\documentclass[10pt,a4paper]{amsart}
\usepackage{amsmath, amssymb,amsthm}
\usepackage{color} 
\usepackage{hyperref}
\usepackage[all]{xy}
\usepackage{bbm}
\usepackage{enumerate,enumitem}
\usepackage{comment}
\usepackage{mathrsfs}
\usepackage{MnSymbol} 
\usepackage{tikz} 
\usepackage{tabularx}

\textheight=585pt
\textwidth=445pt
\oddsidemargin=.25in
\evensidemargin=.25in
\topmargin=0in

\headheight=.1in
\headsep=.5in
\footskip=.75in

\theoremstyle{plain}
\newtheorem{thm}{Theorem}[section]
\newtheorem{prop}[thm]{Proposition}
\newtheorem{lemma}[thm]{Lemma}
\newtheorem{cor}[thm]{Corollary}

\theoremstyle{definition}
\newtheorem{defn}[thm]{Definition}
\theoremstyle{remark}
\newtheorem{rem}[thm]{Remark}

\newcommand{\R}{\mathbb{R}}
\newcommand{\C}{\mathbb{C}}
\newcommand{\Q}{\mathbb{Q}}
\newcommand{\N}{\mathbb{N}}
\newcommand{\Z}{\mathbb{Z}}

\newcommand{\cC}{{\mathcal{C}}}
\newcommand{\cE}{{\mathcal{E}}}
\newcommand{\cD}{{\mathcal{D}}}

\newcommand{\cU}{\mathcal{U}}
\newcommand{\cV}{\mathcal{V}}

\newcommand{\cO}{\mathcal{O}}

\newcommand{\cR}{\mathcal{R}}

\newcommand{\cT}{\mathcal{T}}
\newcommand{\cW}{\mathcal{W}}
\newcommand{\gm}{\mathfrak{m}}

\newcommand{\gp}{\mathcal{P}}

\newcommand{\univ}{\mathrm{univ}}

\newcommand{\dR}{\mathrm{dR}}
\newcommand{\ord}{\mathrm{ord}}
\newcommand{\rig}{\mathrm{rig}}
\newcommand{\nor}{\mathrm{nor}}

\DeclareMathOperator{\ad}{ad} 
 
\DeclareMathOperator{\End}{End}
\DeclareMathOperator{\Frob}{Frob}

\DeclareMathOperator{\GL}{GL}
\DeclareMathOperator{\rH}{H}

\DeclareMathOperator{\Hom}{Hom}

\DeclareMathOperator{\Spec}{Spec}
\DeclareMathOperator{\Spm}{Spm}
\DeclareMathOperator{\Tr}{Tr} 
\DeclareMathOperator{\Ps}{Ps} 

\newcommand{\cris}{\mathrm{crys}}
\newcommand{\tot}{\mathrm{tot}}
\newcommand{\crit}{\mathrm{crit_p}}

\newcommand{\unr}{\mathrm{unr}}

\makeatletter
\DeclareFontFamily{OMX}{MnSymbolE}{}
\DeclareSymbolFont{MnLargeSymbols}{OMX}{MnSymbolE}{m}{n}
\SetSymbolFont{MnLargeSymbols}{bold}{OMX}{MnSymbolE}{b}{n}
\DeclareFontShape{OMX}{MnSymbolE}{m}{n}{
    <-6>  MnSymbolE5
   <6-7>  MnSymbolE6
   <7-8>  MnSymbolE7
   <8-9>  MnSymbolE8
   <9-10> MnSymbolE9
  <10-12> MnSymbolE10
  <12->   MnSymbolE12
}{}
\DeclareFontShape{OMX}{MnSymbolE}{b}{n}{
    <-6>  MnSymbolE-Bold5
   <6-7>  MnSymbolE-Bold6
   <7-8>  MnSymbolE-Bold7
   <8-9>  MnSymbolE-Bold8
   <9-10> MnSymbolE-Bold9
  <10-12> MnSymbolE-Bold10
  <12->   MnSymbolE-Bold12
}{}

\let\llangle\@undefined
\let\rrangle\@undefined
\let\lsem\@undefined
\let\rsem\@undefined
\DeclareMathDelimiter{\llangle}{\mathopen}%
                     {MnLargeSymbols}{'164}{MnLargeSymbols}{'164}
\DeclareMathDelimiter{\rrangle}{\mathclose}%
                     {MnLargeSymbols}{'171}{MnLargeSymbols}{'171}
\DeclareMathDelimiter{\lsem}{\mathopen}%
                     {MnLargeSymbols}{'102}{MnLargeSymbols}{'102}
\DeclareMathDelimiter{\rsem}{\mathclose}%
                     {MnLargeSymbols}{'107}{MnLargeSymbols}{'107}                     
                     
\makeatother


\title{On Siegel eigenvarieties at Saito-Kurokawa points}

\author{ Tobias Berger and Adel Betina}
\address{School of Mathematics and Statistics, University of Sheffield, Hicks Building, Hounsfield Road, Sheffield S3 7RH, UK.}
\email{tberger@cantab.net, adelbetina@gmail.com}

\thanks{Both authors acknowledge  support from the EPSRC Grant EP/R006563/1.}

\subjclass{11F33, 11F46, 11F80, 14G22}

\begin{document}

\begin{abstract}
 We study the geometry of the $p$-adic Siegel eigenvariety $\cE$ of paramodular tame level at certain Saito-Kurokawa points having a critical slope. For $k \geq 2$ let $f$ be a cuspidal new eigenform of $\mathrm{S}_{2k-2}(\Gamma_0(N))$ ordinary at a prime $p\nmid N$ with sign $\epsilon_f=-1$ and write $\alpha$ for the $p$-adic unit root of the Hecke polynomial of $f$ at $p$. Let $\pi_\alpha$   be the semi-ordinary $p$-stabilization of the Saito-Kurokawa lift of the cusp form $f$ to $\mathrm{GSp}(4)$ of weight $(k,k)$ and paramodular tame level. Under the assumption that the dimension of the Selmer group $\rH^1_{f,\unr}(\Q,\rho_f(k-1))$ attached to $f$ is at most one and some mild assumptions on the automorphic representation attached to $f$,  we show that $\cE$ is smooth at the point corresponding to $\pi_\alpha$, and that the irreducible component of $\cE$ specializing to $\pi_\alpha$ is not globally endoscopic. Finally we give an application to the Bloch-Kato conjecture, by proving under some mild assumptions that the smoothness failure of $\cE_{\Delta}$ at $\pi_\alpha$ yields that $\dim \rH^1_{f,\unr}(\Q,\rho_f(k-1)) \geq 2$.
\end{abstract}

\maketitle

\section{Introduction}
Let $p$ be a prime number. Eigenvarieties are $p$-adic rigid analytic spaces interpolating the Hecke eigenvalues of automorphic representations of a particular reductive group $\mathrm{G}$ of finite slope eigenvalues for Hecke operators at $p$, fixed tame level away from $p$ and varying weights. Following the seminal works of Hida \cite{hida85} and Coleman-Mazur \cite{coleman-mazur}  their geometry has been studied by many people, e.g. Bella{\"{\i}}che-Chenevier \cite{B-C},  Majumdar \cite{Maj}, Bella{\"{\i}}che-Dimitrov \cite{D-B}, Betina-Dimitrov-Pozzi and Betina-Dimitrov \cite{BDPozzi, BD19} for $\mathrm{G}={\rm GL}_2(\Q)$, 
and by Bella{\"{\i}}che-Chenevier \cite{Bellaiche08}, \cite{bb} for unitary groups.

Andreatta, Iovita and Pilloni constructed in \cite{AIP} an eigenvariety parametrizing locally analytic overconvergent cuspidal Siegel eigenforms of genus two, principal level $N$ and finite slope, and they proved that the Siegel eigenvariety of tame level $1$ is \'etale over the weight space at certain classical non-critical  points of regular cohomological weights with Iwahoric level at $p$. The proof uses the classicality criteria for overconvergent Siegel cusp forms of Hida  \cite[Prop.3.6]{Hida1}, Tilouine and Urban \cite[Theorem   3.2]{U-T}, Pilloni \cite[Theorem   2]{Pilloni} and the multiplicity one theorem of Arthur's classification for $\mathrm{GSp}_4$ \cite{A}.

We investigate in this work the geometry of the Siegel eigenvariety $\cE_\Delta$ of paramodular level $N$ at the points corresponding to Saito-Kurokawa lifts of ordinary cusp forms for $\GL_2(\Q)$ (which have a critical slope), including the case of the non-cohomological weight $(2,2)$.

In order to state our results, we recall some facts and fix some notations: 
Let $N \geq 1$ be an integer prime to $p$. For a prime $\ell$  the paramodular subgroup of $\mathrm{GSp}_4(\Q_\ell)$ is defined as $\Delta_\ell=\gamma \mathrm{M}_4(\Z_\ell)\gamma^{-1} \cap \mathrm{GSp}_4(\Q_\ell)$ for $\gamma= \mathrm{diag}[1,1,\ell,1]$. We write $\Delta:=\prod_{\ell \mid N} \Delta_{\ell} \cap {\rm GSp}_4(\Q)$ for the paramodular congruence subgroup of level $N$. If $N=1$ we put $\Delta={\rm GSp}_4(\Z)$.

Let $f \in \mathrm{S}_{2k-2}(\Gamma_0(N),K_f)$ be a weight $2k-2$ cuspidal $N$-new eigenform for $\GL_2(\Q)$ with coefficients  in a number field $K_f$ (and trivial central character). Assume that $f$ has an ordinary $p$-stablization and denote it by $f_\alpha$, where $U_p (f_\alpha)=\alpha.f_\alpha$. 

The L-function $L(f,s)$ attached to $f$ satisfies the following functional equation:
$$L(f,s)= \epsilon_f L(f,2k-2-s).$$  
We have that $\epsilon_f=(-1)^{\mathrm{\ord}_{s=k-1}L(f,s)}$. Assume until the end of this paper that $\epsilon_f=-1$\footnote{When $N=1$, one has $\epsilon_f= (-1)^{k-1}$.}, which means that there exists a lift $\mathrm{SK}(f)$ to a holomorphic weight $(k,k)$ cuspform of level $\Delta$ called the Saito-Kurokawa lift of $f$. It satisfies $$L^N(\mathrm{SK}(f), {\rm spin}, s)=\zeta^N(s-k+1) \zeta^N(s-k+2) L^N(s,f).$$  When $N=1$ this lift was constructed by Maass, Andrianov and Zagier; Gritsenko generalized it to any level $N$.  A representation theoretic approach building on results of Piatetski-Shapiro and  Waldspurger is discussed in \cite{Schmidt07} (see also \cite[Theorem   5.3]{Schmidt18}).

In order to  $p$-adically  deform  $\mathrm{SK}(f)$, one must first choose a semi-ordinary\footnote{Semi-ordinary means that the eigenvalue for the Hecke operator $U_0$ is a $p$-adic unit. Following Tilouine-Urban this is also called Siegel ordinary.} {\it  $p$-stabilization }
of $\mathrm{SK}(f)$, that is an eigenform  of tame level the paramodular group $\Delta$  and sharing the same eigenvalues as $\mathrm{SK}(f)$ away from $p$ and of finite slope. Denote by $\pi_\alpha$ the $p$-stablization of $\mathrm{SK}(f)$ such that $U_{0}(\pi_\alpha)=\alpha.\pi_\alpha \text{, and } U_{1}(\pi_\alpha)=p.\alpha.\pi_\alpha$ 
where $U_0,U_1$ are the Hecke operators attached to $\mathrm{diag}[1,1,p,p]$ ($U_0$ is often denoted by $U_p$), $\mathrm{diag}[1,p,p^2,p]$, and $U_1$ has been renormalized to have a good $p$-adic interpolation (see for example \cite[Theorem   2.4.14]{urban}).

Let $\cE_\Delta$ be Siegel eigenvariety of tame paramodular level $\Delta$ (see appendix \S \ref{eigenvariety2}). It is reduced and equidimensional of dimension $2$, and endowed with a morphism $$\kappa: \cE_\Delta\rightarrow \cW$$ called the weight map (which is locally finite and torsion-free), where  the weight space $\cW$  is  the rigid analytic space over $\Q_p$ such that  
$\cW(\C_p)=\Hom_{\mathrm{cont}}((\Z_p^{\times})^2, \C_p^\times).$

The cuspidal eigenform $\pi_\alpha$ defines a point of $\cE_\Delta$ which we denote again (by a slight abuse of notation) by $\pi_\alpha$. Write $L$ for the residue field of  the point $\pi_\alpha \in \cE_{\Delta}$, a finite extension of $\Q_p$. Note that the slopes of $U_{0}$ and $U_{1}$ are locally constant on $\cE_\Delta$, and equal to $0$ for $U_{0}$ and $1$ for $U_{1}$ locally at $\pi_\alpha$. This means that the cuspform $\pi_\alpha$ has a critical slope since it does not satisfy the small slope condition of \cite[Theorem    7.3.1]{AIP}.

One can show that there exists a pseudo-character $\mathrm{Ps}=\mathrm{Ps}_{\cE_\Delta}:G_\Q \rightarrow  \mathcal{O}(\cE_\Delta)$ of dimension $4$ such that the specialization $\mathrm{Ps}(y)$ of $\mathrm{Ps}$ at a classical point $y \in \cE_\Delta(\bar{\Q}_p)$ is the trace of the semi-simple $p$-adic Galois representation $\rho_y:G_\Q \to \GL_4(\bar{\Q}_p)$ of dimension $4$ attached to a cuspidal Siegel eigenform $g_y$ corresponding to $y$ (i.e. $L(g_y,{\rm spin}, s)=L(\rho_y,s)$).
For $y=\pi_{\alpha}$ we have $$\mathrm{Ps}(\pi_\alpha)=\epsilon_p^{1-k} + \epsilon_p^{2-k}  + \Tr\rho_f,$$
where $\rho_f :G_{\Q} \to \GL_2(L)$ is the $p$-adic Galois representation attached to $f$ (i.e. $L(f,s)=L(\rho_f,s)$) and $\epsilon_p$ is the $p$-adic cyclotomic character ($L$ contains the Hecke field $K_f$ of $f$). 

Let $\cT$ be the local ring of $\cE_\Delta$ at $\pi_\alpha$ for the rigid topology, $\gm$  the maximal ideal of $\cT$ and $\varLambda$ the local ring of $\cW$ for the rigid topology at the weight $\kappa(\pi_\alpha)$ of $\pi_\alpha$ (they are both Henselian rings). Note that $\cT$ is an equidimensional ring of dimension $2$.

\begin{defn}
 
We say that  an {\it irreducible} affinoid $\mathcal{Z} \subset \cE_\Delta$ of dimension $2$ is {\it stable} if and only if the reducibility locus of the pseudo-character $\mathrm{Ps}_{\mathcal{Z}}:G_\Q  \to \cO(\mathcal{Z})$ given by the composition of $\Ps_{\cE_{\Delta}}$ with the natural morphism $\cO(\cE_{\Delta}) \to \cO(\mathcal{Z})$ is strictly contained in $\mathcal{Z}$ (i.e. of dimension less or equal to $1$). 
\end{defn} 

Let $\pi_f=\underset{\ell }{\bigotimes} \pi_{f,\ell}$ be the automorphic representation attached to $f$. We list below the main assumptions that we need in this work:

\begin{itemize}

\item[$\bullet$] $({ \bf BK})$  The Bloch-Kato Selmer group {\small  \[\rH^{1}_{f,\unr}(\Q,\ad^0 \rho_f)=\ker(\rH^1(\Q,\ad^0 \rho_f) \to \rH^1(\Q_{p},\ad^0 \rho_f \otimes B_{\cris}) \oplus_{\ell \nmid p} \rH^1(I_{\ell},\ad^0 \rho_f))\] } is trivial  ($B_{\cris}$ is Fontaine's crystalline period ring).

\item[$\bullet$] $({ \bf Reg})$ If $k=2$ assume  $\alpha \neq 1$ ($\alpha$ is the $p$-adic unit root of the Hecke polynomial at $p$ of $f$).

\item[$\bullet$] $({ \bf St})$  For any prime $\ell \mid N$,  $ \rho_f|_{I_{\ell}}$ is special and the sign $\epsilon_\ell(f)$ in the functional equation of $f$ at $\ell$ is always equal to $1$ and the local sign $\epsilon_\infty(f)$ at the archimedean place $\infty$ is equal to $-1$ (which means that $k$ is even). In that case, for any prime $\ell \mid N$, $\pi_{f,\ell} \simeq \mathrm{St} \otimes \xi$, where $\xi$ is the unramified character with $\xi(\ell)=-1$.

\end{itemize}

We highlight that the sign $\epsilon_f=\underset{\ell \mid N\cup\{\infty\}}{\prod}\epsilon_\ell(f) $ and $\epsilon_\infty(f)=(-1)^{k-1}$. Moreover, if $f$ satisfies $({ \bf St})$, then the integer $N$ is necessarily squarefree.

Andreatta-Iovita-Pilloni pose the following question in \cite[\S.8]{AIP}:

\medskip
{\bf  Open problem.} Let $x(g)$ be a classical point of the Siegel eigenvariety $\cE_N$ of tame level $\Gamma(N)$, the principal congruence subgroup  of level $N$. Is the map $\kappa: \cE_N \to \cW$ unramified at $x(g)$?

\medskip

Let $\gm_{\varLambda}$ be the maximal ideal of $\varLambda$, the completed local ring of $\cW$ at $\kappa(\pi_\alpha)$, $\cT'=\cT/\gm_{\varLambda}\cT$ be the local ring of the fiber $\kappa^{-1}(\kappa(\pi_\alpha)) \subset \cE_{\Delta}$ at $\pi_\alpha$ (since $\kappa$ is locally finite, $\cT'$ is an Artinian algebra), and let $\mathfrak{t}_{\pi_\alpha}$ (resp. $\mathfrak{t}_{\pi_\alpha}^0$) be the Zariski tangent space of  $\cT$ (resp. $\cT'$, i.e the relative tangent space of $\kappa^{\#}:\varLambda \to \cT$).

Let $\omega_p:G_\Q \to \Z_p^{\times}$ be the Teichm\"{u}ller character and $L_p(f_\alpha,\omega^{-1}_p,\cdot) \in \Lambda:=\bar{\Z}_p\lsem T \rsem$ be the Manin-Vishik $p$-adic L-function attached to $f_\alpha \otimes \omega_p^{-1}$ (see e.g. \cite[
   16.2]{Kato}), and let  {\small \[\rH^{1}_{f,\unr}(\Q,\rho_f(k-1))=\ker(\rH^1(\Q,\rho_f(k-1)) \to \rH^1(\Q_{p},\rho_f(k-1) \otimes B_{\cris}) \oplus_{\ell \nmid p} \rH^1(I_{\ell},\rho_f(k-1)))\] } be the Bloch-Kato Selmer group attached to $f$.

Recall that $\cE_{\Delta}$ is equidimensional of dimension 2. Our main result is the following theorem describing the local geometry at $\pi_\alpha$ of the rigid analytic space $\cE_{\Delta}$:
\begin{thm}[see \S.\ref{noendoscopic} and \S.\ref{mainthmsection}] \label{A}\ 

{Put  $s=\dim \rH^{1}_{f,\unr}(\Q,\rho_f(k-1))$.}
\begin{enumerate}
\item Assume that $k \geq 2$ and $({ \bf Reg})$.  Then all the irreducible affinoids of $\cE_\Delta$ of dimension $2$ specializing to $\pi_\alpha$ are stable.

\item Assume that $k \geq 2$, $({ \bf Reg})$, $({ \bf BK})$ and $({ \bf St})$, and assume also when $k=2$ that $L_p(f_\alpha,\omega^{-1}_p,T=p) \ne 0$, then $$2 \leq \dim \mathfrak{t}_{\pi_\alpha} \leq 1 +s^2 \text{ and } \dim \mathfrak{t}_{\pi_\alpha}^0 \leq s^2.$$  Moreover, if $\dim \rH^{1}_{f,\unr}(\Q,\rho_f(k-1))=1$, then $\cE_{\Delta}$ is smooth at $\pi_{\alpha}$, and the reducibility locus of the pseudo-character $\mathrm{Ps}_{\cT}:G_\Q \rightarrow \cO(\cE_{\Delta}) \to \cT$ is the closed irreducible smooth subscheme of $\Spec \cT$ of dimension $1$ associated to the Saito-Kurokawa lift of the ordinary Hida family $\mathcal{F}$ passing through $f_\alpha$, and it is even a principal Weil divisor of $\Spec \cT$.

\end{enumerate}
\end{thm}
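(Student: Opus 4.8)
\emph{Overview.} Both parts are proved by studying the deformation theory of the residual pseudo-character $\Ps(\pi_\alpha)=\epsilon_p^{1-k}+\epsilon_p^{2-k}+\Tr\rho_f$, a sum of three pairwise non-isomorphic irreducible constituents $S_1=\rho_f$ (of dimension $2$), $S_2=\epsilon_p^{1-k}$ and $S_3=\epsilon_p^{2-k}$. For part $(1)$: if some dimension-$2$ irreducible affinoid $\mathcal{Z}\ni\pi_\alpha$ were not stable, then (as $\rho_f$ is irreducible) $\Ps_{\mathcal{Z}}$ would split globally as $\Tr R\oplus\psi_1\oplus\psi_2$ with $R$ a rank-$2$ representation residually $\rho_f$ and $\psi_i$ characters; by Arthur's classification for $\mathrm{GSp}_4$ the classical points of $\mathcal{Z}$ would then all be Saito--Kurokawa lifts, so $\mathcal{Z}$ would lie in the Saito--Kurokawa locus, which is one-dimensional (the image of the one-dimensional Hida family of $f$), contradicting $\dim\mathcal{Z}=2$; here $(\mathbf{Reg})$ is what makes this work in the non-cohomological weight $(2,2)$ (density of classical points, and good behaviour of the Galois family at $\pi_\alpha$). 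For part $(2)$: a tangent vector in $\mathfrak{t}_{\pi_\alpha}$, i.e.\ a local $L$-algebra homomorphism $\cT\to L[\epsilon]/(\epsilon^2)$, produces a pseudo-character $\widetilde\tau\colon G_\Q\to L[\epsilon]/(\epsilon^2)$ deforming $\Ps(\pi_\alpha)$, unramified outside $Np$, carrying at $p$ the refined/trianguline structure of the semi-ordinary refinement (Hodge--Tate--Sen weights deforming $\{0,k-2,k-1,2k-3\}$, $U_0$ a unit, $U_1$ of slope $1$), and with the special behaviour at $\ell\mid N$ from $(\mathbf{St})$.

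\emph{Step 1: the relevant Selmer groups.} By the generalized matrix algebra (GMA) formalism of Bella\"{\i}che--Chenevier, $\Ps_{\cT}$ factors through a GMA over $\cT$ with diagonal blocks of sizes $(2,1,1)$ and off-diagonal finitely generated $\cT$-modules $A_{ij}$ with $A_{ij}A_{jk}\subseteq A_{ik}$ and $A_{ij}/\gm_{\cT}A_{ij}\hookrightarrow\mathrm{Ext}^1_{\mathrm{cond}}(S_j,S_i)$, the $\mathrm{Ext}$ being Galois cohomology with the refined local conditions above. Using the essential self-duality of $\rho_f$ one identifies $\mathrm{Ext}^1_{\mathrm{cond}}(S_2,S_1)\cong\mathrm{Ext}^1_{\mathrm{cond}}(S_1,S_3)$ with a subspace of $\rH^1_{f,\unr}(\Q,\rho_f(k-1))$ (of dimension $s$) and $\mathrm{Ext}^1_{\mathrm{cond}}(S_1,S_2)\cong\mathrm{Ext}^1_{\mathrm{cond}}(S_3,S_1)$ with a subspace of $\rH^1_{f,\unr}(\Q,\rho_f(k-2))$, while the extensions between $S_2$ and $S_3$ are governed by refined $\rH^1$-groups of $\Q_p(\pm1)$. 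Now $\rH^1_{f,\unr}(\Q,\rho_f(k-2))=0$ by Kato's Euler system — the controlling $L$-value of $f$ is non-central, hence non-zero by the functional equation and absolute convergence, $(\mathbf{Reg})$ excludes an exceptional zero at $p$, and $(\mathbf{St})$ handles the primes $\ell\mid N$ — so $A_{13}=A_{21}=0$; and the refined $\rH^1$-group governing one of the $S_2$--$S_3$ extensions vanishes, which in weight $(2,2)$ (where $S_3$ is the trivial character) is exactly where the hypothesis $L_p(f_\alpha,\omega_p^{-1},T=p)\neq0$ is used, to kill an extra deformation direction coming from a $p$-adic trivial-zero phenomenon at $p$. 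Hence the only possibly non-zero off-diagonal modules are $A_{12}$ and $A_{31}$, each embedding into $\rH^1_{f,\unr}(\Q,\rho_f(k-1))$, together with one further $\le 1$-dimensional module.

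\emph{Step 2: tangent bounds, smoothness, and the reducibility divisor.} Let $\mathcal{I}\subseteq\cT$ be the reducibility ideal of $\Ps_{\cT}$ for the partition $\{S_1\}\sqcup\{S_2,S_3\}$. Then $\cT/\mathcal{I}$ is the completed local ring at $\pi_\alpha$ of the locus of Saito--Kurokawa lifts, which via $f'\mapsto\mathrm{SK}(f')$ is identified with the ordinary Hida family $\mathcal{F}$ of $f_\alpha$; by $(\mathbf{BK})$ (i.e.\ $\rH^1_{f,\unr}(\Q,\ad^0\rho_f)=0$) together with classical modularity this ring is regular of dimension $1$ and \'etale over the one-dimensional image of its weight map, so $\dim_L\mathfrak{t}_{\cT/\mathcal{I}}=1$ and this tangent direction comes from $\varLambda$. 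By the GMA the conormal module $\mathcal{I}/\mathcal{I}^2$ is a sub-quotient of $(A_{12}/\gm_{\cT})\otimes_L(A_{31}/\gm_{\cT})$ (up to the $\le 1$-dimensional factor of Step 1), hence of $\rH^1_{f,\unr}(\Q,\rho_f(k-1))^{\otimes 2}$, of dimension $s^2$. The exact sequence $0\to\mathcal{I}\to\cT\to\cT/\mathcal{I}\to0$ then gives $\dim_L\mathfrak{t}_{\pi_\alpha}\le\dim_L\mathfrak{t}_{\cT/\mathcal{I}}+\dim_L\bigl(\mathcal{I}/(\mathcal{I}\cap\gm_{\cT}^2)\bigr)\le 1+s^2$, and the same estimate after reduction modulo $\gm_{\varLambda}$ (which kills $\mathfrak{t}_{\cT/\mathcal{I}}$) gives $\dim_L\mathfrak{t}_{\pi_\alpha}^0\le s^2$; the lower bound $\dim_L\mathfrak{t}_{\pi_\alpha}\ge\dim\cT=2$ is immediate from equidimensionality. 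If $s=1$ then $\dim_L\mathfrak{t}_{\pi_\alpha}\le 2=\dim\cT$, so $\cT$ is regular, i.e.\ $\cE_\Delta$ is smooth at $\pi_\alpha$; being a two-dimensional regular local ring, $\cT$ is a unique factorization domain, and the irreducible smooth one-dimensional $V(\mathcal{I})$ is cut out by a single regular parameter, so it is a principal Weil divisor of $\Spec\cT$, equal by construction to the Saito--Kurokawa lift of $\mathcal{F}$. That $V(\mathcal{I})$ is the entire reducibility locus of $\Ps_{\cT}$ follows because $\rho_f$ is irreducible (so any reducibility splits $\rho_f$ off) and, by Step 1 and part $(1)$, the vanishing loci of the non-zero off-diagonal modules are one-dimensional with Saito--Kurokawa classical points, hence all equal to $V(\mathcal{I})$.

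\emph{Main obstacle.} The technical heart is Step 1: identifying precisely the refined local conditions at $p$ (from the semi-ordinary triangulation) and at $\ell\mid N$ (from $(\mathbf{St})$) defining the $\mathrm{Ext}^1$-groups, and proving that with these only $\rH^1_{f,\unr}(\Q,\rho_f(k-1))$ contributes — in particular controlling the extensions involving the near-cyclotomic characters $S_2,S_3$, where $(\mathbf{Reg})$ and, in weight $(2,2)$, the non-vanishing of $L_p(f_\alpha,\omega_p^{-1},T=p)$ intervene to rule out an exceptional-zero contribution. A secondary point is the clean identification $\cT/\mathcal{I}\cong\mathcal{F}$ with its regularity, which rests on $(\mathbf{BK})$ and on the $p$-adic interpolation of Saito--Kurokawa lifts in Hida families, including at the non-cohomological weight $(2,2)$.
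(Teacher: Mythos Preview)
Your overall strategy for part~(ii)---GMA structure, bound generators of the reducibility ideal by Selmer groups, identify $\cT/\mathcal{I}$ with the Hida family via $(\mathbf{BK})$---is the paper's approach, but two concrete errors break your execution. First, the GMA injection is misstated: Bella\"{\i}che--Chenevier gives $\Hom_L(A_{ij}/A'_{ij},L)\hookrightarrow\mathrm{Ext}^1(S_j,S_i)$ with $A'_{ij}:=A_{ik}A_{kj}$, not $A_{ij}/\gm_{\cT}A_{ij}\hookrightarrow\mathrm{Ext}^1$. When the target Selmer group vanishes, Nakayama yields $A_{ij}=A'_{ij}$, \emph{not} $A_{ij}=0$; your conclusion ``$A_{13}=A_{21}=0$'' contradicts the generic irreducibility of $\rho_K$ that you just established in part~(i). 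The paper's actual reduction is: $\rH^1_{f,\unr}(\Q,\epsilon_p)=0$ gives $A_{13}=A_{12}A_{23}$, and the symplectic anti-involution gives $A_{23}A_{32}=A_{12}A_{21}$, whence $\mathcal{I}^{\tot}=A_{12}A_{21}$; then $A_{12}$ and (after further manipulation) $A_{21}$ are each generated by $\leq s$ elements, giving the $s^2$ bound. Second, you misplace the hypothesis $L_p(f_\alpha,\omega_p^{-1},T=p)\neq0$: it is not used for the $S_2$--$S_3$ (i.e.\ $\epsilon_p^{\pm1}$-valued) extensions but for the $\rho_f$-valued ones. When $k=2$ one has $\rho_f(k-2)=\rho_f$ with smallest Hodge--Tate weight $0$, so ordinarity of the $S$-extensions does \emph{not} imply crystallinity; the paper instead bounds by a Greenberg-type Selmer group $\mathrm{Sel}_{\Q,f_\alpha}$, whose vanishing follows from Kato's Iwasawa bound under the $L_p$ hypothesis.

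For part~(i) your argument has a gap: you assume that a reducible Galois representation forces the classical form to be non-type-(G), hence Saito--Kurokawa by the paramodular classification, but irreducibility for type~(G) is only conjectural. The paper avoids this by invoking \cite[Theorem~3.2.9]{urban}, which shows (unconditionally) that a specific reducibility pattern with the $2$-dimensional piece ordinary forces classical specializations to be CAP or endoscopic; at sufficiently regular non-parallel weights CAP is ruled out, leaving endoscopic, which does not occur at paramodular level. You also need Urban's argument to exclude a $3$-dimensional irreducible constituent of $\Ps_{\mathcal{Z}}$. Finally, you rightly flag the local-condition verification as the technical heart, but note that the crystallinity at $p$ of the $S$-extensions in $\rH^1(\Q,\epsilon_p)$ is considerably more delicate than a trianguline bookkeeping: the paper must pass to the rank-$3$ quotient family $\mathcal{M}'=\mathcal{M}/\mathcal{M}^{I_p}$ (torsion-free but not locally free) to access the correct crystalline period $U_1U_0^{-1}\cdot p^{l_2-2}$, and the unramifiedness at $\ell\mid N$ requires a monodromy-rank argument using $(\mathbf{St})$ and the paramodular bound on monodromy.
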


\medskip A key step in the proof is the determination  of the schematic reducibility locus of the pseudo-character $\mathrm{Ps}_{\cT} : G_\Q \to \cO(\cE_{\Delta}) \to \cT$ carried by $\cE_{\Delta}$ at $\pi_\alpha$, and our approach uses pseudo-representations of $p$-adic families of cuspidal Siegel eigenforms and $p$-adic Hodge theory.
We provide a more detailed sketch of the proof in section \ref{sketch}.

 A direct consequence of (ii)  of the above theorem is that under these assumptions there exists a unique irreducible component of $\cE_{\Delta}$ specializing to $\pi_\alpha$ when the Selmer group $\rH^{1}_{f,\unr}(\Q,\rho_f(k-1))$ is $1$-dimensional.

The smoothness of the eigencurve at critical points is a crucial ingredient for the construction of a family of $p$-adic  $L$-functions on an open neighborhood of these points, see e.g. \cite{Be}. {Our result on the smoothness of $\cE_\Delta$ opens up the possibility of constructing a family of $p$-adic L-functions in a neighbourhood of $\pi_\alpha$, a challenging question in Iwasawa theory.}

\begin{rem}
We require assumption $({ \bf BK})$ for applying an $R=T$ theorem in characteristic zero (see Proposition \ref{RTBK}). Assumption $({ \bf BK})$  is conjectured to always hold by the Bloch-Kato conjectures as $L({\rm ad}^0(\rho_f), 1) \neq 0$. It has been proven under various assumptions, in particular, by Weston \cite[Theorem 5.5]{Weston} if $f$ is not CM, and special or supercuspidal at all $\ell \mid N$, and by Kisin \cite{Kisin04} under mild assumptions on the residual representation $\overline{\rho}_f$, e.g. if $\overline{\rho}_f|_{G_L}$ is absolutely irreducible for any quadratic extension $L/\Q$ with $L \subset \Q(\zeta_{p^3})$ or if $\overline{\rho}_f^{\rm ss}$ over
an algebraic closure is given by the sum of two characters that are distinct on $G_{\Q(\zeta_{p^{\infty}})}$. 
\end{rem}
\medskip

Using results about $\Lambda$-adic Selmer groups we exhibit many examples where the Selmer group $\rH^{1}_{f,\unr}(\Q,\rho_f(k-1))$ is $1$-dimensional (see Appendix \S.\ref{example}). {We also have an example of an elliptic curve satisfying all the assumptions of (ii) of the above theorem (see \S.\ref{example}).}

\begin{cor}\

Assume that $k \geq 2$, $({ \bf Reg})$, $({ \bf BK})$ and $({ \bf St})$, and assume also when $k=2$ that $L_p(f_\alpha,\omega^{-1}_p,T=p) \ne 0$. If the rigid analytic  space $\cE_{\Delta}$ is singular at $\pi_{\alpha}$ then $\dim \rH^{1}_{f,\unr}(\Q,\rho_f(k-1)) \geq 2$.

\end{cor}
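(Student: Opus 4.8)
The plan is to obtain the Corollary as an immediate consequence of Theorem \ref{A}(ii), read contrapositively and combined with the numerical bounds on the Zariski tangent space. Set $s=\dim \rH^{1}_{f,\unr}(\Q,\rho_f(k-1))$ as in the theorem. The running hypotheses of the Corollary ($k\geq 2$, $({\bf Reg})$, $({\bf BK})$, $({\bf St})$, and $L_p(f_\alpha,\omega_p^{-1},T=p)\neq 0$ when $k=2$) are exactly those under which Theorem \ref{A}(ii) supplies the chain of inequalities
\[
2 \leq \dim \mathfrak{t}_{\pi_\alpha} \leq 1 + s^{2},
\]
together with the assertion that $\cE_{\Delta}$ is smooth at $\pi_\alpha$ as soon as $s=1$. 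So the only work is bookkeeping.

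First I would note that the existence of the displayed inequalities already forces $s\geq 1$: if $s=0$ we would get $2\leq 1$, which is absurd. (This is consistent with the expectation that $({\bf BK})$-type inputs make $s=0$ impossible, but the tangent-space bound alone suffices here, so no extra argument is needed.) Next, suppose for contradiction that $s=1$. Then the last part of Theorem \ref{A}(ii) applies verbatim and gives that $\cE_{\Delta}$ is smooth at $\pi_\alpha$; concretely, $\dim\mathfrak{t}_{\pi_\alpha}\leq 1+1=2$ while the lower bound forces $\dim\mathfrak{t}_{\pi_\alpha}=2$, which equals $\dim\cT$ (recall $\cT$ is equidimensional of dimension $2$), so $\cT$ is regular, i.e.\ $\cE_\Delta$ is smooth at $\pi_\alpha$. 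This contradicts the hypothesis that $\cE_{\Delta}$ is singular at $\pi_\alpha$. Hence $s\neq 0$ and $s\neq 1$, so $s\geq 2$, which is the claim.

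There is no genuine obstacle here beyond having Theorem \ref{A} available; the one point requiring care is to check that the hypothesis list of the Corollary matches precisely the hypotheses under which both the tangent-space bounds and the smoothness statement of Theorem \ref{A}(ii) were proved, so that no additional assumption is implicitly invoked when passing to the contrapositive. In particular one should confirm that the $k=2$ case, which needs the extra non-vanishing $L_p(f_\alpha,\omega_p^{-1},T=p)\neq 0$, is covered identically in both directions.
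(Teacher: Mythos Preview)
Your argument is correct and matches the paper's approach: the Corollary is stated in the paper without an explicit proof, being an immediate contrapositive of the smoothness clause in Theorem \ref{A}(ii). Your additional observation that the tangent-space inequality $2\leq 1+s^2$ already forces $s\geq 1$ is a clean way to rule out $s=0$ without invoking the separate non-triviality results (Corollary \ref{nontrivsel} or Proposition \ref{nontrivsel2}).
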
 

Hence we have a geometric criterion to detect if $\dim \rH^{1}_{f,\unr}(\Q,\rho_f(k-1)) \geq 2$. Thus, the question of finding a lower bound of the dimension of the Selmer group $\rH^{1}_{f,\unr}(\Q,\rho_f(k-1))$ can be reduced to certain computations of spaces of semi-ordinary $p$-adic modular cuspforms for $\mathrm{GSp}_4$. 

\medskip

It turns out that the geometry of $\cE_N$ at $\pi_\alpha$ depends on the tame level. When we change the tame level to the principal Siegel congruence subgroup $\Gamma(N)$ it is in general non-smooth. In particular, the answer to the question in \cite{AIP} is negative if $N$ is not prime.

\begin{cor}[see Corollary \ref{cor12.5}]
Assume that $\ell_1.\ell_2 \mid N$, where $\{\ell_i\}_{\{1,2\}}$ are prime numbers and assume that $f$ is Steinberg at both these primes. Then the eigenvariety $\cE_N$ of level $\Gamma(N)$ is singular  at $\pi_\alpha$ and has at least two irreducible endoscopic components specializing to $\pi_\alpha$. 

\end{cor}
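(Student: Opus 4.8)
The plan is to pull back the statements of Theorem~\ref{A} and the preceding corollary to the eigenvariety $\cE_N$ of principal level $\Gamma(N)$ by using the level-raising structure at the primes $\ell_1,\ell_2 \mid N$, and then to exhibit explicitly at least two distinct endoscopic components through $\pi_\alpha$, which forces singularity. First I would record the relationship between $\cE_N$ and $\cE_\Delta$: both carry the same four-dimensional pseudo-character $\Ps$ on $G_\Q$, and the point $\pi_\alpha$ lifts to a point of $\cE_N$ with the same Satake parameters away from $\{\ell_1,\ell_2\}\cup\{p\}$. The key local input is $(\mathbf{St})$: at each $\ell_i$ the representation $\pi_{f,\ell_i} \simeq \mathrm{St}\otimes\xi$ is Steinberg (up to the unramified quadratic twist), and the Saito--Kurokawa lift $\mathrm{SK}(f)$ has at $\ell_i$ a local component of ``Saito--Kurokawa type'' whose space of $\Gamma(N)_{\ell_i}$-fixed vectors (where $\Gamma(N)_{\ell_i}$ is the principal congruence subgroup at $\ell_i$) is strictly larger than its space of paramodular fixed vectors. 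Concretely, one has an injection (but not surjection) of the relevant Hecke modules, giving at least two distinct new-at-$\ell_i$ stabilizations of $\mathrm{SK}(f)$ at each of $\ell_1$ and $\ell_2$, and hence at least four points of $\cE_N$ above $\pi_\alpha$; but more importantly, the endoscopic (Yoshida-type / Saito--Kurokawa-type) locus meeting $\pi_\alpha$ now has several branches, one for each choice of local sign/stabilization data at $\ell_1$ and at $\ell_2$.

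The second step is to identify the endoscopic components. Recall from part~(ii) of Theorem~\ref{A} that at paramodular level the reducibility locus of $\Ps_\cT$ is the Saito--Kurokawa lift of the Hida family $\mathcal F$ through $f_\alpha$, which is a single smooth curve. At level $\Gamma(N)$, the local representation type at $\ell_1$ and $\ell_2$ is no longer rigid: the Hida family $\mathcal F$ admits, at $\ell_i$, both the Steinberg-type and a ``level-raised'' (principal-series or twist-of-Steinberg) deformation, and correspondingly the Saito--Kurokawa lift of $\mathcal F$ spreads into a union of components in $\cE_N$ indexed by the local choices. I would argue that at least two of these components pass through $\pi_\alpha$ — say the one with Steinberg-type behaviour at both $\ell_1,\ell_2$ and the one with the level-raised behaviour at $\ell_1$ and Steinberg at $\ell_2$ — because the residual data at $\pi_\alpha$ is compatible with all of them. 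Each such component is globally endoscopic in the sense that its pseudo-character is everywhere reducible (a sum $\epsilon_p^{1-k}+\epsilon_p^{2-k}+\Tr\rho_{\mathcal F_?}$), so it is contained in the reducibility locus; thus none of them is stable in the sense of the Definition above.

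The final step is to deduce singularity. Since two distinct irreducible components of the equidimensional (dimension $2$) space $\cE_N$ pass through the point $\pi_\alpha$, the local ring of $\cE_N$ at $\pi_\alpha$ is not a domain, hence $\cE_N$ is not smooth (equivalently not regular) at $\pi_\alpha$; this is the standard fact that a regular local ring is an integral domain. One checks the two components are genuinely distinct by comparing their Hecke eigenvalues at $\ell_1$ (or at $\ell_2$): the $U_{\ell_i}$-type eigenvalues differ between the Steinberg stabilization and the level-raised one. I expect the main obstacle to be the precise local representation theory at $\ell_i$: one must verify, using Schmidt's tables for paramodular vectors and the analysis of $\Gamma(N)_{\ell_i}$-fixed vectors in Saito--Kurokawa-type local representations (via Piatetski-Shapiro's classification and the restriction of the local Arthur packet), that the relevant local Hecke module at level $\Gamma(N)_{\ell_i}$ is at least two-dimensional and that each line deforms in a $p$-adic family, so that one really gets two distinct \emph{global} components rather than a single component meeting itself. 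Controlling that the deformations glue to honest two-dimensional irreducible affinoids — rather than merely tangent directions — is where the genus-two eigenvariety machinery of \cite{AIP} and the multiplicity statements of Arthur's classification \cite{A} must be invoked carefully; once that is in place, the singularity and the count of endoscopic components follow formally.
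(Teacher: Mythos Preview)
Your proposal has a genuine gap: the families you construct are one-dimensional, not two-dimensional, so they cannot be irreducible components of the equidimensional (dimension $2$) eigenvariety $\cE_N$. The Saito--Kurokawa lift of the Hida family $\mathcal{F}$ through $f_\alpha$ depends on a single weight parameter, and passing to level $\Gamma(N)$ and choosing different $\ell_i$-stabilizations does not add a dimension; at best it multiplies the number of one-dimensional curves in the reducibility locus. Moreover, the eigenvariety $\cE_N$ as constructed in the paper uses only the Hecke operators $T_{\ell,1},T_{\ell,2},S_\ell$ for $\ell\nmid Np$ together with $U_0,U_1$ at $p$, so your suggested discriminant ``$U_{\ell_i}$-type eigenvalues'' is not available to separate points or components. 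There is also a terminological slip: the reducibility pattern $\epsilon_p^{1-k}+\epsilon_p^{2-k}+\Tr\rho_{\mathcal{F}}$ is CAP (Saito--Kurokawa, type~(P)), not endoscopic in the sense of Arthur; the statement of the corollary asks for \emph{endoscopic} components, i.e.\ of Yoshida type~(Y), where the pseudo-character splits as a sum of two two-dimensional pieces.

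The paper's argument is quite different and supplies exactly the missing dimension. For each prime $\ell_i$ at which $f$ is Steinberg, one takes on the $\GL_2$-eigencurve $\cC_{\ell_i}$ a cuspidal Coleman family $G_i$ through the critical Eisenstein point $E_2^{\crit,\ord_{\ell_i}}$ (this is why level $>1$ is needed: $E_2^{\crit}$ itself is not overconvergent, but its $\ell_i$-ordinary stabilization is). One then forms the $p$-adic \emph{Yoshida} lift $\mathrm{Yo}(\mathcal{F},G_i)$: interpolating the classical Yoshida lifts of pairs $(f^{\mathrm{old}}_1,g^{\mathrm{old}}_1)$ produces a closed immersion $\cU^0\times_{\Q_p}\cU^1\hookrightarrow \cE_N$, a genuine two-dimensional component. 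At the special point the Galois representation is $\rho_f\oplus(\mathbbm{1}\oplus\epsilon_p^{-1})$ up to the appropriate twist, which coincides with that of $\pi_\alpha$; hence each $\mathrm{Yo}(\mathcal{F},G_i)$ passes through $\pi_\alpha$. The two choices $i=1,2$ yield distinct components because the underlying Coleman families $G_1,G_2$ live on eigencurves of different tame levels and are generically Steinberg at different primes (Proposition~\ref{localfamilyGL2}). Two distinct two-dimensional components through $\pi_\alpha$ then force the local ring to be non-integral, hence non-regular, giving the singularity.
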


\subsection{Sketch of the proof of Theorem \ref{A}} \label{sketch}

Using \cite[Theorem   3.2.9]{urban} we show that any irreducible affinoid $\mathcal{Z} \subset \cE_\Delta$ of dimension $2$ specializing to $\pi_\alpha$ such that its pseudo-character $\mathrm{Ps}_{\mathcal{Z}}$ is reducible contains necessarily a classical point corresponding to an endoscopic cuspidal eigenform of trivial central character, Iwahoric level and old at $p$. In fact, it follows from Arthur's classification and results of Roberts and Schmidt  that no such Yoshida lift exists for level $\Delta$.  This establishes that all irreducible components of $\cE_{\Delta}$ containing $\pi_\alpha$ are stable.

Hence by localizing the pseudo-character $\mathrm{Ps}_{\cE_{\Delta}}:G_\Q \rightarrow  \mathcal{O}(\cE_{\Delta})$ of dimension $4$ at the local Henselian ring $\cT$, we get a  pseudo-character $\mathrm{Ps}_{\cT}:G_{\Q} \rightarrow  \cT$ deforming $\mathrm{Ps}(\pi_\alpha)$ which is generically irreducible on each irreducible component containing $\pi_\alpha$. Following the results of \cite{bb}, we obtain a GMA matrix $S=\cT[G_\Q]/\ker(\mathrm{Ps}_{\cT}) $ with orthogonal idempotents lifting the natural idempotents of the semi-simple representation $\varrho=\epsilon_p^{2-k} \oplus \rho_f \oplus \epsilon_p^{1-k}$. 

The total reducibility ideal $\mathcal{I}^{\tot}$ of $\mathrm{Ps}_{\cT}$ is defined to be the smallest ideal $I$ of $\cT$ such that $$\mathrm{Ps}_{\cT} \mod{I}=T_1+T_2+T_3$$ for pseudocharacters $T_i$ with $T_i \mod{\gm}=\Tr(\rho_i)$ for $\rho_1=\epsilon^{2-k}_p$, $\rho_2= \rho_f$, $\rho_3= \epsilon^{1-k}_p$. By results of \cite{bb} it is controlled by the entries of the GMA $S$ (see Proposition \ref{redideal2}). These in turn give rise to $S$-extensions of $\rho_i$ by $\rho_j$ for $i \neq j$. We prove in Theorem \ref{princ1}{ when  $s:=\dim \rH^{1}_{f,\unr}(\Q,\rho_f(k-1))=1$} that $\mathcal{I}^{\tot}$ is principal ({or more generally we bound the number of its generators by $s^2$}) by proving that these extension satisfy the required local properties to lie in the corresponding Selmer groups
$\rH^1_{f,\unr}(\Q,\rho_f(k-2))=\{0\}$ (a deep result of Kato \cite{Kato}), $\rH^1_{f,\unr}(\Q,\epsilon_p)\overset{\text{Kummer}}{\simeq} \Z^{\times} \otimes L= \{0\}$ and $\rH^1_{f,\unr}(\Q,\rho_f(k-1))$.

This local analysis forms the technical heart of the paper. At $p$ we use that any representation $\rho_z$ attached to a classical point $z$ of a sufficiently small open admissible affinoid $\mathcal{Z} \subset \cE_{\Delta}$ containing $\pi_\alpha$ is semi-ordinary (i.e. $\dim \rho_z^{I_p} \geq 1$). Using this we prove in \S\ref{ordrho} and \S \ref{ordrho1} that any $S$-extension $W$ (resp. $W'$) occuring in the cohomology group $\rH^1(\Q,\rho_f(k-1))$ (resp. $\rH^1(\Q,\rho_f(k-2))$) is in fact ordinary at $p$, in the sense that $W^{I_p} \ne 0$, $(W')^{I_p} \ne 0$ and $\Frob_p$ acts on them by $\alpha$. Therefore, $W$ (resp. $W'$ when $k \geq 3$) is ordinary in the sense of  Fontaine-Perrin-Riou (so de Rham, see Theorem \ref{nekthm}), and hence crystalline since $\rH^1_g(\Q_p,\rho_f(k-i))=\rH^1_f(\Q_p,\rho_f(k-i))$  for $i \in \{1,2\}$. 

To prove the crystallinity of the $S$-extensions in $\mathrm{Ext}^1_{G_{\Q}}(\epsilon_p^{1-k},\epsilon_p^{2-k})$ we apply  in \S\ref{cnsht} the results of  \cite{bb} \S4  on the analytic continuation of crystalline periods for the smallest Hodge-Tate weight in families of $p$-adic $G_{\Q_p}$-representations occurring in a torsion free coherent module of generic $3$. To this end we establish in section \S\ref{s4} that classical points which are old at $p$ are very Zariski dense in $\cE_{\Delta}$. To be able to study the period we are interested in we need to consider the quotient by the line fixed by inertia due to semi-ordinarity. At  a classical point $z \in \mathcal{Z}$ of cohomological weight $(l_1,l_2)$ the smallest Hodge-Tate weight of the $3$-dimensional $G_{\Q_p}$-representation $\rho_z/\rho_z^{I_p}$ is $l_2-2$ and $\dim \cD_{\cris}(\rho_z/\rho_z^{I_p})^{U_{1}/U_{0}(z) p^{l_2-2}} =1$. 

This allows us to prove that the $S$-extensions occuring in $\mathrm{Ext}^1_{G_{\Q}}(\epsilon_p^{1-k},\epsilon_p^{2-k})$ have a crystalline period equal to $$\lim_{z_n \in \cE_\Delta , z_n \to \pi_\alpha}U_{1}/U_{0}(z) p^{l_2(z)-2}=U_{1}/U_{0}(\pi_\alpha) p^{k-2}=p^{k-1}.$$ This means that for any $S$-extension $V \in \mathrm{Ext}^1_{L[G_\Q^{Np}]}(\epsilon_p^{1-k},\epsilon_p^{2-k})$, we have $\cD_{\cris}^{\Phi=p^{k-1}}(V) \ne 0$  so that $\dim \cD_{\cris}(V)=2$, i.e. that $V$ is crystalline at $p$. 

For $\ell \mid N$, \cite[Lemma.4.1.3]{urban} shows that the $S$-extensions occuring in the cohomology group $\rH^1(\Q,\rho_f(k-1))$ (resp. $\rH^1(\Q,\rho_f(k-2))$)  are unramified outside $p$. For proving that the $S$-extensions occuring in $\rH^1(\Q,\epsilon_p)$  are unramified at $\ell \mid N$ (under the hypothesis $({ \bf St})$), we use the semi-continuity of the rank of the monodromy operator attached to the Weil-Deligne representation at $\ell$ of $p$-adic families and that the rank is generically one for families of paramodular tame level.

Having bounded the number of generators of $\mathcal{I}^{\tot}$ by $s^2$  we determine in \S \ref{modpar} the local ring $A:=\cT/\mathcal{I}^{\tot}$ by proving under the hypothesis $({ \bf BK})$ that the completion $\widehat{A}$ of $A$ with respect to its maximal ideal is isomorphic to the universal ring representing the $p$-ordinary minimally ramified deformations of $\rho_f$, which is a discrete valuation ring. Since $\cT$ is equidimensional of dimension $2$, $\cT/\mathcal{I}^{\tot}=A$ is regular of dimension one (implied by $\widehat{A}$ being regular) and $\mathcal{I}^{\tot}$ is principal when $\dim \rH^{1}_{f,\unr}(\Q,\rho_f(k-1))=1 $ ({or more generally generated by at most $s^2$ elements}), it follows that the generator of $\mathcal{I}^{\tot}$ is a regular local parameter of $\cT$ when $\dim \rH^{1}_{f,\unr}(\Q,\rho_f(k-1))=1 $ ({or more generally, we obtain the desired bound of the Zariski tangent space of $\cT$}). 

This means that the tangent space of $\cT$ is of dimension $2$ when \[\dim \rH^{1}_{f,\unr}(\Q,\rho_f(k-1))=1 \] and $\cT$ is regular of dimension $2$. Thus the rigid analytic space $\cE_\Delta$ is smooth at $\pi_\alpha$, and as a consequence, $\cE_\Delta$ has a unique irreducible component (of dimension $2$) specializing to $\pi_\alpha$. 

However, for the case when $k=2$ (i.e Theorem   \ref{A}(iii)), we need to prove in addition that the $S$-extensions occuring in $\rH^1(\Q,\rho_f)$ are crystalline at $p$. This seems difficult to establish (see Remark \ref{Jannsen}). But we know that these extensions are ordinary in the sense that they have an unramified line on which $\Frob_p$ acts by $\alpha$, and so they belong to a Greenberg's type Selmer group $\mathrm{Sel}_{\Q,f_\alpha}$ attached to $\rho_f^{\vee}(-1)$ (see \S.\ref{Selmergroupvanish}). Moreover, we know from the Iwasawa main conjecture for $\GL_2$ that the Pontryagin dual of the $\Lambda$-adic Greenberg's Selmer group of $f_\alpha$ is a torsion $\Lambda$-module, and its characteristic ideal contains the $p$-adic L function $L_p(f_\alpha,\omega^{-1}_p,\cdot)$ (see \cite[Theorem   3.25]{SkinnerUrban14}). Hence, the condition that $L_p(f_\alpha,\omega^{-1}_p,T=p) \ne 0$ is sufficient for the vanishing of $\mathrm{Sel}_{\Q,f_\alpha}$.

\medskip

By a general principle of Hida the adjoint $L$-function of modular forms controls congruences with other modular forms. This demonstrates that our assumptions are natural for understanding the geometry at $\pi_{\alpha}$ since $\rH^1_{f,\unr}(\Q, \ad^0(\varrho))$ contains the Selmer groups \[\rH^1_{f,\unr}(\Q, \rho_f(k-2)), \rH^1_{f,\unr}(\Q, \rho_f(k-1))\text{, and } \rH^1_{f,\unr}(\Q, \ad^0(\rho_{f})),\] where $\varrho=\epsilon_p^{1-k} \oplus \epsilon_p^{2-k}  \oplus \rho_f$ is the semi-simple $p$-adic representation attached to $\pi_\alpha$. As we show the first two groups control the generators of $\mathcal{I}^{\rm tot}$ (which correspond to congruences with stable components) and $\rH^1_f(\Q, \ad^0(\rho_{f}))$ controls $\cT/\mathcal{I}^{\rm tot}$ (corresponding to the number of ordinary families of $p$-adic representations specializing to $\rho_{f_{\alpha}}$).

\subsection{Relationship to other results in the literature} Bella{\"{\i}}che-Chenevier studied in \cite{bb} the geometry of  some eigenvarieties $X$ attached to unitary Shimura varieties at points with reducible Galois representation and gave applications to the Bloch-Kato conjecture. They focus on points $z \in X$ with Galois representation given by $\mathbbm{1} \oplus \epsilon_p \oplus \rho_z$, where $\rho_z$ is an irreducible $n$-dimensional representation anti-ordinary at $p$. They proved that at $z \in X$, the local Galois deformation at $p$ is irreducible on every Artinian thickening of $z$ (the reducibility locus at $z$ of the pseudo-character carried by $X$ is the maximal ideal of $\cO_{X,z}$). It should be pointed out that our setting is quite different since the reducibility locus at $\pi_\alpha$ of the pseudo-character $\mathrm{Ps}_{\cE_{\Delta}}$ is given by a principal Weil divisor of the $2$-dimensional affine scheme $\Spec \cT$ and corresponds on the modular side to the Saito-Kurokawa lift of the Hida family passing through $f_\alpha$. A further difference between these settings lies in the position of the Hodge-Tate weights and their distribution between the different pieces of the reducible Galois representations $\mathbbm{1} \oplus \epsilon_p \oplus \rho_z$ and  $\rho_{\pi_\alpha}:=\epsilon_p^{1-k} \oplus \epsilon_p^{2-k}  \oplus \rho_f$. More precisely, while the smallest Hodge-Tate of $\rho_{\pi_\alpha}$ is zero and occurs in the $2$-dimensional representation $\rho_f$, the smallest Hodge-Tate weight of $\mathbbm{1} \oplus \epsilon_p \oplus \rho_z$ is $-1$ and occurs in the one dimensional sub-representation $\epsilon_p$, and  $\rho_z$ has no Hodge-Tate weights equal to $\{0,-1\}$, and this difference makes  the proof of the crystallinity of the $S:=\cT[G_\Q]/\ker(\mathrm{Ps}_{\cT}) $-extensions occuring in $\mathrm{Ext}^1_{G_{\Q}}(\epsilon^{1-k},\epsilon_p^{2-k})$ (in our setting)  more subtle than \cite[Prop.8.2.14]{bb} (see \S.\ref{cnsht}). In addition, we investigate also in this paper the geometry of $\cE_{\Delta}$ at Saito-Kurokawa points $\pi_\alpha$ of non-cohomological weights (i.e when $k=2$) and in that case $\rho_{\pi_\alpha}$ has only two Hodge-Tate weights $\{0,1\}$ (with multiplicity two).

\medskip

Skinner-Urban  constructed in \cite[Thm.2.4.10]{urban} a semi-ordinary eigenvariety {as an} admissible open of $\cE_N$. Using a deep automorphic argument they established the existence of a stable semi-ordinary $p$-adic cuspidal component $\mathcal{Y}$ of dimension $2$ specializing to $\pi_\alpha$ (see \cite[Thm.4.2.7]{urban}), with fewer assumptions on the level and the local representation $\rho_f$ at $\ell \mid Np$ than us (they assumed only that $f$ is ordinary at $p$).  They then applied the lattice construction of \cite{urban1} (generalizing Ribet's Lemma to higher dimensions) to obtain non-trivial extensions in $ \rH^1_{f,\unr}(\Q, \rho_f(k-1))$.

\medskip

In \cite{bk} short crystalline, minimal, essentially self-dual deformations of non-semisimple mod $p$ Galois representations $\overline{\rho}$ with $\overline{\rho}^{\rm ss}=\overline{\rho}_{\pi_\alpha}=\overline{\epsilon}_p^{2-k} \oplus \overline{\rho}_f \oplus \overline{\epsilon}_p^{1-k}$ are studied. In this analysis the principality of the total reducibility ideal of the universal pseudodeformation of ${\rm Tr}(\overline{\rho})$ to $\mathcal{O}_L$-algebras also played a crucial role. 

\medskip

Hernandez constructed in \cite{He} a three dimensional $p$-adic eigenvariety for the group $\mathrm{U}(2,1)(E)$, where $E$ is a quadratic imaginary field in which $p$ is inert (the Picard modular surface has an empty ordinary locus in that case), and gave an application by reproving particular cases of the Bloch-Kato conjecture for Galois characters of $E$.

\subsection*{Acknowledgement} 
We would first like to thank the referee  for their careful reading, helpful comments and suggestions that helped to improve the exposition and weaken the hypotheses in our result. We would like also to thank Riccardo Brasca, Valentin Hernandez, Kris Klosin, Vincent Pilloni, Jacques Tilouine, Chris Skinner and Eric Urban for helpful communications related to the topics of this article.

\tableofcontents

\subsection*{Notation and some remarks}

\begin{enumerate}

\item Throughout this article, we fix a prime number $p$, as well as an algebraic closure
$\bar \Q$ of $\Q$ and $\bar{\Q}_p$ of $\Q_p$ and an embedding $\iota_p: \bar{\Q} \hookrightarrow \bar{\Q}_p$. Observe that $\iota_p$ yields a canonical embedding
$G_{\Q_p} \hookrightarrow  G_{\Q}$ through which we will see $G_{\Q_p}$ as a decomposition group of $G_{\Q}$ at $p$. We denote by $I_p$ the inertia subgroup of $G_{\Q_p}$.

\item We fix an embedding $\bar{\Q} \subset \C$  and a complex conjugation $c \in G_{\Q}$.

\item Let $\Q_p(1)$ denote the $G_\Q$ representation of dimension $1$ on which $G_{\Q}$ acts by the $p$-adic cyclotomic character $\epsilon_p:G_\Q \twoheadrightarrow \Z_p^{\times} \hookrightarrow \Q_p^{\times}$.

\item The Hodge-Tate-Sen weight of $\Q_p(1)$ is $-1$ and its Sen polynomial is $X+1$ (we are following the geometric convention).

\item Let $B_{\cris}$ denote the crystalline period ring endowed with the semi-linear Frobenius $\Phi$ and the natural $G_{\Q_p}$-action.

\item Let $t \in B_{\cris} $ be the element on which $G_{\Q_p}$-acts by $\epsilon_p$ and $\Phi(t)=p.t$. Note that $t$ generates the maximal ideal of the integral de Rham periods ring $B_{\dR}^{+}$; i.e $B_{\dR}^{+}/t.B_{\dR}^{+} \simeq \C_p$ as $G_{\Q_p}$-modules. 

\item Let $B_{\cris}^+ \subset B_{\cris}$ denote the ring of period defined in \cite[Expos\'e II, \S.2.3]{PP}.

\item Let $V $ be a $G_{\Q_p}$-representation of finite dimension over a $p$-adic field $L$. Let $\cD_{\cris}(V)$ denote the $L$-vector space $(B_{\cris} \otimes_{\Q_p} V)^{G_{\Q_p}}$ of dimension at most $\dim_L V$. And we denote again by $\Phi$ for the $L$-linear action given by $\Phi \otimes \mathrm{Id}_{V}$ on $\cD_{\cris}(V)$. Denote also by $\cD_{\cris}^+(V)$ for $(B_{\cris}^+ \otimes_{\Q_p} V)^{G_{\Q_p}}$. 

\item Let $z \in \cE_N$ be a classical point such that  the Galois representation $V$ attached to $z$ is crystalline. Then the $(\Phi,\Gamma)$-module attached to $V$ is trianguline in the sense of Colmez. However, the triangulation can be given by  non-\'etale $(\Phi,\Gamma)$-submodules, and hence $V_{\mid G_{\Q_p}}$ is not necessarily ordinary at $p$.

\item Remark that $\cD_{\cris}^+(\epsilon_p)=0$, $\cD_{\cris}(\epsilon_p)=\Q_p.t^{-1}$ ($t^{-1}$ is not in $ B^{+}_{\cris}$), and  $\cD_{\cris}^+(\epsilon_p^{-1})=\Q_p.t$.

\item Let  $\mathbbm{1}$ be the trivial representation of dimension $1$.

\item We shall always write $\Frob_\ell$ for the geometric Frobenius at the prime $\ell$.   

\item Let $\alpha \in \Q$, we shall denote $\cE_N^{\alpha}$ for the admissible open locus of $\cE_N$ defined by $\mid U_0 U_1 \mid_p=\alpha$.

\item We write $G_\Q^{Np}$ for the Galois group of the maximal extension of $\Q$ unramified outside of $Np$ and $\infty$. For any $G_{\Q}$-geometric representation $V$ we define the Bloch-Kato Selmer groups
 $$\rH^{1}_{f,\unr}(\Q,V)=\ker(\rH^1(\Q,V) \to \rH^1(\Q_{p},V \otimes B_{\cris}) \oplus_{\ell \nmid p} \rH^1(I_{\ell},V)) $$
 and
  $$\rH^{1}_{f}(\Q,V)=\ker(\rH^1(\Q,V) \to \rH^1(\Q_{p},V \otimes B_{\cris})).$$

\item Let $A$ be a ring and $M$ be a finite length $A$-module. We shall always denote by $l(M)$ the length of $M$ as $A$-module.

\end{enumerate}

\section{Some properties of automorphic $p$-adic representations}\label{section: bounds}
In this section we recall some facts about the Galois representations associated to classical and Siegel modular forms. The end of this section is devoted to the proof of the first part of Theorem \ref{A}.

\subsection{Ordinary Galois representations}
We recall in the following definition the notion of ordinary $p$-adic representations.

\begin{defn}\label{ordinaryrepsentation} Let $V$ be  finite-dimensional  $L$-vector space endowed with a continous $G_{\Q_p}$-action. We say that $V$ is ordinary, if it has a decreasing filtration by $G_{\Q_p}$-
subrepresentations $\ldots F^{i+1} \subset F^{i} \ldots$ satisfying $\bigcap_i F^i =\{0\}$ and $\bigcup_i F^i=V$, and such that all graded factors have of the following form

{\small

\[ F^{i} /F^{i+1} = V_i(i), \text{ with $V_i$ unramified at $p$, and $V_i(i)$ is the twist by $\epsilon_p^i$}.\]}
\end{defn}

Nekovar proved  the following useful theorem.
\begin{thm}\label{nekthm}\cite[Theorem   1.30]{Nek93}Any ordinary $p$-adic representation is semistable.
\end{thm}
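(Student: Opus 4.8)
For $V$ ordinary I would build enough elements of $\mathcal{D}_{\mathrm{st}}(V):=(B_{\mathrm{st}}\otimes_{\Q_p}V)^{G_{\Q_p}}$ to force $\dim_{L}\mathcal{D}_{\mathrm{st}}(V)=\dim_{L}V$, proceeding by dévissage along the ordinary filtration $F^{\bullet}$. First a reduction: semistability may be checked after a finite unramified base change $L'/\Q_p$, since $\widehat{\Q_p^{\unr}}\subset B_{\mathrm{st}}$ implies, by Galois descent (Hilbert 90), $\dim_{L}\mathcal{D}_{\mathrm{st}}(V)=\dim_{L\otimes_{\Q_p}L'}(B_{\mathrm{st}}\otimes_{\Q_p}V)^{G_{L'}}$. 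Enlarging $L'$, I may thus assume each $V_i$ in the definition is trivial, so that every graded piece $F^i/F^{i+1}$ is a sum of copies of $\Q_p(i)$. Now induct on the number of non-zero graded pieces: if there is one, $V\cong\Q_p(m)^{\oplus a}$ is crystalline; otherwise, letting $m$ be the smallest index with $V_m\neq0$, there is a short exact sequence of $G_{\Q_p}$-representations
\[0\longrightarrow W\longrightarrow V\longrightarrow \Q_p(m)^{\oplus a}\longrightarrow 0,\]
in which $W=F^{m+1}$ is again ordinary with strictly fewer non-zero graded pieces (hence semistable by induction) and is built only from twists $\Q_p(j)$ with $j>m$; equivalently every Hodge--Tate--Sen weight of $W$ is strictly below the weight $-m$ of $\Q_p(m)$. (In passing this already shows $V$ is Hodge--Tate: the $\C_p$-semilinear graded pieces $\C_p(i)^{\oplus a_i}$ are indexed by pairwise distinct $i$, and $\mathrm{Ext}^1_{\C_p[G_{\Q_p}]}(\C_p(i),\C_p(j))=\rH^1(G_{\Q_p},\C_p(j-i))=0$ for $i\neq j$ by Tate, so the filtration splits over $\C_p$.) It remains to prove such a $V$ is semistable.

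\textbf{Producing the periods.} Since $\mathcal{D}_{\mathrm{st}}$ is left exact and $\dim\mathcal{D}_{\mathrm{st}}(W)=\dim W$ by induction, it suffices to lift the $\mathcal{D}_{\mathrm{st}}$-periods of $\Q_p(m)^{\oplus a}$, i.e. to show the connecting homomorphism $\mathcal{D}_{\mathrm{st}}(\Q_p(m)^{\oplus a})\to\rH^1(G_{\Q_p},B_{\mathrm{st}}\otimes_{\Q_p}W)$ vanishes. As $W$ is semistable, $B_{\mathrm{st}}\otimes_{\Q_p}W$ is $B_{\mathrm{st}}$-free on a trivial $G_{\Q_p}$-module, so the point is that the extension class of $V$, living in $\rH^1(G_{\Q_p},W(-m))^{\oplus a}$ where $W(-m)$ is an iterated extension of $\Q_p(d)$'s with $d\geq1$, dies in $\rH^1(G_{\Q_p},B_{\mathrm{st}}\otimes W(-m))$. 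I would prove this by writing the missing periods down explicitly, using $B_{\mathrm{st}}^{+}=B_{\cris}^{+}[u]$ with $u=\log[\underline{p}]$ satisfying $\sigma(u)=u+c(\sigma)\,t$, where $c\in\rZ^1(G_{\Q_p},\Z_p(1))$ is the Kummer cocycle of $p$ and $t$ is the period of the excerpt. In a basis of $V$ adapted to $F^{\bullet}$ the matrix of the action is block upper-triangular with diagonal blocks $\epsilon_p^{i}$; multiplying by $\mathrm{diag}(\dots,t^{-i},\dots)$ absorbs the cyclotomic part, and the remaining unipotent cocycle is trivialised one superdiagonal at a time by a unipotent upper-triangular matrix with entries polynomial in $u$. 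The obstruction traces back to the image of $\rH^1(G_{\Q_p},\Q_p(d))\to\rH^1(G_{\Q_p},B_{\mathrm{st}}(d))$, $d\geq1$, which is zero because every extension of $\Q_p$ by $\Q_p(d)$ is semistable: crystalline for $d\geq2$ (there $\rH^1=\rH^1_f$) and, for $d=1$, split over $B_{\mathrm{st}}$ by $u/t$ together with $\log[\tilde v]/t$ for $1$-units $\tilde v$. Hence $\dim\mathcal{D}_{\mathrm{st}}(V)=\dim V$, $V$ is semistable, and a fortiori de Rham since $B_{\mathrm{st}}\subset B_{\dR}$.

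\textbf{Main obstacle.} The delicate part is the last step for a genuinely iterated, non-split filtration. For a two-step filtration it is exactly the classical computation that the Galois representation of a Tate curve is semistable, and the entire content is the single element $u$. For longer filtrations one must check that the cocycles obstructing the higher superdiagonals — assembled from the lower entries and from the $B_{\cris}$- and $u$-data already chosen — remain cohomologous over $B_{\mathrm{st}}$ to classes pulled back from $\rH^1(G_{\Q_p},\Q_p(d))$ with $d\geq1$, so that they still vanish. The matching of the filtration index with the Tate twist in the definition of ``ordinary'' is precisely what keeps the configuration in this good range: it excludes ``wrong-order'' situations such as a ramified additive character, which gives a non-de-Rham class in $\rH^1(G_{\Q_p},\Q_p)=\rH^1(G_{\Q_p},\mathbbm{1})$ and which the definition does \emph{not} permit. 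An alternative, less hands-on route: the ordinary filtration exhibits $V$ as trianguline over the Robba ring with ordered parameters $\delta_i=(\text{unramified})\cdot x^{n_i}$, $n_1>\dots>n_r$, and Colmez's classification of extensions of rank-one $(\varphi,\Gamma)$-modules shows every such $V$ is semistable; but the $B_{\mathrm{st}}$-period argument above is more elementary.
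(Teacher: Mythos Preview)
The paper does not prove this theorem; it simply quotes it as \cite[Theorem~1.30]{Nek93}, so there is no in-paper argument to compare against. Your strategy is the standard one and is essentially that of Perrin--Riou/Nekov\'a\v{r}/Fontaine: after an unramified base change the graded pieces become powers of $\Q_p(i)$, and one inducts along the filtration, the two-step case being exactly the Tate-curve computation with the period $u=\log[\underline{p}]$.

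Your identification of the ``main obstacle'' is accurate, and your sketch there is honest but vague. A cleaner way to close the induction, avoiding the explicit bookkeeping of higher superdiagonals in $B_{\mathrm{st}}$, is to split the problem in two. First show directly that $V$ is de Rham: with $U:=W(-m)$ built from $\Q_p(d)$, $d\geq 1$, one has $H^1(G_{\Q_p},\mathrm{gr}^i(B_{\dR}^+\!\otimes U))=\bigoplus_j H^1(G_{\Q_p},\C_p(i+d_j))=0$ for all $i\geq 0$ by Tate, hence $H^1(G_{\Q_p},B_{\dR}^+\!\otimes U)=0$ by d\'evissage; this forces $\mathrm{Fil}^0 D_{\dR}(E)\twoheadrightarrow \Q_p$ for any extension $E$ of $\Q_p$ by $U$, so $E$ is de Rham. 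Second, upgrade de Rham to semistable without further computation: $V$ has a filtration with crystalline graded pieces, so the inertia action on $D_{\mathrm{pst}}(V)$ is unipotent; but de Rham means this action also factors through a finite quotient, hence it is trivial and $V$ is semistable over $\Q_p$. This bypasses the delicate inductive control of $B_{\mathrm{st}}$-cocycles you flag, while your direct period construction and your $(\varphi,\Gamma)$-module alternative are both valid routes as well.
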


\subsection{Elliptic modular forms}Recall that $(p,N)=1$, $f \in \mathrm{S}_{2k-2}(\Gamma_0(N),L)$ is an elliptic cuspidal eigenform with coeffiecients in the $p$-adic field $L$, and let $\rho_f:G_\Q \to \GL_2(L)$ be the $p$-adic Galois representation attached to $f$ in the sense that $L(\rho_f,s)=L(f,s)$. We note that $\rho_f^{\vee} \simeq \rho_f(2k-3)$ by the duality of $2$-dimensional representations. It is known that $\rho_f$ is de Rham and that its Hodge-Tate-Sen weights are $(2k-3,0)$. Moreover, $\rho_{f}$ is crystalline at $p$ since $p \nmid N$. 

Since $f_\alpha$ is ordinary at $p$, $(\rho_f)_{\mid G_{\Q_p}} \sim  \begin{pmatrix} 
\psi & *  \\ 
 0 & \psi^{-1}\epsilon_p^{3-2k}
\end{pmatrix} $, where $\psi:G_{\Q_p} \to L^\times$ is the unramified character such that $\psi(\Frob_p)=\alpha$ (where $U_p(f_\alpha)=\alpha f_{\alpha}$) and $\det \rho_f=\epsilon_p^{3-2k}$. Note that the characteristic polynomial of the $L$-linear Frobenius $\Phi$ acting of $\cD_{\cris}(\rho_f)$ is equal to the $p$-th Hecke polynomial of $f$.

\begin{prop}\label{locl}\ Let $\ell \mid N$ be a prime number.

\begin{enumerate}
\item Assume that $\pi_{f,\ell}$ is special at $\ell$, then $$\dim \mathrm{Ext}^1_{G_{\Q_\ell}}(\rho_f,\epsilon_p^{2-k})=\dim \mathrm{Ext}^1_{G_{\Q_\ell}}(\epsilon_p^{1-k},\rho_f)=\dim \rH^1(\Q_{\ell},\rho_f(k-1))=0.$$
\item Assume that $\pi_{f,\ell} \simeq \mathrm{St} \otimes \xi$ (i.e $a_{\ell}(f)=-\ell^{k-2}$), then $$\dim  \mathrm{Ext}^1_{G_{\Q_\ell}}(\rho_f,\epsilon_p^{1-k})=\dim \mathrm{Ext}^1_{G_{\Q_\ell}}(\epsilon_p^{2-k},\rho_f)=\dim \rH^1(\Q_{\ell},\rho_f(k-2))=0.$$

\end{enumerate}

\end{prop}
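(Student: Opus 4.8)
The plan is to reduce every one of the six quantities in the statement to a single local Galois cohomology group, and then to kill it by combining the local Euler--Poincaré formula at $\ell\nmid p$ with Tate local duality; the only vanishing statements actually needed are for $\rH^0$'s. The first step is to rewrite the Ext groups as twists of $\rho_f$. Using $\rho_f^\vee\simeq\rho_f(2k-3)$ one gets $\mathrm{Ext}^1_{G_{\Q_\ell}}(\rho_f,\epsilon_p^{2-k})=\rH^1(\Q_\ell,\rho_f^\vee(2-k))=\rH^1(\Q_\ell,\rho_f(k-1))$ and $\mathrm{Ext}^1_{G_{\Q_\ell}}(\epsilon_p^{1-k},\rho_f)=\rH^1(\Q_\ell,\rho_f\otimes\epsilon_p^{k-1})=\rH^1(\Q_\ell,\rho_f(k-1))$, so part (i) becomes the single assertion $\rH^1(\Q_\ell,\rho_f(k-1))=0$; the same manipulation identifies all three groups of part (ii) with $\rH^1(\Q_\ell,\rho_f(k-2))$, so (ii) becomes $\rH^1(\Q_\ell,\rho_f(k-2))=0$.

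Next I would pin down the shape of $\rho_f|_{G_{\Q_\ell}}$ in the special case. By local--global compatibility at $\ell\nmid p$, the Weil--Deligne representation attached to $\rho_f|_{G_{\Q_\ell}}$ has non-zero monodromy, so $\rho_f|_{G_{\Q_\ell}}$ is a non-split extension $0\to\chi_1\to\rho_f|_{G_{\Q_\ell}}\to\chi_2\to0$ with $\chi_1=\chi_2\epsilon_p$; together with $\det\rho_f=\epsilon_p^{3-2k}$ this forces $\chi_1=\psi_0\epsilon_p^{2-k}$ and $\chi_2=\psi_0\epsilon_p^{1-k}$ for a quadratic character $\psi_0$ of $G_{\Q_\ell}$, which is unramified in case (ii). In case (ii) one moreover has $a_\ell(f)=\chi_1(\Frob_\ell)=\psi_0(\Frob_\ell)\,\ell^{k-2}$, so the hypothesis $a_\ell(f)=-\ell^{k-2}$ pins down $\psi_0=\xi$.

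Then comes the cohomology computation. Since $\ell\ne p$ the local Euler characteristic vanishes, so Tate duality gives $\dim\rH^1(\Q_\ell,V)=\dim\rH^0(\Q_\ell,V)+\dim\rH^0(\Q_\ell,V^\vee(1))$ for every $V$. For (i) take $V=\rho_f(k-1)$: since $\det V=\epsilon_p$ one has $V^\vee(1)\simeq V$, so it suffices to show $\rH^0(\Q_\ell,\rho_f(k-1))=0$; after twisting, $\rho_f(k-1)|_{G_{\Q_\ell}}$ is the non-split extension of $\psi_0$ by $\psi_0\epsilon_p$, its unique $G_{\Q_\ell}$-stable line is $\psi_0\epsilon_p$, and this is non-trivial because $\epsilon_p$ has infinite order on $G_{\Q_\ell}$ while $\psi_0$ is quadratic, so there are no invariants and (i) follows. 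For (ii) take $V=\rho_f(k-2)$: here $V^\vee(1)=\rho_f(k)$, and after twisting the Jordan--Hölder characters of $\rho_f(k-2)|_{G_{\Q_\ell}}$ are $\xi$ and $\xi\epsilon_p^{-1}$, and those of $\rho_f(k)|_{G_{\Q_\ell}}$ are $\xi\epsilon_p^{2}$ and $\xi\epsilon_p$; since $\xi(\Frob_\ell)=-1$ and $\epsilon_p(\Frob_\ell)=\ell^{-1}$, none of these four unramified characters is trivial, so $\rH^0(\Q_\ell,\rho_f(k-2))=\rH^0(\Q_\ell,\rho_f(k))=0$ and (ii) follows.

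The one genuinely delicate point is the local--global normalization in the middle step: getting the direction of the extension and the precise twist of $\psi_0$ correct. It is also where the sign hypothesis of (ii) enters, and it is essential here: were $a_\ell(f)=+\ell^{k-2}$ (equivalently $\xi$ trivial, equivalently $\epsilon_\ell(f)=-1$), the trivial character would occur as a sub-representation of $\rho_f(k-2)|_{G_{\Q_\ell}}$ and $\rH^1(\Q_\ell,\rho_f(k-2))$ would be non-zero, so the assumption $\pi_{f,\ell}\simeq\mathrm{St}\otimes\xi$ with $\xi(\ell)=-1$ cannot be dropped. Everything else is a routine application of the local Euler--Poincaré formula and Tate duality.
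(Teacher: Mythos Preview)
Your proof is correct and follows essentially the same strategy as the paper's: reduce everything to $\rH^1(\Q_\ell,\rho_f(k-1))$ and $\rH^1(\Q_\ell,\rho_f(k-2))$, then kill these via the local Euler--Poincar\'e formula and Tate duality by checking that the relevant $\rH^0$'s vanish, using the explicit shape of $\rho_f|_{G_{\Q_\ell}}$ and the self-duality $\rho_f^\vee\simeq\rho_f(2k-3)$. You are more explicit than the paper (which only writes out the $\rho_f(k-1)$ case and says ``similarly'' for the rest), and your careful use of non-splitness in part~(i) to identify the unique stable line is exactly what makes the $\rH^0$-vanishing go through without any sign hypothesis there.
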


\begin{rem}

When $k=2$, the assumption that  $a_{\ell}=-1$ when $\ell \mid N$ is a prime holds if and only if the abelian variety $A_f$ attached to the weight $2$ cuspidal eigenform $f$ has  non-split multiplicative reduction at $\ell$.

\end{rem}

\begin{proof}
We know, in fact, that $(\rho_f)|_{G_{\Q_\ell}}= \begin{pmatrix} 
\psi_\ell^{-1}  & *     \\ 
       0 & \psi_\ell^{-1} \epsilon_p^{-1} 
\end{pmatrix}$ with infinite image of inertia, where $\psi_\ell$ is an unramified character such that $\psi_\ell(\Frob_\ell) =a_{\ell}(f)$. Note that by \cite[Theorem 4.6.17(2)]{Miyake} $a_\ell^2(f)=\ell^{2k-4}$. Our assumption on $a_{\ell}$ implies that $\rH^0(\Q_\ell, \rho_f(k-1))=\rH^0(\Q_\ell, \rho_f(k-2))=0$.

By applying the Euler characteristic formula and Tate duality, we obtain:
 $$\dim \rH^1(\Q_{\ell}, \rho_f(k-1)) = \dim \rH^0(\Q_{\ell}, \rho_f(k-1)) +\dim \rH^0(\Q_{\ell}, (\rho_f(k-1))^{\vee}(1)).$$

Since $\rho_f^{\vee}=\rho_f(2k-3)$ (the duality for $2$-dimensional representations), the above equality yields that \begin{equation}\label{vanishingatell} \dim \rH^1(\Q_{\ell}, \rho_f(k-1))=0.
\end{equation}
The other cases are proved similarly.

\end{proof}

\subsection{Siegel modular forms}
We define the abstract Hecke algebra $\mathcal{H}_N$ as the $\Z$-algebra generated by the Hecke operators $T_{\ell,1},T_{\ell,2}, \mathrm{S}_\ell$ for $\ell \nmid Np$ and the Hecke operators $U_0,U_1$ at $p$, where $T_{\ell,1}$ (resp. $T_{\ell,2}$, $\mathrm{S}_\ell$) is the Hecke operator attached to ${\rm diag}[1,1,\ell, \ell]$ (resp. ${\rm diag}[1,\ell,\ell^2, \ell]$, ${\rm diag}[\ell,\ell,\ell, \ell]$); and $U_0,U_1$ are the Hecke operators attached respectively to $\mathrm{diag}[1,1,p,p]$, $\mathrm{diag}[1,p,p^2,p]$ ($U_1$ is renormalized as in \cite[Theorem   2.4.14]{urban}).

We recall the $p$-adic properties of Galois representation arising from Siegel modular eigenforms. 
The following theorem has been proved by Laumon and Weissauer (see \cite{W} and \cite {Laumon}).

\begin{thm}\label{exrep} Let $\pi$ be a Siegel modular eigenform of central character $\omega_\pi$ of level $\Gamma(N)$ and of cohomological weight $k=(l_1,l_2)$  with corresponding Hecke character $\lambda_{\pi}:\mathcal{H}_N \to L_\pi$, where $L_\pi \subset \bar{\Q}_p$ is a $p$-adic field generated over $\Q_p$ by the systems of Hecke eigenvalues of $\pi$. Then there exist a finite extension $L_\pi'$ of $L_\pi$ and a $p$-adic representation $\rho_\pi : G_{\Q} \rightarrow \GL_4(L_\pi')$ unramified outside $Np$  and such that for all $\ell \nmid Np$, $$ \det (X.\mathrm{Id}- \rho_\pi(\Frob_\ell))=P_{\pi,\ell}(X),$$
where $P_{\pi,\ell}(X)$ is the Hecke-Andrianov polynomial at $\ell$ attached to $\pi$. Moreover, we have the symplectic relation :

\begin{equation}\label{symplectic} \overset{\vee}{\rho_\pi} \simeq \rho_\pi \otimes \chi^{-1}_\pi,
\end{equation}
and $\det \rho_\pi=\chi_\pi^{2}$. We also have the following relation between the similitude character $\chi_\pi$ and the central character:$$\omega_\pi\epsilon_p^{3-l_1 - l_2}=\chi_\pi.$$
\end{thm}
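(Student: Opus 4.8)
The statement to be established is Theorem~\ref{exrep}, asserting the existence of the $4$-dimensional Galois representation $\rho_\pi$ attached to a Siegel eigenform $\pi$ of genus two, together with the symplectic duality \eqref{symplectic}, the determinant formula $\det\rho_\pi=\chi_\pi^2$, and the relation between the similitude character and the central character. Since this is a recollection of work of Laumon and Weissauer, the plan is to cite the construction and then record the normalisations we need; I will not reprove the full cohomological construction.

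First I would recall the construction of $\rho_\pi$ via the \'etale cohomology of the Siegel threefold. For a cohomological weight $(l_1,l_2)$ one forms the appropriate local system $V_{(l_1,l_2)}$ on the Siegel modular variety $\mathcal{A}_2$ of level $\Gamma(N)$, and passes to the $\pi_f$-isotypic part of the interior (or intersection) cohomology $\rH^3_{!}(\mathcal{A}_{2,\overline{\Q}},V_{(l_1,l_2)})$; by the work of Taylor, Laumon and Weissauer this carries a $G_\Q$-action, and after semisimplification one extracts a $4$-dimensional piece whose Frobenius traces at $\ell\nmid Np$ are computed by the Hecke--Andrianov polynomial $P_{\pi,\ell}(X)$ via the Eichler--Shimura--Langlands--Kottwitz formula (for general $\pi$ one works with the spin $L$-factor and uses Arthur's classification to pin down the correct constituent; for the Saito--Kurokawa and Yoshida cases this requires the supplementary arguments of Weissauer). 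The field of definition $L_\pi'$ is a finite extension of $L_\pi$ needed to realise this constituent. Unramifiedness outside $Np$ is immediate from good reduction of $\mathcal{A}_2$ away from $Np$ and the smooth base change theorem.

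Next I would derive the duality and determinant relations from Poincar\'e duality on $\rH^3_{!}$. The cup product pairs $\rH^3_{!}(\mathcal{A}_{2,\overline{\Q}},V_{(l_1,l_2)})$ with $\rH^3_{!}(\mathcal{A}_{2,\overline{\Q}},V_{(l_1,l_2)}^{\vee})(-3)$, and since the symplectic form identifies $V_{(l_1,l_2)}^{\vee}$ with a Tate twist of $V_{(l_1,l_2)}$, one gets a $G_\Q$-equivariant, up-to-twist alternating perfect pairing on the $\pi_f$-part; this yields $\overset{\vee}{\rho_\pi}\simeq\rho_\pi\otimes\chi_\pi^{-1}$ for a character $\chi_\pi$, whence $\det\rho_\pi=\chi_\pi^2$ on taking fourth exterior powers of an alternating self-dual (up to twist) representation. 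Finally, comparing the action of complex conjugation / the central Hecke operator $\mathrm{S}_\ell$ (equivalently, computing the constant term $P_{\pi,\ell}(0)$ of the Andrianov polynomial, which involves $\omega_\pi(\ell)\ell^{3-l_1-l_2}$) against $\chi_\pi(\Frob_\ell)$ on a set of primes of density one gives the identity $\omega_\pi\,\epsilon_p^{3-l_1-l_2}=\chi_\pi$ by Chebotarev.

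The main obstacle, and the reason this is only a citation rather than a short proof, is that the clean statement as phrased—valid for \emph{all} Siegel eigenforms $\pi$, including non-tempered ones such as Saito--Kurokawa and CAP representations, and including the endoscopic Yoshida type—requires Arthur's multiplicity formula for $\mathrm{GSp}_4$ and the careful analysis by Weissauer of the non-tempered contributions to $\rH^3_!$, where the naive Eichler--Shimura argument only sees part of the cohomology and the Galois representation can fail to be irreducible or pure. For the Saito--Kurokawa point $\pi_\alpha$ that concerns this paper, one checks directly that the resulting $\rho_\pi$ has the reducible shape $\epsilon_p^{1-k}\oplus\epsilon_p^{2-k}\oplus\rho_f$ consistent with the factorisation of the spin $L$-function recalled in the introduction, which is exactly what is needed downstream.
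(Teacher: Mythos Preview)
Your proposal is correct and matches the paper's own treatment: the paper does not prove Theorem~\ref{exrep} at all but simply states it as a citation of Laumon~\cite{Laumon} and Weissauer~\cite{W}. Your sketch of the cohomological construction, the Poincar\'e duality argument for the symplectic relation, and the Chebotarev argument for the similitude character are all accurate supplementary context that the paper omits entirely.
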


We have the following properties at $p$ of $\rho_\pi$ following from the works of Chai-Faltings, Laumon, Taylor, Urban and Weissauer (see \cite{Laumon}, \cite{U1}, \cite{Taylor2} \cite{W} and \cite{CF}).

\begin{thm}\label{Hodge-Tate} Under the notations of the above theorem we have :
\begin{enumerate}
\item The Galois representation $\rho_\pi $ is de Rham and its Hodge-Tate weights are \[\{0,l_2-2,l_1-1,l_1+l_2-3\}.\]
\item If $\pi$ is old at $p$ and semi-ordinary (i.e $U_{0}$ acts on $\pi$ by a $p$-adic unit), then the $p$-adic representation $\rho_{\pi}$ is crystalline at $p$, and the characteristic polynomial of $\Phi$ acting on $\cD_{\cris}(\rho_\pi)$ is the Hecke polynomial at $p$. The eigenvalues of the $L_\pi'$-linear Frobenius $\Phi$ acting on $\cD_{\cris}(\rho_\pi)$ are  $$\{ \lambda_{\pi}(U_{0}), \lambda_{\pi}(U_{1}.U_{0}^{-1}) p^{l_2-2}, \lambda_{\pi}(U_{0}.U_{1}^{-1})^{-1} p^{l_1-1}, \lambda_{\pi}(U_{0})^{-1} p^{l_1+l_2-3}\}.$$

\item Assume that $\pi$ is semi-ordinary at $p$ and of finite slope for $\mathbb{U}=U_0 U_1$, then \begin{equation}\label{2-dimensionalsubspace}(\rho_{\pi})_{\mid G_{\Q_p}} \sim \begin{pmatrix} 
\phi_\pi & * & * & * \\ 
       0 & * & * & * \\
 0 & * & * & * \\
 0 & 0 & 0 & \phi^{-1}_\pi\epsilon_p^{-l_1-l_2+3}
\end{pmatrix}, \end{equation} where  $\phi_\pi:G_{\Q_p} \to  L_\pi^\times$ is the unramified character having $\lambda_\pi( U_{0})$ as value at $\Frob_p$.

\end{enumerate}

\end{thm}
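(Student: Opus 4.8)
The statement to prove is Theorem~\ref{Hodge-Tate}, which collects $p$-adic Hodge-theoretic properties at $p$ of the Galois representation $\rho_\pi$ attached to a Siegel eigenform $\pi$. Since the excerpt explicitly attributes this result to Chai--Faltings, Laumon, Taylor, Urban and Weissauer, I do not expect to reprove it from scratch; rather, my plan is to assemble the three items from the cited literature and make the normalizations match the conventions fixed in this paper (geometric Frobenius, geometric Hodge--Tate weights, the renormalization of $U_1$ as in \cite[Theorem~2.4.14]{urban}, and the similitude character $\chi_\pi = \omega_\pi \epsilon_p^{3-l_1-l_2}$ from Theorem~\ref{exrep}).

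\textbf{Plan for (i).} The de Rham property of $\rho_\pi$ at $p$ follows from the fact that $\rho_\pi$ occurs (up to a twist and a shift) in the $p$-adic \'etale cohomology of a Siegel threefold with coefficients in an automorphic local system, together with Faltings' comparison theorem (or Tsuji's $C_{\mathrm{st}}$), so $\rho_\pi|_{G_{\Q_p}}$ is de Rham. To pin down the Hodge--Tate weights I would compute the Hodge filtration on the corresponding piece of de Rham cohomology: for the local system of highest weight $(l_1-3, l_2-3)$ (in the standard normalization for $\mathrm{GSp}_4$) the Hodge--Tate weights of the degree-$3$ cohomology are $\{0,\, l_2-2,\, l_1-1,\, l_1+l_2-3\}$, which one reads off from the Hodge numbers of the BGG complex / the structure of the motivic weight $l_1+l_2-3$ and the Hodge cocharacter of $\mathrm{GSp}_4$. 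I would cite \cite{Laumon}, \cite{W}, \cite{CF} for this, checking only that the geometric sign convention of the present paper gives the weights as stated rather than their negatives.

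\textbf{Plan for (ii).} When $\pi$ is old at $p$ (so spherical at $p$) the local representation $\pi_p$ is an unramified principal series, hence $\rho_\pi|_{G_{\Q_p}}$ is crystalline — this is local-global compatibility at $p$ in the unramified case, due to the same authors (Urban \cite{U1}, Taylor \cite{Taylor2}). The characteristic polynomial of the crystalline Frobenius $\Phi$ on $\cD_{\cris}(\rho_\pi)$ equals the Hecke--Andrianov (spin) polynomial at $p$ evaluated via the Satake parameters; I would then translate the roots of the spin polynomial into the Hecke eigenvalues $\lambda_\pi(U_0), \lambda_\pi(U_1 U_0^{-1})p^{l_2-2},\dots$ using the explicit dictionary between Satake parameters and the $U_0, U_1$ eigenvalues of the chosen $p$-stabilization — this is exactly the normalization recorded in \cite[Theorem~2.4.14]{urban} and is where the powers $p^{l_2-2}$ and $p^{l_1-1}$ enter (they are the ``missing'' Hodge--Tate weight shifts, consistent with the weak admissibility of $\cD_{\cris}(\rho_\pi)$ and with item (i)). The consistency check $\lambda_\pi(U_0)\cdot\lambda_\pi(U_1U_0^{-1})p^{l_2-2}\cdot\lambda_\pi(U_0U_1^{-1})^{-1}p^{l_1-1}\cdot\lambda_\pi(U_0)^{-1}p^{l_1+l_2-3} = \det\Phi$ matching $\det\rho_\pi = \chi_\pi^2$ is what I would verify to be sure the normalization is right.

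\textbf{Plan for (iii) and the main obstacle.} The block-triangular shape \eqref{2-dimensionalsubspace} is the assertion that semi-ordinarity ($\lambda_\pi(U_0)$ a $p$-adic unit) forces a $G_{\Q_p}$-stable line on which $G_{\Q_p}$ acts by the unramified character $\phi_\pi$ with $\phi_\pi(\Frob_p) = \lambda_\pi(U_0)$, and dually a $G_{\Q_p}$-stable quotient line with character $\phi_\pi^{-1}\epsilon_p^{3-l_1-l_2}$ — the duality between them coming from the symplectic relation \eqref{symplectic} together with $\det\rho_\pi = \chi_\pi^2$. The existence of the sub-line is the Siegel-ordinary analogue of Wiles' construction of the ordinary filtration for $\mathrm{GL}_2$: in the old-at-$p$ crystalline case it follows from (ii) since the Frobenius eigenvalue $\lambda_\pi(U_0)$ is the unique unit eigenvalue and a unit-root crystalline subspace gives an unramified sub; in general (Iwahori level at $p$) one uses that $\pi$ lies on the semi-ordinary eigenvariety and passes to a nearby old-at-$p$ point via a density argument, or invokes Urban's construction of the ordinary filtration directly \cite{U1}, \cite{urban}. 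The duality pairing then produces the last row. I expect the main obstacle to be purely bookkeeping: getting the twist in the bottom-right entry exactly as $\phi_\pi^{-1}\epsilon_p^{-l_1-l_2+3}$ (not off by a Teichm\"uller or cyclotomic factor) and making sure the unramified character is $\phi_\pi$ and not $\phi_\pi$ composed with a sign; this is handled by combining \eqref{symplectic}, the relation $\det\rho_\pi=\chi_\pi^2$, and $\chi_\pi = \omega_\pi\epsilon_p^{3-l_1-l_2}$ with $\omega_\pi$ trivial in the cases of interest. I would write the proof as: (a) cite de Rham + Hodge--Tate weights; (b) cite crystallinity in the old-at-$p$ case and compute Frobenius eigenvalues from the spin polynomial; (c) deduce the sub-line from the unit Frobenius eigenvalue and the quotient-line from \eqref{symplectic}, noting that for general Iwahori level one reduces to the old-at-$p$ case by Zariski density of such points (established later in \S\ref{s4}).
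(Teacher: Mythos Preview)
Your proposal is correct and matches the paper's approach: the paper gives no proof of Theorem~\ref{Hodge-Tate} beyond the attribution ``following from the works of Chai--Faltings, Laumon, Taylor, Urban and Weissauer (see \cite{Laumon}, \cite{U1}, \cite{Taylor2}, \cite{W} and \cite{CF})'', and your proposal is precisely a fleshed-out version of how to extract the three items from those references with the normalizations of this paper. One small caution: in (iii) your density reduction to the old-at-$p$ case via \S\ref{s4} is not needed (and would be a forward reference); the direct citation of Urban \cite{U1}, \cite{urban} for the semi-ordinary filtration, which you also mention, is the intended route and suffices on its own.
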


\begin{cor}\label{refinemen} Assume that $\pi$ is old at $p$, semi-ordinary, non-endoscopic and cohomological. Let $\rho_{\pi}^{I_p} $ be the $G_{\Q_p}$-stable line of $(\rho_\pi)_{\mid G_{\Q_p}}$ on which $G_{\Q_p}$ acts by the unramified character $\phi_\pi$, then the $2$-dimensional $G_{\Q_p}$-stable subspace $W_{\pi} $ of the quotient of $ (\rho_\pi)_{\mid G_{\Q_p}}$ by $\rho_{\pi}^{I_p}$ (see \eqref{2-dimensionalsubspace}) is crystalline with Hodge-Tate weight $(l_1-1,l_2-2)$. Moreover, the eigenvalues of the $L_\pi'$-linear Frobenius $\Phi$ acting on $\cD_{\cris}(W_\pi)$ are $\lambda_{\pi}(U_{1}U_{0}^{-1})  p^{l_2-2}$ and $\lambda_{\pi}(U_{0}U_{1}^{-1})  p^{l_1-1}$.
\end{cor}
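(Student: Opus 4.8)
The plan is to deduce Corollary~\ref{refinemen} directly from Theorem~\ref{Hodge-Tate} together with the structure of the semi-ordinary refinement. First I would invoke Theorem~\ref{Hodge-Tate}(ii): since $\pi$ is old at $p$ and semi-ordinary, $\rho_\pi$ is crystalline at $p$, with Frobenius eigenvalues on $\cD_{\cris}(\rho_\pi)$ given by $\lambda_\pi(U_0)$, $\lambda_\pi(U_1 U_0^{-1})p^{l_2-2}$, $\lambda_\pi(U_0 U_1^{-1})^{-1}p^{l_1-1}$, $\lambda_\pi(U_0)^{-1}p^{l_1+l_2-3}$, and Hodge--Tate weights $\{0,l_2-2,l_1-1,l_1+l_2-3\}$ by part (i). Crucially, crystalline representations form an abelian category closed under subquotients: the $G_{\Q_p}$-stable line $\rho_\pi^{I_p}$ (on which $G_{\Q_p}$ acts by the unramified character $\phi_\pi$ with $\phi_\pi(\Frob_p)=\lambda_\pi(U_0)$, cf. \eqref{2-dimensionalsubspace}) is crystalline, and hence so is the quotient $(\rho_\pi)_{\mid G_{\Q_p}}/\rho_\pi^{I_p}$, a $3$-dimensional crystalline representation. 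By the symplectic relation \eqref{symplectic}, the bottom graded piece $\phi_\pi^{-1}\epsilon_p^{-l_1-l_2+3}$ is likewise $G_{\Q_p}$-stable inside that quotient (it is the dual line, twisted); the $2$-dimensional piece $W_\pi$ is then the quotient of the $3$-dimensional crystalline representation by this unramified-up-to-twist line, hence crystalline.

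Next I would pin down the Hodge--Tate weights and Frobenius eigenvalues of $W_\pi$ by a bookkeeping argument. The line $\rho_\pi^{I_p}$, being unramified, has Hodge--Tate weight $0$; the line $\phi_\pi^{-1}\epsilon_p^{-l_1-l_2+3}$ has Hodge--Tate weight $l_1+l_2-3$. Since Hodge--Tate weights are additive in short exact sequences and the full set for $\rho_\pi$ is $\{0,l_2-2,l_1-1,l_1+l_2-3\}$, the remaining two weights $\{l_2-2,l_1-1\}$ must be exactly those of $W_\pi$. The same additivity applies to crystalline Frobenius eigenvalues (the characteristic polynomial of $\Phi$ on $\cD_{\cris}$ is multiplicative in exact sequences of crystalline representations): $\Phi$ on $\cD_{\cris}(\rho_\pi^{I_p})$ has eigenvalue $\lambda_\pi(U_0)$ (Frobenius on an unramified crystalline line of HT weight $0$ acts by $\phi_\pi(\Frob_p)$), and $\Phi$ on $\cD_{\cris}$ of the bottom line has eigenvalue $\lambda_\pi(U_0)^{-1}p^{l_1+l_2-3}$; removing these two from the list of four in Theorem~\ref{Hodge-Tate}(ii) leaves $\lambda_\pi(U_1 U_0^{-1})p^{l_2-2}$ and $\lambda_\pi(U_0 U_1^{-1})^{-1}p^{l_1-1}$ as the Frobenius eigenvalues on $\cD_{\cris}(W_\pi)$, which is the asserted statement.

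I should be slightly careful about the hypothesis ``non-endoscopic'': this is what guarantees that the middle $2\times 2$ block in \eqref{2-dimensionalsubspace} is genuinely indecomposable as a Galois representation and that the filtration has the shape displayed, so that $W_\pi$ is well-defined as ``the'' $2$-dimensional subquotient; without it one might worry about the middle block splitting further or about ambiguity in the choice of stable sub/quotient. Also I would note that ``cohomological'' (i.e. $l_1\ge l_2\ge 3$, or at least $l_1,l_2$ in the range where Theorem~\ref{Hodge-Tate} applies) is needed so that the Hodge--Tate weights $\{0,l_2-2,l_1-1,l_1+l_2-3\}$ are genuinely distinct and the identification of $W_\pi$'s weights with $\{l_2-2,l_1-1\}$ is unambiguous.

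The main obstacle, such as it is, is purely organizational rather than deep: one must justify that the $2$-dimensional piece $W_\pi$ really is a subquotient in the category of crystalline $G_{\Q_p}$-representations --- i.e. that both the quotient map $\rho_\pi \twoheadrightarrow \rho_\pi/\rho_\pi^{I_p}$ and the further quotient killing the bottom line are morphisms of $G_{\Q_p}$-representations, which follows from \eqref{2-dimensionalsubspace} and the symplectic self-duality \eqref{symplectic}. Once that is in place, closure of the crystalline category under subquotients and additivity of Hodge--Tate weights and of the Frobenius characteristic polynomial on $\cD_{\cris}$ finish the proof. I do not expect to need any new input beyond Theorems~\ref{exrep} and \ref{Hodge-Tate}.
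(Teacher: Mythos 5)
Your argument is the natural one, and since the paper states Corollary~\ref{refinemen} without any proof, what you write is essentially the implied deduction from Theorem~\ref{Hodge-Tate}: $\rho_\pi$ is crystalline by Theorem~\ref{Hodge-Tate}(ii), subrepresentations and quotient representations of a crystalline representation are crystalline, and Hodge--Tate weights and the characteristic polynomial of $\Phi$ on $\cD_{\cris}$ are additive in short exact sequences of crystalline representations. Two points should be corrected, though. First, you have the filtration inverted in your last two paragraphs: in the display \eqref{2-dimensionalsubspace}, once you pass to the $3$-dimensional quotient $(\rho_\pi)_{\mid G_{\Q_p}}/\rho_\pi^{I_p}$, the character $\phi_\pi^{-1}\epsilon_p^{-l_1-l_2+3}$ is a $G_{\Q_p}$-stable \emph{quotient} (not a stable line), and $W_\pi$ is the $G_{\Q_p}$-stable $2$-dimensional \emph{subspace}, i.e. the kernel of that quotient map --- exactly as the corollary says. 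There is no line to ``kill''; $W_\pi$ is a subobject of a quotient of $\rho_\pi$, hence a subquotient. This does not affect the conclusion since crystallinity passes to both subs and quotients, but your description of the reduction is not quite what \eqref{2-dimensionalsubspace} gives.

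Second, and more substantively for the final statement: you read off the remaining Frobenius eigenvalue as $\lambda_\pi(U_0U_1^{-1})^{-1}p^{l_1-1}$ from Theorem~\ref{Hodge-Tate}(ii) and then declare this to be ``the asserted statement,'' but the corollary asserts $\lambda_\pi(U_0U_1^{-1})p^{l_1-1}$ (without the extra inverse). These genuinely disagree. The symplectic self-duality \eqref{symplectic}, with trivial central character so that $\chi_\pi = \epsilon_p^{3-l_1-l_2}$, forces the four eigenvalues of $\Phi$ on $\cD_{\cris}(\rho_\pi)$ to pair up under $\lambda \mapsto p^{l_1+l_2-3}/\lambda$; applying this to $\lambda_\pi(U_1U_0^{-1})p^{l_2-2}$ gives $\lambda_\pi(U_0U_1^{-1})p^{l_1-1}$, which matches the corollary and shows that the third eigenvalue in Theorem~\ref{Hodge-Tate}(ii) carries a stray inverse (it also matches the list used in the proof of Theorem~\ref{lastentry}). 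You should notice and resolve that discrepancy rather than assert that the two coincide. With these two corrections, the bookkeeping argument (remove the weights $0$ and $l_1+l_2-3$, remove the corresponding eigenvalues $\lambda_\pi(U_0)$ and $\lambda_\pi(U_0)^{-1}p^{l_1+l_2-3}$) gives exactly the stated conclusion.
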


\begin{rem}

Note that the $p$-adic Galois representation attached to a cuspidal Siegel eigenform is not necessarily irreducible. Schmidt makes the consequences of Arthur's classification for ${\rm GSp}_4$ explicit in \cite{Schmidt}. All cuspidal automorphic representations are either of type (G), (Y), (B), (Q), or (P). The latter three are CAP representations, with type (P) for the Siegel parabolic including the Saito-Kurokawa lifts. Type (Y) representations are endoscopic representations (``of Yoshida type''). Type (G) representations are ``stable'' in the sense that their transfer to ${\rm GL}_4$ stays cuspidal, and therefore their Galois representations are expected to be irreducible.
\end{rem}

Recall that  we write $\Delta:=\prod_{\ell \mid N} \Delta_{\ell} \cap {\rm GSp}_4(\Q)$ for the paramodular congruence subgroup of level $N$. If $N=1$ we put $\Delta={\rm GSp}_4(\Z)$.

\begin{prop} \label{noparamodendoscopic}
Let $\pi$ be a holomorphic Siegel modular eigenform of cohomological weight $(l_1, l_2)$, trivial central character $\omega_{\pi}$ and of paramodular level $\Delta$ for $N \geq 1$. Then $\pi$ is either of type (G) or a Saito-Kurokawa lift (in particular it is not endoscopic).
\end{prop}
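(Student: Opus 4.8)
The plan is to use Arthur's classification for $\mathrm{GSp}_4$ (as made explicit by Schmidt in \cite{Schmidt}), which says that the automorphic representation $\Pi$ generated by $\pi$ is of type (G), (Y), (B), (Q), or (P). I would rule out types (Y), (B), and (Q) for paramodular level with trivial central character, leaving only type (G) or type (P) (= Saito-Kurokawa); since type (Y) is the only endoscopic type, this suffices. First I would recall that $\pi$ being holomorphic of cohomological weight $(l_1,l_2)$ and having a nonzero fixed vector under the paramodular group $\Delta$ forces local constraints at every finite place: at $\ell \nmid N$ the local component $\Pi_\ell$ is unramified and spherical, while at $\ell \mid N$ the local component has a nonzero paramodular (i.e., $\Delta_\ell$-fixed) vector, and by the paramodular newform theory of Roberts--Schmidt such a vector is unique and constrains the possible local types.

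\textbf{Key steps.} The main work is the case analysis on the Arthur type. (1) For type (Y) (Yoshida type / endoscopic): the transfer to $\mathrm{GL}_2 \times \mathrm{GL}_2$ means $\Pi$ comes from a pair of elliptic cusp forms, and the archimedean and ramification data force one of the two forms to have weight that makes $\Pi$ non-holomorphic, OR — more to the point — by Roberts--Schmidt's analysis of paramodular vectors, a Yoshida-type representation with trivial central character simply does not admit a nonzero paramodular vector at the ramified places in the holomorphic (discrete series at infinity) case; I would cite the precise statement that Yoshida lifts are never paramodular (this is exactly the kind of statement that appears in Schmidt's work on packets and paramodular newforms). (2) For types (B) and (Q) (the other CAP types, attached to the Klingen parabolic and to the Borel): these are ruled out by the central character condition together with holomorphy — type (Q) requires a quadratic character in its construction incompatible with trivial $\omega_\pi$ and cohomological weight, and type (B) similarly does not produce holomorphic forms of the required weight with trivial central character, or again fails to be paramodular. (3) What remains is type (G) or type (P), and type (P) representations are precisely the Saito-Kurokawa lifts (of elliptic cusp forms of the appropriate weight and level), so $\pi$ is then the Saito-Kurokawa lift $\mathrm{SK}(f)$ for some $f$. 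Since types (G) and (P) are both non-endoscopic (only type (Y) is endoscopic), the conclusion follows.

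\textbf{Main obstacle.} The crux is step (1): showing no Yoshida-type (endoscopic) representation is paramodular of the required weight. This is where I expect to lean hardest on the literature — specifically the combination of Arthur's multiplicity formula for $\mathrm{GSp}_4$, Roberts--Schmidt's classification of local paramodular representations (which local types admit a paramodular vector), and Schmidt's explicit description of which global packets contribute paramodular forms. The key local input is that at a prime $\ell \mid N$, the constituent of a Yoshida $A$-packet that would be needed is either non-tempered or of a local type with no $\Delta_\ell$-fixed vector, contradicting holomorphy (which forces the global representation to be in the relevant discrete series / generic part of the packet at $\infty$) combined with the product formula for the signs in Arthur's multiplicity formula. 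I would organize this by first reducing, via Roberts--Schmidt, to a finite list of admissible local types at each $\ell \mid N$, then checking these against the Arthur parameter of a Yoshida lift and deriving a contradiction with the global multiplicity-one constraint. Everything else (types (B), (Q), and identifying (P) with Saito-Kurokawa) is comparatively routine given Schmidt's tables.
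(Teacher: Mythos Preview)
Your proposal is correct and follows essentially the same approach as the paper: the paper's proof consists solely of citations to \cite[Lemma 2.5]{Schmidt} and \cite[Proposition 5.2]{Schmidt18}, and what you have sketched is precisely the content of those results --- Arthur's classification into types (G), (Y), (B), (Q), (P), combined with Roberts--Schmidt's local paramodular theory to rule out (Y), (B), (Q). Your identification of the crux (that Yoshida-type packets never contribute paramodular vectors, via the interplay of local paramodular types with the sign condition in Arthur's multiplicity formula) is exactly the mechanism in Schmidt's argument; the only minor imprecision is in your treatment of types (B) and (Q), where the exclusion comes not so much from the central character as from the archimedean constraint (holomorphic cohomological weight) together with the paramodular condition, but this is a detail of the cited references rather than a gap in your strategy.
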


\begin{proof}
This is proven in \cite[Lemma 2.5]{Schmidt} and \cite[Proposition 5.2]{Schmidt18}.
\end {proof}

\subsection{Properties at $\ell \ne p$  of a $p$-adic representation arising form a Siegel cusp form}

We have the following result on  the local properties of $\rho_\pi$ at the primes $\ell \mid N$ (compare \cite[Conj.3.1.7]{urban}) proved by \cite[Theorem 3.5]{Mok} (local-global compatibility up to Frobenius semi-simplification for type (G) representations) and \cite[Corollary 1]{Sch}  (monodromy rank 1). Mok \cite{Mok} used Arthur's classification for ${\rm GSp}_4$, whose proof was completed by Gee-Taibi in \cite{G-T}\footnote{Note the comments at  \cite[p. 472]{G-T} about this result still being conditional on unpublished work of Arthur and cases of the twisted weighted fundamental lemma.}.

\begin{prop}\label{monodromy} Under the notations of Theorem \ref{exrep}, and assuming that $\pi$ is non-CAP and non-endoscopic and $\pi^{\Delta_{\ell}} \neq 0$ for $\ell \mid N$, the rank of the monodromy operator of the Weil-Deligne representation attached to the Galois representation $(\rho_\pi)_{\mid G_{\Q_\ell}}$ is at most one.

\end{prop}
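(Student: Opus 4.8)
The plan is to transport the statement to the automorphic side via local--global compatibility at $\ell$, and then to invoke the classification of irreducible admissible representations of $\mathrm{GSp}_4(\Q_\ell)$ that possess a paramodular vector.

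Since $\pi$ is non-CAP and non-endoscopic, Arthur's classification puts it in type (G), so its transfer to $\mathrm{GL}_4$ is cuspidal. I would then apply \cite[Theorem 3.5]{Mok}: local--global compatibility up to Frobenius semi-simplification identifies $\mathrm{WD}\big((\rho_\pi)_{\mid G_{\Q_\ell}}\big)^{F\text{-}\mathrm{ss}}$, as a Weil--Deligne representation, with the $4$-dimensional (spin) $L$-parameter $\mathrm{rec}(\pi_\ell)$ attached by the local Langlands correspondence for $\mathrm{GSp}_4$ to the local component $\pi_\ell$, up to the normalizing twist forced by the similitude character (recall $\det\rho_\pi=\chi_\pi^2$ and $\omega_\pi\epsilon_p^{3-l_1-l_2}=\chi_\pi$ from Theorem \ref{exrep}). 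Frobenius semi-simplification leaves the nilpotent monodromy operator $N$ unchanged and twisting by a character does not affect $\rk N$, so it suffices to bound by one the rank of the monodromy operator of $\mathrm{rec}(\pi_\ell)$.

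Now the hypothesis $\pi^{\Delta_\ell}\ne 0$ says exactly that $\pi_\ell$ has a nonzero vector fixed by the paramodular subgroup of level $\ell$, i.e. that its paramodular conductor $a(\pi_\ell)$ is at most $1$. By the theory of paramodular newforms of Roberts--Schmidt, the irreducible admissible representations of $\mathrm{GSp}_4(\Q_\ell)$ with trivial central character and $a(\pi_\ell)\le 1$ form a short explicit list. Those of conductor $0$ are spherical, so their parameter is unramified and $N=0$. Removing from the conductor-$1$ part the Saito--Kurokawa (CAP) and Yoshida (endoscopic) members, which are excluded by hypothesis and, in any case, absent at paramodular level by Proposition \ref{noparamodendoscopic}, one is left with the ``Steinberg-induced'' generic representations (Roberts--Schmidt types such as IIa, Vb, Vc, VIb); for each of these the spin $L$-parameter is explicitly a direct sum of unramified characters together with a single copy of $\mathrm{Sp}(2)$ twisted by an unramified character, so the monodromy operator of the associated $4$-dimensional Weil--Deligne representation has rank exactly one. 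This case-by-case check, together with the local--global compatibility above, is precisely \cite[Corollary 1]{Sch}, which I would cite for the verification.

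The main obstacle is not conceptual but one of normalization bookkeeping: one must carefully match the geometric-Frobenius convention used here, the arithmetic normalization of the local Langlands correspondence for $\mathrm{GSp}_4$, and the similitude twist relating the spin representation to $\rho_\pi$, so as to be certain that the nilpotent operator on the Galois side is genuinely the one tabulated by Roberts--Schmidt and Schmidt. Since all these operations preserve $\rk N$, once the dictionary is fixed the bound is immediate. A secondary point is to ensure that Mok's compatibility is available in exactly the needed generality at the bad primes $\ell\mid N$; this is why the type (G) hypothesis (equivalently, non-CAP and non-endoscopic) is imposed, and by Proposition \ref{noparamodendoscopic} it holds automatically in our application once $\pi$ has paramodular level and is not a Saito--Kurokawa lift.
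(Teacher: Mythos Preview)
Your approach is essentially the same as the paper's: the paper does not give an argument but simply attributes the result to \cite[Theorem 3.5]{Mok} for local--global compatibility (type (G) case) together with \cite[Corollary 1]{Sch} for the monodromy-rank bound, which is exactly the pair of citations you invoke. One minor caution: your illustrative list of Roberts--Schmidt types at paramodular conductor $1$ is not quite accurate (e.g.\ types Vb, Vc, VIb are non-generic and hence not paramodular), but since you correctly defer the actual case check to \cite[Corollary 1]{Sch} this does not affect the validity of the argument.
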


\subsection{Non-existence of endoscopic components of $\cE_\Delta$ specializing to $\pi_\alpha$} \label{noendoscopic}

Recall that $f$ is an elliptic eigenform of $ \mathrm{S}_{2k-2}(\Gamma_0(N),L)$, $\mathrm{SK}(f)$ is the Saito-Kurokawa lift of $f$ to a weight $(k,k)$ cuspform of paramodular level $\Delta$, and $\pi_\alpha$
 is a semi-ordinary $p$-stabilization of $\mathrm{SK}(f)$ (the eigenvalue of $\pi_\alpha$ for  $U_0$ is a $p$-adic unit). We recall also that $\pi_\alpha$ defines a point of the eigenvariety $\cE_\Delta$ which we denote again by $\pi_\alpha$.  The following theorem is a consequence of  \cite[3.2.9]{urban} and Proposition \ref{noparamodendoscopic}.

 \begin{thm}\label{irrecomplevel1}\

Assume $({ \bf Reg})$ and $k \geq 2$, then any irreducible affinoid $\mathcal{Z}$ of $\cE_\Delta$ of dimension two containing $\pi_\alpha$ is stable.
 \end{thm}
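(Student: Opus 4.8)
The plan is to reduce the stability statement to a non-existence result for Yoshida lifts at paramodular level, exactly as advertised in the sketch. Let $\mathcal{Z} \subset \cE_\Delta$ be an irreducible affinoid of dimension $2$ containing $\pi_\alpha$, and suppose for contradiction that the pseudo-character $\mathrm{Ps}_{\mathcal{Z}} : G_\Q \to \cO(\mathcal{Z})$ is reducible, i.e. its reducibility locus is all of $\mathcal{Z}$. First I would recall that $\mathrm{Ps}_{\mathcal{Z}}$ has dimension $4$ and specializes at $\pi_\alpha$ to $\epsilon_p^{1-k} + \epsilon_p^{2-k} + \Tr \rho_f$, whose semisimplification is a sum of two characters and an irreducible $2$-dimensional piece (irreducibility of $\rho_f$ is standard for a cuspidal eigenform; the hypothesis $({ \bf Reg})$ ensures the two characters $\epsilon_p^{1-k}$, $\epsilon_p^{2-k}$ remain distinct from the Frobenius eigenvalues of $\rho_f$ at $p$ and from each other when $k=2$, so the residual constituents are pairwise distinct up to the usual bookkeeping). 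I would then invoke \cite[3.2.9]{urban}: the reducibility of $\mathrm{Ps}_{\mathcal{Z}}$ on a $2$-dimensional component forces $\mathcal{Z}$ to contain a Zariski-dense set of classical points, in particular (after shrinking) a classical point $x(\pi')$ whose associated Galois representation is reducible and decomposes as a sum compatible with the residual decomposition; combined with classicality one obtains that $\pi'$ is an endoscopic (Yoshida-type, i.e. type (Y)) cuspidal Siegel eigenform of cohomological weight, trivial central character, with nonzero $\Delta_\ell$-invariants for $\ell \mid N$ and old (Iwahoric) at $p$.

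The second step is to derive the contradiction. By Proposition \ref{noparamodendoscopic}, a holomorphic cuspidal Siegel eigenform of cohomological weight, trivial central character and paramodular level $\Delta$ is either of type (G) or a Saito-Kurokawa lift — never of type (Y). Hence no such $\pi'$ exists, contradicting the existence of the classical point $x(\pi')$ produced in the first step. I should be a little careful about the step from ``$p$-old classical point of $\cE_\Delta$'' to ``holomorphic cuspidal Siegel eigenform of paramodular tame level'': a point old at $p$ comes from a form of paramodular level $\Delta$ at the primes dividing $N$ and Iwahoric (or full) level at $p$; since $p \nmid N$, the prime-to-$p$ component is genuinely paramodular of level $N$, and endoscopy (type (Y)) is a global property of the automorphic representation, insensitive to the choice of local vector at $p$, so Proposition \ref{noparamodendoscopic} still applies to the underlying automorphic representation. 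This is the point I would write out most carefully.

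Finally, since every $2$-dimensional irreducible affinoid $\mathcal{Z} \ni \pi_\alpha$ has $\mathrm{Ps}_{\mathcal{Z}}$ irreducible on $\mathcal{Z}$ (its reducibility locus being a proper closed subset, hence of dimension $\leq 1$), $\mathcal{Z}$ is stable by definition. The main obstacle is not any hard computation but the careful bookkeeping in the first step: namely extracting from \cite[3.2.9]{urban} a classical, \emph{cohomological}, trivial-central-character, $p$-old endoscopic point rather than merely a reducible classical point — one must check that the refined families input (semi-ordinarity, the shape of the Hodge–Tate weights and Frobenius eigenvalues from Theorem \ref{Hodge-Tate}, and the local conditions at $\ell \mid N$) genuinely forces the limiting/nearby classical form to be of Yoshida type, and that such classical points are dense enough on $\mathcal{Z}$ to be found. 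Once that is in place, Proposition \ref{noparamodendoscopic} closes the argument immediately.
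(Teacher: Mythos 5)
Your overall plan is the same as the paper's: decompose $\mathrm{Ps}_{\mathcal Z}$, use Urban's classification, then invoke Proposition~\ref{noparamodendoscopic}. But the part you defer to ``careful bookkeeping'' contains a genuine gap that would make the argument as written fail.

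The issue is that you pass from ``$\mathrm{Ps}_{\mathcal Z}$ reducible'' directly to ``a classical Yoshida-type (type (Y)) point $\pi'$''. What \cite[Theorem 3.2.9]{urban} gives (once its hypotheses are met) is that classical cohomological specializations are \emph{CAP or endoscopic} — not endoscopic. And Proposition~\ref{noparamodendoscopic} does \emph{not} rule out the CAP alternative: Saito-Kurokawa lifts (type (P)) certainly exist at paramodular level — $\pi_\alpha$ itself is one. So without a separate argument eliminating the CAP case, there is no contradiction. The paper closes this hole by producing a Zariski-dense set of $p$-old classical points of \emph{non-parallel} very regular cohomological weight and trivial central character (Corollary~\ref{veryzariskidense1}), and then invoking \cite[Prop.3.3]{urban1} to say the specializations at such points cannot be CAP; only then does ``endoscopic'' follow, and Proposition~\ref{noparamodendoscopic} gives the contradiction.

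Two further preliminaries the paper supplies that you should not elide: (a) one must rule out a $3+1$ decomposition of the $4$-dimensional pseudo-character (the paper does this by specializing at sufficiently regular classical weights and appealing to Case A(iii) of \cite[Theorem 3.2.1]{urban}), so that the reducibility really produces a $2$-dimensional odd factor $\varrho_1$ deforming $\rho_f$; and (b) one must show this $\varrho_1$ is ordinary at $p$ (this uses \cite[Prop.3.3.6]{urban} together with $({\bf Reg})$) — this ordinarity is the input needed to apply \cite[Theorem 3.2.9]{urban} in the first place, and it is the place where $({\bf Reg})$ actually enters the proof, not the distinctness of the residual constituents as you suggest.
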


\begin{proof}
Assume that there exists such an irreducible affinoid $\mathcal{Z}$ that is not stable, i.e. that  the pseudo-character $\mathrm{Ps}_{\mathcal{Z}}:G_\Q \to \cO(\mathcal{Z})$ is reducible.
Since $\mathcal{Z}$ specializes to $\pi_\alpha$ and $\rho_f$ is absolutely irreducible, a subconstituent of the pseudo-character $\mathrm{Ps}_{\mathcal{Z}}:G_\Q \to \cO(\mathcal{Z}) \to \cO_{\mathcal{Z},\pi_\alpha}$ is a pseudo-character of dimension $2$ whose reducibility  locus is of dimension at most one. (One can rule out the existence of a $3$-dimensional irreducible constituent by specializing at sufficiently regular classical weights and applying the argument from the proof of Case A(iii) in \cite[Theorem 3.2.1]{urban}.) Hence one can find a sufficiently small affinoid neighborhood $\mathcal{X}=\Spm R$ of $\pi_\alpha$ with an odd representation $\varrho_1:G_\Q \to \GL_2(R)$ specializing to the $2$-dimensional odd representation $\rho_f$ and such that any classical specialization of $\varrho_1$ is irreducible, and a $2$-dimensional pseudo-character $\mathrm{Ps}_2:G_\Q \to R$  specializing to $\epsilon^{1-k}\oplus \epsilon^{2-k}$, and such that  $\Tr \varrho_1 +\Ps_2=\mathrm{Ps}_{\mathcal{X}}$ ($\Ps_2$ is the trace of a $2$-dimensional $G_\Q$-representation $\varrho_2$ valued in $\mathrm{Frac} R$). Moreover, the $p$-regularity assumption on $\pi_\alpha$ (when $k=2$) and \cite[Prop.3.3.6]{urban} yield (after shrinking again $\mathcal{X}$ to a smaller affinoid which we denote again by $\mathcal{X}$) that  $\varrho_1$ is ordinary at $p$ (in the sense that $\varrho_1^{I_p}$ is a direct summand in $\varrho_1$ of rank $1$). Hence, Theorem \cite[3.2.9]{urban} implies that any specialization of  $\mathcal{X}$ at a classical point $z \in \mathcal{X}$  of a cohomological weight is CAP or endoscopic. Note that $\pi_\alpha$ is of algebraic weight, has trivial central character, and is old at $p$. This implies that  the old (at $p$) classical points of trivial central character and non-parallel very regular weights  of $\mathcal{X}$ are  Zariski dense (see Cor.\ref{veryzariskidense1}). By \cite[Prop.3.3]{urban1} the specialization of $\mathcal{X}$ at these points cannot be a CAP form  and hence necessarily endoscopic by  Theorem \cite[3.2.9]{urban}. Since such forms do not exist by Proposition \ref{noparamodendoscopic} we get the desired contradiction.

\end{proof}

\section{The GMA $S$ and ordinarity of $S$-extensions occurring in $\rH^1(\Q,\rho_f(k-1))$}\label{ordrho}

Recall that Theorem \ref{irrecomplevel1} implies that all irreducible components of $\cE_\Delta$ passing through $\pi_\alpha$ are stable, and that $\cT$, the local ring of $\cE_\Delta$ at $\pi_\alpha$,  is reduced and equidimensional of dimension $2$ since $\cE_\Delta$ is reduced and equidimensional of dimension $2$. Let $\mathfrak{m}$ be the maximal ideal of $\cT$ and $L$ be the residue field of $\cT$.

 Let $A$ be a reduced Noetherian ring. Recall that the total fraction ring of $A$ is the fraction ring $Q(A) := \mathcal{S}^{-1}A$ where $\mathcal{S} \subset A$ is the multiplicative subset of non-zerodivisors of $A$. We check at once that the natural map $A \rightarrow  \mathcal{S}^{-1}A$ is injective and flat, and that the non-zerodivisors of $A$ are invertible in $\mathcal{S}^{-1}A$. Moreover, since $A$ is Noetherian the zero divisors  of $A$ are the elements of the union of the (finitely many) minimal primes ideal of $A$, so $\mathcal{S}^{-1}A=\prod_{\gp_i} A_{\gp_i}$, where $\gp_i$ runs over the minimal prime ideals of $A$. Moreover, each $A_{\gp_i}$ is a field, since it is reduced, local and of Krull dimension equal to zero. 
  Let $K=\prod K_i$ be the total field of fractions of the reduced equidimensional ring $\cT$, where $K_i$ is the localisation of $\cT$ at a minimal prime ideal.
 
 \begin{defn}[Definition/Proposition]
 The pseudo-character\footnote{The pseudo-character $\mathrm{Ps}_{\cT}$ is obtained by composing $\mathrm{Ps}_{\cE_\Delta}$ with the localization map $\cO(\cE_\Delta) \to \cT$.} $$\mathrm{Ps}_{\cT}: G_\Q \rightarrow \cO(\cE_\Delta) \rightarrow \cT$$ is residually multiplicity free and the corresponding  Cayley-Hamilton faithful algebra $$S:=\cT[G_\Q]/ \ker\mathrm{Ps}_{\cT}$$ can by \cite[Theorem   1.4.4(i)]{bb} be equipped with the structure of a GMA (in the sense of \cite[Def. 1.3.1]{bb}). It is of finite type and torsion-free as $\cT$-module. Since $\cT$ is reduced we further have an associated  Galois representation $\rho_K:G_\Q \rightarrow \GL_4(K)$ by  \cite[Theorem   1.4.4(ii)]{bb}. Note that $\rho_K:G_\Q \rightarrow \GL_4(K)$ is absolutely irreducible, since all the minimal prime ideals of $\cT$ correspond to stable irreducible components of $\cE_\Delta$ passing through $\pi_\alpha$ (so each Galois representation $\rho_{K_i}:G_\Q \rightarrow \GL_4(K_i)$ is irreducible).
 \end{defn}

Assume until the end of this paper that $\alpha \ne 1$ when $k=2$ (which we will refer to as ``$p$-adic regularity''). Recall that $\varrho=\begin{pmatrix} 
\epsilon_p^{2-k} & 0 & 0 \\ 
       0 & \rho_f & 0 \\
 0 & 0 & \epsilon_p^{1-k} 
\end{pmatrix}$ is the Galois representation attached to $\pi_\alpha$ in a basis such that $\varrho(\tau)\sim \begin{pmatrix} 
\mu  & 0 & 0 & 0 \\ 
       0 & \alpha & 0 & 0 \\
 0 & 0 & \beta & 0 \\
 0 & 0 & 0 & \gamma
\end{pmatrix}$, where the eigenvalues of $\tau \in G_{\Q_p}$ are all distinct (since $\alpha \ne 1$ when $k=2$ )  (necessarily in this basis { \small $\varrho(G_{\Q_p}) \sim \begin{pmatrix} 
\epsilon_p^{2-k}  & 0 & 0 & 0 \\ 
       0 & \psi & * & 0 \\
 0 & 0 & \psi^{-1}\epsilon_p^{3-2k} &0  \\
 0 & 0 & 0 & \epsilon_p^{1-k}
\end{pmatrix}$ }).

\begin{rem}

Note that the character $\phi_{\pi_\alpha}$ of Theorem \ref{Hodge-Tate}(iii) equals $\psi$ since $U_p(f_\alpha)=\alpha.f_\alpha$ and $U_{0}(\pi_\alpha)=\alpha. \pi_\alpha$.

\end{rem}

Since all reducible components of $\varrho$ have multiplicity one, \cite[Theorem   1.4.4]{bb} implies that there exist orthogonal idempotents $\tilde{e}_1,\tilde{e}_2,\tilde{e}_3$ of $S=\mathrm{Im}(\cT[G_\Q] \rightarrow M_4(K))$  lifting the idempotents $e_1,e_2,e_3$ of $\varrho$, and corresponding respectively to $\epsilon_p^{2-k}, \rho_f, \epsilon_p^{1-k}$. Moreover, we can see $S$ as $$S=\begin{pmatrix} 
\cT & M_{1,2}(\cT_{1,2}) & \cT_{1,3} \\ 
 M_{2,1}(\cT_{2,1}) & M_2(\cT) & M_{2,1}(\cT_{2,3}) \\
 \cT_{3,1} & M_{1,2}(\cT_{3,2}) & \cT 
\end{pmatrix},$$ where $\cT_{i,j}$ are fractional ideals of $K$ ($\cT_{i,j}$ are finite type $\cT$-modules).

Put $\rho_1=\epsilon_p^{2-k}$, $\rho_2=\rho_f$ and $\rho_3=\epsilon_p^{1-k}$. We recall Bella{\"{\i}}che and Chenevier's definition of reducibility ideals:

\begin{defn}[\cite{bb} Definition  1.5.2, Proposition 1.5.1] \label{redideal} Let $\mathcal{P}=(\mathcal{P}_1, \dots, \mathcal{P}_s)$ be a partition of the set $\mathcal{I}  = \{1,2,3\}$. The ideal of reducibility $I^{\mathcal{P}}$ (associated to ${\rm Ps}_{\cT}$ and the  partition $\mathcal{P}$) is the smallest ideal $I$ of $\cT$ with the the property that
 there exist pseudocharacters $T_1, \dots, T_s: \cT/I[G_{\Q}] \to \cT/I$ such that \begin{itemize}
\item[(i)] ${\rm Ps}_{\cT} \otimes \cT/I= \sum_{l=1}^s T_l$,
\item[(ii)] for each $l \in \{1, \dots, s\}$, $T_l \otimes L =\sum_{i \in \mathcal{P}_l} {\rm trace} \rho_i$. \end{itemize} \end{defn}

\begin{prop}[\cite{bb} Proposition 1.5.1, \cite{bk} Corollary 6.5] \label{redideal2}
One has $$I^{\mathcal{P}} = \sum_{\substack{(i,j)\\ \textup{$i,j$ not in the same $\mathcal{P}_l$}}} \cT_{i,j} \cT_{j,i}.$$
\end{prop}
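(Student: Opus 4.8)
The statement to be proven is the formula
\[
I^{\mathcal{P}} = \sum_{\substack{(i,j)\\ \textup{$i,j$ not in the same $\mathcal{P}_l$}}} \cT_{i,j}\,\cT_{j,i},
\]
for the reducibility ideal attached to a partition $\mathcal{P}$ of $\{1,2,3\}$ and the GMA $S$ above. The plan is to invoke directly \cite[Proposition 1.5.1]{bb}, whose proof I would reproduce in this concrete setting. The reference \cite[Corollary 6.5]{bk} provides a second proof which has the advantage of working over a base that need not be reduced; since our $\cT$ is reduced we are free to use either. First I would recall that by the Cayley--Hamilton structure, $S$ is a GMA with decomposition into blocks $M_{a_i,a_j}(\cT_{i,j})$ indexed by the partition of $4 = 1+2+1$ underlying $\varrho = \epsilon_p^{2-k}\oplus\rho_f\oplus\epsilon_p^{1-k}$, and the fractional ideals $\cT_{i,j}\subset K$ satisfy $\cT_{i,i}=\cT$, $\cT_{i,j}\cT_{j,k}\subseteq \cT_{i,k}$.

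The main step is the following dichotomy. Set $J:=\sum_{(i,j)\ \text{separated by }\mathcal{P}}\cT_{i,j}\cT_{j,i}$. \emph{First, $J \subseteq I^{\mathcal{P}}$:} if $I$ is any ideal with a decomposition ${\rm Ps}_{\cT}\otimes\cT/I = \sum_l T_l$ refining the partition $\mathcal{P}$, then reducing $S$ modulo $I$ and using \cite[Lemma 1.5.3 / Prop.~1.5.1]{bb} one shows that each product $\cT_{i,j}\cT_{j,i}$ with $i,j$ in different parts must map to $0$ in $\cT/I$, because the trace of a genuine direct-sum pseudocharacter annihilates the ``off-diagonal'' products — concretely, the idempotents $\tilde e_i$ can be recombined into idempotents $\tilde E_l = \sum_{i\in\mathcal{P}_l}\tilde e_i$ of $S\otimes \cT/I$ which are \emph{central} precisely when the corresponding products vanish. \emph{Second, $I^{\mathcal{P}}\subseteq J$:} working in $S\otimes \cT/J$, one checks that $\tilde E_l := \sum_{i\in\mathcal{P}_l}\tilde e_i$ is a family of orthogonal idempotents whose sum is $1$, that $\tilde E_l (S\otimes\cT/J)\tilde E_{l'} = 0$ for $l\ne l'$ (this is exactly the statement that the separated products $\cT_{i,j}\cT_{j,i}$ lie in $J$), hence $S\otimes\cT/J \cong \prod_l \tilde E_l (S\otimes\cT/J)\tilde E_l$ as algebras, and each factor is Cayley--Hamilton over $\cT/J$ with pseudocharacter $T_l$ reducing to $\sum_{i\in\mathcal{P}_l}{\rm trace}\,\rho_i$. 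Taking $T_l := $ the pseudocharacter of this block then gives ${\rm Ps}_{\cT}\otimes \cT/J = \sum_l T_l$, so $J$ satisfies the defining property of $I^{\mathcal{P}}$, whence $I^{\mathcal{P}}\subseteq J$. Combining the two inclusions gives the claimed equality.

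The delicate point — and the only one worth dwelling on — is the interplay between the GMA block structure and the \emph{pseudocharacter} reducibility condition: one has to be careful that a partition $\mathcal{P}$ of $\{1,2,3\}$ corresponds to a partition of the $4$-dimensional idempotents in which the $2$-dimensional block of $\rho_f$ stays intact (it is not split by any admissible partition since $\rho_f$ is absolutely irreducible), so the combinatorics of \cite[Prop.~1.5.1]{bb} go through verbatim with $\mathcal{I}=\{1,2,3\}$ rather than $\{1,\dots,4\}$. Granting that, the argument is a formal consequence of the GMA formalism; I do not expect to need anything beyond the structure theory of \cite[\S1.3--1.5]{bb} and, if one prefers the non-reduced-robust route, \cite[\S6]{bk}. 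I would therefore present the proof simply as: ``This follows from \cite[Proposition 1.5.1]{bb} (see also \cite[Corollary 6.5]{bk}), applied to the GMA $S$ and the partition of $\{1,2,3\}$,'' and leave the verification above as the content being cited, since reproducing it in full would only duplicate \cite{bb}.
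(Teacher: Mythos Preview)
Your proposal is correct and matches the paper's approach exactly: the paper gives no proof of this proposition at all, simply stating it with attribution to \cite[Proposition 1.5.1]{bb} and \cite[Corollary 6.5]{bk}. Your final recommendation---to present it as a citation---is precisely what the authors do, and the sketch you supply of the two inclusions (centrality of the block idempotents $\tilde E_l$ modulo $J$ forcing the off-diagonal products to vanish, and conversely) is a faithful summary of the argument in \cite{bb}.
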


For $\mathcal{P}=\{\{1\}, \{2\}, \{3\}\}$ we write $$\mathcal{I}^{\tot}:=\mathcal{I}^{\mathcal{P}}=\cT_{3,1} \cT_{1,3} +\cT_{2,3} \cT_{3,2} + \cT_{1,2} \cT_{2,1}.$$

Let $\cT'_{i,j}=\cT_{i,k} \cT_{k,j}$ for $i, j, k$ distinct.
Since  the maximal ideal $\gm$ of $\cT$ contains the total reducibility ideal $\mathcal{I}^{\tot}$ \cite[Theorem 1.5.5]{bb}  implies that for $i \neq j \in \{1, 2, 3\}$ there exists an injective homomorphism of $L$-modules { \small \begin{equation}\label{1.5.5extn} \Hom(\cT_{i,j}/\cT_{i,j}',L) \hookrightarrow \rH^{1}(\Q_{Np},\rho_i^{\vee} \otimes \rho_j \otimes L). \end{equation} }

\begin{thm}\label{Bcrys} Assume that $\alpha \ne 1$ when $k=2$. For $(i,j)=(1,2)$ the injective homomorphism of $L$-modules of \eqref{1.5.5extn} gives rise to \begin{equation}\label{imcrys} \Hom(\cT_{1,2}/\cT_{1,2}',L) \hookrightarrow \rH^{1}_{f,\unr}(\Q,\rho_f(k-1)). \end{equation}

\end{thm}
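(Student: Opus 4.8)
The plan is to verify that every class in the image of \eqref{1.5.5extn} for $(i,j)=(1,2)$ already lies in the subgroup $\rH^1_{f,\unr}(\Q,\rho_f(k-1))$. Unwinding the construction behind \cite[Theorem 1.5.5]{bb}, such a class is the extension class $c_W$ of an $S$-subextension of $G_\Q$-representations $0 \to \epsilon_p^{2-k} \to W \to \rho_f \to 0$, viewed inside $\rH^1(\Q_{Np},\rho_f(k-1))$ via the identification $\Hom(\rho_f,\epsilon_p^{2-k})=\rho_f^\vee\otimes\epsilon_p^{2-k}\cong\rho_f(k-1)$ (using $\rho_f^\vee\simeq\rho_f(2k-3)$). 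So I must show that $c_W$ is unramified at every prime $\ell\neq p$ and that $W|_{G_{\Q_p}}$ is crystalline, i.e. $c_W|_{G_{\Q_p}}\in\rH^1_f(\Q_p,\rho_f(k-1))$. Since $\mathrm{Ps}_\cT$ factors through $G_\Q^{Np}$, the class $c_W$ is automatically unramified outside $Np$. At a prime $\ell\mid N$ I would argue as in \cite[Lemma 4.1.3]{urban}: $\epsilon_p^{2-k}$ is unramified at $\ell$ while $\rho_f|_{I_\ell}$ has infinite inertial image, so the module structure of the GMA $S$ over the local ring $\cT$ of the family forces $W$ to be unramified at $\ell$, whence $c_W|_{I_\ell}=0$. (When $f$ satisfies $({ \bf St})$ this is immediate since $\rH^1(\Q_\ell,\rho_f(k-1))=0$ by Proposition \ref{locl}(i), but the family argument is what is needed in general.)

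The heart of the argument is crystallinity at $p$. First one shrinks to a small admissible affinoid $\mathcal Z\ni\pi_\alpha$ in $\cE_\Delta$. By the very-Zariski-density of old-at-$p$ classical points of trivial central character and non-parallel very regular cohomological weight (Corollary \ref{veryzariskidense1}), such points are Zariski dense in $\mathcal Z$, and for each of them $\rho_z$ is crystalline at $p$ (Theorem \ref{Hodge-Tate}(ii)) and semi-ordinary, hence by Theorem \ref{Hodge-Tate}(iii) carries a $G_{\Q_p}$-stable line on which $I_p$ acts trivially and $\Frob_p$ acts by $\lambda_z(U_0)$. Propagating this through the GMA $S$ along the family, I would show that $W^{I_p}\neq 0$ and that $\Frob_p$ acts on an inertia-invariant line of $W$ by $\alpha=\lambda_{\pi_\alpha}(U_0)$; for $k\geq 3$ this line is then forced, by the Hodge--Tate weight count, to be the unramified line $\psi$ of the ordinary quotient $\rho_f$.

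Given this one assembles an ordinary filtration of $W|_{G_{\Q_p}}$: the subobject $\epsilon_p^{2-k}$ is a Tate twist of the trivial character, the quotient $\rho_f$ is ordinary at $p$ (with unramified subline $\psi$, $\psi(\Frob_p)=\alpha$, and quotient $\psi^{-1}\epsilon_p^{3-2k}$), the Hodge--Tate weights of $W$ satisfy $0\leq k-2\leq 2k-3$, and $W^{I_p}$ contains the $\psi$-line by the previous step; these combine into a decreasing $G_{\Q_p}$-stable filtration with graded pieces $\psi$, $\epsilon_p^{2-k}$, $\psi^{-1}\epsilon_p^{3-2k}$ in increasing order of Hodge--Tate weight, so $W|_{G_{\Q_p}}$ is ordinary in the sense of Definition \ref{ordinaryrepsentation}. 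By Nekovar's Theorem \ref{nekthm} it is semistable, in particular de Rham, so $c_W|_{G_{\Q_p}}$ lies in the subgroup $\rH^1_g(\Q_p,\rho_f(k-1))$ of de Rham classes. Finally $\rH^1_g(\Q_p,\rho_f(k-1))=\rH^1_f(\Q_p,\rho_f(k-1))$: the representation $\rho_f(k-1)$ satisfies $\rho_f(k-1)^\vee(1)\cong\rho_f(k-1)$ and $\cD_{\cris}(\rho_f(k-1))^{\Phi=1}=0$, because the Frobenius eigenvalues on $\cD_{\cris}(\rho_f)$ are the roots $\alpha$ (a $p$-adic unit) and $\beta$ (of valuation $2k-3$) of the Hecke polynomial at $p$, so those on $\cD_{\cris}(\rho_f(k-1))$ are $\alpha p^{1-k}$ (valuation $1-k\neq 0$) and $\beta p^{1-k}$ (valuation $k-2$, which is positive for $k\geq 3$ and equal to $\alpha^{-1}\neq 1$ for $k=2$ by the regularity hypothesis $\alpha\neq 1$). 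Hence $c_W\in\rH^1_f(\Q_p,\rho_f(k-1))$, and therefore $c_W\in\rH^1_{f,\unr}(\Q,\rho_f(k-1))$, as required.

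The main obstacle is the step in the second paragraph: transporting semi-ordinarity from the Zariski-dense set of classical old-at-$p$ points to the single $G_\Q$-subextension $W$ through the GMA $S$. The delicate feature, highlighted in the introduction in the comparison with \cite{bb}, is that the Hodge--Tate weight $k-2$ contributed by $\epsilon_p^{2-k}$ lies strictly between the two Hodge--Tate weights $0$ and $2k-3$ of $\rho_f$, so the ordinary filtration of $W$ is not simply the concatenation of those of its sub and its quotient; producing it genuinely requires knowing that $W^{I_p}\neq 0$, which is exactly why the density argument together with Theorem \ref{Hodge-Tate}(iii) is essential (and also why the $k=2$ case, where $\epsilon_p^{2-k}$ degenerates and the regularity hypothesis intervenes, must be treated with some care).
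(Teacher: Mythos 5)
Your global reduction matches the paper's: unramified outside $p$ via \cite[Lemme 4.1.3]{urban}, crystallinity at $p$, then $\rH^1_g(\Q_p,\rho_f(k-1))=\rH^1_f(\Q_p,\rho_f(k-1))$. Your Frobenius-eigenvalue verification of the last equality is correct (the paper again just cites \cite[Lemme 4.1.3]{urban}, so this is a welcome unwinding). The genuine divergence is the $p$-local step, which is the real content. You propose to show directly that $W^{I_p}\neq 0$ (with $\Frob_p=\alpha$) by ``propagating'' the semi-ordinarity of classical specializations through the GMA; this is precisely the argument the paper runs in \S\ref{ordrho1} for the $(3,2)$ extension, applied to the column module $M_2=SE_2$. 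That route should in fact work here as well: by \cite[Theorem 1.5.6]{bb} and a Lemma~\ref{S_p}-type argument, the kernel $\mathcal Q$ of $M_2/\gm M_2\oplus\epsilon_p^{2-k}\twoheadrightarrow W$ has $S$-simple subquotients missing $\rho_2=\rho_f$, hence no $\psi$ as a $G_{\Q_p}$-constituent, so the image of $M_2^{I_p}\not\subset\gm M_2$ survives in $W$; the resulting $\psi$-subline then forces $b=0$ in the matrix $\left(\begin{smallmatrix}\epsilon_p^{2-k}&b&c\\0&\psi&*\\0&0&\psi^{-1}\epsilon_p^{3-2k}\end{smallmatrix}\right)$ (for $k\geq 3$ because $\epsilon_p^{2-k}|_{I_p}\neq 1$, and for $k=2$ because $\psi\neq\mathbbm{1}$ by $({\bf Reg})$), yielding the ordinary flag you describe, and then Nekov\'a\v{r} gives de Rham.

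The paper, however, does not prove $W^{I_p}\neq 0$ by this route. It instead forms a weakly refined family in the sense of \cite[Definition 4.2.7]{bb} and applies the Kisin/Bella\"iche--Chenevier analytic continuation of crystalline periods \cite[Theorem 4.3.6]{bb} to get $\dim\cD^+_{\cris}(W)^{\Phi=\alpha}=1$; it then transports this period into the $2$-dimensional $G_{\Q_p}$-subrepresentation $V^0=\left(\begin{smallmatrix}\epsilon_p^{2-k}&b\\0&\psi\end{smallmatrix}\right)$ (the quotient $\psi^{-1}\epsilon_p^{3-2k}$ contributes no $\Phi=\alpha$ period), uses $\alpha\neq p^{k-2}$ to deduce that $V^0$ is crystalline, and only then concludes $b=0$ from $\rH^1_f(\Q_p,\psi^{-1}\epsilon_p^{2-k})=0$. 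Note the analytic continuation produces a $B^+_{\cris}$-period, not a priori an inertia-fixed line; your formulation misattributes to the density argument a conclusion that the paper only reaches at the end. The actual gap in your proposal is that the propagation step is left as ``I would show,'' without supplying its two nontrivial ingredients: that $M_2^{I_p}\not\subset\gm M_2$ (which uses $({\bf Reg})$ and the multiplicity-freeness of $\varrho|_{G_{\Q_p}}$), and that $\mathcal Q$ has no $\psi$-subquotient. Both are provable but not immediate, and without them the central step is incomplete.
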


\begin{proof}
The proof of \cite[Theorem 1.5.5]{bb} tells us that the homomorphism \eqref{1.5.5extn} is given by \begin{equation}\label{ext}
\begin{split}
\Hom(\cT_{1,2}/\cT_{1,2}',L) \hookrightarrow \rH^1(\Q, \rho_f(k-1)) \\ h  \mapsto (g \rightarrow  h(\bar{b}_1(g),\bar{b}_2(g))\rho^{-1}_f(g)), 
\end{split}
\end{equation}
where  $(\bar{b}_1(g),\bar{b}_2(g))$ is the class of $t_{1,2}(g)=(b_1(g),b_2(g)) \in M_{1,2}(\cT_{1,2})$ in $M_{1,2}(\cT_{1,2}/\cT_{1,2}')$.  The classical points old at $p$ and of regular weights form a  very Zariski dense set $\Sigma$ in every irreducible component of $\cE_\Delta$ specializing to $\pi_\alpha$ (see Lemma \ref{zdop} and \cite[Prop.3.3.6]{urban}). By Theorem \ref{Hodge-Tate}, the set of Hodge-Tate-Sen weights of the semi-simple representation $\rho_y$ attached to any point $y \in \cE_\Delta$ corresponding to a classical cuspidal Siegel eigenform old at $p$ of weight $(l_1,l_2)$ is $\{0,l_2-2,l_1-1,l_1+l_2-3\}$ and $\rho_y$ is crystalline at $p$.

 On the other hand, for any $y \in \Sigma$, let us denote by $\rho_y:G_\Q \rightarrow \GL_4(L_{y})$ the semi-simple $p$-adic Galois representation attached to the Siegel eigenform corresponding to $y$ (i.e.$\Tr \rho_y$ is the specialization of the universal pseudo-character $\mathrm{Ps}_{\cE_\Delta}:G_\Q \to \cO(\cE_\Delta)$ at $y$). Theorem \ref{Hodge-Tate} implies that $\dim \cD^{+}_{\cris}(\rho_y)^{\Phi=U_{0}(y)}=1 $, and then $(\mathrm{Ps}_{\cE_{\Delta}},\Sigma,U_0,\{\kappa_i\})$ is a weakly refined family  (in the sense of  \cite[Definition 4.2.7]{bb}) since $U_{0} \in \cO(\cE_\Delta)^{\times}$. Note also that condition (*) of \cite[Definition 4.2.7]{bb} is satisfied since we have a torsion free morphism $\kappa:\cE_\Delta \to \cW$; condition (v) of \cite[Definition 4.2.7]{bb} is satisfied by Lemma \ref{vzd}, Lemma \ref{zdop} and Corollary \ref{veryzariskidense1} (so the classical points of $\mathcal{E}_\Delta$  which are old at $p$ accumulate to $\pi_\alpha$).

Moreover, $\dim \cD^{+}_{\cris}(\varrho)^{\Phi=\alpha=U_{0}(\pi_\alpha)}=1$ by regularity assumption on $\varrho$ at $p$. 
 Hence, \cite[Theorem   4.3.6]{bb} implies that any $G_{\Q}$-representation $V$ corresponding to a cohomology class in the image of the morphism \eqref{ext} satisfies $$\dim \cD_{\cris}^{+}(V)^{\Phi=\alpha}=1.$$

We use this to first prove that $V$ is crystalline at $p$. One can see $V$ as the following $ G_\Q^{Np}$-extension:

$$ 0 \to \epsilon_p^{2-k} \to V \to \rho_f \to 0.$$ 

Let $\tilde{\rho}=\begin{pmatrix} 
\epsilon_p^{2-k}  & *  \\ 
       0 & \rho_f 
\end{pmatrix}$ be the realization of $V$ by a matrix. The restriction to $G_{\Q_p}$ of $\tilde{\rho}$ has the form $\begin{pmatrix} 
\epsilon_p^{2-k}  & b   & c  \\ 
       0 & \psi  & * \\
       0  & 0 &  \psi^{-1} \epsilon_p^{3-2k}
\end{pmatrix}$. Hence, we have an extension of $G_{\Q_p}$-modules $$0 \rightarrow \begin{pmatrix} 
\epsilon_p^{2-k}  & b     \\ 
       0 & \psi 
\end{pmatrix} \rightarrow \tilde{\rho}_{\mid G_{\Q_p}} \rightarrow \psi^{-1}\epsilon_p^{3-2k} \rightarrow 0.$$

Let $V^{0} \subset V$ be the $L$-vector space of dimension $2$ on which $G_{\Q_p}$ acts by $ \begin{pmatrix} 
\epsilon_p^{2-k}  & b     \\ 
       0 & \psi 
\end{pmatrix}$.   By applying the left exact functor $\cD^{+}_{\cris}(.)^{\Phi=\alpha}$ to the above exact sequence, we obtain  $$ \cD_{\cris}^{+}( V^0 )^{\Phi=\alpha} \simeq \cD_{\cris}^{+}(\tilde{\rho}_{\mid G_{\Q_p}})^{\Phi=\alpha}.$$

Since $\dim \cD_{\cris}(V)^{\Phi=\alpha}=1$, we get $\dim \cD_{\cris}^{+}(V^0 )^{\Phi=\alpha}=1$. Hence,  $V_0=\begin{pmatrix} 
\epsilon_p^{2-k}  & b     \\ 
       0 & \psi  
\end{pmatrix} $ is crystalline at $p$ which implies that the cohomology class of $ b$ in $\mathrm{Ext}^1_{G_{\Q_p}}(\psi,\epsilon_p^{2-k})$ is trivial (i.e.$\tilde{\rho}_{\mid G_{\Q_p}} \simeq \begin{pmatrix} 
\epsilon_p^{2-k} & 0   & c  \\ 
       0 & \psi& * \\
       0  & 0 &  \psi^{-1}\epsilon_p^{3-2k}
\end{pmatrix} $). Thus, $\tilde\rho$ is ordinary in the sense of Definition \ref{ordinaryrepsentation} and then semi-stable (hence de Rham) at $p$ by Theorem   \ref{nekthm}. 
Therefore the extension $V$ gives a cohomology class in  $$\rH^1_g(G_\Q^{Np}, \rho_f(k-1))=\ker(\rH^1(\Q, \rho_f(k-1)) \rightarrow \rH^1(\Q_{p}, \rho_f(k-1) \otimes B_{\dR})).$$ Since $\rH^1_g(G_\Q^{Np}, \rho_f(k-1)) \simeq \rH^1_f(G_\Q^{Np}, \rho_f(k-1))$ (see e.g. \cite[Lemme 4.1.3]{urban}) we deduce that $V$ is crystalline at $p$.

Finally, the restriction of the map $$\rH^1(\Q, \rho_f(k-1)) \rightarrow \rH^1(I_{\ell}, \rho_f(k-1)),$$ when $\ell \mid N$ is trivial by Proposition \cite[Lemme 4.1.3]{urban}.

\end{proof}

\subsection{Symplectic relation and the the anti-involution $\tau$ on $S$}\label{symmetry}

Recall that \begin{equation}
\mathrm{Ps}_{\mathcal{E}_{\Delta}} :G_\Q \to  \cO(\mathcal{E}_{\Delta})\end{equation}
are pseudo-characters of dimension $4$ and since the classical points of $\mathcal{E}_{\Delta}$ are Zariski dense, the relation \eqref{symplectic} implies that the pseudo-character $\mathrm{Ps}_{\cT}$ is invariant under the anti-involution $$\tau:\cT[G_\Q] \to \cT[G_\Q] \text { sending }g \to \epsilon_p^{-\kappa_1-\kappa_2+3}.g^{-1},$$ where $ \epsilon_p^{-\kappa_1-\kappa_2+3}$ is the character $G_\Q \to  \cO(\cU)^{\times}$ interpolating the similitude character of the $G_\Q$-semi-simple representations whose trace correspond to the classical specializations of the pseudo-character $\mathrm{Ps}_\cU:G_\Q \to \cO(\mathcal{E}_{\Delta}) \to \cO(\cU)$ for a small enough connected affinoid neighborhood $\mathcal{U}$ of $\pi_\alpha$. 

Hence $\tau$ yields an anti-automorphism on $S$ given by $\rho_K \circ \tau$ and it follows from \cite[Lemma.1.8.3]{bb} that we can choose our idempotent $\tilde{e}_1,\tilde{e}_2,\tilde{e}_3$ of $S$ lifting the idempotents $e_1,e_2,e_3$ attached respectively to $\epsilon_p^{2-k},\rho_f,\epsilon_p^{1-k}$, and such that $\tilde{e}_{\tau(1)}=\tilde{e}_3$ ($\tau$ preserves the idempotent corresponding to $\rho_f$, and switches the idempotents corresponding to $\epsilon_p^{1-k}$, $\epsilon_p^{2-k}$). 

By \eqref{1.5.5extn} there exists an injection \begin{equation}\label{extinv} \Hom(\cT_{2,3}/\cT'_{2,3},L) \hookrightarrow \mathrm{Ext}^1_{G_\Q^{Np}}(\epsilon_p^{1-k},\rho_f) \simeq  \rH^{1}(\Q,\rho_f(k-1)).\end{equation}

Proposition \cite[Prop.1.8.6]{bb} yields immediately the following corollary.

\begin{cor}\label{antiinvB'}Assume that $\alpha \ne 1$ when $k=2$. Then the image of \eqref{extinv} lands in $\rH^{1}_{f,\unr}(\Q,\rho_f(k-1))$ and has dimension equal to the dimension of the image of the morphism \eqref{imcrys}.

\end{cor}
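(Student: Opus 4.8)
\textbf{Proof proposal for Corollary \ref{antiinvB'}.}

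The plan is to transport the statement of Theorem \ref{Bcrys} along the anti-involution $\tau$, exactly as in \cite[Prop.~1.8.6]{bb}. Recall that $\tau$ has been arranged so that it fixes the idempotent $\tilde e_2$ (attached to $\rho_f$) and swaps $\tilde e_1 \leftrightarrow \tilde e_3$ (attached to $\epsilon_p^{2-k}$ and $\epsilon_p^{1-k}$). Consequently $\tau$ induces isomorphisms of $\cT$-modules $\cT_{1,2} \xrightarrow{\sim} \cT_{2,3}$ and $\cT_{2,1} \xrightarrow{\sim} \cT_{3,2}$, compatibly with the products defining $\cT'_{1,2} = \cT_{1,3}\cT_{3,2}$ and $\cT'_{2,3} = \cT_{2,1}\cT_{1,3}$ (using that $\tau$ sends $\cT_{1,3}$ to itself up to the twist). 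Hence $\tau$ gives an isomorphism $\cT_{1,2}/\cT'_{1,2} \xrightarrow{\sim} \cT_{2,3}/\cT'_{2,3}$, and dualising, an isomorphism $\Hom(\cT_{2,3}/\cT'_{2,3},L) \xrightarrow{\sim} \Hom(\cT_{1,2}/\cT'_{1,2},L)$. In particular the source of \eqref{extinv} and the source of \eqref{imcrys} have the same dimension. The only remaining point is to check that, under this identification, the extension class produced by \eqref{extinv} coincides (up to the symplectic duality $\rho_f^\vee \simeq \rho_f(2k-3)$, which converts $\rho_f(k-2)^\vee(1)$-type data into $\rho_f(k-1)$) with the one produced by \eqref{imcrys}; this is precisely the content of \cite[Prop.~1.8.6]{bb}, which says that the two families of extensions obtained from an $S$-block and its $\tau$-transpose are related by the duality pairing. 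Feeding this through the equality $\rH^1_g = \rH^1_f$ for $\rho_f(k-1)$ and the local triviality at $\ell \mid N$ already established in the proof of Theorem \ref{Bcrys}, we conclude that the image of \eqref{extinv} lies in $\rH^1_{f,\unr}(\Q,\rho_f(k-1))$ and has dimension equal to that of the image of \eqref{imcrys}.

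Concretely, I would carry this out in three steps. First, record the $\tau$-equivariance of the GMA structure: since $\rho_K \circ \tau$ is an anti-automorphism of $S$ preserving the chosen idempotents up to the permutation $(1\,3)$, \cite[Lemma~1.8.3]{bb} (already invoked in \S\ref{symmetry}) gives the module isomorphisms $\cT_{i,j} \simeq \cT_{\tau(j),\tau(i)}$ twisted by $\epsilon_p^{-\kappa_1-\kappa_2+3}$; specialising at $\pi_\alpha$ this twist becomes $\epsilon_p^{3-2k}$, which is exactly $\det\rho_f$, so it matches the self-duality $\rho_f^\vee \simeq \rho_f(2k-3)$ and turns an extension of $\epsilon_p^{2-k}$ by $\rho_f$ into an extension of $\rho_f$ by $\epsilon_p^{1-k}$. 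Second, apply \cite[Prop.~1.8.6]{bb} verbatim: it shows the diagram relating \eqref{1.5.5extn} for $(i,j)=(1,2)$ and for $(i,j)=(2,3)$ via $\tau$ and the duality pairing on $\rH^1$ commutes, hence the images of \eqref{imcrys} and \eqref{extinv} have equal dimension. Third, note that the Selmer conditions are self-dual under this pairing: crystallinity at $p$ for a class valued in $\rho_f(k-1)$ is preserved (the functor $\cD_{\cris}$ is compatible with the duality, and $\rH^1_f$ is its own annihilator under Tate's local pairing), and the unramified-at-$\ell$ condition for $\ell\mid N$ is likewise preserved; both were verified in Theorem \ref{Bcrys} for the image of \eqref{imcrys}, so they hold for the image of \eqref{extinv}.

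The main obstacle, such as it is, lies in bookkeeping rather than in any new idea: one must be careful that the twist $\epsilon_p^{-\kappa_1-\kappa_2+3}$ appearing in the anti-involution, when specialised at the weight of $\pi_\alpha$, produces exactly the right Tate twist so that the $\tau$-image of an extension of $\epsilon_p^{2-k}$ by $\rho_f$ (a class in $\rH^1(\Q,\rho_f(k-1))$) is again naturally a class in $\rH^1(\Q,\rho_f(k-1))$ rather than in some other twist — this uses crucially that $\rho_f$ is essentially self-dual with the specific determinant $\epsilon_p^{3-2k}$. Once this is pinned down, everything else is a direct citation of \cite[Lemma~1.8.3, Prop.~1.8.6]{bb} together with the local computations already done in the proof of Theorem \ref{Bcrys}; there is no genuinely new global or $p$-adic-Hodge-theoretic input required.
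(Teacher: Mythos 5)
Your proposal is correct and follows exactly the paper's route: the paper's proof consists of the single sentence "Proposition \cite[Prop.1.8.6]{bb} yields immediately the following corollary," and your write-up simply unpacks how that proposition applies, via the $\tau$-equivariance of the GMA entries ($\cT_{1,2}\simeq\cT_{2,3}$, $\cT'_{1,2}\simeq\cT'_{2,3}$) and the compatibility of the twist $\epsilon_p^{3-2k}$ with the self-duality $\rho_f^\vee\simeq\rho_f(2k-3)$. The bookkeeping you flag as the "main obstacle" is precisely what \cite[Lemma.1.8.3, Prop.1.8.6]{bb} handle, so there is nothing missing.
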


\section{Crystallinity of the $S$-extensions occuring in $\rH^1(\Q,\epsilon_p)$}\label{cnsht}

In this section we show using the analytic continuation of the crystalline periods in a family of $p$-adic $G_{\Q_p}$-representations of generic rank $3$ interpolating $\{ \rho_z/\rho_z^{I_p}, z \in \cE_\Delta^{\mid U_0 \mid_p=1} \}$ the crystallinity of the $S$-extensions occuring in $\rH^1(\Q,\epsilon_p)$. Assume in this section  $({ \bf Reg})$, $({ \bf St})$ and that $k \geq 2$.

By \eqref{1.5.5extn} we have a natural injection \begin{equation}\label{ext1cyc}\begin{split}
\Hom(\cT_{1,3}/\cT'_{1,3},L) &\hookrightarrow \mathrm{Ext}^1_{G_\Q^{Np}}(\epsilon_p^{1-k},\epsilon_p^{2-k}) \simeq \rH^{1}(G_\Q^{Np},\epsilon_p) \\ h & \mapsto (g \rightarrow  \frac{h(\bar{t}_{1,3}(g))}{\epsilon_p^{1-k}(g)}), \end{split}
\end{equation}

where $\bar{t}_{1,3}(g)$ is the class of $t_{1,3}(g) \in \cT_{1,3}$ in $\cT_{1,3}/\cT'_{1,3}$.

 Now we have to determine the exact image of the injective morphism $\eqref{ext1cyc}$. In \cite[\S1.5.4]{bb}, Bella{\"{\i}}che-Chenevier introduce a left ideal $M_3=S.E_3$ of $S \subset M_4(K)$ which is the third column of the GMA matrix $S$ and hence it is a projective left $S$-module (see \cite[1.3.3]{bb} for the definition of $E_3$), and they proved in \cite[Theorem   1.5.6]{bb} and \cite[Lemma.4.3.9]{bb} the following results: 

\begin{enumerate}\label{N}

\item  There exists an exact sequence of $S$-left modules \begin{equation}\label{multfree1of1} 0 \rightarrow E \rightarrow M_3/\gm M_3\rightarrow \epsilon_p^{1-k} \to 0 \end{equation}

\item Any simple $S$-subquotient of $E$ occurs in the set $\{\rho_f, \epsilon_p^{2-k}\}$ (in particular it is not isomorphic to $\epsilon_p^{1-k}$).

\item The image of the morphism \eqref{ext1cyc} consists of extensions occurring as quotient of the  $S/\gm S$-module $M_3/\gm M_3 \oplus \epsilon_p^{2-k}$ by an $S$-submodule $\mathcal{Q}$ such that the $S$-simple subquotient of $\mathcal{Q}$ occurs in  $\{\rho_f, \epsilon_p^{2-k}\}$ (in particular it is not isomorphic to $\epsilon_p^{1-k}$).
\end{enumerate}

We will need the following additional property:
\begin{lemma} \label{S_p}
Let $\mathrm{S}_p$ be the subring generated by the image of $G_{\Q_p}$ in $S$. Then the $\mathrm{S}_p$-simple subquotients of $\mathcal{Q}$ occur in $\{\epsilon_p^{2-k}, \psi, \psi^{-1}\epsilon_p^{3-2k}\}$.
\end{lemma}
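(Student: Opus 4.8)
The plan is to combine the structural results (1)--(3) recalled above about $M_3/\gm M_3$ with the semi-ordinarity of the Galois representations $\rho_z$ along a very Zariski dense set of classical points. First I would recall from \S\ref{ordrho} (in the basis fixed there) that the restriction to $G_{\Q_p}$ of the pseudo-character carried by $\cT$ has a matrix realization whose entries lie in the ``upper triangular'' GMA pattern dictated by the block form $\varrho(G_{\Q_p})$: the inertia-fixed line $\rho^{I_p}$ gives a quotient character $\epsilon_p^{2-k}$ (the $(1,1)$-entry), the $2$-dimensional middle block reduces to $\begin{pmatrix}\psi & *\\ 0 & \psi^{-1}\epsilon_p^{3-2k}\end{pmatrix}$, and the last diagonal entry is $\epsilon_p^{1-k}$. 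Thus the mod-$\gm$ reduction $S_p/\gm S_p$ has, as composition factors of its natural (column) modules, only the four characters $\epsilon_p^{2-k}$, $\psi$, $\psi^{-1}\epsilon_p^{3-2k}$, $\epsilon_p^{1-k}$; equivalently, any $S_p$-simple subquotient of any $S/\gm S$-module is one of these four characters.

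Next I would apply this to $\mathcal{Q}$. By point (3) above, $\mathcal{Q}$ is an $S/\gm S$-submodule of $M_3/\gm M_3 \oplus \epsilon_p^{2-k}$ all of whose $S$-simple subquotients lie in $\{\rho_f,\epsilon_p^{2-k}\}$. Restricting scalars along $S_p \hookrightarrow S$, each such $S$-simple subquotient breaks up as an $S_p$-module into $S_p$-simple subquotients, and these must simultaneously lie in the $G_{\Q_p}$-composition factors of $\{\rho_f,\epsilon_p^{2-k}\}$ and in the list $\{\epsilon_p^{2-k},\psi,\psi^{-1}\epsilon_p^{3-2k},\epsilon_p^{1-k}\}$ produced in the previous step. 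Since $(\rho_f)_{\mid G_{\Q_p}}$ has $G_{\Q_p}$-composition factors $\psi$ and $\psi^{-1}\epsilon_p^{3-2k}$ (by ordinarity at $p$, recalled in \S2), and $\epsilon_p^{2-k}$ is already a character, the intersection is exactly $\{\epsilon_p^{2-k},\psi,\psi^{-1}\epsilon_p^{3-2k}\}$. This gives the claimed conclusion. The point of excluding $\epsilon_p^{1-k}$ here — which is the feature we will exploit downstream in \S\ref{cnsht} — is already guaranteed by (3) on the level of $S$-modules, but the finer statement over $S_p$ pins down precisely which unramified/cyclotomic characters can appear, which is what the analytic-continuation-of-crystalline-periods argument needs.

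The one genuine subtlety is to make sure the reduction modulo $\gm$ of the $G_{\Q_p}$-block structure is valid, i.e.\ that the idempotents and the upper-triangular shape of $\varrho(G_{\Q_p})$ lift compatibly to $S$. This is exactly the content of the GMA formalism of \cite[\S1.4]{bb} together with the explicit shape of $\varrho_{\mid G_{\Q_p}}$ recorded in \S\ref{ordrho}: because the four eigenvalues $\mu=\epsilon_p^{2-k}(\tau)$, $\alpha=\psi(\tau)$, $\beta=\psi^{-1}\epsilon_p^{3-2k}(\tau)$, $\gamma=\epsilon_p^{1-k}(\tau)$ of a suitable $\tau\in G_{\Q_p}$ are pairwise distinct under $(\mathbf{Reg})$, one can diagonalize $\tau$ inside $S_p$ and read off that every $S_p$-module's composition factors are among the corresponding four characters. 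So the main obstacle is not deep; it is bookkeeping: carefully transporting the composition-series statement through the restriction functor $S\text{-mod}\to S_p\text{-mod}$ and through the mod-$\gm$ reduction, and invoking the $p$-ordinarity of $\rho_f$ to identify the two characters $\psi,\psi^{-1}\epsilon_p^{3-2k}$ as the $G_{\Q_p}$-constituents of $\rho_f$.
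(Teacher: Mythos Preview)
Your argument is correct and follows essentially the same approach as the paper: both refine the $S$-Jordan--H\"older series of $\mathcal{Q}$ (whose factors lie in $\{\rho_f,\epsilon_p^{2-k}\}$) to an $\mathrm{S}_p$-composition series, noting that the $\mathrm{S}_p$-action on each $S$-simple factor coincides with the $L[G_{\Q_p}]$-action, so the resulting factors are $\psi$, $\psi^{-1}\epsilon_p^{3-2k}$, $\epsilon_p^{2-k}$. The only difference is that the paper first invokes \cite[Lemma~1.2.7]{bb} to identify $\mathrm{S}_p/\mathrm{rad}(\mathrm{S}_p)$ and hence classify all simple $\mathrm{S}_p$-modules, whereas you proceed more directly; your first paragraph and the discussion of diagonalizing $\tau$ are in fact not needed, since one-dimensional $\mathrm{S}_p$-modules are automatically simple.
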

\begin{proof}
Let ${\rm Ps}_p$ be the restriction of ${\rm Ps}_{\cT}$ to $\mathrm{S}_p$. By \cite[Lemma 1.2.7]{bb} we have $S/{\rm rad}(S)\cong \overline{S}/{\rm ker}\overline{{\rm Ps}}$ and $\mathrm{S}_p/{\rm rad}(\mathrm{S}_p)\cong \overline{S}_p/{\rm ker}\overline{{\rm Ps}}_p$, hence ${\rm rad}(S ) \cap \mathrm{S}_p \subset {\rm rad}(\mathrm{S}_p)$, and we obtain a morphism $\mathrm{S}_p/{\rm rad}(S ) \cap \mathrm{S}_p \twoheadrightarrow \mathrm{S}_p/{\rm rad}(\mathrm{S}_p)=\overline{S}_p/{\rm ker}\overline{{\rm Ps}}_p\cong \prod_{i=1}^4 {\rm End}_L(\rho_i),$ where $\rho_i \in \{\epsilon_p^{2-k}, \psi, \psi^{-1}\epsilon_p^{2k-3}, \epsilon_p^{1-k}\}$. In particular, one can see that all $\{\epsilon_p^{2-k}, \psi, \psi^{-1}\epsilon_p^{2k-3}, \epsilon_p^{1-k}\}$ are simple $\mathrm{S}_p$-modules. Now, we claim that any simple $\mathrm{S}_p$-representation occurs in \[ \{\epsilon_p^{2-k}, \psi, \psi^{-1}\epsilon_p^{2k-3}, \epsilon_p^{1-k}\},\] and it follows immediately from the injection $\mathrm{S}_p/{\rm rad}(S ) \cap \mathrm{S}_p  \hookrightarrow S/{\rm rad}(S) \simeq \overline{S}/{\rm ker}\overline{{\rm Ps}}\cong \prod_{i=1}^3 {\rm End}_L(\rho_i)$ whose image is $\prod_{i=1}^3 {\rm End}_L((\rho_i)_{\mid G_{\Q_p}})$ (so $\mathrm{S}_p/{\rm rad}(\mathrm{S}_p)$ is a semi-simple quotient of $\prod_{i=1}^3 {\rm End}_L((\rho_i)_{\mid G_{\Q_p}})$).

The rest of the lemma follows  from the fact (see \eqref{multfree1of1}(iii)) that the $S$-module $\mathcal{Q}$ has a Jordan-Holder sequence, all subquotients of which are isomorphic to either $\rho_f$ or $\epsilon_p^{2-k}$, and  it has a refinement as $\mathrm{S}_p$-module for which the $\mathrm{S}_p$-simple subquotients occur in $\{\epsilon_p^{2-k}, \psi, \psi^{-1}\epsilon_p^{2k-3}\}$.

\end{proof}

We recall that a torsion-free $A$-module is a module over a ring $A$ such that $0$ is the only element annihilated by a regular element (i.e non-zero-divisor of $A$) of the ring. A coherent sheaf $\mathcal{F}$ over a rigid analytic space $X$ is a sheaf of $\mathcal {O}_{X}^{\rig}$-modules such that there exists an admissible covering of $X$ by affinoid subdomains $\{U_i= \Spm R_i\}$ of $X$ for which the restriction $\mathcal{F}_{\mid U_i}$ is associated to $\tilde{M_i}$ and $M_i$ is a finite type $R_i$-module.

The sheaf $\mathcal{F}$ is said to be torsion-free if all those modules $M_i$ are torsion-free over their respective rings. Alternatively, $\mathcal{F}$ is torsion-free if and only if it has no local torsion sections.

\begin{lemma}\ \begin{enumerate}

\item One has $M_3 \subset K^4$ and $M_3.K=K^4$. Moreover, $M_3$ is a $\cT$-torsion-free lattice of the representation $\rho_K \to \GL_4(K)$.

\item The natural morphism $M_3 \to M_3 \otimes_{\cT} K$ is injective and the natural morphism $$M_3 \otimes_{\cT} K \to M_3.K$$ is an isomorphism.
\end{enumerate}
\end{lemma}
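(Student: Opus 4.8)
The plan is to unwind the construction of the left ideal $M_3 = S \cdot E_3$ inside $S \subset M_4(K)$, where $E_3$ is the idempotent of $M_4(K)$ cutting out the third standard coordinate. First I would recall from \cite[\S1.3.3]{bb} that $E_3$ is a rank-one idempotent, so $M_4(K) \cdot E_3 = K^4$ as a left $M_4(K)$-module; since $S \subset M_4(K)$ we get immediately the inclusion $M_3 = S \cdot E_3 \subset M_4(K) \cdot E_3 = K^4$. For the second assertion of (i), note $K = \prod_i K_i$ with $\rho_{K_i}$ absolutely irreducible, so over each factor $K_i$ the image of $\cT[G_\Q]$ in $M_4(K_i)$ is all of $M_4(K_i)$ (by the density/Burnside argument already invoked in defining $S$). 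Hence $S \cdot K = \prod_i M_4(K_i) = M_4(K)$, and therefore $M_3 \cdot K = (S \cdot E_3)\cdot K = (S\cdot K)\cdot E_3 = M_4(K)\cdot E_3 = K^4$. This shows $M_3$ spans $K^4$ over $K$, i.e. $M_3$ is a lattice of the tautological representation $\rho_K : G_\Q \to \GL_4(K)$. Torsion-freeness over $\cT$ is automatic: $M_3$ is a $\cT$-submodule of $K^4$, and since $\cT$ is reduced with total fraction ring $K$, any $\cT$-submodule of a $K$-vector space has no $\cT$-torsion (a regular element of $\cT$ acts invertibly on $K^4$).

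For part (ii), injectivity of $M_3 \to M_3 \otimes_\cT K$ is exactly the statement that $M_3$ is $\cT$-torsion-free, which was just established: the kernel of $M \to M \otimes_\cT K$ for any module $M$ is the set of elements killed by a non-zerodivisor, and for torsion-free $M$ this is $0$. (Here one uses that $K = \mathcal{S}^{-1}\cT$ is the localization at the multiplicative set of non-zerodivisors, as recalled in the Definition/Proposition above.) For the isomorphism $M_3 \otimes_\cT K \xrightarrow{\sim} M_3 \cdot K$: the multiplication map $M_3 \otimes_\cT K \to K^4$ has image exactly $M_3 \cdot K$ by definition, so it suffices to check injectivity. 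Since $M_3$ is finitely generated and torsion-free over the reduced Noetherian ring $\cT$, localization at $\mathcal{S}$ is exact and $M_3 \otimes_\cT K = \mathcal{S}^{-1}M_3$ injects into $\mathcal{S}^{-1}(K^4) = K^4$ (localization of an injection $M_3 \hookrightarrow K^4$ of $\cT$-modules, $\mathcal{S}$ being a multiplicative set of non-zerodivisors), and its image is precisely $M_3 \cdot K$. Combining the two displays, $M_3 \otimes_\cT K \to M_3 \cdot K$ is an isomorphism.

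There is no real obstacle here; the only point requiring a little care is to make sure the identity $S \cdot K = M_4(K)$ is genuinely available — it follows because each $\rho_{K_i}$ is absolutely irreducible (already noted in the Definition/Proposition: all minimal primes of $\cT$ correspond to stable components), so $K_i[G_\Q]$ surjects onto $\End_{K_i}(\rho_{K_i}) = M_4(K_i)$ by the Jacobson density theorem, and hence so does $K[G_\Q] = \prod_i K_i[G_\Q]$. Everything else is formal commutative algebra over the reduced Noetherian ring $\cT$ with its total fraction ring $K$. I would present (i) and (ii) in that order, since (ii) is essentially a restatement of the torsion-freeness in (i) together with the spanning property $M_3 \cdot K = K^4$.
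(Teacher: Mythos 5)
Your proof is correct and follows essentially the same route as the paper: realize $M_3$ as the third column of the GMA inside $M_4(K)$, use irreducibility of $\rho_K$ (over each factor $K_i$) to see it spans $K^4$, and derive (ii) from the identification $M_3\otimes_\cT K = \mathcal{S}^{-1}M_3$ together with torsion-freeness. You simply unpack two steps the paper leaves implicit: the Burnside/Jacobson-density justification of $S\cdot K = M_4(K)$, and the observation that torsion-freeness is automatic for a $\cT$-submodule of a $K$-vector space (the paper instead cites the torsion-freeness of $S$ already recorded in the preceding Definition/Proposition); both are harmless elaborations. One tiny slip: with the block sizes $(1,2,1)$, the idempotent $E_3$ projects onto the \emph{fourth} standard coordinate rather than the third, but this does not affect any step of the argument.
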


\begin{proof}
i) Note that the finite type $\cT$-module $M_3$ corresponds to the third column of the GMA matrix $S \subset M_4(K)$, hence $M_3 \subset K^4$. Since $\rho_K:G_\Q \to S^{\times} \subset \GL_4(K)$ is irreducible, then $M_3.K$ is necessarily of rank $4$ over $K$. 

ii) Recall that  $M_3 \otimes_{\cT} K= M_3 \otimes_{\cT} \mathcal{S}^{-1} \cT$, where $\mathcal{S}$ is the set of non-zero divisors of $\cT$. Hence, $M_3 \otimes_{\cT} K= \mathcal{S}^{-1}M_3$ and the injection follows from the fact that $M_3$ is torsion-free. It yields also that the natural surjection $M_3 \otimes_{\cT} K \twoheadrightarrow M_3.K=K^4$ is in fact an isomorphism.
\end{proof}

\begin{thm}\label{lastentry}Assume $({ \bf Reg})$ and $({ \bf St})$.  Then the image of the injective morphism of $L$-modules $$\Hom(\cT_{1,3}/\cT'_{1,3},L) \hookrightarrow \mathrm{Ext}^1_{G_\Q^{Np}}(\epsilon_p^{1-k},\epsilon_p^{2-k}) \simeq \rH^{1}(G_\Q^{Np},\epsilon_p)$$
lands in $\rH^{1}_{f,\unr}(\Q,\epsilon_p)=\{0\}$.

\end{thm}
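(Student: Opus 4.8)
The plan is to show that the $S$-extension $V \in \mathrm{Ext}^1_{G_\Q^{Np}}(\epsilon_p^{1-k},\epsilon_p^{2-k}) \simeq \rH^1(G_\Q^{Np},\epsilon_p)$ coming from an element of $\Hom(\cT_{1,3}/\cT'_{1,3},L)$ is (a) crystalline at $p$ and (b) unramified at every $\ell \mid N$. Once both hold, the class lies in $\rH^1_{f,\unr}(\Q,\epsilon_p)$, which by Kummer theory is identified with $\Z^\times \otimes L = \{0\}$, forcing the image to be zero. The genuinely hard part is (a), the crystallinity at $p$, since the relevant Hodge--Tate weight ($\epsilon_p$ has weight $-1$) is \emph{not} the smallest one of the ambient $4$-dimensional representation; this is exactly the subtlety flagged in the introduction when comparing with \cite[Prop.\ 8.2.14]{bb}.

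First I would treat the local condition at $p$. Choose a very Zariski-dense set $\Sigma \subset \cE_\Delta^{\mid U_0 U_1 \mid_p = 1}$ of classical points old at $p$ and semi-ordinary (using Lemma \ref{zdop}, Corollary \ref{veryzariskidense1}, and the density results of \S\ref{s4}). For $z \in \Sigma$ of cohomological weight $(l_1,l_2)$, work with the $3$-dimensional quotient $\rho_z/\rho_z^{I_p}$: by Corollary \ref{refinemen} its $2$-dimensional crystalline subquotient $W_z$ has Hodge--Tate weights $(l_1-1,l_2-2)$ with Frobenius eigenvalues $\lambda_z(U_1 U_0^{-1})p^{l_2-2}$ and $\lambda_z(U_0 U_1^{-1})p^{l_1-1}$, so the smallest Hodge--Tate weight of $\rho_z/\rho_z^{I_p}$ is $l_2-2$ and $\dim \cD_{\cris}(\rho_z/\rho_z^{I_p})^{\Phi = U_1/U_0(z)\,p^{l_2-2}} = 1$. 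This makes the family $\{\rho_z/\rho_z^{I_p}\}$ a weakly refined family in the sense of \cite[Def.\ 4.2.7]{bb} once one checks the accumulation and torsion-freeness hypotheses — here I would use Lemma \ref{S_p} together with the exact sequence \eqref{multfree1of1} to realize the relevant extension $V$ inside a torsion-free coherent module of generic rank $3$ (the quotient of $M_3$ by the inertia-fixed line), on which this refined structure is carried. Applying the analytic continuation of crystalline periods \cite[Theorem 4.3.6]{bb} to this rank-$3$ family and specializing at $\pi_\alpha$, the limiting crystalline period is
\[
\lim_{z \to \pi_\alpha} U_1/U_0(z)\, p^{l_2(z)-2} = U_1/U_0(\pi_\alpha)\, p^{k-2} = p^{k-1},
\]
using $U_0(\pi_\alpha) = \alpha$, $U_1(\pi_\alpha) = p\alpha$. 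Hence $\cD_{\cris}^{\Phi = p^{k-1}}(V) \ne 0$. Since $V$ sits in $0 \to \epsilon_p^{2-k} \to V \to \epsilon_p^{1-k} \to 0$ with the two Hodge--Tate weights $k-2$ and $k-1$, a nonzero $\Phi = p^{k-1}$-eigenvector in $\cD_{\cris}(V)$ together with $\dim \cD_{\cris}(\epsilon_p^{2-k})^{\Phi=p^{k-2}}=1$ forces $\dim \cD_{\cris}(V) = 2$, i.e.\ $V$ is crystalline at $p$. (One must be slightly careful that Lemma \ref{S_p} guarantees the $\mathrm{S}_p$-subquotients of $\mathcal{Q}$ avoid $\epsilon_p^{1-k}$, so the period we produce genuinely detects the ``top'' piece of $V$ rather than being absorbed into $\mathcal{Q}$; this is the technical crux.)

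Next I would handle the primes $\ell \mid N$. By \cite[Lemme 4.1.3]{urban} the $S$-extensions occurring in $\rH^1(\Q,\epsilon_p)$ are automatically unramified outside $p$ except possibly at such $\ell$. Under $({\bf St})$, the local representation $\pi_{f,\ell}$ is Steinberg (twisted), and more importantly for a family of paramodular tame level the monodromy operator of the Weil--Deligne representation at $\ell$ attached to $\rho_z$ has rank $\le 1$ for non-CAP non-endoscopic $z$ (Proposition \ref{monodromy}); by semicontinuity of the rank of the monodromy operator in $p$-adic families this rank is generically exactly $1$ on each stable component through $\pi_\alpha$. Tracking which graded piece of the $4$-dimensional $\rho_z|_{G_{\Q_\ell}}$ carries this monodromy (it lies in the $2$-dimensional special piece $\rho_f|_{G_{\Q_\ell}}$, not in the two cyclotomic lines, which are unramified at $\ell$ since $\ell \nmid p$), one sees that the associated extension of $\epsilon_p^{1-k}$ by $\epsilon_p^{2-k}$ restricted to $I_\ell$ must be trivial — any nontrivial ramification at $\ell$ would have to show up in the monodromy attached to the cyclotomic constituents, contradicting rank $\le 1$ concentrated on $\rho_f$. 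Hence the image of \eqref{ext1cyc} lands in $\rH^1(I_\ell,\epsilon_p)$ trivially for all $\ell \mid N$.

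Combining the two steps, the image of $\Hom(\cT_{1,3}/\cT'_{1,3},L) \hookrightarrow \rH^1(G_\Q^{Np},\epsilon_p)$ consists of classes that are crystalline at $p$ and unramified at all $\ell \mid N$, hence lie in $\rH^1_{f,\unr}(\Q,\epsilon_p)$. Finally, by the Kummer isomorphism $\rH^1_{f,\unr}(\Q,\epsilon_p) \simeq (\Z^\times \otimes_\Z L) = \{0\}$ (the global units of $\Q$ being $\pm 1$, which is torsion), so the image is zero. I expect the main obstacle to be the bookkeeping in step one: verifying all the hypotheses of the weakly-refined-family machinery of \cite{bb} for the rank-$3$ family $\{\rho_z/\rho_z^{I_p}\}$ (in particular the torsion-freeness of the coherent module realizing $V$ and the compatibility of Lemma \ref{S_p} with the submodule $\mathcal{Q}$ so that the produced crystalline period is attached to the correct extension class), and then correctly matching the numerical value $p^{k-1}$ of the limiting period to the Frobenius eigenvalue forcing crystallinity.
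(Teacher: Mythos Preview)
Your plan matches the paper's approach closely, and the crystallinity-at-$p$ step is essentially what the paper does: pass to the rank-$3$ torsion-free quotient $\mathcal{M}' = \mathcal{M}/\mathcal{M}^{I_p}$, verify the refined-family hypotheses, and propagate the crystalline period $U_1/U_0 \cdot p^{\kappa_2-2}$ to the specialization at $\pi_\alpha$. Two small corrections here: the result you need from \cite{bb} is Theorem~3.4.1 (continuity of crystalline periods for a torsion-free coherent family), not Theorem~4.3.6 (which is the pseudocharacter version used in Theorem~\ref{Bcrys}); and to conclude $\dim \cD_{\cris}(V)=2$ from $\cD_{\cris}^{\Phi=p^{k-1}}(V)\ne 0$, the clean argument is to apply the left-exact functor $\cD_{\cris}^{\Phi=p^{k-1}}(\cdot)$ to $0\to \epsilon_p^{2-k}\to V\to \epsilon_p^{1-k}\to 0$ and use $\cD_{\cris}^{\Phi=p^{k-1}}(\epsilon_p^{2-k})=0$, so your nonzero period maps isomorphically onto $\cD_{\cris}^{\Phi=p^{k-1}}(\epsilon_p^{1-k})$, forcing surjectivity of $\cD_{\cris}(V)\to \cD_{\cris}(\epsilon_p^{1-k})$.

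The genuine gap is in your step~(b). Your heuristic that the monodromy at $\ell\mid N$ ``is concentrated on $\rho_f|_{G_{\Q_\ell}}$ and not on the cyclotomic lines'' is the right intuition, but as written it is not a proof: a~priori, in the GMA $S$ the local algebra $\mathrm{S}_\ell$ could mix the $\rho_f$-block with the two one-dimensional pieces, and semicontinuity alone does not tell you \emph{where} the rank-$1$ monodromy sits. The paper makes this rigorous by invoking Proposition~\ref{locl}(ii) (which uses the full strength of $({\bf St})$, in particular the sign condition $a_\ell(f)=-\ell^{k-2}$, to get $\rH^1(\Q_\ell,\rho_f(k-2))=0$) so that \cite[Lemma~8.2.11]{bb} applies with $\mathcal{P}=\{\epsilon_p^{1-k},\epsilon_p^{2-k}\}$. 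This yields idempotents for which $\tilde{e}=\tilde{e}_1+\tilde{e}_2$ is \emph{central} in $\mathrm{S}_\ell$, i.e.\ $\mathrm{S}_\ell$ is block-diagonal of type $(2,2)$. One then shows $(1-\tilde{e})N_K$ already has rank~$1$ (since $\rho_f$ is twisted Steinberg at $\ell$), so the total rank-$1$ bound from Proposition~\ref{monodromy} forces $\tilde{e}N_K=0$, which is exactly the triviality of your extension on $I_\ell$. Without this block-diagonal decomposition your argument is incomplete; also note that your citation of \cite[Lemme~4.1.3]{urban} here is misplaced --- that lemma is used for the $\rho_f(k-i)$ extensions, not for $\epsilon_p$.
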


\begin{proof} 

To simplify notation, let $M$ denote the finite type   $S$-module $M_3$. We recall that $M$ is a torsion-free finite type $\cT$-module, because $S$ is of finite type over $\cT$ and $M \subset S \subset M_4(K)$. According to \cite[Lemma.4.3.7]{bb}, there exists an open affinoid neighborhood $\mathcal{U}=\Spm A$ of $\pi_\alpha$ inside $\cE_\Delta$ such that we can extend $M$ to an analytic torsion-free coherent sheaf $\tilde{\mathcal{M}}$ over $\mathcal{U}$ ($\mathcal{M}$ is the $A$-module associated to $\tilde{\mathcal{M}}$) and such that:

\begin{enumerate}

\item $Q(A) \otimes \mathcal{M}=Q(A)^4$ (i.e the generic rank of $\mathcal{M}$ is $4$ \footnote{We have to choose $\Spm A$ small enough in the aim that it is connected and it contains no more irreducible components than $\Spec \cT$, to have a natural inclusion $Q(A) \subset K$.}) 

\item $\mathcal{M} \otimes_A \cT=M$ (i.e the stalk of $\tilde{\mathcal{M}}$ at $\pi_\alpha$ is $M$). 

\item The $A$-module $\mathcal{M}$ carries a continuous action of $G_\Q$ compatible with the action of $G_\Q$ on its localization $M$ at $\pi_\alpha$, and the generic representation $G_\Q \to \GL_4(Q(A))$ is semi-simple and its trace is just the trace given by $\mathrm{Ps}_A:G_\Q \rightarrow \cO(\cE_\Delta) \to A$. 

\end{enumerate}

On the other hand, by semi-ordinarity at $p$, the action of $I_p$ on $Q(A)^4$ stabilizes a line $(Q(A)^4)^{I_p}$ on which $\Frob_p$ acts by $U_{0}$. Let $\tilde{\mathcal{L}}$ be the subsheaf of $\mathcal{M}$ given by 
$(Q(A)^4)^{I_p} \cap \mathcal{M}$ (i.e the sections of $\tilde{\mathcal{L}}$ are the sections of $\tilde{\mathcal{M}}$ on which $I_p$ acts trivially and $\Frob_p$ acts by $U_{0}$). Moreover, $\tilde{\mathcal{L}}$ is the coherent sheaf associated to the $A$-submodule $\mathcal{L}$ of $\mathcal{M}$  given by the elements which are invariant under the actions of the inertia $I_p$ and on which $\Frob_p$ acts by $U_0$.

Let $\tilde{\mathcal{M}'}_+$ be the quotient presheaf $\tilde{\mathcal{M}}/\tilde{\mathcal{L}}$ and $\tilde{\mathcal{M}'}$ be the sheaf associated to the presheaf $\tilde{\mathcal{M}}'_+$, and it is $\widetilde{\mathcal{M}/\mathcal{L}}$ since $\mathcal{U}$ is an affinoid, and is endowed naturally with an action of $G_{\Q_p}$.

Let $M':=\mathcal{M}' \otimes_A \cT$. Since $M_K :=M \otimes_{\cT} K= M. K=K^4$, it is obvious that $M' $ is a $\cT$-submodule of $K^4/(K^4)^{I_p}$, where $(K^4)^{I_p}$ means the $I_p$-invariant subspace on which $\Frob_p$ acts by $U_0$.
Hence, $M'$ is a finite type torsion-free $\cT$-module of generic rank $3$ over $K$, and the regularity assumption when $k=2$ yields that the $G_{\Q_p}$-semi-simplification $M' \otimes_{\cT} L$ does not contain $\psi$. 

Similarly, since $Q(A) \subset K$ and $(K^4)^{I_p} \cap \mathcal{M}=\mathcal{M}^{I_p} $, we obtain that $\mathcal{M}' $ injects into $K^4 /(K^4)^{I_p}$ and $\mathcal{M}'$ is torsion-free over $A$ and with generic rank equal to $3$. Moreover, the regularity assumption  yields that the $G_{\Q_p}$-semi-simplification of its specialization at $\pi_\alpha=x$ does not contain the character $\psi_{\mid G_{\Q_p}}$. In fact, Corollary \ref{refinemen} implies the characteristic polynomial of the  Frobenius $\Phi$ acting on the crystalline module of almost of the classical specializations $y$ of $\mathcal{M}'$ has no root equal to $U_{0}(y)$.  

Let $Z=V(\mathcal{I}) \subset \mathcal{U}$ be the Zariski closed set defined by the ideal $\mathcal{I}$ generated by the $4$-th Fitting ideal $\mathrm{Fitt}_4$ of the $A$-module $\mathcal{M}$ and by the $3$-rd Fitting ideal $\mathrm{Fitt}_3$ of the $A$-module $\mathcal{M}'$, then any point $y$ lies in $V(\mathrm{Fitt}_4)$ (resp. $V(\mathrm{Fitt}_3)$) if and only if $\dim_{k(y)}(\mathcal{M}(y)) \geq 5$ (resp. $\dim_{k(y)}(\mathcal{M}'(y)) \geq 4$), where $\mathcal{M}(y)$ (resp. $\mathcal{M}'(y)$) is the fiber of $\mathcal{M}$(resp. $\mathcal{M}'$) at $y$ and $k(y)$ is the residue field at $y$.

Thus $\mathcal{U}-V (\mathcal{I})$ is the biggest admissible open subset of $\mathcal{U}$ on which $\mathcal{M}$ (resp. $\mathcal{M}'$) can be locally generated (on stalks) by $4$ elements (resp. $3$ elements). Moreover, since the coherent $\mathcal{M}$ (resp. $\mathcal{M}'$) is generically of rank $4$ (resp. $3$) and torsion-free then one can deduce that the coherent sheaf $\mathcal{M}$ (resp. $\mathcal{M}'$) is locally free of rank $4$ (resp. $3$) on the admissible open $\mathcal{U}-Z=\mathcal{U}'$ ($\mathcal{U}'$ does not necessarily contain  $\pi_\alpha$). Thus, the direct summand $\mathcal{M}'$ of $\mathcal{M}$ is also locally free of rank $3$ on $\mathcal{U}'$. Hence one can deduce that the Hodge-Tate weights of the specialization of $\mathcal{M}'$ at classical points of $\mathcal{U}'$ of weight $l_1>l_2+1$ and having crystalline representation (they form a very Zariski dense set) are $l_2-2,l_1-1,l_1+l_2-3$; and then $l_2-2$ is the smallest Hodge-Tate weight (see Corollary \ref{refinemen}).  

In addition, if $\mathcal{M'}(y)$ (resp. $\mathcal{M}(y)$) denotes the specialization of $\mathcal{M}'$ (resp. $\mathcal{M}$) at a very  classical point $y \in \mathcal{U}'$. We can enlarging $Z$ if it is necessary to have that for any $y \in \mathcal{U}'$, $\mathcal{M}(y)^{ss}=\mathcal{M}(y)$.  Now, if $y \in \mathcal{U}'$ is a classical point  of weight $(l_1,l_2)$ and $\rho_y$ is a crystalline representation at $p$, then the eigenvalues of the  Frobenius $\Phi$ acting on $\cD_{\cris}(\mathcal{M'}(y))$ are $\lambda_{y}(U_{1}U_{0}^{-1})  p^{l_2-2}$, $\lambda_{y}(U_{0}U_{1}^{-1})  p^{l_1-1}$ and $\lambda_{y}(U_{0}^{-1})  p^{l_2+l_1-3}$, where $\lambda_y: \Spm L_y \to \cE_\Delta$ is the morphism corresponding to $y$. When $y=x$,  we have $\lambda_{x}(U_{1}U_{0}^{-1})=p$ ($\underline{k}=(k,k)$ is the weight of $\pi_\alpha$). 

The exact sequence \eqref{multfree1of1} (i.e. $\epsilon_p^{1-k}$ occurs with multiplicity one in $\mathcal{M}'/\gm \mathcal{M}'$), the regularity assumption (i.e. $\alpha \ne 1$) of $\varrho$ at $p$ when $k=2$, and the fact that $\mathcal{M}'\otimes_A \mathcal{T}=M'$ (since $\mathcal{M}^{I_p} \otimes_{A} \cT=M^{I_p}$), yield that
\begin{equation}\label{multM'}
\dim \cD^{+}_{\cris}(\mathcal{M}'(\pi_\alpha)^{ss})^{\Phi=p^{k-1}}=  \dim \cD_{\cris}^{\Phi=p^{k-1}} (\epsilon_p^{1-k})= 1.
\end{equation} 

Hence, one has (after a twist by $\epsilon^{k-2}$)

\begin{equation}\label{multM'2}
\dim \cD^{+}_{\cris}(\mathcal{M}'(\pi_\alpha)^{ss}(k-2))^{\Phi=p}= 1.
\end{equation}

Since the set $\Sigma$ of classical points of $\cE_{\Delta}$ of cohomological weights and old at $p$ (i.e having a crystalline representation) of $\cE_\Delta$ are very Zariski dense (see Cor.\ref{veryzariskidense1}), it follows from Lemma \ref{restzarden} that $\Sigma \cap \mathcal{U}- (\Sigma \cap Z)$ is Zariski dense in $\mathcal{U}$, and hence we obtain a refined family $$(G_{\Q_p} \to \mathrm{Aut}_{\mathcal{U}}(\mathcal{M}'), \Sigma \cap \mathcal{U}- (\Sigma \cap Z), \{\kappa_i\},U_0/U_1 \in \cO(\cE_\Delta)^{\times})$$ of generic rank equal to $3$ over $K$.  Note also that condition (*) of \cite[Definition 4.2.7]{bb} is satisfied since we have a torsion free morphism $\kappa:\cE_\Delta \to \cW$; the condition (v) of \cite[Definition 4.2.7]{bb} is satisfied by Lemma \ref{vzd}, Lemma \ref{zdop} and Corollary \ref{veryzariskidense1} (so $ \Sigma \cap \mathcal{U}- (\Sigma \cap Z)$ accumulate to $\pi_\alpha$).

Since $\mathcal{M}' \otimes_A \cT=M'$, it follows from \cite[Theorem   3.4.1]{bb} that $$\dim \cD_{\cris}^{+}(M'/\gm M'(k-2))^{\Phi=p}=1.$$
 Then

$$\dim \cD^{+}_{\cris}(M'/\gm M')^{\Phi=p^{k-1}}=1.$$
 Finally, by \cite[Theorem   1.5.6]{bb} any $S/\gm S$-extension $V$ of $\epsilon_p^{1-k}$ by $\epsilon_p^{2-k}$ (i.e occurring in the image of the morphism \eqref{ext1cyc}) is a quotient of $M/\gm M \oplus \epsilon_p^{2-k}$ by an $S$-submodule $\mathcal{Q}$ (see (iii) of \eqref{surj1}).

 However, by the regularity assumption at $p$ the non-trivial unramified character $\psi_{\mid G_{\Q_p}}$ does not occur in $V_{\mid G_{\Q_p}} \in \mathrm{Ext}^{1}_{G_{\Q_p}}(\epsilon_p^{1-k}, \epsilon_p^{2-k})$, which implies that $V_{\mid G_{\Q_p}}$ is a quotient of $M'/\gm M' \oplus \epsilon_p^{2-k}$. Thus we obtain a surjection of $G_{\Q_p}$-modules \begin{equation}\label{surj1}
 M'/\gm M' \oplus \epsilon_p^{2-k} \overset{\pi'}{\twoheadrightarrow}  V_{\mid G_{\Q_p}},\end{equation}
 with kernel isomorphic to a quotient of the $G_{\Q_p}$-module $\mathcal{Q}$.
 
Since  the semi simplification of $M_3/\gm M_3$ is isomorphic to the representation $$\rho_f ^{n_1} \oplus (\epsilon_p^{2-k})^{n_2} \oplus \epsilon_p^{1-k}$$ by  \eqref{multfree1of1}, the regularity assumption at $p$ on  $\varrho$ when $k=2$ (i.e.$\alpha \ne 1$), and the fact that the $\mathrm{S}_p$-simple subquotients of $\mathcal{Q}$ do not equal $\epsilon_p^{1-k}$ by Lemma \ref{S_p} (they occur in $\{\epsilon_p^{2-k}, \psi, \psi^{-1}\epsilon_p^{3-2k}\}$), 
one has  $$\cD_{\cris}^{\Phi=p^{k-1}}(\ker(\pi'))=0.$$

Thus the surjective morphism \eqref{surj1} yields the following injection $$\cD_{\cris}^{\Phi=p^{k-1}}(M'/\gm M') \hookrightarrow \cD_{\cris}^{\Phi=p^{k-1}}(V),$$ and implies that $\cD_{\cris}^{\Phi=p^{k-1}}(V)\ne 0$ (since $\dim \cD_{\cris}^{\Phi=p^{k-1}}(M'/\gm M')=1$).

On the other hand, by applying the left exact functor $\cD_{\cris}^{\Phi=p^{k-1}}(.)$ to the exact sequence $$0 \to \epsilon_p^{2-k} \to V \to \epsilon_p^{1-k} \to 0,$$ and using the fact that $\cD_{\cris}^{\Phi=p^{k-1}}(\epsilon_p^{2-k})=0$ and $\dim \cD_{\cris}^{\Phi=p^{k-1}}(\epsilon_p^{1-k})=1$, we obtain that  $\dim \cD_{\cris}^{\Phi=p^{k-1}}(V)=1$ (since it is non-zero by the above discussion). Hence the characteristic polynomial of $\Phi$ has two roots $\{p^{k-2},p^{k-1}\}$ yielding that $\dim \cD_{\cris}(V)=2$ and that $V$ is crystalline, so $V \otimes \epsilon_p^{k-2}$ is also crystalline at $p$.

It remains to proof that the image of the map $$\Hom(\cT_{1,3}/\cT'_{1,3},L) \hookrightarrow  \rH^{1}_f(G_\Q^{Np},\epsilon_p)$$
consists of extensions which are unramified outside $p$. Let $\ell$ denote a prime number dividing $N$ (so prime to $p$), note that any $G_{\Q_\ell}$-extension of $\epsilon_p^{-1}$ by $\mathbbm{1}$ is trivial or its restriction to the inertia has the following form $$ \begin{pmatrix} 
1  & 1     \\ 
       0 & 1 
\end{pmatrix},$$
and hence the monodromy operator of the Weil-Deligne representation attached to its $2$-dimensional $G_{\Q_\ell}$-representation is of rank $1$ (i.e a Steinberg type).

We know that the rank over $L$ of the monodromy operator attached to the Weil-Deligne representation corresponding to $(\rho_f)_{\mid G_{\Q_\ell}}$ is one (since we assumed that $\rho_f$ is a twisted Steinberg at every prime $\ell \mid N$).

Recall that the $G_{\Q}$-coherent sheaf $\mathcal{M}$ is locally free of rank $4$ on the admissible open $\mathcal{U}-Z=\mathcal{U}'$ and it admits a Weil-Deligne representation $(r_\mathcal{U} ,N_\mathcal{U})$ by \cite[Lemma.7.8.14]{bb} at $\ell$ (for which $N_\mathcal{U} \in \End_A(\mathcal{M}$). Since the rank of the monodromy operator of the Weil-Deligne representation attached to the specializations of $(r_\mathcal{U} , N_\mathcal{U})$ at classical points of non-endoscopic, non-CAP points $\mathcal{U}'$ is at most $1$ by Theorem \ref{monodromy},   \cite[Prop.7.8.19(ii)]{bb} implies that the generic rank over $K$ of the monodromy operator of the Weil-Deligne representation attached $(r_\mathcal{U} ,N_\mathcal{U})$ is also $1$ (since it is non-trivial at $\pi_\alpha$). Therefore, the generic rank of the monodromy $N_K=N_\mathcal{U} \otimes K$ operator of the Weil-Deligne representation attached to $(\rho_K)_{\mid G_{\Q_\ell}}$ is one.

Let $\mathrm{S}_\ell$ be the image of $\cT[G_{\Q_\ell}]$ inside $S$. Thanks to Proposition \ref{locl}, one can apply  \cite[Lemma.8.2.11]{bb} \footnote{The assumption that $\pi_{f,\ell} \simeq \mathrm{St} \otimes \xi$ is crucial to prove the vanishing of $\rH^1(G_{\Q_\ell},\rho_f(k-2))$.} to $\mathcal{P}=\{\epsilon_p^{1-k}, \epsilon_p^{2-k}\}$), and we obtain that there exists idempotents $(\tilde{e_1},\tilde{e_2},\tilde{e_3})$ of $S$ lifting the idempotents attached respectively to $\epsilon_p^{2-k},\epsilon_p^{1-k},\rho_f$ and such that $\tilde{e}=\tilde{e_1} + \tilde{e_2}$ is in the center of $\mathrm{S}_\ell$ (see \cite[Lemma.8.2.12]{bb}), and hence $\mathrm{S}_\ell$ is block diagonal of type $(2,2)$ in $S$. Thus,

 $$\mathrm{S}_\ell/\gm \mathrm{S}_\ell=\begin{pmatrix} 
*  &    *& *  \\ 
       * & *  & * \\
       0  & 0 &  \rho_f
\end{pmatrix}\text{, and } \mathrm{S}_\ell=\begin{pmatrix} 
* & *   & *  \\ 
     *   & *  & * \\
       0  & 0 &  M_{2,2}(\cT) 
\end{pmatrix}.$$

By \cite[Lemma.7.8.14]{bb} one can see $N_K$ as element of $\mathrm{S}_\ell$. By \eqref{multfree1of1}(iii)  it is enough to prove that $\tilde{e}N_K \in \tilde{e}\mathrm{S}_\ell$ is trivial for showing that the image of $\Hom(\cT_{1,3}/\cT'_{1,3},L) \hookrightarrow  \rH^{1}(G_\Q^{Np},\epsilon_p)$ gives rise to classes unramified at $\ell$.

As an element of $\mathrm{S}_\ell$ we know that
$N_K=\begin{pmatrix} 
*  & *   & * &* \\   * & *   & *&*  \\ 
      0 & 0 & *  & * \\
      0 &0  & * &  *
\end{pmatrix}$ and it is of rank $1$ as discussed before.  By \cite[Prop.7.8.8]{bb} applied to $(1-\tilde{e})\mathrm{S}_\ell ( 1-\tilde{e})$ we further know that  the rank of $(1-\tilde{e})N_K$ is one, using that $\rho_f$ is a twisted Steinberg at $\ell$ (the rank of the monodromy operator of $WD_\ell(\rho_f)$ is one) and the surjection $(1-\tilde{e}).S/\gm S.(1-\tilde{e}) \twoheadrightarrow \rho_f$.
Hence, $\tilde{e_i}N_K=0$ for $i\in \{1,2\}$, which  yields that $\tilde{e}N_K=0$.

\end{proof}

The proof of Theorem \ref{lastentry} yields the following corollary.
\begin{cor}\label{familyM'}
There exists a representation $\rho_{\mathcal{M}'}:G_{\Q_p} \to \mathrm{Aut}_{\mathcal{U}}(\mathcal{M}')$, where $\mathcal{M}'$ is a torsion-free coherent sheaf on an admissible open affinoid $\mathcal{U}=\Spm A \subset \cE_\Delta $ containing $\pi_\alpha$ and $\rho_{\mathcal{M}'}$ is of generic rank $3$ over the total ring of fractions $K$ of $\mathcal{U}$ such that:
\begin{enumerate}
\item  There exists a very Zariski dense set $\Sigma' \subset \mathcal{U}$ such that the specialization of the representation $\rho_{\mathcal{M}'}$ at any point $z$ of $\Sigma'$ gives rise to a crystalline $G_{\Q_p}$-representation $\rho_z'$ of dimension $3$, with Hodge-Tate-Sen weights given by $(\kappa_2-2,\kappa_1-1,\kappa_1+\kappa_2-3)$.

\item The smallest Hodge-Tate weight of $\rho_z'$ is $\kappa_2(z)-2$ and $U_{1}/U_{0} \in \cO(\cE_\Delta)^{\times}$ interpolates the crystalline period  of the smallest Hodge-Tate weight. In other words, one has $$\dim \cD_{\cris}(\rho'_z)^{\Phi=U_{1}/U_{0}(z)p^{\kappa_2(z)-2}}=1.$$

\item Let $M':=\mathcal{M}'\otimes_A \cT$, then for any cofinite ideal $\mathcal{J}$ of $\cT$ one has that $$l(\cD^{+}_{\cris}(M'/\mathcal{J} M'\otimes (\epsilon_p^{\kappa_2-2}))^{\Phi=U_{1}/U_{0}}) = l(\cT/\mathcal{J}).$$

\item The Sen operator of  $\cD_{\mathrm{sen}}(M'/\mathcal{J}M')$ is annihilated by the polynomial $$(T -(\kappa_2-2))(T-(\kappa_1-1))(T-(\kappa_1+\kappa_2-3)).$$

\end{enumerate}

\end{cor}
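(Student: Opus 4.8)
The plan is to read off all four statements from the construction already carried out in the proof of Theorem~\ref{lastentry}. First I would recall from there the open affinoid $\mathcal{U}=\Spm A$ containing $\pi_\alpha$, the torsion-free coherent sheaf $\widetilde{\mathcal{M}}$ of generic rank $4$ carrying a continuous $G_\Q$-action with stalk $M_3$ at $\pi_\alpha$, its $I_p$-invariant line subsheaf $\widetilde{\mathcal{L}}$ (the sections on which $I_p$ acts trivially and $\Frob_p$ acts by $U_0$), the quotient $\widetilde{\mathcal{M}'}=\widetilde{\mathcal{M}}/\widetilde{\mathcal{L}}$ of generic rank $3$ with its $G_{\Q_p}$-action $\rho_{\mathcal{M}'}$, and the proper Zariski-closed subset $Z=V(\mathcal{I})\subsetneq \mathcal{U}$ outside which $\mathcal{M}$ and $\mathcal{M}'$ are locally free of ranks $4$ and $3$; write $\mathcal{U}'=\mathcal{U}-Z$. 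For $\Sigma'$ I would take the classical points of $\mathcal{U}'$ that are old at $p$ and of cohomological weight $(l_1,l_2)$ with $l_1>l_2+1$; by the very Zariski density of the set $\Sigma$ of classical points old at $p$ (Corollary~\ref{veryzariskidense1}) together with Lemma~\ref{restzarden} applied to the proper closed subset $Z$, this $\Sigma'$ is very Zariski dense in $\mathcal{U}$.

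For (i) and (ii) I would fix $z\in\Sigma'$ and let $\rho_z=\mathcal{M}(z)$ be the semisimple Galois representation attached to the corresponding Siegel eigenform, which is old at $p$ and semi-ordinary. Since $\mathcal{M}'$ is locally free of rank $3$ on $\mathcal{U}'$ and $\widetilde{\mathcal{L}}$ cuts out the $I_p$-invariant line on which $\Frob_p=U_0$, the specialization $\rho_z':=\mathcal{M}'(z)$ is canonically $\rho_z/\rho_z^{I_p}$. By Theorem~\ref{Hodge-Tate}(i),(ii), $\rho_z$ is crystalline with Hodge-Tate-Sen weights $\{0,l_2-2,l_1-1,l_1+l_2-3\}$, and the unramified line $\rho_z^{I_p}$ contributes the weight $0$ and crystalline Frobenius eigenvalue $\lambda_z(U_0)$, so $\rho_z'$ is crystalline of dimension $3$ with Hodge-Tate-Sen weights $\{\kappa_2(z)-2,\kappa_1(z)-1,\kappa_1(z)+\kappa_2(z)-3\}$, where $\kappa_1,\kappa_2$ denote the weight functions with $\kappa_i(z)=l_i$; this is (i). As $l_1>l_2+1$ the weight $l_2-2$ is the strict minimum, so the smallest Hodge-Tate weight is $\kappa_2(z)-2$, and by Corollary~\ref{refinemen} the corresponding crystalline Frobenius eigenvalue is $\lambda_z(U_1U_0^{-1})\,p^{l_2-2}=\lambda_z(U_1/U_0)\,p^{\kappa_2(z)-2}$, interpolated by the unit $U_1/U_0\in\cO(\cE_\Delta)^\times$. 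That this eigenvalue occurs with multiplicity one in $\cD_{\cris}(\rho_z')$ means it is distinct from the other two Frobenius eigenvalues listed in Corollary~\ref{refinemen}; this fails only on a proper Zariski-closed subset of $\mathcal{U}$, which does not contain $\pi_\alpha$ by $({\bf Reg})$, so after enlarging $Z$ I may assume it holds throughout $\mathcal{U}'$, giving (ii).

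For (iii), exactly as in the proof of Theorem~\ref{lastentry} the datum $(\rho_{\mathcal{M}'},\Sigma',\{\kappa_i\},U_0/U_1)$ is a weakly refined family in the sense of \cite[Definition 4.2.7]{bb} of generic rank $3$ over $K$: condition $(*)$ holds because $\kappa\colon\cE_\Delta\to\cW$ is torsion-free, and condition $(v)$ because $\Sigma'$ accumulates at $\pi_\alpha$ by Lemmas~\ref{vzd}, \ref{zdop} and Corollary~\ref{veryzariskidense1}. Applying \cite[Theorem 3.4.1]{bb}, in its general form valid for every ideal of cofinite length and not merely $\gm$, to $M'=\mathcal{M}'\otimes_A\cT$, using that the Frobenius eigenvalue of smallest Hodge-Tate weight is interpolated by $U_1/U_0$ and that $\dim\cD^{+}_{\cris}(\mathcal{M}'(\pi_\alpha)^{ss}(k-2))^{\Phi=p}=1$ (equation~\eqref{multM'2}), I obtain $l\bigl(\cD^{+}_{\cris}(M'/\mathcal{J}M'\otimes\epsilon_p^{\kappa_2-2})^{\Phi=U_1/U_0}\bigr)=l(\cT/\mathcal{J})$ for every cofinite ideal $\mathcal{J}\subset\cT$; the twist by $\epsilon_p^{\kappa_2-2}$, which makes sense over each Artinian quotient $\cT/\mathcal{J}$, normalizes the relevant Frobenius eigenvalue to the unit $U_1/U_0$, compatibly with (ii).

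For (iv) I would invoke the theory of the Sen operator in $p$-adic families: $\cD_{\mathrm{sen}}(\mathcal{M}')$ carries a Sen operator $\Theta$ whose characteristic polynomial is a monic cubic with coefficients in $A$, compatible with specialization, and by the weight computation in (i) it specializes at each $z\in\Sigma'$ to $(T-(\kappa_2(z)-2))(T-(\kappa_1(z)-1))(T-(\kappa_1(z)+\kappa_2(z)-3))$. Since $\Sigma'$ is Zariski dense and $A$ is reduced, the characteristic polynomial of $\Theta$ equals $(T-(\kappa_2-2))(T-(\kappa_1-1))(T-(\kappa_1+\kappa_2-3))$ identically, so by Cayley--Hamilton $\Theta$ is annihilated by it; base-changing along $A\to\cT\to\cT/\mathcal{J}$ then gives the statement for the Sen operator of $\cD_{\mathrm{sen}}(M'/\mathcal{J}M')$. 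The only genuinely non-formal inputs are the appeal to \cite[Theorem 3.4.1]{bb} in (iii), which is already the technical heart of Theorem~\ref{lastentry}, and the multiplicity-one verification in (ii); I expect the latter to be the main point requiring care, but it reduces to comparing $p$-adic valuations near $\pi_\alpha$, where $U_1/U_0$ has slope one and $U_0$ has slope zero, so the exceptional locus is a proper Zariski-closed subset not containing $\pi_\alpha$ and can be absorbed into $Z$.
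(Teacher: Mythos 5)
Your proposal is correct and takes essentially the same route as the paper: the paper's own proof of this corollary is a one-line citation to the construction in the proof of Theorem~\ref{lastentry} (together with \cite[Theorem~3.4.1]{bb} and \cite[Lemma~4.3.3]{bb}), and your write-up simply unpacks that construction — the affinoid $\mathcal{U}$, the sheaf $\widetilde{\mathcal{M}}$, the inertia-invariant line $\widetilde{\mathcal{L}}$, the quotient $\widetilde{\mathcal{M}'}$, the exceptional locus $Z$, and the weakly refined family structure — exactly as intended.
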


\begin{proof}

i) and ii)  follows directly from the proof of Theorem \ref{lastentry} and \cite[Theorem   1.5.6]{bb}. Thus, it remains to show iii) and iv), which follows immediately from similar arguments to those already used to prove of Theorem   \ref{lastentry}, \cite[Theorem   3.4.1]{bb} and \cite[Lemma.4.3.3]{bb}(i).

\end{proof}

\section{Crystallinity of the $S$-extensions occurring in $\rH^1(\Q,\rho_f(k-2))$}\label{ordrho1}

By \eqref{1.5.5extn} we have a natural injection \begin{equation}\label{extrho1}\Hom(\cT_{3,2}/\cT'_{3,2},L) \hookrightarrow \mathrm{Ext}^1_{G_\Q^{Np}}(\rho_f, \epsilon_p^{1-k}) \simeq \rH^{1}(G_\Q^{Np},\rho_f(k-2)).
\end{equation}

 Now we have to determine the exact image of the injective morphism $\eqref{extrho1}$. As in section \ref{cnsht} we apply the results of \cite[Theorem   1.5.6]{bb} and \cite[Lemma.4.3.9]{bb} for the left ideal $M_2=S.E_2$ of $S$ given by the second column of the GMA matrix $S$: 

\begin{enumerate} \label{ext3}

\item  There exists an exact sequence of $S$-left modules \begin{equation}\label{multfree1of2} 0 \rightarrow E' \rightarrow M_2/\gm M_2\rightarrow \rho_f \to 0 \end{equation}

\item Any simple $S$-subquotients of $E'$ is not isomorphic to $\rho_f$ and they occur in the set $\{\epsilon_p^{1-k}, \epsilon_p^{2-k} \}$.

\item The image of the morphism \eqref{extrho1} consists of extensions occurring as quotient of the  $S/\gm S$-module $M_2/\gm M_2 \oplus \epsilon_p^{1-k}$ by an $S$-submodule $\mathcal{Q}'$ whose $S$-simple subquotients occur in the set $\{\epsilon_p^{1-k}, \epsilon_p^{2-k} \}$.

\end{enumerate}
Since $\rho_K$ is absolutely irreducible and $M_2$ is a finite type torsion free $\cT$-module we again have $M_2.K= K^4$.

\begin{thm}\label{Bcrys2} Assume  $({ \bf Reg})$. Let $\cT'_{3,2}$  be the $\cT$-module $\cT_{3,1}\cT_{1,2} \subset \cT_{3,2}$, then:

\begin{enumerate}
\item There exists an injective homomorphism of $L$-modules { \small \begin{equation} \Hom(\cT_{3,2}/\cT_{3,2}',L) \hookrightarrow \ker(\rH^{1}(\Q,\rho_f(k-2)) \to \rH^1(\Q_p,\rho_f/\rho_f^{I_p}(k-2))\oplus_{\ell \nmid p} \rH^1(I_{\ell},\rho_f(k-2))). 
\end{equation}}

\item Assume that $k \geq 3$, then \begin{equation}\label{imcrys2} \Hom(\cT_{3,2}/\cT_{3,2}',L) \hookrightarrow \rH^{1}_{f,\unr}(\Q,\rho_f(k-2)).
\end{equation}
\end{enumerate}
\end{thm}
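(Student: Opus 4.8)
The plan is to run, almost verbatim, the argument used for Theorem \ref{Bcrys}, with the character $\epsilon_p^{1-k}$ playing the role of $\epsilon_p^{2-k}$; the only genuinely new ingredient is a count of Hodge--Tate twists that explains why full crystallinity needs $k\geq 3$. First I would record that, by \eqref{extrho1}, every class in the image is represented by a $G_\Q^{Np}$-extension
\[0 \to \epsilon_p^{1-k} \to V \to \rho_f \to 0,\]
so it is automatically unramified outside $Np$, and its restriction to $\rH^1(I_\ell,\rho_f(k-2))$ for $\ell\mid N$ is trivial by \cite[Lemme 4.1.3]{urban}, exactly as at the end of the proof of Theorem \ref{Bcrys}. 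Hence the entire content is local at $p$, and amounts to controlling $V_{\mid G_{\Q_p}}$.

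For the local study I would restrict to $G_{\Q_p}$, where $(\rho_f)_{\mid G_{\Q_p}}$ is an extension of $\psi^{-1}\epsilon_p^{3-2k}$ by $\psi$ and $\rho_f^{I_p}$ is the unramified line on which $\Frob_p$ acts by $\alpha$, and I would set $V^0 \subseteq V_{\mid G_{\Q_p}}$ to be the preimage of $\rho_f^{I_p}$, so that $0\to \epsilon_p^{1-k}\to V^0\to\psi\to 0$ and $V_{\mid G_{\Q_p}}/V^0\cong \psi^{-1}\epsilon_p^{3-2k}$. Using the self-duality $\rho_f\cong\rho_f^\vee(3-2k)$ (under which $\rho_f^{I_p}$ is its own annihilator) one identifies $(\rho_f/\rho_f^{I_p})(k-2)$ with $\psi^{-1}\epsilon_p^{1-k}$ and checks that the image of the class of $V$ under $\rH^1(\Q,\rho_f(k-2))\to\rH^1(\Q_p,(\rho_f/\rho_f^{I_p})(k-2))$ is precisely the class of $V^0$ in $\rH^1(\Q_p,\psi^{-1}\epsilon_p^{1-k})$; so for $(i)$ it suffices to show that $V^0$ splits. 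Here I would invoke the weakly refined family $(\mathrm{Ps}_{\cE_\Delta},\Sigma,U_0,\{\kappa_i\})$ already set up in the proof of Theorem \ref{Bcrys}: $({ \bf Reg})$ gives $\dim\cD^+_{\cris}(\varrho)^{\Phi=\alpha}=1$, whence \cite[Theorem 4.3.6]{bb} forces $\dim\cD^+_{\cris}(V)^{\Phi=\alpha}=1$ for every $V$ in the image of \eqref{extrho1}. Since $\Frob_p$ acts on $V_{\mid G_{\Q_p}}/V^0\cong\psi^{-1}\epsilon_p^{3-2k}$ by $\beta=\alpha^{-1}p^{2k-3}\neq\alpha$, left-exactness of $\cD^+_{\cris}(\cdot)^{\Phi=\alpha}$ gives $\dim\cD^+_{\cris}(V^0)^{\Phi=\alpha}=1$; combined with the Frobenius-$p^{k-1}$ line $\cD^+_{\cris}(\epsilon_p^{1-k})\subseteq\cD^+_{\cris}(V^0)$ (and $p^{k-1}\neq\alpha$) this yields $\dim_L\cD^+_{\cris}(V^0)\geq 2=\dim_L V^0$, so $V^0$ is crystalline. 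A Bloch--Kato dimension count ($\rH^0$ trivial, all Hodge--Tate weights $\geq 1$) shows $\rH^1_f(\Q_p,\psi^{-1}\epsilon_p^{1-k})=0$, and since a crystalline extension lies in $\rH^1_f$ this forces $V^0$ to split, proving $(i)$.

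For $(ii)$ I would then assume $k\geq 3$ and use that, by $(i)$, $V^0\cong\psi\oplus\epsilon_p^{1-k}$, so $V_{\mid G_{\Q_p}}$ carries a three-step $G_{\Q_p}$-stable filtration with graded pieces $\psi^{-1}\epsilon_p^{3-2k}=\psi^{-1}(3-2k)$, $\epsilon_p^{1-k}=\mathbbm 1(1-k)$, $\psi=\psi(0)$. Since for $k\geq 3$ the twists $3-2k$, $1-k$, $0$ are pairwise distinct, this exhibits $V$ as ordinary in the sense of Definition \ref{ordinaryrepsentation}; by Theorem \ref{nekthm} $V$ is then semistable, hence de Rham, so the class of $V$ lands in $\rH^1_g(\Q_p,\rho_f(k-2))$, and I would finish with the equality $\rH^1_g(\Q_p,\rho_f(k-2))=\rH^1_f(\Q_p,\rho_f(k-2))$ (via \cite[Lemme 4.1.3]{urban}; equivalently the obstruction $\cD_{\cris}(\rho_f(k))^{\Phi=1}$ vanishes because $\alpha$ is a $p$-adic unit and $k\geq 3$). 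Together with the unramifiedness at $\ell\mid N$ this places the class in $\rH^1_{f,\unr}(\Q,\rho_f(k-2))$.

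I expect the main obstacle to be, as in Theorem \ref{Bcrys}, establishing the crystallinity of $V^0$ through the weakly refined family — in particular keeping track of which Frobenius eigenvalue the refinement $U_0$ sees at $\pi_\alpha$, and the left-exactness bookkeeping with $\cD^+_{\cris}$. The genuinely new subtlety is the Hodge--Tate-twist count: it is exactly the distinctness of $3-2k$, $1-k$, $0$ for $k\geq 3$ that upgrades the ``ordinary at $p$'' statement $(i)$ to the ``crystalline at $p$'' statement $(ii)$, and when $k=2$ the two $\epsilon_p^{-1}$-graded pieces collapse to the same twist, so the ordinary-filtration argument breaks down — which is precisely why crystallinity of the $S$-extensions in $\rH^1(\Q,\rho_f)$ is left open for $k=2$.
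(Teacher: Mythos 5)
Your argument is correct, and for part (ii) it coincides with the paper's: once one has the $I_p$-fixed line, distinctness and monotonicity of the twists $0 > 1-k > 3-2k$ (which holds precisely for $k\geq 3$) gives ordinarity, hence semistability by Nekov\'a\v{r} and membership in $\rH^1_g=\rH^1_f$ via \cite[Lemme~4.1.3]{urban}. For part (i), however, you take a genuinely different route from the paper. You re-run the weakly-refined-family/crystalline-period machinery of Theorem~\ref{Bcrys} (i.e.\ \cite[Theorem~4.3.6]{bb}) to get $\dim\cD^+_{\cris}(V)^{\Phi=\alpha}=1$, then by left-exactness and the two distinct Frobenius eigenvalues $\alpha$ and $p^{k-1}$ conclude that the two-dimensional $V^0$ is crystalline, and finally invoke the Bloch--Kato vanishing $\rH^1_f(\Q_p,\psi^{-1}\epsilon_p^{1-k})=0$ to split $V^0$. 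The paper instead gives a purely module-theoretic argument: it observes that $({\bf Reg})$ (multiplicity-freeness of $\varrho|_{G_{\Q_p}}$) forces $M_2^{I_p}\not\subset\gm M_2$, and that (as in Lemma~\ref{S_p}) the kernel $\mathcal{Q}'$ of $M_2/\gm M_2\oplus\epsilon_p^{1-k}\twoheadrightarrow V$ has no $\psi$-subquotient, so the image of $M_2^{I_p}$ in $V$ is non-zero, giving $V^{I_p}\neq 0$ with $\Frob_p$-eigenvalue $\alpha$ directly — without appealing to crystalline periods or to $\rH^1_f$-vanishing at all. Both arguments reach the same conclusion (an $I_p$-fixed Frobenius-$\alpha$ line in $V$). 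Your route has the merit of being uniform with the proof of Theorem~\ref{Bcrys} and making the relevant Bloch--Kato local condition explicit; the paper's route is lighter, staying at the level of the GMA module $M_2$ and its inertia invariants and using only the regularity assumption rather than the full strength of the analytic continuation of crystalline periods. Your diagnosis of the role of $k\geq 3$ — that for $k=2$ the graded pieces $\epsilon_p^{1-k}$ and $\psi^{-1}\epsilon_p^{3-2k}$ carry the same twist $-1$, so the ordinarity upgrade fails — matches Remark~\ref{Jannsen} exactly.
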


\begin{proof}\

i) By \eqref{multfree1of2}  we have a surjective morphism of $S$-modules $\pi: M_2/\gm M_2 \twoheadrightarrow \rho_f$ whose  kernel does not contain $\rho_f$ and whose semi-simplification contains only $G_\Q$-representations lying in the set $\{\epsilon_p^{1-k}, \epsilon_p^{2-k} \}$. Moreover, our assumptions yield that the irreducible constituents of the semi-simplification of $\varrho_{\mid G_{\Q_p}}$ are without multiplicity, hence $M_2^{I_p}:=\{x \in M_2, \forall g\in I_p, g.x=x \text{ and } \Frob_p.x=U_0.x\}$ is not contained in $\gm M_2$.  Let $V  \in \mathrm{Ext}^1_{G_\Q}(\rho_f,\epsilon_p^{1-k})=\rH^{1}(G_\Q^{Np},\rho_f(k-2)) $ be in the image of \eqref{imcrys2}. By \eqref{multfree1of2} (iii) we have an  exact sequence of left $S$-modules $$0 \to \mathcal{Q}' \to M_2/\gm M_2 \oplus \epsilon_p^{1-k} \to V \to 0.$$ 

Similar to Lemma \ref{S_p} we can show that $\mathcal{Q}'$ has no $L[G_{\Q_p}]$-simple subquotients equal to $\psi$ or $\psi^{-1} \epsilon_p^{2k-3}$. This shows that the image of $M_2^{I_p}$ in  $V$ is non-zero. It follows that $$V^{I_p} \ne 0.$$ Moreover, since $\Frob_p$ acts on $M_2^{I_p}$ by $U_{0}$, the action of $\Frob_p$ on $V^{I_p}$ is given by $\psi$. If the realization of $V$ is given by $\tilde{\rho}=\begin{pmatrix} 
\epsilon_p^{1-k}  & *  \\ 
       0 & \rho_f 
\end{pmatrix}$ then the restriction of $\tilde{\rho}$  to $G_{\Q_p}$ is given by $\begin{pmatrix} 
\epsilon_p^{1-k}  & 0   & c  \\ 
       0 & \psi  & * \\
       0  & 0 &  \psi^{-1} \epsilon_p^{3-2k}
\end{pmatrix}$ since $V^{I_p} \ne 0$. Finally, it remains to show that the extensions $V$ are unramified at every prime $\ell \mid N$, and this fact follows immediately from Proposition \cite[Lemme 4.1.3]{urban}.

ii) The fact that $k \geq 3$ implies that $3-2k < 1-k$ and hence $\tilde\rho$  is ordinary in the sense of Definition \ref{ordinaryrepsentation} and then semi-stable (hence de Rham) at $p$ by Theorem   \ref{nekthm}. Therefore the extension $V$ gives a cohomology class in $\rH^1_g(G_\Q^{Np}, \rho_f(k-2))$ which is isomorphic to $\rH^1_f(G_\Q^{Np}, \rho_f(k-2))$ by \cite[Lemme 4.1.3]{urban}.

\end{proof}

\begin{rem} \label{Jannsen}
For $k=2$ ordinarity/crystallinity of the extension would require us to prove additionally that  $\tilde{\rho}/\tilde{\rho}^{I_p} \cong  \begin{pmatrix} 
\epsilon^{-1} &  c  \\ 
   
       0  &  \psi^{-1} \epsilon^{-1}
\end{pmatrix}$ is a trivial extension. This would follow, e.g. if one could prove that the generator of $\rH^1(G_\Q^{Np}, \rho_f)$ (which is conjectured to be $1$-dimensional by Jannsen) has no line fixed by inertia at $p$). See section \ref{Selmergroupvanish} below for an alternative approach in this case.
\end{rem}

Similarly to Corollary \ref{antiinvB'}, \cite[Prop.1.8.6]{bb} yields immediately the following corollary.

\begin{cor}\label{antiinvB''}Assume $({ \bf Reg})$. Then the image of the natural injective morphism of $L$-modules $$\Hom(\cT_{2,1}/\cT_{2,3}\cT_{3,1},L) \hookrightarrow \rH^{1}(\Q,\rho_f(k-2))$$  is isomorphic to the image of \eqref{extrho1} (which is described in Theorem   \ref{Bcrys2}).

\end{cor}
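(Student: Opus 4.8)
The plan is to obtain this, exactly as Corollary~\ref{antiinvB'} was obtained, as an immediate consequence of the $\tau$-symmetry of the GMA $S$ set up in \S\ref{symmetry} together with \cite[Prop.1.8.6]{bb}; the only thing to track is which entries of $S$ are interchanged. Recall from \S\ref{symmetry} that the anti-involution $\tau$ of $\cT[G_\Q]$ descends to an anti-automorphism of $S$, and that the idempotents $\tilde e_1,\tilde e_2,\tilde e_3$ lifting those of $\epsilon_p^{2-k}\oplus\rho_f\oplus\epsilon_p^{1-k}$ were chosen (via \cite[Lemma.1.8.3]{bb}) so that $\tau$ fixes $\tilde e_2$ and interchanges $\tilde e_1$ and $\tilde e_3$. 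Recall also that $\rho_f$ is essentially self-dual, $\rho_f^\vee\simeq\rho_f(2k-3)$, so that fixing such an isomorphism (equivalently, the symplectic form on $\rho_f$) provides canonical identifications $\mathrm{Ext}^1_{G_\Q}(\epsilon_p^{2-k},\rho_f)\simeq\rH^1(\Q,\rho_f(k-2))\simeq\mathrm{Ext}^1_{G_\Q}(\rho_f,\epsilon_p^{1-k})$, under which the morphism of the statement and the morphism \eqref{extrho1} both land in the same group $\rH^1(\Q,\rho_f(k-2))$.

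First I would record the effect of $\tau$ on the relevant entries. Since $\tau$ is an anti-automorphism, it carries $\cT_{i,j}=\tilde e_iS\tilde e_j$ onto $\cT_{\sigma(j),\sigma(i)}$, where $\sigma$ is the transposition of $1$ and $3$ realized on the idempotents; hence $\tau(\cT_{2,1})=\cT_{3,2}$, and, reversing products, $\tau(\cT'_{2,1})=\tau(\cT_{2,3}\cT_{3,1})=\tau(\cT_{3,1})\tau(\cT_{2,3})=\cT_{3,1}\cT_{1,2}=\cT'_{3,2}$ (here $\cT'_{2,1}=\cT_{2,3}\cT_{3,1}$ and $\cT'_{3,2}=\cT_{3,1}\cT_{1,2}$ by Proposition~\ref{redideal2}). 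Therefore $\tau$ induces an $L$-linear isomorphism $\cT_{2,1}/\cT'_{2,1}\xrightarrow{\sim}\cT_{3,2}/\cT'_{3,2}$, whence an isomorphism $\Hom(\cT_{2,1}/\cT'_{2,1},L)\simeq\Hom(\cT_{3,2}/\cT'_{3,2},L)$ of $L$-vector spaces.

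It then remains to feed this into \cite[Prop.1.8.6]{bb}, whose content is precisely that the isomorphism just constructed is compatible with the extension maps \eqref{1.5.5extn}: the $S$-extension attached to a class $h\in\Hom(\cT_{2,1}/\cT'_{2,1},L)$ and the $S$-extension attached to $\tau^{*}(h)\in\Hom(\cT_{3,2}/\cT'_{3,2},L)$ by \eqref{extrho1} are dual to one another under the pairing on $\rH^1(\Q,\rho_f(k-2))$ induced by the symplectic form carried by $\rho_K$, and in particular the images of the two morphisms coincide as subspaces of $\rH^1(\Q,\rho_f(k-2))$; being an isomorphism, the duality then exhibits the two images as isomorphic $L$-vector spaces, which is the assertion. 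There is no genuine obstacle here — the hypotheses of \cite[Prop.1.8.6]{bb}, namely that $S$ carries a $\tau$-compatible GMA structure, were verified in \S\ref{symmetry} — and the only point that needs attention, just as in the proof of Corollary~\ref{antiinvB'}, is matching the twist $\epsilon_p^{-\kappa_1-\kappa_2+3}$ built into $\tau$ with the cyclotomic twist $\epsilon_p^{2k-3}$ realizing $\rho_f^\vee\simeq\rho_f(2k-3)$, which agree after specialisation at $\pi_\alpha$ because $\kappa_1=\kappa_2=k$ there.
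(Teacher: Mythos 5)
Your argument is exactly the one the paper intends: the one-line proof in the text (``Similarly to Corollary \ref{antiinvB'}, \cite[Prop.1.8.6]{bb} yields immediately the following corollary'') is precisely the appeal to the $\tau$-symmetry of the GMA and the duality of \cite[Prop.1.8.6]{bb} that you spell out, and your bookkeeping $\tau(\cT_{2,1})=\cT_{3,2}$, $\tau(\cT_{2,3}\cT_{3,1})=\cT_{3,1}\cT_{1,2}=\cT'_{3,2}$ is correct. No issues.
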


\subsection{On the vanishing of the Greenberg's Selmer group attached to $f_\alpha$}\label{Selmergroupvanish}

Assume in this subsection that $k=2$ and let $$\mathrm{Sel}_{\Q,f_\alpha}= \ker(\rH^{1}(\Q_{Np},\rho_f) \to \rH^1(\Q_p,\rho_f/\rho_f^{I_p}) \underset{\ell \nmid p}{\oplus} \rH^1(I_\ell,\rho_f))$$ be the Greenberg-type Selmer group we used in Theorem \ref{Bcrys2}(i) attached to the ordinary elliptic cuspform $f_\alpha$. In the literature, Greenberg's Selmer group is often defined using the representation $\rho_f^{\vee}(-1)$ (arithmetic Frobenius convention). The $p$-adic representation $\rho_f^{\vee}$ corresponds to the Tate module $T_p(A_f)$ of the abelian variety $A_f$, and $\rho_f$ is the Galois representation obtained from the $p$-adic \'etale cohomology of $A_f$. We remark also that for $k=2$ the condition at $p$ is weaker than the ``usual'' condition for the ordinary representation $\rho_f$ (which would require the class to be split at $p$). Our condition of having an $I_p$-fixed quotient for the extension $\begin{pmatrix} \rho_f & *\\0& 1\end{pmatrix}$ (or dually an $I_p$-fixed line for $\begin{pmatrix} \epsilon^{-1}_p & *\\0&  \rho_f\end{pmatrix}$) is the one that would normally be required for $\rho_f(1) \cong \rho_f^{\vee}$.

Note that $\rho_f$ is not critical in the sense of Deligne. We use Iwasawa theory for the cyclotomic $\Z_p$-extension to bound $\mathrm{Sel}_{\Q,f_\alpha}$:
It follows from Kato \cite{Kato} that the the Pontryagin dual of the Selmer group $\mathrm{Sel}_{\Q_\infty,f_\alpha}$ is a torsion $\Lambda$-module with characteristic ideal $g(T) \in \Lambda$.  Furthermore, according to the Iwasawa main conjecture (Kato's bound, see e.g. \cite[Theorem   3.25]{SkinnerUrban14}),  $g(T) \mid L_p(f,\omega^{-1},\cdot)$. Hence $\dim \mathrm{Sel}_{\Q,f_\alpha}=0$ when $L_p(f_\alpha,\omega^{-1}_p,T=p) \ne 0$ (see \cite[Prop.2.10]{bk} and \cite[Theorem   2.11]{bk} for more details). Moreover, it follows from the control theorem for the $\Lambda$-adic Greenberg's Selmer group $\mathrm{Sel}_{\Q_\infty,f_\alpha}$ (see also \cite{Ochiai}) that  $g(T=p) \ne 0$ is a necessary condition for the vanishing of $\mathrm{Sel}_{\Q,f_\alpha}$.

\section{Schematic reducibility locus of the pseudo-character $\mathrm{Ps}_{\cT}$ on $\Spec \cT$ and applications to the Bloch-Kato conjecture}\label{redloc}

Recall that we view $S$ as the generalized matrix attached to the pseudo-character $$\mathrm{Ps}_\cT:G_\Q \to \cT$$ with respect to a set of idempotents compatible with the anti-involution $\tau$ and have $$S=\begin{pmatrix} 
\cT & M_{1,2}(\cT_{1,2}) & \cT_{1,3} \\ 
 M_{2,1}(\cT_{2,1}) & M_2(\cT) & M_{2,1}(\cT_{2,3}) \\
 \cT_{3,1} & M_{1,2}(\cT_{3,2}) & \cT 
\end{pmatrix},$$ where $\cT_{i,j}$ are fractional ideals of $K$ that satisfy $\cT_{i,j} \cT_{j,k} \subset \cT_{i,k}$ and $\cT_{i,j} \cT_{j,i} \subset \gm$.

In this section we will compute the total reducibility  ideal $\mathcal{I}^{\tot}\subset \cT$ (see Definition \ref{redideal}). 
By Proposition \ref{redideal2} it is given by
\begin{equation}\label{totred}
\mathcal{I}^{\tot}=\cT_{3,1} \cT_{1,3} +\cT_{2,3} \cT_{3,2} + \cT_{1,2} \cT_{2,1}.
\end{equation}
The following lemma follows directly from the anti-involution $\tau :S \to S$ and the fact that $\mathrm{Ps}_\cT$ is invariant under the action of $\tau$.

\begin{lemma}\label{antiinvre} One always has:

$\cT_{2,3} \cT_{3,2} = \cT_{1,2} \cT_{2,1}$.

\end{lemma}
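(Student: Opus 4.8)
The statement $\cT_{2,3}\cT_{3,2} = \cT_{1,2}\cT_{2,1}$ is, in the GMA $S$, the assertion that two of the three products of off-diagonal fractional ideals appearing in \eqref{totred} coincide, and the mechanism that forces this is the anti-involution $\tau$ of \S\ref{symmetry}. Recall that $\tau$ fixes the idempotent $\tilde e_2$ attached to $\rho_f$ and swaps $\tilde e_1$ (attached to $\epsilon_p^{2-k}$) with $\tilde e_3$ (attached to $\epsilon_p^{1-k}$). The plan is to unwind what $\tau$ does to the $(i,j)$-blocks of $S$ under this relabelling of idempotents and read off the claimed equality of ideals.

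First I would recall the concrete description of the blocks: for $i\neq j$, the $(i,j)$-block $\tilde e_i S \tilde e_j$ is a free module over $\cT_{i,j}$ of rank $\dim\rho_i \cdot \dim\rho_j$, and these fractional ideals satisfy the multiplicativity $\cT_{i,j}\cT_{j,k}\subseteq\cT_{i,k}$ together with the Cayley--Hamilton/GMA trace pairing that identifies $\cT_{i,j}\cT_{j,i}$ as a genuine product inside $\cT$ (this is exactly the content of Proposition \ref{redideal2} and its proof in \cite{bb}). Then I would apply $\tau$: since $\tau$ is a $\cT$-linear anti-automorphism of $S$ sending $\tilde e_i$ to $\tilde e_{\tau(i)}$, it carries the block $\tilde e_i S \tilde e_j$ isomorphically (as a $\cT$-module) onto $\tilde e_{\tau(j)} S \tilde e_{\tau(i)}$ — note the order reversal, which is the whole point of its being an anti-automorphism. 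With $\tau(1)=3$, $\tau(3)=1$, $\tau(2)=2$, the block $(1,2)\mapsto(2,3)$ and the block $(2,1)\mapsto(3,2)$. Hence $\tau$ induces $\cT$-module isomorphisms $\cT_{1,2}\cong\cT_{2,3}$ and $\cT_{2,1}\cong\cT_{3,2}$, compatibly with the multiplication maps into $\cT$; since $\tau$ acts as the identity on $\cT$ (it only twists the Galois argument, not the coefficients), the product ideal $\cT_{1,2}\cT_{2,1}\subseteq\cT$ is carried to $\cT_{2,3}\cT_{3,2}\subseteq\cT$, and as both are honest ideals of $\cT$ fixed by the identity, they are equal.

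The one point that needs care — and is the only mild obstacle — is justifying that these products of fractional ideals, a priori computed inside the total fraction ring $K$ via the embedding $S\subset M_4(K)$, transform under $\tau$ exactly as $\cT$-submodules of $\cT$ with $\tau|_{\cT}=\mathrm{id}$. For this I would invoke the functoriality of the GMA structure under anti-involutions as set up in \cite[Lemma 1.8.3]{bb}: the choice of idempotents $\tilde e_1,\tilde e_2,\tilde e_3$ compatible with $\tau$ was made precisely so that $\tau$ permutes blocks as above, and the resulting isomorphisms of blocks are $\cT$-linear for the $\cT$-action coming from the (common) diagonal, which $\tau$ preserves. Granting this, the equality $\cT_{2,3}\cT_{3,2}=\cT_{1,2}\cT_{2,1}$ is immediate, completing the proof. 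I would keep the argument to a few lines, citing \cite[Lemma 1.8.3]{bb} and Proposition \ref{redideal2} rather than re-deriving the block combinatorics.
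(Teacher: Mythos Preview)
Your proposal is correct and follows essentially the same approach as the paper, which simply cites \cite[Lemma 8.2.16]{bb} and the anti-involution $\tau$ of \S\ref{symmetry}; you have spelled out the block permutation $\tilde e_i S \tilde e_j \mapsto \tilde e_{\tau(j)} S \tilde e_{\tau(i)}$ and the $\cT$-linearity of $\tau$ that makes the two product ideals coincide inside $\cT$. The only minor point is that the precise reference in \cite{bb} for the induced isomorphisms $\cT_{1,2}\simeq\cT_{2,3}$, $\cT_{2,1}\simeq\cT_{3,2}$ is Lemma~1.8.5 rather than 1.8.3 (the latter gives the compatible idempotents, the former the block correspondence), as the paper itself invokes later in the proof of Proposition~\ref{nontrivsel2}.
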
 

\begin{proof}
This is proved exactly as in Lemma \cite[8.2.16]{bb} using the anti-involution $\tau$.

\end{proof}

Hence, the above lemma implies that \begin{equation}\label{totred2}
\mathcal{I}^{\tot}= \cT_{3,1} \cT_{1,3} +\cT_{1,2} \cT_{2,1}. \end{equation}

\begin{lemma}\label{form} Assume $({ \bf Reg})$ and $({ \bf St})$. Then one has

$$\mathcal{I}^{\tot}=  \cT_{1,2} \cT_{2,1}= \cT_{2,3} \cT_{3,2}.$$

\end{lemma}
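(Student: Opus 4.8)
The plan is to show that $\mathcal{I}^{\tot} = \cT_{1,2}\cT_{2,1}$ by combining \eqref{totred2} with the vanishing result $\cT_{1,3}\cT_{3,1} \subseteq \cT_{1,2}\cT_{2,1}$, and then to upgrade this inclusion to an equality using the anti-involution $\tau$ together with Lemma~\ref{antiinvre}. Concretely, from \eqref{totred2} we already know $\mathcal{I}^{\tot} = \cT_{3,1}\cT_{1,3} + \cT_{1,2}\cT_{2,1}$, so it suffices to prove $\cT_{3,1}\cT_{1,3} \subseteq \cT_{1,2}\cT_{2,1}$; and Lemma~\ref{antiinvre} gives $\cT_{1,2}\cT_{2,1} = \cT_{2,3}\cT_{3,2}$ for free, which will yield the final ``$= \cT_{2,3}\cT_{3,2}$'' clause automatically. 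So the whole content is the single inclusion $\cT_{3,1}\cT_{1,3} \subseteq \cT_{1,2}\cT_{2,1}$.

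First I would recall the interpretation of $\cT_{1,3}/\cT_{1,3}'$ (where $\cT_{1,3}' = \cT_{1,2}\cT_{2,3}$, but here the relevant partition-$\mathcal{P}$ version is $\cT_{1,3}' = \cT_{1,2}\cT_{2,3}$; more precisely one works with the reducibility ideal for the partition $\{\{1,2\},\{3\}\}$ or $\{\{1\},\{2,3\}\}$). The key input is Theorem~\ref{lastentry}: the image of the injective map $\Hom(\cT_{1,3}/\cT_{1,3}', L) \hookrightarrow \rH^1(G_\Q^{Np}, \epsilon_p)$ lands inside $\rH^1_{f,\unr}(\Q,\epsilon_p)$, which is zero by Kummer theory ($\Z^\times \otimes L = 0$). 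Since the map is injective and $L$ is the residue field, this forces $\cT_{1,3}/\cT_{1,3}' = 0$, i.e. $\cT_{1,3} = \cT_{1,3}' = \cT_{1,2}\cT_{2,3}$. Dually, applying the anti-involution $\tau$ which swaps indices $1 \leftrightarrow 3$ and fixes $2$, one gets $\cT_{3,1} = \cT_{3,2}\cT_{2,1}$. Multiplying these two identities: $\cT_{3,1}\cT_{1,3} = \cT_{3,2}\cT_{2,1}\cT_{1,2}\cT_{2,3} \subseteq \cT_{3,2}\cdot(\cT_{2,1}\cT_{1,2})\cdot\cT_{2,3} \subseteq (\cT_{2,1}\cT_{1,2})\cdot \cT_{3,2}\cT_{2,3}$, using that these are ideals/fractional ideals of the commutative ring $\cT$ and $\cT_{3,2}\cT_{2,3} \subseteq \cT_{3,3} = \cT$. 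Hence $\cT_{3,1}\cT_{1,3} \subseteq \cT_{1,2}\cT_{2,1}$, which plugged into \eqref{totred2} gives $\mathcal{I}^{\tot} = \cT_{1,2}\cT_{2,1}$, and Lemma~\ref{antiinvre} finishes.

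I expect the main subtlety is being careful about exactly which ``primed'' module $\cT_{1,3}'$ appears in Theorem~\ref{lastentry} and in \eqref{1.5.5extn}, and checking that the vanishing of $\rH^1_{f,\unr}(\Q,\epsilon_p)$ combined with injectivity of the map really does give $\cT_{1,3} = \cT_{1,3}'$ rather than merely a containment up to the maximal ideal — here one uses that $\Hom(\cT_{1,3}/\cT_{1,3}', L) = 0$ together with Nakayama (the module $\cT_{1,3}/\cT_{1,3}'$ is a finite $\cT$-module, so it vanishes iff its reduction mod $\gm$ does, iff its $L$-dual does). A second point to verify is that $\tau$ indeed carries $\cT_{i,j}$ to $\cT_{\tau(j),\tau(i)}$ compatibly, which is exactly the content of choosing the idempotents $\tilde e_i$ compatibly with $\tau$ as done in \S\ref{symmetry}; this is the mechanism producing $\cT_{3,1} = \cT_{3,2}\cT_{2,1}$ from $\cT_{1,3} = \cT_{1,2}\cT_{2,3}$. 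Everything else is formal manipulation of products of fractional ideals inside $K$.
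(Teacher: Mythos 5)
Your overall strategy — use \eqref{totred2}, show $\cT_{3,1}\cT_{1,3}\subseteq \cT_{1,2}\cT_{2,1}$, then invoke Lemma~\ref{antiinvre} — is exactly the paper's, and the first half (deducing $\cT_{1,3}=\cT_{1,2}\cT_{2,3}$ from Theorem~\ref{lastentry}, Kummer theory, and Nakayama) is right. But the step ``dually, applying the anti-involution $\tau$ one gets $\cT_{3,1}=\cT_{3,2}\cT_{2,1}$'' is incorrect, and it is the step you then rely on in the multiplication.

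The anti-involution $\tau$ reverses multiplication, so it sends $S_{i,j}=e_iSe_j$ to $S_{\sigma(j),\sigma(i)}$, where $\sigma=(1\,3)$; thus it swaps $\cT_{1,2}\leftrightarrow\cT_{2,3}$ and $\cT_{2,1}\leftrightarrow\cT_{3,2}$ (this is Lemma~1.8.5 of \cite{bb} and is what the paper uses), but it \emph{fixes} $\cT_{1,3}$ and $\cT_{3,1}$ individually. Applying $\tau$ to $\cT_{1,3}=\cT_{1,2}\cT_{2,3}$ therefore returns the same identity and produces nothing about $\cT_{3,1}$. Moreover, there is no vanishing available for the corner $(3,1)$: the map $\Hom(\cT_{3,1}/\cT_{3,1}',L)\hookrightarrow\rH^1(G_\Q^{Np},\epsilon_p^{-1})$ has one-dimensional target (Proposition~\ref{bornjansen}), so one only gets that $\cT_{3,1}/\cT_{3,1}'$ is cyclic, not zero; indeed this non-vanishing is exactly what is exploited later in Theorem~\ref{princ1}(ii). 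So $\cT_{3,1}=\cT_{3,2}\cT_{2,1}$ is not justified by your argument and is not a consequence of the methods of this paper.

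The good news is that the false identity is not needed. Once you have $\cT_{1,3}=\cT_{1,2}\cT_{2,3}$, the inclusion you want follows directly from the GMA multiplication rule $\cT_{2,3}\cT_{3,1}\subseteq\cT_{2,1}$: in the commutative ring $\cT$,
\[\cT_{3,1}\cT_{1,3}=\cT_{3,1}\cT_{1,2}\cT_{2,3}=\cT_{1,2}\bigl(\cT_{2,3}\cT_{3,1}\bigr)\subseteq\cT_{1,2}\cT_{2,1},\]
which is exactly what the paper does. Plugging into \eqref{totred2} and using Lemma~\ref{antiinvre} then finishes the proof.
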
 

\begin{proof} 

We first show that $ \cT_{1,3}=\cT'_{1,3}=\cT_{1,2} \cT_{2,3}$. By Theorem \ref{lastentry} we have an injective map $$\Hom(\cT_{1,3}/\cT'_{1,3},L) \hookrightarrow \rH^{1}_{f,\unr}(\Q,\epsilon_p).$$

Note that the Kummer map provides an isomorphism $$\rH^1_{f,\unr}(\Q,\epsilon_p) \simeq \Z^{\times} \otimes L.$$ Hence $\rH^{1}_{f,\unr}(\Q,\epsilon_p)$ is trivial, and then $ \cT_{1,3}/\cT'_{1,3}=0$ by Nakayama's lemma ($\cT_{1,3}$ is of finite type over $\cT$ since $S$ is). Thus, we have \begin{equation}\label{1,3}
 \cT_{1,3}=\cT_{1,2} \cT_{2,3}. \end{equation}
 It is easy to see that \begin{equation}
\begin{split}
\mathcal{I}^{\tot}=& \cT_{3,1} \cT_{1,3} +\cT_{1,2} \cT_{2,1} \\ =&\cT_{1,2} \cT_{2,3}\cT_{3,1} +\cT_{1,2} \cT_{2,1} \\ =&\cT_{1,2} \cT_{2,1} \text{, since $\cT_{2,3}\cT_{3,1} \subset \cT_{2,1} $}. 
\end{split}
\end{equation}

\end{proof}

\begin{cor}\label{description1,2}
One has
$$\cT'_{1,2}=\mathcal{I}^{\tot}.\cT_{1,2}.$$

\end{cor}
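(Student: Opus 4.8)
\textbf{Proof proposal for Corollary \ref{description1,2}.}

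The plan is to recall the definition $\cT'_{1,2} = \cT_{1,3}\cT_{3,2}$ as one of the ``product of two legs of the GMA triangle'' fractional ideals (the notation $\cT'_{i,j} = \cT_{i,k}\cT_{k,j}$ was set up just before Proposition \ref{redideal2}), and then simply substitute the two identities already established in the proof of Lemma \ref{form}. First I would write $\cT'_{1,2} = \cT_{1,3}\cT_{3,2}$ by definition. Next I would invoke \eqref{1,3}, namely $\cT_{1,3} = \cT_{1,2}\cT_{2,3}$, which was proved in Lemma \ref{form} using that $\rH^1_{f,\unr}(\Q,\epsilon_p) \simeq \Z^\times \otimes L = 0$ forces $\cT_{1,3}/\cT'_{1,3} = 0$ by Nakayama. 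Substituting gives $\cT'_{1,2} = \cT_{1,2}\cT_{2,3}\cT_{3,2}$.

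Then I would apply Lemma \ref{form} again, in the form $\mathcal{I}^{\tot} = \cT_{2,3}\cT_{3,2}$, to rewrite this as $\cT'_{1,2} = \cT_{1,2}\cdot(\cT_{2,3}\cT_{3,2}) = \mathcal{I}^{\tot}\cdot\cT_{1,2}$, which is exactly the claimed identity. The only point requiring a word of care is that multiplication of these fractional ideals of $K$ is associative and commutative, so that the reassociation $\cT_{1,2}(\cT_{2,3}\cT_{3,2}) = (\cT_{1,2}\cT_{2,3})\cT_{3,2}$ is legitimate; this is automatic since they are all $\cT$-submodules of the commutative ring $K$.

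I do not expect any genuine obstacle here: the corollary is a formal bookkeeping consequence of Lemma \ref{form} and its proof, and the substance — the vanishing of $\rH^1_{f,\unr}(\Q,\epsilon_p)$ and the symplectic symmetry identity $\cT_{2,3}\cT_{3,2} = \cT_{1,2}\cT_{2,1}$ of Lemma \ref{antiinvre} — has already been done. If anything, the mild subtlety is purely notational: one must be sure that $\cT'_{1,2}$ is indeed read as $\cT_{1,3}\cT_{3,2}$ (the ``missing vertex'' being $3$) rather than some other convention, but this is fixed by the definition $\cT'_{i,j} = \cT_{i,k}\cT_{k,j}$ for $i,j,k$ distinct recorded immediately before Proposition \ref{redideal2}.
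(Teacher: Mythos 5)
Your proposal is correct and follows essentially the same route as the paper: expand $\cT'_{1,2}=\cT_{1,3}\cT_{3,2}$, substitute the relation $\cT_{1,3}=\cT_{1,2}\cT_{2,3}$ from Lemma~\ref{form}, and then apply $\mathcal{I}^{\tot}=\cT_{2,3}\cT_{3,2}$ from the same lemma. Nothing is missing.
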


\begin{proof}
Since $\cT_{1,3}=\cT_{1,2} \cT_{2,3}$ by relation \eqref{1,3} we get $\cT'_{1,2}=\cT_{1,3} \cT_{3,2}=\cT_{1,2} \cT_{2,3}\cT_{3,2}$. On the other hand, we have by Lemma \ref{antiinvre} that $\cT_{2,3} \cT_{3,2} = \cT_{1,2} \cT_{2,1}$, and we have also by Lemma \ref{form} $\mathcal{I}^{\tot}=  \cT_{1,2} \cT_{2,1}= \cT_{2,3} \cT_{3,2}$. Thus $\cT'_{1,2}=\cT_{1,3} \cT_{3,2}=\cT_{1,2} \cT_{2,3}\cT_{3,2}=\mathcal{I}^{\tot}\cT_{1,2}.$

\end{proof}

\subsection{Application to Bloch-Kato conjecture}

Since we have assumed that the sign $\epsilon_f$ of $L(f,s)$ is $-1$, the functional equation $$L(f,s)= - L(f,2k-2-s)$$
yields that $L(f,s)$ vanishes at the central value $k-1$. The Selmer group $\rH^{1}_{f,\unr}(\Q,\rho_f(k-1))$ classifies the extensions with everywhere good reduction and one can think of the Bloch-Kato conjecture as a generalization of the Birch and Swinnerton-Dyer conjecture for the motive $M_f$ corresponding to $f$ of weight $2k-2 \geq 2$. On has the following application related to the Bloch-Kato conjecture:

\begin{cor}\label{nontrivsel}Assume that $k \geq 2$, $({ \bf Reg})$ and $({ \bf St})$. Then there exists an injection \begin{equation} \Hom(\cT_{1,2}/\gm.\cT_{1,2},L)  \hookrightarrow \rH^{1}_{f,\unr}(\Q,\rho_f(k-1)), \end{equation}
and $\dim \cT_{1,2}/\gm.\cT_{1,2} \geq 1$.

\end{cor}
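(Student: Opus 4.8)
The plan is to combine Corollary \ref{antiinvB''} (which identifies the image of $\Hom(\cT_{2,1}/\cT_{2,3}\cT_{3,1},L)$ inside $\rH^1(\Q,\rho_f(k-2))$ with the image of \eqref{extrho1}, landing in the appropriate Selmer group) with the $\tau$-symmetry relations established in this section, and then transfer the resulting injection through the anti-involution to the entry $\cT_{1,2}$. First I would invoke Corollary \ref{nontrivsel}'s companion statement, Theorem \ref{Bcrys}/Corollary \ref{antiinvB'}, which already produce an injection $\Hom(\cT_{1,2}/\cT'_{1,2},L) \hookrightarrow \rH^1_{f,\unr}(\Q,\rho_f(k-1))$. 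So the real content is to upgrade $\cT'_{1,2}$ to $\gm\cT_{1,2}$, i.e.\ to show $\cT'_{1,2}=\gm\cT_{1,2}$, equivalently (by Corollary \ref{description1,2}, which gives $\cT'_{1,2}=\mathcal{I}^{\tot}\cT_{1,2}$) that $\mathcal{I}^{\tot}=\gm$ would be too strong --- rather one wants the weaker statement that the natural surjection $\cT_{1,2}/\gm\cT_{1,2} \twoheadleftarrow \cT_{1,2}/\cT'_{1,2}$ composed with \eqref{imcrys} still gives an injection, which amounts to noting $\cT'_{1,2}\subseteq\gm\cT_{1,2}$ always holds since $\cT'_{1,2}=\mathcal{I}^{\tot}\cT_{1,2}\subseteq\gm\cT_{1,2}$ (as $\mathcal{I}^{\tot}\subseteq\gm$). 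Hence $\Hom(\cT_{1,2}/\gm\cT_{1,2},L)\subseteq\Hom(\cT_{1,2}/\cT'_{1,2},L)$ and the first claim follows by precomposing with the map \eqref{imcrys}.

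For the second claim, $\dim\cT_{1,2}/\gm\cT_{1,2}\geq 1$, the point is that $\cT_{1,2}\neq 0$: if $\cT_{1,2}=0$ then by Lemma \ref{form} and Lemma \ref{antiinvre} we would get $\mathcal{I}^{\tot}=\cT_{1,2}\cT_{2,1}=0$, so $\mathrm{Ps}_\cT$ would be globally reducible, $\mathrm{Ps}_\cT=\epsilon_p^{2-k}+\epsilon_p^{1-k}+\Tr\rho_{f,\cT}$ over all of $\cT$. But $\cT$ is $2$-dimensional and equidimensional with every minimal prime corresponding to a \emph{stable} component by Theorem \ref{irrecomplevel1}, so $\rho_K$ is absolutely irreducible --- contradiction. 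Therefore $\cT_{1,2}\neq 0$, and since $\cT_{1,2}$ is a finite-type $\cT$-module, Nakayama's lemma gives $\cT_{1,2}/\gm\cT_{1,2}\neq 0$, hence of $L$-dimension at least $1$.

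The main (and essentially only) obstacle is bookkeeping: making sure the injection \eqref{imcrys}, which is stated as a map out of $\Hom(\cT_{1,2}/\cT'_{1,2},L)$, restricts correctly to the submodule $\Hom(\cT_{1,2}/\gm\cT_{1,2},L)$, and that the target is genuinely $\rH^1_{f,\unr}(\Q,\rho_f(k-1))$ rather than a larger group --- but this is immediate from Theorem \ref{Bcrys} together with the inclusion $\mathcal{I}^{\tot}\cT_{1,2}\subseteq\gm\cT_{1,2}$. No new local analysis at $p$ or at $\ell\mid N$ is needed here; all the hard work was done in Theorems \ref{Bcrys}, \ref{lastentry}, and \ref{Bcrys2}. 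I would present the argument in two short paragraphs: first deduce $\cT'_{1,2}\subseteq\gm\cT_{1,2}$ from Corollary \ref{description1,2} and compose with \eqref{imcrys}; then rule out $\cT_{1,2}=0$ using stability of the components and conclude via Nakayama.
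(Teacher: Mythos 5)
Your proposal is correct and takes essentially the same route as the paper: you reduce the first claim to the inclusion $\cT'_{1,2}=\mathcal{I}^{\tot}\cT_{1,2}\subseteq\gm\cT_{1,2}$ (using Corollary~\ref{description1,2} and $\mathcal{I}^{\tot}\subseteq\gm$) and precompose with Theorem~\ref{Bcrys}, and the second claim you obtain from $\mathcal{I}^{\tot}=\cT_{1,2}\cT_{2,1}\neq(0)$ (irreducibility of $\rho_K$) followed by Nakayama. One very minor remark: the paper actually asserts $\Hom(\cT_{1,2}/\gm\cT_{1,2},L)\simeq\Hom(\cT_{1,2}/\mathcal{I}^{\tot}\cT_{1,2},L)$, which holds because $\Hom$ here means $\cT$-module homomorphisms into the residue field $L$ and any such map automatically kills $\gm\cT_{1,2}$; your weaker ``$\subseteq$'' version is also fine and gives the same conclusion. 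The opening paragraph about Corollary~\ref{antiinvB''} and the $\tau$-symmetry is a detour you correctly abandon --- it plays no role in the final argument.
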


\begin{proof}   The following injection follow from Theorem \ref{Bcrys} and Corollary \ref{description1,2}: 

\begin{equation}\label{imcrys3} \Hom(\cT_{1,2}/\gm.\cT_{1,2},L) \simeq \Hom(\cT_{1,2}/\mathcal{I}^{\tot}.\cT_{1,2},L) \hookrightarrow \rH^{1}_{f,\unr}(\Q,\rho_f(k-1)) \end{equation}

Moreover, $ \cT_{1,2}/\gm.\cT_{1,2} \ne \{0\}$ since $\rho_K:G_\Q \rightarrow \GL_4(K)$ is absolutely irreducible (so $\mathcal{I}^{\tot}=  \cT_{1,2} \cT_{2,1} \ne (0)$).

\end{proof}

\begin{prop}\label{bornjansen}

 $\dim \rH^{1}(G_\Q^{Np},\epsilon_p^{-1})=1$.

\end{prop}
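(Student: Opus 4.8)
The plan is to compute $\rH^1(G_\Q^{Np},\epsilon_p^{-1})$ directly using the global Euler characteristic formula together with the vanishing of the relevant $\rH^0$ and $\rH^2$ terms. First I would recall Tate's global Euler characteristic formula for a finite-dimensional $\Q_p$-representation $V$ of $G_\Q^{Np}$: writing $d^\pm$ for the dimensions of the $\pm 1$-eigenspaces of complex conjugation $c$ on $V$, one has
\[
\dim \rH^1(G_\Q^{Np},V) = \dim \rH^0(G_\Q^{Np},V) + \dim \rH^2(G_\Q^{Np},V) + d^- ,
\]
where for $V = \epsilon_p^{-1}$ (a one-dimensional space) we have $d^- = 1$ because the cyclotomic character sends $c$ to $-1$, so $\epsilon_p^{-1}(c) = -1$ and $c$ acts by $-1$ on the line.

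Next I would dispose of the $\rH^0$ term: $\rH^0(G_\Q^{Np},\epsilon_p^{-1}) = (\epsilon_p^{-1})^{G_\Q} = 0$ since $\epsilon_p$ is a nontrivial character (its image is infinite). For the $\rH^2$ term I would use Poitou--Tate duality: $\rH^2(G_\Q^{Np},\epsilon_p^{-1})$ is dual to $\rH^0(G_\Q^{Np}, \epsilon_p \otimes \epsilon_p(-1)^{\vee})$... more precisely $\rH^2(G_\Q^{Np},\epsilon_p^{-1})^\vee \cong \rH^1_c$ type object, but the cleaner route is that by Tate local-global and the fact that $\rH^2(G_\Q^{Np},M)^\vee$ relates to $M^\vee(1)$ invariants; here $(\epsilon_p^{-1})^\vee(1) = \epsilon_p \cdot \epsilon_p = \epsilon_p^2$, which again is a nontrivial character, so the relevant $\rH^0$ vanishes and hence $\rH^2(G_\Q^{Np},\epsilon_p^{-1}) = 0$. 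Alternatively, one observes $\rH^1_{f,\unr}(\Q,\epsilon_p) = \Z^\times \otimes L = 0$ (as used in the proof of Lemma~\ref{form}), and the Poitou--Tate exact sequence together with the computation of local $\rH^1$'s at $p$ and at $\ell \mid N$ pins down the global $\rH^1$; but the Euler characteristic argument is the most economical.

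Combining the three inputs, $\dim \rH^1(G_\Q^{Np},\epsilon_p^{-1}) = 0 + 0 + 1 = 1$, which is the claim. The main obstacle — really the only point requiring care — is justifying the vanishing of $\rH^2(G_\Q^{Np},\epsilon_p^{-1})$; I would handle this by Poitou--Tate global duality, identifying $\rH^2(G_\Q^{Np},\epsilon_p^{-1})$ with the dual of a Selmer-type group for the Tate twist $\epsilon_p^{-1}$, equivalently with $\Sha^1$-type terms controlled by $\rH^0$ of the Cartier dual $\epsilon_p^{2}$, all of which vanish because the characters involved have infinite image. Since the statement is about $\Q_p$-coefficients (characteristic zero), there are no subtleties with torsion, so once the $\rH^0$ and $\rH^2$ vanishing is in place the dimension count is immediate.
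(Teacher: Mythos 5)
Your overall strategy — global Euler characteristic plus vanishing of $\rH^0$ and $\rH^2$ — is the standard route, and it is (essentially) what the paper delegates to the cited reference: the paper's own proof is a one-line citation of \cite[Prop.2.2]{Maj}. The computations $d^- = 1$ (since $\epsilon_p^{-1}(c) = -1$) and $\rH^0(G_\Q^{Np},\epsilon_p^{-1}) = 0$ are correct, so the claim does reduce to showing $\rH^2(G_\Q^{Np},\epsilon_p^{-1})=0$.

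However, your justification of $\rH^2=0$ contains a genuine gap. You argue that $\rH^2(G_\Q^{Np},\epsilon_p^{-1})$ is controlled by $\rH^0$ of the Cartier dual $\epsilon_p^{2}$, which vanishes ``because the characters involved have infinite image.'' This conflates \emph{local} Tate duality with \emph{global} Poitou--Tate duality. Locally one does have $\rH^2(G_{\Q_v},M)\cong\rH^0(G_{\Q_v},M^*)^\vee$, but globally the nine-term Poitou--Tate sequence only gives
\[
0 \to \Sha^2(G_\Q^{Np},\epsilon_p^{-1}) \to \rH^2(G_\Q^{Np},\epsilon_p^{-1}) \to \bigoplus_v \rH^2(G_{\Q_v},\epsilon_p^{-1}),
\]
together with $\Sha^2(G_\Q^{Np},\epsilon_p^{-1})\cong \Sha^1(G_\Q^{Np},\epsilon_p^{2})^\vee$. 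The local terms on the right do vanish by the argument you indicate (local duality and nontriviality of $\epsilon_p^2$ at each $v$), but the $\Sha^2$ term is \emph{not} controlled by any $\rH^0$: it sits inside $\rH^1(G_\Q^{Np},\epsilon_p^{2})^\vee$, and what is actually needed is the vanishing of $\rH^1(G_\Q^{Np},\Q_p(2))$. That vanishing is Soulé's theorem (a consequence of his work on \'etale Chern class maps and the computation $K_3(\Z)\otimes\Q=0$), which is a substantial $K$-theoretic input, not a formal consequence of the characters having infinite image. Your alternative sketch via $\rH^1_{f,\unr}(\Q,\epsilon_p)=0$ does not close this gap either: that Selmer group is a subspace of $\rH^1(G_\Q^{Np},\Q_p(1))$, a different twist, and its triviality has no direct bearing on $\Sha^1(G_\Q^{Np},\Q_p(2))$.

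So: right framework, correct $\rH^0$ and $d^-$ terms, but the $\rH^2$ vanishing needs to be justified honestly via Soulé's theorem (or an equivalent $K$-theoretic input), not by a duality shortcut that does not hold globally.
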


\begin{proof} This follows from \cite[Prop.2.2]{Maj}.

\end{proof}

\begin{prop}\label{nontrivsel2}Assume that $k \geq 3$ and $({ \bf Reg})$. The image of the natural injection \begin{equation} \Hom(\cT_{1,2}/ \cT_{1,2}',L)  \hookrightarrow \rH^{1}_{f,\unr}(\Q,\rho_f(k-1)), \end{equation}
is non-trivial (i.e, $ \cT_{1,2}/ \cT_{1,2}' \ne \{ 0 \}$).
\end{prop}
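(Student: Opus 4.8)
The plan is to prove the stronger claim that $\cT_{1,2}/\cT'_{1,2} \neq \{0\}$; given this, since $\cT_{1,2}/\cT'_{1,2}$ is a finitely generated module over the local ring $\cT$ with residue field $L$, Nakayama's lemma gives $\Hom(\cT_{1,2}/\cT'_{1,2},L)\neq 0$, and the non-triviality of the image in $\rH^1_{f,\unr}(\Q,\rho_f(k-1))$ follows at once from the injectivity of the map of Theorem \ref{Bcrys}. Recall that $\cT'_{1,2} = \cT_{1,3}\cT_{3,2}$, that the fractional ideals attached to the GMA $S$ satisfy $\cT_{i,j}\cT_{j,k} \subseteq \cT_{i,k}$ and $\cT_{i,j}\cT_{j,i} \subseteq \gm$, and that $\cT_{1,2} \neq \{0\}$ because $\rho_K$ is absolutely irreducible.

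The first step is to pin down the $(3,2)$-entry. Since $k \geq 3$, Theorem \ref{Bcrys2}(ii) furnishes an injection $\Hom(\cT_{3,2}/\cT'_{3,2},L) \hookrightarrow \rH^1_{f,\unr}(\Q,\rho_f(k-2))$, where $\cT'_{3,2} = \cT_{3,1}\cT_{1,2}$. By the deep theorem of Kato (\cite{Kato}) the target Selmer group vanishes, so $\Hom(\cT_{3,2}/\cT'_{3,2},L) = 0$; as $\cT_{3,2}$ is a finite type module over the local ring $\cT$ with residue field $L$, Nakayama's lemma forces $\cT_{3,2} = \cT'_{3,2} = \cT_{3,1}\cT_{1,2}$.

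Feeding this into the definition of $\cT'_{1,2}$, and using associativity of products of fractional ideals together with $\cT_{1,3}\cT_{3,1} \subseteq \gm$, one obtains
\[
\cT'_{1,2} \;=\; \cT_{1,3}\cT_{3,2} \;=\; (\cT_{1,3}\cT_{3,1})\,\cT_{1,2} \;\subseteq\; \gm\,\cT_{1,2}.
\]
Since $\cT_{1,2}$ is a nonzero finite type $\cT$-module, Nakayama's lemma gives $\gm\,\cT_{1,2} \subsetneq \cT_{1,2}$, hence $\cT'_{1,2} \subsetneq \cT_{1,2}$ and $\cT_{1,2}/\cT'_{1,2} \neq \{0\}$, as required.

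The deduction is short and there is no substantive obstacle once the input results are in hand. The one point requiring attention is that the hypothesis $k \geq 3$ is genuinely used: for $k = 2$ Theorem \ref{Bcrys2} only places $\Hom(\cT_{3,2}/\cT'_{3,2},L)$ inside a Greenberg-type Selmer group (ordinarity, hence de Rham-ness, of the relevant extension not being available --- cf. Remark \ref{Jannsen}), and that group is not known to vanish without the extra assumption on $L_p(f_\alpha,\omega_p^{-1},\cdot)$, so the first step of the plan would break down in weight $(2,2)$.
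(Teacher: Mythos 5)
Your argument is essentially the same as the paper's: both proofs use Kato's vanishing plus Nakayama to reduce $\cT_{3,2}$ to $\cT_{3,1}\cT_{1,2}$, and then the containment $\cT_{1,3}\cT_{3,1}\subseteq\gm$ to trap $\cT'_{1,2}$ inside $\gm\,\cT_{1,2}$. The difference is in the final step. The paper argues by contradiction: assuming $\cT_{1,2}=\cT'_{1,2}$, Nakayama gives $\cT_{1,2}=0$, and then the anti-involution $\tau$ (via \cite[Lemma 1.8.5]{bb}, so $\cT_{1,2}=\cT_{2,3}$, $\cT_{2,1}=\cT_{3,2}$) together with a second application of Theorem~\ref{Bcrys2} and Corollary~\ref{antiinvB''} kills $\cT_{2,3}$, $\cT_{3,2}$ and $\cT_{2,1}$ as well; only then does one read off that $\rho_K$ has a stable two-dimensional subspace, contradicting Theorem~\ref{irrecomplevel1}. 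You instead assert directly that $\cT_{1,2}\neq 0$ ``because $\rho_K$ is absolutely irreducible'' and finish by Nakayama. That assertion is in fact correct and can be proved without $({\bf St})$: over each residue field $K_m$ of $K$ the ideal relation $\cT_{1,3}\cT_{3,2}\subseteq\cT_{1,2}$ forces $\cT_{1,3}$ or $\cT_{3,2}$ to vanish whenever $\cT_{1,2}$ does, and in either case one of the subspaces $Ke_2+Ke_3$ or $Ke_2+Ke_3+Ke_4$ is $S$-stable, contradicting irreducibility of $\rho_{K_m}$. However, you label it a ``recall''; the place where the paper does state $\cT_{1,2}\neq0$ this way (Corollary~\ref{nontrivsel}) operates under $({\bf St})$ and relies on $\mathcal{I}^{\tot}=\cT_{1,2}\cT_{2,1}$ from Lemma~\ref{form}, which is not available here. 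So you should either supply the short GMA argument above, or fall back on the paper's involution-based contradiction; as written the step is true but not actually proved, and it is the one place where your route diverges from the text. The observation about why $k=2$ fails is accurate and matches Remark~\ref{Jannsen} and Section~\ref{Selmergroupvanish}.
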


\begin{proof}
Since $\dim \rH^{1}_{f,\unr}(\Q,\rho_f(k-2))=0$ (by \cite{Kato}), Theorem   \ref{Bcrys2} and Corollary \ref{antiinvB''} yield that $\Hom(\cT_{3,2}/\cT_{3,2}',L)=\{0 \}$, and hence Nakayama's lemma implies that  $\cT_{3,2}=\cT'_{3,2}=\cT_{3,1}\cT_{1,2}  $. We will proceed by contradiction. Assume  that $ \cT_{1,2}/ \cT_{1,2}' = \{ 0 \}$ and so $\cT_{1,2}=\cT_{1,2}'=\cT_{1,3} \cT_{3,2}$. Thus, $\cT_{1,2}=\cT_{1,3} \cT_{3,1}\cT_{1,2}$. Moreover, $\cT_{1,3} \cT_{3,1} \subset \gm \subset \cT$ by \cite[Theorem   1.4.4]{bb} and hence   $\cT_{1,2}=\gm \cT_{1,2}$. Therefore, Nakayama's lemma yields that $\cT_{1,2}=0$. The involution introduced in \S.\ref{symmetry} implies by \cite[Lemma.1.8.5]{bb} that $\cT_{1,2}=\cT_{2,3}$ and $\cT_{2,1}=\cT_{3,2}$, and so also $\cT_{2,3}=0$. Furthermore, Theorem \ref{Bcrys2} yields that $\cT_{2,1}=\cT'_{2,1}=\cT_{2,3}\cT_{3,1}$. Finally, this yields that $\cT_{3,2}=\cT_{2,1}=\cT_{2,3}\cT_{3,1}=0$ and also $\cT_{1,2}=\cT_{2,3}=0$, and it follows that the generic representation $\rho_K:G_\Q \to S^{\times} \subset \GL_4(K)$ attached to $K=Q(\cT)$ is reducible, which contradicts Theorem   \ref{irrecomplevel1}.

\end{proof}

Assume now that $\bar{\rho}_f$ is absolutely irreducible. Let $\mathbb{I}$ be the finite flat integral extension of the Iwasawa algebra $\Z_p   \lsem   T  \rsem $ generated by the coefficients of a Hida family $\mathcal{F}$ specializing to $f_\alpha$ ($\mathcal{F}$ is unique up to Galois conjugacy) and let $\rho_\mathcal{F}: G_\Q \to \GL_2(\mathbb{I})$ be the $p$-adic Galois representation attached to $\mathcal{F}$. Let $\chi_{\univ}:G_\Q \to \Z_p   \lsem   T  \rsem ^{\times}$ be the universal character given by the composition of the $p$-adic cyclotomic character $\epsilon_p: G_\Q \to 1+p^\nu \Z_p$  with the tautological character $ 1+p^\nu \Z_p\to \Z_p  \lsem  1+p^\nu \Z_p  \rsem ^{\times}\simeq \Z_p   \lsem   T  \rsem ^{\times}$, where $\nu=1$ if $p \geq 3$ and $\nu=2$ if $p=2$. It follows from the work of Nekovar \cite[Prop.4.2.3]{Nek} that the $\mathbb{I}$-adic Selmer group $\rH^1_{f,\unr}(\Q,\rho_{\mathcal{F}} \otimes \chi_{\univ}^{-1/2})$ is of finite type over the Iwasawa algebra $\Z_p   \lsem   T  \rsem $, and so over $\mathbb{I}$ since $\mathbb{I}$ is finite flat over $\Z_p   \lsem   T  \rsem $.

\begin{cor}\label{nonItor}Assume that $\bar{\rho}_f$ is absolutely irreducible, then the generic rank of the $\mathbb{I}$-adic Selmer group $\rH^1_{f,\unr}(\Q,\rho_{\mathcal{F}} \otimes \chi_{\univ}^{-1/2})$ is at least one (i.e. $\rH^1_{f,\unr}(\Q,\rho_{\mathcal{F}} \otimes \chi_{\univ}^{-1/2})$ has a non-torsion class over $\mathbb{I}$).
\end{cor}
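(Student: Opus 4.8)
The plan is to exhibit a non-torsion class in the $\mathbb{I}$-adic Selmer group by spreading out, along the Hida family, the non-trivial Selmer class in $\rH^1_{f,\unr}(\Q,\rho_f(k-1))$ produced in Corollary \ref{nontrivsel} (resp. Proposition \ref{nontrivsel2}), using the reducibility locus of $\Ps_{\cT}$ as the bridge. First I would record the relevant identifications. By Theorem \ref{A}(ii) and the $R=T$ argument of \S\ref{modpar}, the reducibility quotient $A=\cT/\mathcal{I}^{\tot}$ is a discrete valuation ring whose completion $\widehat{A}$ is the universal $p$-ordinary minimally ramified deformation ring of $\rho_f$; since $\bar\rho_f$ is absolutely irreducible this is, by Hida theory, the completed local ring of the Hida family $\mathcal{F}$ at $f_\alpha$, so there is a flat local injection $\mathbb{I}\hookrightarrow\widehat{A}$ (localise $\mathbb{I}[1/p]$ at the point attached to $f_\alpha$ and complete) which is compatible with the Galois representations, in the sense that $\rho_{\mathcal{F}}\otimes_{\mathbb{I}}\widehat{A}\cong\rho_{T_2}\otimes_A\widehat{A}$, where $\rho_{T_2}\colon G_\Q\to\GL_2(A)$ is the genuine lift (existing because $\bar\rho_f$ is absolutely irreducible) of the two-dimensional constituent $T_2$ of the reducible pseudo-character $\Ps_{\cT}\bmod\mathcal{I}^{\tot}=T_1+T_2+T_3$.

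Next I would build the class. Over $A$ the generalised matrix algebra $S/\mathcal{I}^{\tot}S$ is block upper-triangularisable with diagonal blocks the interpolated characters $\delta_1=\epsilon_p^{2-\mathbf{k}}$, the representation $\rho_{T_2}$, and $\delta_3=\epsilon_p^{1-\mathbf{k}}$, and its $(1,2)$ off-diagonal block produces a non-split $A$-adic extension of $\rho_{T_2}$ by $\delta_1$, whose associated cohomology class $c$ lies in $\rH^1(G_\Q^{Np},\rho_{T_2}\otimes\epsilon_p^{\mathbf{k}-1})$; and $\rho_{T_2}\otimes\epsilon_p^{\mathbf{k}-1}$ is exactly $\rho_{\mathcal{F}}\otimes\chi_{\univ}^{-1/2}$ base-changed along $\mathbb{I}\hookrightarrow\widehat{A}$ (it specialises at $f_\alpha$ to $\rho_f(k-1)$, the target of Corollary \ref{nontrivsel}). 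Two things must then be checked. First, $c$ lies in the Selmer group: by the local analysis of \S\ref{ordrho} and \S\ref{ordrho1} (ordinarity at $p$: Theorems \ref{Bcrys} and \ref{Bcrys2}(i), which are proved at the level of the family) together with unramifiedness at the primes $\ell\mid N$ (\cite[Lemme 4.1.3]{urban}), the extension satisfies the Greenberg-type conditions defining Nekovar's Selmer group $\rH^1_{f,\unr}(\Q,\rho_{\mathcal{F}}\otimes\chi_{\univ}^{-1/2})$ after base change. Second, $c$ is non-torsion over $\widehat{A}$: the off-diagonal module $\cT_{1,2}/\cT'_{1,2}=\cT_{1,2}\otimes_{\cT}A$ is a non-torsion $A$-module of rank one, because $\cT_{1,2}$ is a finite torsion-free $\cT$-module of generic rank one (as $\cT_{1,2}\cT_{2,1}=\mathcal{I}^{\tot}\neq 0$ by Corollary \ref{nontrivsel}) and $\mathcal{I}^{\tot}$ is a height-one prime of $\cT$, so $\cT_{1,2}$ localised at $\mathcal{I}^{\tot}$ is free of rank one over the DVR $\cT_{\mathcal{I}^{\tot}}$; the Bellaïche–Chenevier extension map being injective up to bounded torsion, the same holds for the image of $c$.

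Finally I would conclude by a torsion argument. Suppose for contradiction that $\mathbf{X}:=\rH^1_{f,\unr}(\Q,\rho_{\mathcal{F}}\otimes\chi_{\univ}^{-1/2})$ is $\mathbb{I}$-torsion; it is of finite type over $\mathbb{I}$ by Nekovar's finiteness theorem, so this means $\mathbf{X}\otimes_{\mathbb{I}}\mathrm{Frac}(\mathbb{I})=0$. Since $\mathbb{I}\hookrightarrow\widehat{A}$ is flat and the Selmer group is defined by local conditions, it commutes with this base change, so $\mathbf{X}\otimes_{\mathbb{I}}\widehat{A}$ is the $\widehat{A}$-adic Selmer group; and, as $\mathrm{Frac}(\mathbb{I})\subset\mathrm{Frac}(\widehat{A})$, it is $\widehat{A}$-torsion, hence of finite length over the DVR $\widehat{A}$. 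This contradicts the previous paragraph, which exhibited inside it the non-torsion $\widehat{A}$-submodule generated by $c$. Hence $\mathbf{X}$ is not $\mathbb{I}$-torsion, i.e. $\rk_{\mathbb{I}}\mathbf{X}\geq 1$. I expect the main obstacle to be the second paragraph: namely the precise translation between the extensions carried by the generalised matrix algebra over $A$ and the local conditions of Nekovar's $\mathbb{I}$-adic Selmer group — especially the condition at $p$, which for the ordinary family is a Greenberg condition given by the quotient of $\rho_{\mathcal{F}}$ by its $I_p$-fixed line, and which must be matched with the ordinarity of the $S$-extensions established in \S\S\ref{ordrho}--\ref{ordrho1} — together with the commutative-algebra input that $\cT_{1,2}/\cT'_{1,2}$ is genuinely non-torsion (not merely non-zero) over $A$. (Alternatively, one could deduce the statement from Nekovar's generic parity for essentially self-dual $\mathbb{I}$-adic families, the global sign here being $\epsilon_f=-1$, hence the generic rank odd; but the argument above is in the spirit of the rest of the paper.)
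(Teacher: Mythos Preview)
Your instinct---to spread the Selmer class along the Hida family---is exactly right, but your execution is far more elaborate than the paper's and, more importantly, leans on results (Theorem~\ref{A}(ii), Proposition~\ref{RTBK}) that require $(\mathbf{BK})$ and $(\mathbf{St})$, neither of which is assumed in Corollary~\ref{nonItor}. Without $(\mathbf{BK})$ you cannot identify $\widehat{A}=\widehat{\cT/\mathcal{I}^{\tot}}$ with the completed local ring of $\mathcal{F}$ at $f_\alpha$ (you do not even know $A$ is a DVR), so your construction of an explicit $\widehat{A}$-adic class never gets off the ground. Even granting those hypotheses, further work would be needed: Theorems~\ref{Bcrys} and~\ref{Bcrys2}(i) verify the local Selmer conditions only for $L$-valued extensions (i.e.\ modulo $\mathfrak{m}$), not for $A$-valued ones modulo $\mathcal{I}^{\tot}$; and your claim that the Selmer group commutes with the flat base change $\mathbb{I}\hookrightarrow\widehat{A}$ is not automatic.

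The paper's proof is the simplest possible implementation of your opening sentence. Proposition~\ref{nontrivsel2} requires only $k\geq 3$ (together with the standing setup $\epsilon_f=-1$, $p$-ordinarity, paramodular level)---no $(\mathbf{St})$, no $(\mathbf{BK})$---and it applies verbatim to every classical specialisation $f_{k'}$ of $\mathcal{F}$ with $k'\geq 3$, since each such $f_{k'}$ again has sign $-1$ and produces its own Saito--Kurokawa point on $\cE_\Delta$. Hence $\rH^1_{f,\unr}(\Q,\rho_{f_{k'}}(k'-1))\neq 0$ at infinitely many arithmetic primes of $\mathbb{I}$; combined with finite generation of the $\mathbb{I}$-adic Selmer group (Nekovar) this forces positive generic rank. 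Your parenthetical alternative via Nekovar's generic parity is also valid and closer in spirit to this, though still not the paper's argument.
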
 

\begin{proof}
It follows from Corollary \ref{nontrivsel2} that  the $\mathbb{I}$-adic Selmer group $\rH^1_{f,\unr}(\Q,\rho_{\mathcal{F}} \otimes \chi_{\univ}^{-1/2})$ specializes at infinitely many classical points of $\Spm \mathbb{I}[1/p]$ to a non-trivial Selmer group. Hence, the generic rank of $\rH^1_{f,\unr}(\Q,\rho_{\mathcal{F}} \otimes \chi_{\univ}^{-1/2})$ over $\mathbb{I}$ is non-zero.

\end{proof}

\subsection{Bounding the number of generators of $\mathcal{I}^{\tot}$}

\begin{thm}\label{princ1} 
Assume $({ \bf Reg})$, $({ \bf St})$ and $\dim \rH^{1}_{f,\unr}(\Q,\rho_f(k-1))=1$. 
\begin{enumerate}

\item There exists idempotents $\{e_1',e_2',e_3'\}$ of $S$ lifting the idempotents of $\varrho$ attached to $\{\epsilon_p^{2-k},\rho_f,\epsilon_p^{1-k}\}$ such that  $S$ has the following form $$S=\begin{pmatrix} 
\cT & M_{1,2}(\cT) & \cT \\ 
 M_{2,1}(\mathcal{I}^{\tot}) & M_2(\cT) & M_{2,1}(\cT) \\
 \cT_{3,1} & M_{1,2}(\mathcal{I}^{\tot}) & \cT 
\end{pmatrix},$$ where $\cT_{3,1}=\mathcal{J} \subset \mathcal{I}^{\tot}$ is an ideal.

\item Assume $k \geq 3$. Then $\mathcal{I}^{\tot}=  \mathcal{J}=\cT_{3,1}$ and $\mathcal{I}^{\tot}=\cT.g + (\mathcal{I}^{\tot})^2$ for an element $g$ in $\mathcal{I}^{\tot}$, which yields that the reducibility ideal $\mathcal{I}^{\tot}$ is principal and generated by $g$.

\item Assume that $k=2$ and $\dim \mathrm{Sel}_{\Q,f_\alpha}=0$, then $\mathcal{I}^{\tot}=  \mathcal{J}=\cT_{3,1}$ and $\mathcal{I}^{\tot}=\cT.g + (\mathcal{I}^{\tot})^2$ for an element $g$ in $\mathcal{I}^{\tot}$, and the reducibility ideal  $\mathcal{I}^{\tot}$ is principal and generated by $g$.
\end{enumerate}

\end{thm}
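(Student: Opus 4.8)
The plan is to split the statement into three moves: (a) normalise the generalized matrix algebra $S$ so that the fractional ideals $\cT_{i,j}$ become explicit — this is (i); (b) identify the remaining ideal $\cT_{3,1}$ with $\mathcal{I}^{\tot}$; and (c) deduce principality of $\mathcal{I}^{\tot}$ from a one-dimensionality statement in Galois cohomology.

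\emph{Step (a).} I would begin from Corollary~\ref{nontrivsel}, which gives an injection $\Hom(\cT_{1,2}/\gm\,\cT_{1,2},L)\hookrightarrow \rH^{1}_{f,\unr}(\Q,\rho_f(k-1))$. The hypothesis $s=\dim\rH^{1}_{f,\unr}(\Q,\rho_f(k-1))=1$ then gives $\dim_L\cT_{1,2}/\gm\,\cT_{1,2}\le 1$, and since $\cT_{1,2}\neq 0$ (because $\rho_K$ is absolutely irreducible) Nakayama shows $\cT_{1,2}$ is cyclic over $\cT$. As a torsion-free fractional ideal of $K$ which, by irreducibility of each $\rho_{K_i}$ together with $\cT_{1,3}=\cT_{1,2}\cT_{2,3}$ (relation \eqref{1,3}), does not vanish at any minimal prime of $\cT$, it is free of rank one: $\cT_{1,2}=\cT x$ with $x\in K^{\times}$. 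I would then conjugate the matrix realization of $S$ by $\mathrm{diag}(x^{-1},1,1,x)$, which fixes the idempotents and preserves their compatibility with the anti-involution $\tau$ while rescaling $\cT_{1,2}$ to $\cT$. Feeding this into Lemma~\ref{form} ($\mathcal{I}^{\tot}=\cT_{1,2}\cT_{2,1}=\cT_{2,3}\cT_{3,2}$), the symmetry relations $\cT_{2,3}=\cT_{1,2}$, $\cT_{3,2}=\cT_{2,1}$ (\cite[Lemma 1.8.5]{bb}) and \eqref{1,3}, one reads off $\cT_{1,2}=\cT_{2,3}=\cT_{1,3}=\cT$ and $\cT_{2,1}=\cT_{3,2}=\mathcal{I}^{\tot}$ in the new presentation, while $\cT_{3,1}=:\mathcal{J}$ is an ideal of $\cT$; substituting into Proposition~\ref{redideal2} gives $\mathcal{I}^{\tot}=\mathcal{J}+\mathcal{I}^{\tot}$, hence $\mathcal{J}\subseteq\mathcal{I}^{\tot}$. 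This is (i).

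\emph{Steps (b) and (c).} In the normalised presentation $\cT'_{3,2}=\cT_{3,1}\cT_{1,2}=\mathcal{J}$, so Theorem~\ref{Bcrys2} furnishes an injection $\Hom(\cT_{3,2}/\mathcal{J},L)\hookrightarrow \rH^{1}_{f,\unr}(\Q,\rho_f(k-2))$ when $k\ge 3$ and $\Hom(\cT_{3,2}/\mathcal{J},L)\hookrightarrow \mathrm{Sel}_{\Q,f_\alpha}$ when $k=2$ (as $\rho_f(k-2)=\rho_f$ in that case). The right-hand side vanishes: $\rH^{1}_{f,\unr}(\Q,\rho_f(k-2))=0$ by Kato's theorem \cite{Kato} for $k\ge 3$, and $\dim\mathrm{Sel}_{\Q,f_\alpha}=0$ by hypothesis for $k=2$. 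Hence $\cT_{3,2}=\mathcal{J}$ by Nakayama, and with $\cT_{3,2}=\mathcal{I}^{\tot}$ from (a) this yields $\mathcal{J}=\cT_{3,1}=\mathcal{I}^{\tot}$. Next, $\cT'_{3,1}=\cT_{3,2}\cT_{2,1}=(\mathcal{I}^{\tot})^{2}$, so \eqref{1.5.5extn} together with Proposition~\ref{bornjansen} gives $\Hom(\mathcal{I}^{\tot}/(\mathcal{I}^{\tot})^{2},L)\hookrightarrow \rH^{1}(G_\Q^{Np},\epsilon_p^{-1})$, a one-dimensional space. Therefore $\dim_L\mathcal{I}^{\tot}/(\gm\,\mathcal{I}^{\tot}+(\mathcal{I}^{\tot})^{2})\le 1$, i.e. $\mathcal{I}^{\tot}=\cT\cdot g+(\mathcal{I}^{\tot})^{2}$ for some $g\in\mathcal{I}^{\tot}$; since $(\mathcal{I}^{\tot})^{2}\subseteq\gm\,\mathcal{I}^{\tot}$ this forces $\dim_L\mathcal{I}^{\tot}/\gm\,\mathcal{I}^{\tot}\le 1$, and Nakayama with $\mathcal{I}^{\tot}\neq 0$ gives $\mathcal{I}^{\tot}=\cT\cdot g$. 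This proves (ii) and (iii).

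\emph{Where the difficulty sits.} The genuinely hard inputs are external to this theorem: the ordinarity and crystallinity analysis underlying Corollary~\ref{nontrivsel} and Theorem~\ref{Bcrys2}, Kato's Euler system bound, and—for $k=2$—the Iwasawa-theoretic vanishing of $\mathrm{Sel}_{\Q,f_\alpha}$. Inside the argument above, the only point requiring care is the bookkeeping in (a): checking that $\cT_{1,2}$ is really free of rank one (full support, via irreducibility of $\rho_K$ on each component and relation \eqref{1,3}) and that the rescaling can be chosen to commute with $\tau$, so that Lemma~\ref{form} and the symmetry relations remain available in the normalised presentation. The same steps, with ``$\le 1$'' replaced throughout by ``$\le s^{2}$'', bound the number of generators of $\mathcal{I}^{\tot}$ by $s^{2}$ in general.
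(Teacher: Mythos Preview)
Your proof is correct and follows essentially the same route as the paper's: bound $\cT_{1,2}/\gm\cT_{1,2}$ via the Selmer group to get freeness of rank one, renormalise the GMA so that $\cT_{1,2}=\cT_{2,3}=\cT_{1,3}=\cT$ and $\cT_{2,1}=\cT_{3,2}=\mathcal{I}^{\tot}$, then use Theorem~\ref{Bcrys2} (resp.\ the vanishing of $\mathrm{Sel}_{\Q,f_\alpha}$) to force $\mathcal{J}=\mathcal{I}^{\tot}$, and finally Proposition~\ref{bornjansen} plus Nakayama for principality. Two small remarks: your single conjugation by $\mathrm{diag}(x^{-1},1,1,x)$ relies on the equality $\cT_{1,2}=\cT_{2,3}$ from \cite[Lemma~1.8.5]{bb} (the paper instead takes two separate generators $\alpha,\beta$ and rescales by $\mathrm{diag}(\alpha,1,1,\beta^{-1})$), and your deduction $\cT_{1,3}=\cT$ directly from \eqref{1,3} is a mild shortcut over the paper, which re-invokes Theorem~\ref{lastentry} at that step.
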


\begin{rem}
Using results about $\Lambda$-adic Selmer groups we exhibit many examples where the Selmer group $\rH^{1}_{f,\unr}(\Q,\rho_f(k-1))$ is $1$-dimensional (see Appendix \S.\ref{example}).

\end{rem}

\begin{proof}
i) By Theorem \ref{Bcrys} and Corollary \ref{description1,2}, we have the following: 

\begin{equation}\label{imcrys3} \Hom(\cT_{1,2}/\mathcal{I}^{\tot}.\cT_{1,2},L) \hookrightarrow \rH^{1}_{f,\unr}(\Q,\rho_f(k-1)). \end{equation}
Moreover, since $\mathcal{I}^{\tot} \subset \gm$, we have an injection $$\Hom(\cT_{1,2}/\gm.\cT_{1,2},L) \hookrightarrow \Hom(\cT_{1,2}/\mathcal{I}^{\tot}.\cT_{1,2},L).$$
By the assumption on the dimension of $\rH^{1}_{f,\unr}(\Q,\rho_f(k-1))$ we get \[\dim \Hom(\cT_{1,2}/\gm.\cT_{1,2},L) \leq 1.\]
On the other hand, the fact that $\rho_K:G_\Q \rightarrow \GL_4(K)$ is irreducible implies that $\mathcal{I}^{\tot}=  \cT_{1,2} \cT_{2,1}\ne 0$ and hence \[\dim \Hom(\cT_{1,2}/\gm.\cT_{1,2},L)=1.\]

Thus, Nakayama's lemma implies that the $\cT$-modules $\cT_{1,2}$ is a monogenic $\cT$-module.

Since $\cT_{1,2}$ is a fractional ideal of $K$ and each component  $\rho_{K_i}$ of $\rho_K$ is absolutely irreducible, the annihilator of the generator of $\cT_{1,2}$ over $\cT$ is trivial. Hence, $\cT_{1,2}$ is a  free rank one $\cT$-module. Moreover,  the symmetry under the anti-involution implies that $\cT_{1,2} \simeq \cT_{2,3}$ and hence $\cT_{2,3}$ is also a free $\cT$-module of rank one.

Let $\alpha \in K$ (resp. $\beta \in K$) be a generator of $\cT_{1,2}$ (resp. of $\cT_{2,3}$) as  $\cT$-module. A direct computation shows that one can choose $e_1'=\alpha.e_1, e_2'=e_2, e_3'=\beta^{-1}.e_3$ as a suitable basis of idempotents.

Moreover, we recall that we have an injection by Theorem \ref{lastentry}  $$\Hom(\cT_{1,3}/\cT_{1,2} \cT_{2,3},L)=\Hom(\cT_{1,3}/\cT,L) \hookrightarrow \rH^1_{f,\unr}(\Q,\epsilon_p)=\{0\}.$$ 
Hence, Nakayama's lemma implies that $\cT_{1,3}=\cT$. Now, we can conclude from the fact that $\cT_{1,2}\cT_{2,1}=\cT_{2,3}\cT_{3,2}=\mathcal{I}^{\tot}$ that $\cT_{2,1}=\cT_{3,2}=\mathcal{I}^{\tot}$.

ii) By \eqref{1.5.5extn} applied with $(i,j)=(3,2)$ and $(3,1)$, respectively, applying Theorem \ref{Bcrys2} and Cor.\ref{antiinvB''}  for $(i,j)=(3,2)$ and using that $\cT_{3,2}=\mathcal{I}^{\tot}$, $\cT'_{3,2}=\cT_{3,1} \cT_{1,2}=\mathcal{J}$, $\cT_{3,1}=\mathcal{J}$, and $\cT'_{3,1}=\cT_{3,2}\cT_{2,1}=(\mathcal{I}^{\tot})^2$ we get injective morphisms 

 \begin{equation}\label{imcrys4}\begin{split} \Hom(\mathcal{I}^{\tot}/\mathcal{J},L) \hookrightarrow \rH^{1}_{f,\unr}(\Q,\rho_f(k-2)) \\ \Hom(\mathcal{J}/(\mathcal{I}^{\tot})^2,L) \hookrightarrow \rH^{1}(G_\Q^{Np},\epsilon_p^{-1}). \end{split}
 \end{equation}

One has  $\dim \rH^{1}_{f,\unr}(\Q,\rho_f(k-2))=0$ (by a deep result of Kato \cite{Kato}), hence Nakayama's lemma applied to $\mathcal{I}^{\tot}/\mathcal{J}$ yields that $\mathcal{I}^{\tot}=\mathcal{J}$. Moreover, the ideal $\mathcal{I}^{\tot}$ is non-zero since $\rho_K$ is irreducible. Thus, the fact that $\dim \rH^{1}(G_\Q^{Np},\epsilon_p^{-1}) = 1$  (by Proposition \ref{bornjansen}) yields that $\mathcal{I}^{\tot}=\cT.g + (\mathcal{I}^{\tot})^2$ and $g$ is a generator of the ideal $\mathcal{I}^{\tot}$.

iii) The assertion follows from similar arguments to those already used to prove i), ii) and the fact that $\Hom(\mathcal{I}^{\tot}/\mathcal{J},L) \hookrightarrow \mathrm{Sel}_{\Q,f_\alpha}$ by Theorem   \ref{Bcrys2} and Cor.\ref{antiinvB''}. 
\end{proof}

One has the following general bound of the number of generators of $\mathcal{I}^{\tot}$: 
\begin{cor}\label{bound} Let $s:=\dim \rH^{1}_{f,\unr}(\Q,\rho_f(k-1))$. Assume $({ \bf Reg})$, $({ \bf St})$, and that $\dim \mathrm{Sel}_{\Q,f_\alpha}=0$ if $k=2$. Then $\mathcal{I}^{\tot}$ is generated by at most $s^2$ elements.

\end{cor}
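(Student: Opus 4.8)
The plan is to generalize the argument of Theorem \ref{princ1}, which treated the case $s=1$: I will bound the minimal number of $\cT$-generators of each off-diagonal fractional ideal appearing in the GMA $S$ and then combine these bounds through the identity $\mathcal{I}^{\tot}=\cT_{1,2}\cT_{2,1}$ of Lemma \ref{form}. Two elementary observations are used repeatedly. First, each $\cT_{i,j}$ is a finitely generated module over the Henselian local ring $(\cT,\gm)$ with residue field $L$, so by Nakayama's lemma its minimal number of generators equals $\dim_L\Hom(\cT_{i,j}/\gm\cT_{i,j},L)$; in particular, if $\Hom(\cT_{i,j}/\cT'_{i,j},L)=0$ then $\cT_{i,j}=\cT'_{i,j}$. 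Second, if $\cT'_{i,j}\subseteq\gm\cT_{i,j}$ then the surjection $\cT_{i,j}/\cT'_{i,j}\twoheadrightarrow\cT_{i,j}/\gm\cT_{i,j}$ gives $\dim_L\Hom(\cT_{i,j}/\gm\cT_{i,j},L)\le\dim_L\Hom(\cT_{i,j}/\cT'_{i,j},L)$, and the right-hand side is controlled by the injections of \cite[Theorem 1.5.5]{bb} recorded in \eqref{1.5.5extn}, as sharpened in Theorems \ref{Bcrys}, \ref{lastentry}, \ref{Bcrys2} and Corollary \ref{nontrivsel}.

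First I would bound $\cT_{1,2}$: Corollary \ref{nontrivsel} provides an injection $\Hom(\cT_{1,2}/\gm\cT_{1,2},L)\hookrightarrow\rH^1_{f,\unr}(\Q,\rho_f(k-1))$, a space of dimension $s$, so $\cT_{1,2}$ is generated by at most $s$ elements $x_1,\dots,x_s$. Next I would identify $\cT_{3,2}$: by Theorem \ref{Bcrys2}, with $\cT'_{3,2}=\cT_{3,1}\cT_{1,2}$, the module $\Hom(\cT_{3,2}/\cT'_{3,2},L)$ embeds when $k\ge 3$ into $\rH^1_{f,\unr}(\Q,\rho_f(k-2))$, which vanishes by Kato \cite{Kato}, and when $k=2$ into $\mathrm{Sel}_{\Q,f_\alpha}$, which vanishes by hypothesis. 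In either case $\Hom(\cT_{3,2}/\cT'_{3,2},L)=0$, whence $\cT_{3,2}=\cT'_{3,2}=\cT_{3,1}\cT_{1,2}$ by the first observation applied to the finitely generated module $\cT_{3,2}/\cT'_{3,2}$.

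It remains to show that $\cT_{3,1}$ is monogenic. Using the relation just obtained together with $\cT_{1,2}\cT_{2,1}=\mathcal{I}^{\tot}\subseteq\gm$ (Lemma \ref{form}), one finds $\cT'_{3,1}=\cT_{3,2}\cT_{2,1}=\cT_{3,1}(\cT_{1,2}\cT_{2,1})=\cT_{3,1}\mathcal{I}^{\tot}\subseteq\gm\cT_{3,1}$; hence the injection \eqref{1.5.5extn} for the pair $(3,1)$ — whose target is $\rH^1(G_\Q^{Np},\epsilon_p^{-1})$, as recorded in the proof of Theorem \ref{princ1} (see \eqref{imcrys4}) — descends to $\Hom(\cT_{3,1}/\gm\cT_{3,1},L)\hookrightarrow\rH^1(G_\Q^{Np},\epsilon_p^{-1})$, which is one-dimensional by Proposition \ref{bornjansen}. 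Thus $\cT_{3,1}=\cT g_0$ for some $g_0\in\cT$, so $\cT_{3,2}=\cT_{3,1}\cT_{1,2}=g_0\cT_{1,2}$ is generated by $g_0x_1,\dots,g_0x_s$; by the anti-involution of \S\ref{symmetry} (\cite[Lemma 1.8.5]{bb}) one has $\cT_{2,1}=\cT_{3,2}$, so $\cT_{2,1}$ is likewise generated by $s$ elements, and therefore $\mathcal{I}^{\tot}=\cT_{1,2}\cT_{2,1}$ is generated by the $s^2$ products $x_i(g_0x_j)$, $1\le i,j\le s$. This proves the bound. The only delicate point is the order of the steps: the inclusion $\cT'_{3,1}\subseteq\gm\cT_{3,1}$ used to bound $\cT_{3,1}$ only becomes available once $\cT_{3,2}=\cT_{3,1}\cT_{1,2}$ has been established; otherwise the argument is routine bookkeeping built on the Selmer-group facts ($\dim\rH^1_{f,\unr}(\Q,\rho_f(k-1))=s$, $\rH^1_{f,\unr}(\Q,\rho_f(k-2))=0$, $\rH^1_{f,\unr}(\Q,\epsilon_p)=0$, $\dim\rH^1(G_\Q^{Np},\epsilon_p^{-1})=1$, and $\mathrm{Sel}_{\Q,f_\alpha}=0$ for $k=2$) already proved in the preceding sections.
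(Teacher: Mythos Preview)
Your proof is correct and follows essentially the same route as the paper: bound $\cT_{1,2}$ by $s$ generators via the Selmer injection, collapse $\cT_{3,2}$ to $\cT_{3,1}\cT_{1,2}$ using the vanishing of $\rH^1_{f,\unr}(\Q,\rho_f(k-2))$ (or $\mathrm{Sel}_{\Q,f_\alpha}$ when $k=2$), control $\cT_{3,1}$ by the one-dimensional $\rH^1(G_\Q^{Np},\epsilon_p^{-1})$, and conclude for $\mathcal{I}^{\tot}=\cT_{1,2}\cT_{2,1}$ via the anti-involution. Your organization is in fact marginally cleaner than the paper's: by first noting $\cT'_{3,1}=\cT_{3,1}\mathcal{I}^{\tot}\subset\gm\cT_{3,1}$ you deduce directly that $\cT_{3,1}$ is monogenic, whereas the paper only records $\cT_{3,1}=\cT'_{3,1}+g\cT$ and then runs an extra Nakayama step after substituting back into $\cT_{2,1}=\cT_{2,3}\cT_{3,1}$.
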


\begin{proof}
It follows from \eqref{Bcrys} and Corollary \ref{description1,2} that $\cT_{1,2}$ (resp. $\cT_{2,3}$) is generated by at most $s$ elements. Moreover, it follows from Theorem \ref{Bcrys2} and Cor.\ref{antiinvB''} that $\cT_{2,1}=\cT_{2,3}\cT_{3,1}$ and $\cT_{3,1}=\cT_{3,2}\cT_{2,1}+ g.\cT$. Thus, $\cT_{2,1}=(\cT_{3,2}\cT_{2,1}+ g.\cT) \cT_{2,3}= \mathcal{I}^{\tot} \cT_{2,1} + g.\cT_{2,3}$. Hence, $\cT_{2,1}$ is generated also by at most $s$ elements and then $\mathcal{I}^{\tot}=\cT_{1,2}\cT_{2,1}$ is generated by at most $s^2$ elements.

\end{proof}

\section{Smoothness of $\cE_{\Delta}$ at $\pi_\alpha$ }\label{modpar}

The goal of this section is to prove under the assumptions $({ \bf Reg})$, $({ \bf St})$ and $({ \bf BK})$ that $A:=\cT/\mathcal{I}^{\tot}$ is a regular ring of dimension one (so it is a discrete valuation ring) and deduce that $\cT$ is a regular ring of dimension two when $\dim \rH^{1}_{f,\unr}(\Q,\rho_f(k-1))=1$.

\subsection{Ordinary deformations of $\rho_f$}

Recall that $\rho_f: G^{Np}_{\Q} \rightarrow \GL_2(L)$ is the irreducible odd $p$-adic representation attached to $f$ and $f_\alpha$ is  the $p$-ordinary $p$-stabilisation of $f \in S_{2k-2}(\Gamma_0(N),L)$. We assume until the end of this subsection that $({ \bf St})$ holds for $f$ and consider the following  deformation problem attached to $\rho_f$: for $B$ any local $L$-Artinian algebra with maximal ideal $\gm_B$ and residue field $B/\gm_B=L$, we define $\cD(B)$ as the set of strict equivalence classes of representations $\rho_B : G_\Q^{Np} \rightarrow \GL_2(B)$ lifting $\rho_f$
(that is  $\rho_B\mod{\gm_B} \simeq \rho_f$) and which are  ordinary at $p$
in the sense that:
\begin{eqnarray} \rho_{B|G_{\Q_p}} \simeq \left( \begin{matrix} \psi_{1,B} &  * \\ 0 & \psi_{2,B} \end{matrix} \right),  \end{eqnarray} 
where $\psi_{1,B} : G_{\Q_p} \rightarrow B^\times$ is an unramified character, and such that they are minimally ramified at every $\ell \mid N$ (i.e $\rho_B^{I_\ell}$
is  free of rank one over $B$ for any $\ell \mid N$). Let $\cD'$ be the subfunctor of $\cD$ of deformation with constant determinant (so equal to $\det \rho_f=\epsilon_p^{3-2k}$). 

Since $\rho_f$ is absolutely irreducible, we know from \cite{Ma95} that $\cD$ is prorepresentable by a complete local noetherian ring $\cR^{\ord}$ together with its universal ordinary deformation $\rho^{\ord}:G_\Q \to \GL_2(\cR^{\ord})$. We know also from \cite{Ma95} that the tangent space $\cD(L[\epsilon])$ (resp. $\cD'(L[\epsilon])$) is isomorphic to a subspace $\rH^1_{\ord}(\Q, \ad  \rho_f)$ (resp. $\rH^1_{\ord}(\Q, \ad^0  \rho_f)$) of $\rH^1(\Q, \ad \rho_f)$ (resp. $\rH^1(\Q, \ad^0 \rho_f)$). 

The determinant of $\rho^{\ord}$ is a deformation of $\det \rho_f$, and endows $\cR^{\ord}$ with a structure of $\varLambda_1$-algebra, where $\varLambda_1$ is the completed local ring of $\Z_p \lsem  \Z_p^{\times} \rsem=\underset{ 0\leq l \leq p-1}{\oplus} \Z_p \lsem X \rsem$ \footnote{One has $\Spec \Z_p \lsem \Z_p^{\times} \rsem( \C_p)=\Hom_{\mathrm{cont}}(\Z_p^{\times},\C_p^{\times})$.  } at the height one prime ideal corresponding to $\det \rho_f$ ($\varLambda_1= \Q_p \lsem X \rsem$).

\begin{prop}\label{R=Ttg}Assume $({ \bf St})$. Then the $L$-vector space  $\rH^1_{\ord}(\Q, \ad^0  \rho_f)$ is contained in  $\rH^1_{f,\unr}(\Q, \ad^0  \rho_f)$.
\end{prop}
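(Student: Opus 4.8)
\emph{Sketch of proof.} The plan is to unwind $\rH^1_{\ord}(\Q,\ad^0\rho_f)$ as the tangent space $\cD'(L[\epsilon])$ and then verify the two local conditions cutting out $\rH^1_{f,\unr}(\Q,\ad^0\rho_f)$ inside $\rH^1(G_\Q^{Np},\ad^0\rho_f)$. A class $c\in\rH^1_{\ord}(\Q,\ad^0\rho_f)$ is the cocycle of a deformation $\rho_\epsilon\colon G_\Q^{Np}\to\GL_2(L[\epsilon])$ of $\rho_f$ that is ordinary at $p$ with unramified $\psi_1$-part, has constant determinant $\det\rho_\epsilon=\epsilon_p^{3-2k}$, and is minimally ramified at every $\ell\mid N$. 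Since such a $c$ is automatically unramified outside $Np$, it remains to check: (a) $c|_{G_{\Q_p}}\in\rH^1_f(\Q_p,\ad^0\rho_f)=\ker\bigl(\rH^1(\Q_p,\ad^0\rho_f)\to\rH^1(\Q_p,\ad^0\rho_f\otimes B_{\cris})\bigr)$; and (b) $c|_{I_\ell}=0$ in $\rH^1(I_\ell,\ad^0\rho_f)$ for every $\ell\mid N$.

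For (b), fix $\ell\mid N$. Hypothesis $({ \bf St})$ gives $\pi_{f,\ell}\simeq\mathrm{St}\otimes\xi$, so $\rho_f|_{G_{\Q_\ell}}$ is an unramified twist of the Steinberg representation; hence $I_\ell$ acts on $\rho_f$ through an infinite cyclic pro-$p$ unipotent quotient, in a basis $(e_1,e_2)$ with $e_1$ spanning the line $\rho_f^{I_\ell}$ the monodromy operator is $e_{12}$, and $(\sigma-1)\rho_f=\rho_f^{I_\ell}=\langle e_1\rangle$ for a topological generator $\sigma$ of the $p$-part of tame inertia. Writing out the condition that $\rho_\epsilon^{I_\ell}$ be free of rank one over $L[\epsilon]$ shows it is equivalent to the $\rho_f$-valued $1$-cocycle $g\mapsto c(g)e_1$ on $I_\ell$ being a coboundary, hence to $c(\sigma)e_1\in\langle e_1\rangle$; reading this off in the basis $\{e_{12},\,h=\mathrm{diag}(1,-1),\,e_{21}\}$ of $\ad^0\rho_f$ it says exactly that the $e_{21}$-component of $c(\sigma)$ vanishes, i.e.\ $c(\sigma)\in\mathrm{Im}(N_{\ad})$ with $N_{\ad}=\mathrm{ad}(e_{12})$, i.e.\ $c|_{I_\ell}=0$ in $\rH^1(I_\ell,\ad^0\rho_f)$. (Equivalently, the local Euler characteristic formula and Tate duality give $\dim_L\rH^1(\Q_\ell,\ad^0\rho_f)=1$ and $\rH^1_{\unr}(\Q_\ell,\ad^0\rho_f)=0$, so it is enough to see the unique non-zero class is not minimally ramified.)

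For (a), the deformation restricts to $\rho_\epsilon|_{G_{\Q_p}}\simeq\begin{pmatrix}\psi_{1,\epsilon}&*\\0&\psi_{2,\epsilon}\end{pmatrix}$ with $\psi_{1,\epsilon}$ unramified lifting $\psi$ and, by the determinant condition, $\psi_{2,\epsilon}=\psi_{1,\epsilon}^{-1}\epsilon_p^{3-2k}$. As $k\geq2$, the graded pieces of the evident two-step $G_{\Q_p}$-filtration are $\psi_{1,\epsilon}$ (unramified, $\epsilon_p$-twist $0$) and $(\psi_{1,\epsilon}^{-1})\epsilon_p^{-(2k-3)}$ (unramified twist of $\epsilon_p^{-(2k-3)}$, $2k-3\geq1$), so $\rho_\epsilon|_{G_{\Q_p}}$ is ordinary in the sense of Definition \ref{ordinaryrepsentation} and hence semistable by Theorem \ref{nekthm}. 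I then claim its monodromy operator vanishes, so that it is crystalline: on $\cD_{\mathrm{st}}(\rho_\epsilon|_{G_{\Q_p}})$, free of rank $2$ over $L[\epsilon]$, the monodromy $N$ vanishes modulo $\epsilon$ (because $\rho_f|_{G_{\Q_p}}$ is crystalline), hence factors through an $L$-linear map $\overline{N}\colon\cD_{\cris}(\rho_f|_{G_{\Q_p}})\to\cD_{\cris}(\rho_f|_{G_{\Q_p}})$ satisfying $\overline{N}\Phi=p\,\Phi\,\overline{N}$; since the Frobenius eigenvalues of $\rho_f|_{G_{\Q_p}}$ are the roots $\alpha$ (a $p$-adic unit) and $\beta$ of the $p$-th Hecke polynomial, with $\alpha\beta=p^{2k-3}$ and $\val_p\beta=2k-3\geq1$ (in particular distinct, so $\Phi$ is semisimple on $\cD_{\cris}(\rho_f|_{G_{\Q_p}})$), a non-zero $\overline{N}$ would force $\val_p\alpha=k-1$ or $\val_p\alpha=k-2$; as $\val_p\alpha=0$ the former needs $k=1$ and the latter $k=2$ with $\alpha^2=1$, i.e.\ $\alpha=\pm1$, ruled out by $({ \bf Reg})$ together with the Ramanujan bound $|\alpha|_\infty=p^{(2k-3)/2}$. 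Hence $\rho_\epsilon|_{G_{\Q_p}}$ is crystalline, equivalently $c|_{G_{\Q_p}}\in\rH^1_f(\Q_p,\ad\rho_f)\cap\rH^1(\Q_p,\ad^0\rho_f)=\rH^1_f(\Q_p,\ad^0\rho_f)$.

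Putting (a) and (b) together gives $c\in\rH^1_{f,\unr}(\Q,\ad^0\rho_f)$, as claimed. The main obstacle is (b): one must translate the purely deformation-theoretic ``minimally ramified'' condition at $\ell$ into the Galois-cohomological ``unramified'' one, and this works only because $\rH^1(\Q_\ell,\ad^0\rho_f)$ is so small -- which is exactly what $({ \bf St})$ (special at every $\ell\mid N$) buys; the computation would fail for a ramified principal series or a general local type. A secondary, milder point is that at $p$ one needs ``crystalline'', not just ``de Rham'', in the non-cohomological weight $k=2$, whence the appeal to the Weil bound on $\alpha$.
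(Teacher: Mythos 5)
Your proof is correct and takes essentially the same route as the paper's: reduce to the two local conditions, at $p$ use ordinarity together with Nekovar's semistability theorem and then a Frobenius--eigenvalue argument to upgrade de Rham to crystalline, and at $\ell\mid N$ translate ``minimally ramified'' into ``unramified.'' The paper is terser at both steps --- at $\ell$ it just invokes that $\Hom(I_\ell,L)$ is one-dimensional, and at $p$ it uses the identity $\dim\rH^1_g-\dim\rH^1_f=\dim\cD_{\cris}((\ad^0\rho_f)(1))^{\phi=1}$ rather than directly showing the monodromy on $\cD_{\mathrm{st}}$ of the $L[\epsilon]$-representation vanishes --- but these amount to the same computation, and your explicit unwinding of (b) is in fact more careful than the paper's one-liner.

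One inaccuracy in the parenthetical closing (b): the local Euler characteristic and Tate duality give $\dim_L\rH^1(\Q_\ell,\ad^0\rho_f)=0$, not $1$, when $\rho_f|_{G_{\Q_\ell}}$ is an unramified twist of Steinberg. Indeed $\ad^0\rho_f|_{G_{\Q_\ell}}$ is filtered with graded pieces $\epsilon_p,\mathbbm{1},\epsilon_p^{-1}$; the extension of $\mathbbm{1}$ by $\epsilon_p$ in $\langle e_{12},h\rangle$ is the nonzero Steinberg class, so cup product with it makes the connecting maps $\rH^0(\Q_\ell,\mathbbm{1})\to\rH^1(\Q_\ell,\epsilon_p)$ and $\rH^1(\Q_\ell,\mathbbm{1})\to\rH^2(\Q_\ell,\epsilon_p)$ isomorphisms, killing all cohomology of that two-step subquotient, while $\epsilon_p^{-1}$ has trivial $\rH^i(\Q_\ell,-)$ in every degree. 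So the alternative argument is actually \emph{stronger} than you stated: the class restricts to zero on all of $G_{\Q_\ell}$, without needing minimal ramification. This is exactly the rigidity that $({\bf St})$ buys and that would fail for a ramified principal series, as you correctly observe.
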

\begin{proof}Let $\rho_\epsilon= (1+ \epsilon \rho_1) \rho_f$ be a representative of a strict equivalence class in $\cD'(L[\epsilon])$, so $\rho_1$ is a 1-cocycle with respect to the adjoint action $\ad^0 \rho_f$, and its cohomology class lies in $\rH^1_{\ord}(\Q, \ad^0  \rho_f)$. We will show first that $[\rho_1] \in \rH^1_{g}(\Q, \ad^0  \rho_f):=\ker(\rH^1(\Q, \ad^0  \rho_f) \to \rH^1(\Q_p, \ad^0  \rho_f \otimes \mathrm{B}_{\mathrm{dR}}) )$, where $\mathrm{B}_{\mathrm{dR}}$ is Fontaine's de Rham period ring.  So it is enough so show that $\rho_\epsilon$ is ordinary in the sense of Definition \ref{ordinaryrepsentation} (so semi-stable by Theorem   \ref{nekthm}) as a $4$-dimensional $p$-adic representation over $L$ (since the $L$-representation $\rho_\epsilon$ corresponds to an element of $\mathrm{Ext}^1_{G_\Q}(L,\ad^0 \rho_f)=\rH^1(\Q, \ad \rho^0_f)$). Since $\rho_{\epsilon}$ is ordinary at $p$, one can find a basis $(e_1,e_2)$ of $L[\epsilon]^2$ such that the line $L[\epsilon] \cdot e_1$ is $G_{\Q_p}$-stable and invariant under the action of the inertia $I_p$. Let $v_1:=e_1, v_2=\epsilon \cdot e_1, v_3:=e_2; v_4=\epsilon \cdot e_2$ be a basis of $L[\epsilon]^2$ as $L$-vector space. 

By ordinarity at $p$ one has \[ (\rho_\epsilon)_{\mid G_{\Q_p}}=((1+ \epsilon \rho_1)\rho_f)_{\mid G_{\Q_p}}= \begin{pmatrix} 
\psi' & *\\ 
0 & \psi''   
\end{pmatrix},\] where $\psi'$ is unramified and $\psi''_{\mid I_p}=\epsilon_p^{3-2k}$ (since $\det \rho_\epsilon=\epsilon_p^{3-2k}$). Thus, the realization of $\rho_\epsilon$ by matrices in the basis $(v_1, v_2, v_3, v_4)$ of $L[\epsilon]^2$ has the following form on $I_p$:
\[ (\rho_\epsilon)_{\mid I_{_p}}= \begin{pmatrix} 
1 & 0 & \ast & \ast \\ 
0 & 1 & \ast & \ast\\ 
0 & 0 &\epsilon_p^{3-2k}& 0 \\ 0 & 0 &0 & \epsilon_p^{3-2k} \\ 
\end{pmatrix}. \]
Hence $\rho_\epsilon$ is ordinary and it follows from Theorem \ref{nekthm} that $\rho_\epsilon$ is semi-stable, and then $[\rho_1] \in \rH^1_{g}(\Q, \ad^0  \rho_f)$. It remains to show that 
$\rho_1 \in \rH^1_{f,\unr}(\Q, \ad^0  \rho_f)$. Since $\rho_\epsilon$ is minimally ramified at any $\ell \mid N$ (i.e $(\rho_\epsilon)_{\mid I_\ell}=\begin{pmatrix} 
1 & *\\ 
0 & 1  
\end{pmatrix} $), the restriction of the cohomology class of $\rho_1$ to $I_\ell$ is trivial (because $\Hom(I_\ell,L)$ is $1$-dimensional). To be able to conclude, we need to show that $\rH^1_{g}(\Q_p, \ad^0  \rho_f)= \rH^1_{f}(\Q_p, \ad^0  \rho_f)$. This follows immediately from the fact that { \small \[\dim \rH^1_{g}(\Q_p, \ad^0  \rho_f)=\dim \rH^1_{f}(\Q_p, \ad^0  \rho_f) + \dim \cD_{\cris}((\ad^0 \rho_f)_{\mid G_{\Q_p}} (1))^{\phi=1}\] }and \[ \dim \cD_{\cris}((\ad^0 \rho_f)_{\mid G_{\Q_p}} (1))^{\phi=1}=0.\]

\end{proof}

\subsection{Regularity of $\cT/\mathcal{I}^{\rm tot}$}\label{mainthmsection}
Recall that $(\kappa_{1},\kappa_{2}) \subset (\cO(\cW))^2$ are the universal weights interpolating $k_1,k_2$ (they are the derivative at $1$ of $\epsilon_p^{\kappa_1},\epsilon_p^{\kappa_2}$). Hence one can see  $\kappa_i$ as global section in $\cO(\cE_\Delta)$ via the weight map $\kappa : \cE_\Delta \to \cW$. Recall also that $\epsilon_p^{ -\kappa_1}$ and $\epsilon_p^{-\kappa_2}$ specialize at $\underline{k}=(k_1,k_2)$ to the characters $\epsilon_p^{-k_1},\epsilon_p^{-k_2}$, respectively.

Let $A$ be the local quotient ring $\cT/\mathcal{I}^{\tot}$ of dimension $\leq 2$. Note that $A$ is Henselian, since $\cT$ is Henselian (the local ring of a rigid analytic space for the rigid topology is always Henselian).

Let $\mathrm{Ps}_{A}:G_\Q \to \cO(\cE_\Delta) \to A$ be the natural pseudo-character of dimension $4$. Moreover, $\mathrm{Ps}_{A}=\Psi_1 + \Psi_2 + \Tr_{A}$ such that $\Tr_{A}:G_\Q \to A$ is a pseudo-character lifting the pseudo-character $\mathrm{Tr}(\rho_f)$ and $\{\Psi_i\}_{i=1,2}:G_\Q \to A^\times$ are characters lifting respectively $\epsilon_p^{2-k}$ and $\epsilon_p^{1-k}$. Moreover,  since $\rho_f$ is absolutely irreducible,   $\Tr_{A}:G_\Q \to A$ is the trace of a deformation $\rho_{A}:G_\Q \to \GL_2(A)$ of $\rho_f$. The deformation $\det \rho_A$ of $\det \rho_f$ yields a natural local morphism of $\bar{\Q}_p$-algebras $\varLambda_1 \to A$ (see \cite[\S.6]{D-B}).

\begin{thm}\label{Freeness}\ Assume  $({ \bf Reg})$. For any cofinite ideal $\mathcal{J} \subset \cT$ containing $\mathcal{I}^{\tot}$ the $\cT/\mathcal{J}$-module  $\cD_{\cris}^{+}(M'/\mathcal{J} M'\otimes (\epsilon_p^{\kappa_2-2}))^{\Phi=U_{1}/U_{0}})$ is  free of rank one  and $\kappa_1 - \kappa_2  \in \mathcal{I}^{\tot}$. Moreover, one has that $\Psi_2 \equiv \epsilon_p^{1-\kappa_2} \mod \mathcal{I}^{\tot}$.

\end{thm}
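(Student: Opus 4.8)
The plan is to deduce all three assertions from a study of the rank-one module interpolating the crystalline period of smallest Hodge--Tate weight. Fix a cofinite ideal $\mathcal{J}\subseteq\cT$ with $\mathcal{I}^{\tot}\subseteq\mathcal{J}$ and write $N(\mathcal{J}):=\cD_{\cris}^{+}(M'/\mathcal{J}M'\otimes\epsilon_p^{\kappa_2-2})^{\Phi=U_1/U_0}$; by Corollary \ref{familyM'}(iii) we already know $l(N(\mathcal{J}))=l(\cT/\mathcal{J})$. For the freeness I would invoke the refined-family machinery of \cite[Theorem 3.4.1]{bb} (exactly as in the proof of Corollary \ref{familyM'}): it produces a finite $\cT$-module $\mathfrak{D}$, an injection $\mathfrak{D}\otimes_{\cT}\cT/\mathcal{J}\hookrightarrow N(\mathcal{J})$ and the bound $l(\mathfrak{D}/\mathcal{J}\mathfrak{D})\geq l(\cT/\mathcal{J})$, and moreover $\mathfrak{D}/\gm\mathfrak{D}$ embeds in the one-dimensional space $\cD_{\cris}^{+}(\mathcal{M}'(\pi_\alpha)^{ss}(k-2))^{\Phi=p}$ of \eqref{multM'2}. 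By Nakayama $\mathfrak{D}$ is cyclic, say $\mathfrak{D}\cong\cT/\mathfrak{a}$, and applying the length bound with $\mathcal{J}=\gm^{n}$ forces $\mathfrak{a}\subseteq\bigcap_{n}\gm^{n}=(0)$; hence $\mathfrak{D}\cong\cT$ is free of rank one, and comparing lengths in $\cT/\mathcal{J}\cong\mathfrak{D}/\mathcal{J}\mathfrak{D}\hookrightarrow N(\mathcal{J})$ gives $N(\mathcal{J})\cong\cT/\mathcal{J}$, free of rank one with $\Phi$ acting by $U_1/U_0$.

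Next I would extract arithmetic information from this. Since $\mathcal{I}^{\tot}\subseteq\mathcal{J}$, the reducibility $\mathrm{Ps}_A=\Psi_1+\Psi_2+\Tr_A$ together with the shape of the third column $M_3$ of the GMA $S$ (whose off-diagonal products lie in $\mathcal{I}^{\tot}$, by Proposition \ref{redideal2} and \eqref{multfree1of1}) yields a $G_{\Q}$-equivariant surjection $M'/\mathcal{J}M'\twoheadrightarrow(\Psi_2\bmod\mathcal{J})$ whose kernel has all its $G_{\Q_p}$-Jordan--Hölder constituents among $\{\psi,\ \psi^{-1}\epsilon_p^{3-2k},\ \epsilon_p^{2-k}\}$. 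A direct Frobenius-eigenvalue computation shows that each of these, twisted by $\epsilon_p^{\kappa_2-2}$, has Frobenius eigenvalue on $\cD_{\cris}^{+}$ distinct from $U_1/U_0(\pi_\alpha)=p$; here $({\bf Reg})$ is used precisely to ensure that the constituent $\psi^{-1}\epsilon_p^{3-2k}$ does not contribute when $k=2$ (the relevant coincidence being $\alpha=1$), the other constituents and cases being handled by the $p$-adic valuation of $\alpha$. By left exactness of $\cD_{\cris}^{+}(-)$ and dévissage, $N(\mathcal{J})\hookrightarrow\cD_{\cris}^{+}(\Psi_2\epsilon_p^{\kappa_2-2}\bmod\mathcal{J})^{\Phi=U_1/U_0}$; as $\Psi_2\epsilon_p^{\kappa_2-2}$ reduces to $\epsilon_p^{-1}$ the target has length $\leq l(\cT/\mathcal{J})$, so the inclusion is an equality and both are free of rank one. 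Consequently $\Psi_2\epsilon_p^{\kappa_2-2}|_{G_{\Q_p}}\bmod\mathcal{J}$ is a crystalline character of Hodge--Tate weight one, so $\Psi_2|_{G_{\Q_p}}\equiv\epsilon_p^{1-\kappa_2}\nu_{\mathcal{J}}$ with $\nu_{\mathcal{J}}$ unramified; in particular the Sen weight of $\Psi_2$ over $\cT/\mathcal{J}$ equals $\kappa_2-1$.

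To obtain $\kappa_1-\kappa_2\in\mathcal{I}^{\tot}$ I would compare Sen weights over $\cT/\mathcal{J}$. The Sen operator of $M'/\mathcal{J}M'$ is killed by $(T-(\kappa_2-2))(T-(\kappa_1-1))(T-(\kappa_1+\kappa_2-3))$ (Corollary \ref{familyM'}(iv)); using that $\rho_A$ is ordinary at $p$ and that $\det\rho_A=\epsilon_p^{3-\kappa_1-\kappa_2}$ (from the anti-involution $\tau$ of \S\ref{symmetry} applied to $\Tr_A$), the two Sen weights of $\rho_A$ are $0$ and $\kappa_1+\kappa_2-3$, so the Sen weights carried by $\Psi_1$ and $\Psi_2$ are $\kappa_2-2$ and $\kappa_1-1$; as $\Psi_2$ lifts $\epsilon_p^{1-k}$ (residually $k-1\neq k-2$), a Hensel/Nakayama argument pins the Sen weight of $\Psi_2$ to $\kappa_1-1$. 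Comparing with the previous paragraph gives $\kappa_1-1\equiv\kappa_2-1\bmod\mathcal{J}$, hence $\kappa_1-\kappa_2\in\mathcal{J}$; intersecting over all cofinite $\mathcal{J}\supseteq\mathcal{I}^{\tot}$ yields $\kappa_1-\kappa_2\in\mathcal{I}^{\tot}$. Finally, for $\Psi_2\equiv\epsilon_p^{1-\kappa_2}\bmod\mathcal{I}^{\tot}$ it suffices to show that $\Psi_2\epsilon_p^{\kappa_2-1}:G_{\Q}\to A^{\times}$ is unramified everywhere, for then, its reduction being trivial and $1+\gm_A$ being torsion-free, it factors through the everywhere-unramified abelian quotient of $G_{\Q}$, which is trivial by Minkowski. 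It is unramified outside $Np$ because $\mathrm{Ps}_A$ is; it is unramified at $p$ because $\Psi_2\epsilon_p^{\kappa_2-1}\equiv\nu_{\mathcal{J}}$ for every cofinite $\mathcal{J}\supseteq\mathcal{I}^{\tot}$, the $\nu_{\mathcal{J}}$ being compatible; and at $\ell\mid N$ one reuses the $\ell$-adic analysis from the proof of Theorem \ref{lastentry}, where $\tilde{e}N_K=0$ on the $(\Psi_1,\Psi_2)$-block shows inertia at $\ell$ acts there through a finite quotient, so $\Psi_2|_{I_\ell}$, being a finite-order character into the torsion-free group $1+\gm_A$ lifting the trivial character, is trivial.

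The hard part is the freeness step itself: upgrading the length identity of Corollary \ref{familyM'}(iii) to genuine rank-one freeness over $\cT/\mathcal{J}$. This rests both on the reducibility structure available only modulo $\mathcal{I}^{\tot}$ (needed to isolate the $\Psi_2$-summand and hence to compare $N(\mathcal{J})$ with the crystalline module of a single character) and on the vanishing of $\cD_{\cris}^{+}(-)^{\Phi=U_1/U_0}$ for the spurious constituents $\psi,\psi^{-1}\epsilon_p^{3-2k},\epsilon_p^{2-k}$, which is the genuine use of the regularity hypothesis $({\bf Reg})$; the reuse of the monodromy bookkeeping at $\ell\mid N$ to control the ramification of $\Psi_2$ is a secondary, more routine point.
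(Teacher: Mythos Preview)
Your plan lands on the same three targets as the paper, but the execution diverges in two places, and one of them has a real gap.

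\textbf{Freeness.} Your second paragraph is essentially the paper's argument, and makes your first paragraph unnecessary. The paper proceeds exactly as you do there: the kernel of $M'/\mathcal{J}M'\twoheadrightarrow\Psi_2\bmod\mathcal{J}$ has $G_{\Q_p}$-constituents in $\{\psi,\psi^{-1}\epsilon_p^{3-2k},\epsilon_p^{2-k}\}$, none of which contributes to $\cD_{\cris}^{+}(-\otimes\epsilon_p^{\kappa_2-2})^{\Phi=U_1/U_0}$, so $N(\mathcal{J})\hookrightarrow\cD_{\cris}^{+}(\Psi_2\epsilon_p^{\kappa_2-2}\bmod\mathcal{J})^{\Phi=U_1/U_0}$ and both have length $l(\cT/\mathcal{J})$. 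At this point the paper invokes \cite[Lemma 3.3.9]{bb}, which says precisely that a rank-one character whose $\cD_{\cris}^{+}$ has full length is free of rank one; this gives the freeness at once. Your $\mathfrak{D}$-module detour is not needed (and I am not convinced \cite[Theorem 3.4.1]{bb} outputs a module with exactly the injection $\mathfrak{D}\otimes\cT/\mathcal{J}\hookrightarrow N(\mathcal{J})$ you claim).

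\textbf{The identity $\kappa_1-\kappa_2\in\mathcal{I}^{\tot}$.} Here you genuinely diverge. The paper does \emph{not} compute the Sen weight of $\Psi_2$ twice. It uses the constant weight lemma \cite[Prop.~2.5.4]{bb} to get that the Sen weight of $\Psi_2\otimes\epsilon_p^{\kappa_2-2}$ equals the residual value $1$, and then simply plugs $T=1$ into the \emph{annihilating} Sen polynomial $T(T-(\kappa_1-\kappa_2+1))(T-(\kappa_1-1))$ of $M'\otimes\epsilon_p^{\kappa_2-2}$ from Corollary \ref{familyM'}(iv): for $k\geq3$ the factor $2-\kappa_1$ is a unit, forcing $\kappa_2-\kappa_1\equiv0$. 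For $k=2$ this fails (both $\kappa_1-1$ and $\kappa_1+\kappa_2-3$ reduce to $1$), and the paper passes to a rank-$2$ subfamily $\mathcal{M}''\subset\mathcal{M}'$ (from the $2$-dimensional block in Theorem \ref{Hodge-Tate}(iii)) whose twisted Sen polynomial is only $T(T-(\kappa_1-\kappa_2+1))$; now $T=1$ gives the conclusion directly.

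Your ``elimination'' route instead assumes that the Sen \emph{characteristic} polynomial of the rank-$4$ family factors over $\cT/\mathcal{J}$ as $(T-w_{\Psi_1})(T-w_{\Psi_2})$ times the Sen polynomial of $\rho_A$, so that after removing $\rho_A$'s weights $0,\kappa_1+\kappa_2-3$ one is left with exactly $\{\kappa_2-2,\kappa_1-1\}$. But Corollary \ref{familyM'}(iv) only gives an annihilating polynomial for the Sen operator on $M'/\mathcal{J}M'$, not its characteristic polynomial, so you cannot ``subtract off'' $\rho_A$'s contribution from it. To make your argument work you would need to interpolate the Sen characteristic polynomial of the rank-$4$ pseudocharacter over $\cT$ and factor it over $\cT/\mathcal{J}$ using the direct-sum decomposition $\Psi_1\oplus\Psi_2\oplus\rho_A$; this can be done but you have not supplied it. You also use ordinarity of $\rho_A$, which in the paper is only established later (the ordinarity half of Proposition \ref{prop10.3}; note its proof of ordinarity uses only $(\mathbf{Reg})$, so this is a forward reference rather than a circularity).

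\textbf{The identity $\Psi_2\equiv\epsilon_p^{1-\kappa_2}$.} Your $\ell\mid N$ step imports the monodromy analysis from Theorem \ref{lastentry}, which requires $(\mathbf{St})$ --- a hypothesis Theorem \ref{Freeness} does not carry. The paper avoids this entirely: once $\Psi_2\epsilon_p^{\kappa_2-1}\bmod\mathcal{J}$ is crystalline of weight $0$ at $p$, it is a continuous character $G_{\Q}^{Np}\to 1+\gm_{\cT/\mathcal{J}}$ unramified at $p$. Since $1+\gm_{\cT/\mathcal{J}}$ is torsion-free and uniquely $\ell$-divisible for $\ell\neq p$, the image of $I_\ell\to G_\Q^{\mathrm{ab}}\cong\widehat{\Z}^\times$ (which is $\Z_\ell^\times$) is automatically trivial, and the only $\Z_p$-quotient of $G_{\Q}^{\mathrm{ab}}$ is the cyclotomic one, totally ramified at $p$. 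So class field theory alone gives triviality, without $(\mathbf{St})$.
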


\begin{proof}

Recall that in the proof of Theorem \ref{lastentry} and Corollary \ref{familyM'}, we have constructed a family of $p$-adic representations $\rho_{\mathcal{M}'}:G_{\Q_p} \to \GL_{\mathcal{U}}(\mathcal{M}')$ over an affinoid $\mathcal{U}:=\Spm B \subset \cE_\Delta$ containing $\pi_\alpha$, and such that $\mathcal{M}'=\mathcal{M}/\mathcal{M}^{I_p}$ is a torsion-free quotient of $\mathcal{M}$ of generic rank $3$ (the generic rank of $\mathcal{M}$ over $\mathcal{U}$ is 4). By  \cite[Theorem   1.5.6]{bb} we have surjections  $$M=\mathcal{M} \otimes_B \cT \twoheadrightarrow M/\mathcal{J}M \twoheadrightarrow \Psi_2 \mod \mathcal{J},$$  such that any semi-simple $S$-subquotient of the $S$-module $\ker(M/\mathcal{J}M \to \Psi_2 \mod \mathcal{J})$ occurs in $\{\rho_f,\epsilon_p^{2-k}\}$ (any $S$-simple module is necessarily an $S/\mathfrak{m} S$-module). 

On the other hand, since $\mathcal{M}/\mathcal{M}^{I_p}=\mathcal{M}'$, the surjection $M/\mathcal{J}M \twoheadrightarrow \Psi_2 \mod \mathcal{J}$ must factor through \begin{equation}\label{surjconstantweights} M'/\mathcal{J}M' \twoheadrightarrow \Psi_2 \mod \mathcal{J} \end{equation} for $M'=\mathcal{M}' \otimes_B \cT$. 

We recall from  Corollary \ref{familyM'} that $$l(\cD_{\cris}^{+}(M'/\mathcal{J} M'\otimes (\epsilon_p^{\kappa_2-2}))^{\Phi=U_{1}/U_{0}})=l(\cT/\mathcal{J}).$$ 

On the other hand, it follows  from the fact that the semi-simple subquotients of $$\ker(M'/\mathcal{J}M' \to \Psi_2 \mod \mathcal{J})$$ occur in $\{\epsilon_p^{2-k}, \psi, \psi^{-1}\epsilon_p^{3-2k}\}$ that $$\cD_{\cris}(\ker(M'/\mathcal{J}M' \to \Psi_2 \mod \mathcal{J})\otimes \epsilon_p^{\kappa_2-2})^{\Phi=U_1/U_0}=\{0\}.$$ Therefore, $l(\cD_{\cris}^{+}(\Psi_2 \otimes (\epsilon_p^{\kappa_2-2})\mod \mathcal{J})^{\Phi=U_{1}/U_{0}})=l(\cT/\mathcal{J})$. Thus, \cite[Lemma.3.3.9]{bb} yields that \begin{equation}\label{cryspara}\cD_{\cris}^{+}(\Psi_2 \otimes (\epsilon_p^{\kappa_2-2})\mod \mathcal{J})^{\Phi=U_{1}/U_{0}} \text{ is a free rank one $\cT/\mathcal{J}$-module,} \end{equation} and then $$
\cD_{\cris}^{+}(M'/\mathcal{J} M'\otimes (\epsilon_p^{\kappa_2-2}))^{\Phi=U_{1}/U_{0}}) \text{ is a free rank one $\cT/\mathcal{J}$-module}.$$

At the same time \cite[Prop.2.5.4]{bb} (i.e the ``constant weight lemma'') yields that $\Psi_2 \otimes \epsilon_p^{\kappa_2-2}$ has a constant weight given by $1$ (i.e by the weight of $\epsilon_p^{-1} \equiv \Psi_2 \otimes \epsilon_p^{\kappa_2-2} \mod \gm$) and that the Sen operator acts on $\Psi_2$ by multiplication by $1$. It follows from  Corollary \ref{familyM'} that $1$ is a root of \[ (T )(T-(\kappa_1-\kappa_2+1))(T-(\kappa_1-1)).\]

Thus, $ (\kappa_2-\kappa_1) (2-\kappa_1) \equiv 0 \mod \mathcal{J}$. Assume now that $k \geq 3$, then  $(2-\kappa_1)$ is invertible in (the local ring) $\cT/\mathcal{J}$. Therefore, $\kappa_2-\kappa_1 \in \mathcal{J}$. Assume now that $k=2$, we can  consider by Theorem   \ref{Hodge-Tate}(iii) the sub-representation $\rho_{\mathcal{M}''}:G_{\Q_p} \to \mathrm{Aut}_{\mathcal{U}}(\mathcal{M}'')$ generically of dimension $2$ of $\rho_{\mathcal{M}'}$ ($\mathcal{M}''$ is a torsion-free $\cO_\cU$-module) and it specialises at every point $z$ of the Zariski dense set $\Sigma' \subset \mathcal{U}$ to a crystalline $G_{\Q_p}$-representation $\rho_z''$ of dimension $2$ such that:
\begin{enumerate}
\item The Hodge-Tate weights of $\rho_z''$ are  $\{ \kappa_1(z)-1,\kappa_2(z)-2 \}$ with $\kappa_2(z)-2 < \kappa_1(z) -1 $.
\item $\dim \cD_{\cris}(\rho''_z)^{\Phi=U_{1}/U_{0}(z)p^{p^{\kappa_2(z)-2}}}=1$. 
\end{enumerate}
Let $M'':=\mathcal{M}''\otimes_B  \cT=\mathcal{M}''_{\pi_\alpha}$ be the stalk of $\mathcal{M}''$ at $\pi_\alpha$. Similar arguments to those already used to prove \cite[Lemma.4.3.3]{bb}(i) yield that the Sen operator of  $\cD_{\mathrm{sen}}(M''/\mathcal{J}M'')$ is annihilated by the polynomial $(T -(\kappa_2-2))(T-(\kappa_1-1))$. The assumption $({ \bf Reg})$ and the fact that $G_{\Q_p}$ acts by $U_0 \epsilon_p^{-\kappa_1-\kappa_2+3}$ on $M'/M''$ ($U_0$ means the unramified character with value $U_0$ at $\Frob_p$) yield that the  composition $M''/\mathcal{J}M'' \to  M'/\mathcal{J}M' \twoheadrightarrow \Psi_2 \mod \mathcal{J}$ is surjective. Thus, $T (T-(\kappa_1-\kappa_2+1))$ annihilates the Sen operator of $\Psi_2 \otimes \epsilon_p^{\kappa_2-2}$, and since $1$ is a root of that polynomial, we deduce immediately that $\kappa_2-\kappa_1 \in \mathcal{J}$.

Meanwhile, the fact that $\cD_{\cris}^{+}(\Psi_2 \otimes (\epsilon_p^{\kappa_2-2})\mod \mathcal{J})^{\Phi=U_{1}/U_{0}}$ is a free rank one $\cT/\mathcal{J}$-module yields that  the character $\Psi_2 \otimes \epsilon_p^{\kappa_2-2} \mod \cT/\mathcal{J}$ is a crystalline $L[G_{\Q_p}]$-representation with Hodge-Tate weight $1$. Thus, $\Psi_2 \otimes (\epsilon_p^{\kappa_2-2}) \otimes \epsilon_p\mod \mathcal{J}$ is of Hodge-Tate weight $0 $ and crystalline, therefore unramified. Thus, by class field theory we deduce that $\Psi_2 \otimes (\epsilon_p^{\kappa_2-2}) \otimes \epsilon_p\mod \mathcal{J}$ is the trivial character (since $\Q$ has a unique $\Z_p$-extension). Therefore, $\Psi_2 \mod \mathcal{J}= \epsilon_p^{1-\kappa_2} \mod \mathcal{J}$.

Finally, we conclude that $\kappa_2-\kappa_1 \in \mathcal{I}^{\tot}$ and that $\Psi_2=\epsilon_p^{1-\kappa_2} \mod \mathcal{I}^{\tot}$ since the ideal $\mathcal{I}^{\tot}$ is the intersection of all cofinite length containing it (by Krull's theorem).

\end{proof}

\begin{cor} \label{prop10.2}
Assume $({ \bf Reg})$.  Then  the local ring $A$ is topologically generated by the image of $\Tr(\rho_A)$ over $\varLambda_1$.

\end{cor}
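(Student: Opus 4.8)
The plan is to identify $A$ with the closed $\varLambda_1$‑subalgebra $B_0\subseteq A$ topologically generated by $\{\Tr(\rho_A)(g):g\in G_\Q^{Np}\}$ (together with the residue field), by verifying that every one of the eigenvariety generators of $A$ already lies in $B_0$. First I would recall from the construction of $\cE_\Delta$ in Appendix \S\ref{eigenvariety2} that $\cT$, hence its quotient $A=\cT/\mathcal{I}^{\tot}$, is topologically generated over the image of $\varLambda$ by the images of the abstract Hecke operators $T_{\ell,1},T_{\ell,2},\mathrm{S}_\ell$ for $\ell\nmid Np$ and $U_0,U_1$ at $p$. Next I would pin down the three constituents of $\mathrm{Ps}_A=\Psi_1+\Psi_2+\Tr\rho_A$: Theorem \ref{Freeness} gives $\kappa_1\equiv\kappa_2$ and $\Psi_2\equiv\epsilon_p^{1-\kappa_2}\pmod{\mathcal{I}^{\tot}}$, and feeding this into the $\tau$‑invariance of $\mathrm{Ps}_\cT$ from \S\ref{symmetry} — the anti‑involution $g\mapsto\epsilon_p^{3-\kappa_1-\kappa_2}(g)\,g^{-1}$ swaps the idempotents attached to $\epsilon_p^{1-k}$ and $\epsilon_p^{2-k}$ and fixes the one attached to $\rho_f$ — forces $\Psi_1\equiv\epsilon_p^{2-\kappa_2}$ and $\det\rho_A\equiv\epsilon_p^{3-\kappa_1-\kappa_2}\equiv\epsilon_p^{3-2\kappa_2}\pmod{\mathcal{I}^{\tot}}$. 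Since the structure map $\varLambda_1\to A$ is the one classifying the deformation $\det\rho_A$ of $\det\rho_f=\epsilon_p^{3-2k}$, its image is exactly the closed subring generated by the image of $\kappa_2$ in $A$, which by $\kappa_1\equiv\kappa_2$ equals the image of $\varLambda$; thus $A$ is already topologically generated over $\varLambda_1$ by the Hecke operators, and moreover $\Psi_1(g),\Psi_2(g),\det\rho_A(g)\in B_0$ for every $g$.

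For $\ell\nmid Np$, the operators $T_{\ell,1},T_{\ell,2},\mathrm{S}_\ell$ are, up to signs and powers of $\ell$, the elementary symmetric functions of the four roots of the Hecke–Andrianov polynomial at $\ell$, i.e. of the ``Frobenius eigenvalues'' of $\mathrm{Ps}_A$ at $\ell$; by Newton's identities (we are in characteristic zero) they are polynomials with rational coefficients in the power sums $\mathrm{Ps}_A(\Frob_\ell^{\,j})=\Psi_1(\Frob_\ell)^{j}+\Psi_2(\Frob_\ell)^{j}+\Tr\rho_A(\Frob_\ell^{\,j})$, $1\le j\le 4$, and therefore lie in $B_0$. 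At $p$, I would first note that \eqref{cryspara} (from the proof of Theorem \ref{Freeness}) shows that $\Psi_2\otimes\epsilon_p^{\kappa_2-2}\equiv\epsilon_p^{-1}$ is crystalline at $p$ with $\cD^{+}_{\cris}(\epsilon_p^{-1})^{\Phi=U_1/U_0}$ free of rank one over every Artinian quotient $\cT/\mathcal{J}$ with $\mathcal{I}^{\tot}\subseteq\mathcal{J}$; since $\Phi$ acts by $p$ on $\cD^{+}_{\cris}(\epsilon_p^{-1})$ this forces $U_1/U_0\equiv p\pmod{\mathcal{J}}$ for all such $\mathcal{J}$, hence $U_1=p\,U_0$ in $A$ by Krull's intersection theorem. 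It then only remains to put $U_0$ itself into $B_0$.

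By semi‑ordinarity the representation $\rho_A|_{G_{\Q_p}}$ is ordinary, say $\rho_A|_{G_{\Q_p}}\sim\left(\begin{smallmatrix}\psi_1 & \ast\\ 0 & \psi_2\end{smallmatrix}\right)$ with $\psi_1$ unramified, and one has $\psi_1(\Frob_p)=U_0$ by local–global compatibility at $p$ for the family over $A$ (equivalently, $\Spec A$ is the Saito–Kurokawa locus of the Hida family through $f_\alpha$; or: the classical points old at $p$ accumulate at $\pi_\alpha$ and at them $U_0$ is the crystalline Frobenius eigenvalue on the $I_p$‑fixed line by Theorem \ref{Hodge-Tate}(ii)). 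Since $\psi_1\psi_2=\det\rho_A\in B_0$, choosing $g\in G_{\Q_p}$ lifting the geometric Frobenius so that the two residual roots $\alpha$ and $\det\rho_f(g)/\alpha$ of $X^2-\Tr\rho_f(g)X+\det\rho_f(g)$ are distinct in $L$ — possible since $\det\rho_f(g)=\epsilon_p^{3-2k}(g)$ runs over an infinite subset of $L^{\times}$ as $g$ varies over the lifts of $\Frob_p$ — one sees that $U_0$ is the unique root reducing to $\alpha$ of $X^2-\Tr\rho_A(g)X+\det\rho_A(g)\in B_0[X]$, which Hensel's lemma (valid since $A$ is Henselian, and which one descends to the closed subalgebra $B_0$) locates in $B_0$; hence $U_0\in B_0$ and $U_1=p\,U_0\in B_0$.

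Combining the three cases, every topological generator of $A$ over $\varLambda_1$ lies in $B_0$, so $A=B_0$, i.e. $A$ is topologically generated over $\varLambda_1$ by the image of $\Tr(\rho_A)$. I expect the only genuine difficulty to be the last step — establishing the local–global compatibility $U_0=\psi_1(\Frob_p)$ over the (possibly reducible, non‑reduced) base $A$, where the accumulation of classical points old at $p$ together with Theorem \ref{Hodge-Tate} is really used, and making the Hensel‑lemma descent to the topologically generated subalgebra $B_0$ rigorous — while the remaining steps (symmetric functions at $\ell$, the $\tau$‑bookkeeping, the Krull argument for $U_1=p\,U_0$) are routine.
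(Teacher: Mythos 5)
Your proposal is correct and follows essentially the same line as the paper's argument: use Theorem \ref{Freeness} ($\kappa_1\equiv\kappa_2$, $\Psi_2\equiv\epsilon_p^{1-\kappa_2}$) together with the $\tau$-polarisation of $\S$\ref{symmetry} to write $\Psi_1=\Psi_2^{-1}\epsilon_p^{3-\kappa_1-\kappa_2}$ and $\det\rho_A=\epsilon_p^{3-\kappa_1-\kappa_2}$, extract the cyclotomic character by Hensel's lemma from $\det\rho_A\in A'$, and conclude that the whole pseudocharacter takes values in $A'$. The difference is at the very last step. The paper simply asserts ``as $\mathrm{Ps}_A$ is surjective onto $A$ by construction of $\cE_\Delta$ this establishes the proposition,'' which elides the question of how the topological generators $U_0$ and $U_1$ of $A$ over $\varLambda$ --- which are \emph{not} of the form $\mathrm{Ps}_A(\Frob_\ell)$ for $\ell\nmid Np$ --- are recovered from the pseudocharacter. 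You make this explicit: Newton's identities handle $T_{\ell,1},T_{\ell,2},\mathrm{S}_\ell$; the free rank-one statement \eqref{cryspara} forces $U_1=pU_0$ in $A$; and $U_0$ is located by applying Hensel's lemma to the characteristic polynomial of $\rho_A(g)$ at a suitable lift $g$ of $\Frob_p$, using $p$-ordinarity of $\rho_A$ together with the $p$-adic regularity assumption. This is a genuine clarification of a real gap. Two cautions: first, the $p$-ordinarity of $\rho_A$ is Proposition \ref{prop10.3}, which in the paper's exposition comes \emph{after} this corollary --- this causes no hypothesis problem (the ordinarity half of \ref{prop10.3} needs only $({\bf Reg})$, not $({\bf St})$, and its proof does not cite \ref{prop10.2}), but you should state it is being used; second, the descent of Hensel's lemma to the closed subring $B_0$ is, as you note, not automatic --- the clean fix is to pass to the $\gm$-adic completions $\widehat{B_0}\subseteq\widehat{A}$, where Hensel's lemma and the argument that $\widehat{B_0}=\widehat{A}$ both go through, and then observe that this forces $B_0=A$ since $A/B_0$ is a finite $B_0$-module with trivial fibre.
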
 

\begin{proof}

Let $A'$ be the subring of $A$ topologically generated by the image of the trace $ \Tr(\rho_A)$ over $\varLambda_1$.  Since $\kappa_1 - \kappa_2  \in \mathcal{I}^{\tot}$ by Theorem \ref{Freeness}, the polarisation of $\mathrm{Ps}_\cT$ described in section \ref{symmetry} and the fact that the subconstituent of $\mathrm{Ps}_{\cT} \mod \mathcal{I}^{\tot}$ are uniquely determined (see  \cite[Prop.1.5.1]{bb}) yield that $\Tr \rho_A =\Tr (\rho^{\vee}_A \otimes \epsilon_p^{3-2\kappa_1})$, $\Psi_1=\Psi_2^{-1} \otimes \epsilon_p^{3-2\kappa_1}=\epsilon_p^{2-\kappa_1}$ (the last equality follows from Theorem \ref{Freeness}). Thus, $\det \rho_A$ is given by the character $\epsilon_p^{3-2\kappa_1}$. As the determinant can be expressed in terms of the trace the image of $\epsilon_p^{2\kappa_1}$ lies in $A'$, and so by Hensel's lemma the image of the character $\epsilon_p^{\kappa_1}= \epsilon_p^{\kappa_2} \mod \mathcal{I}^{\tot}$ (and therefore also of $\Psi_1$ and $\Psi_2$) lies in $A'$. Finally, as $\mathrm{Ps}_A$ is surjective onto $A$ by construction of $\cE_\Delta$ this establishes the proposition.

\end{proof}

\begin{prop} \label{prop10.3} Assume $({ \bf Reg})$ and $({ \bf St})$. Then the representation $\rho_A$ is $p$-ordinary and minimal.

\end{prop}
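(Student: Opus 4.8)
Assume $({\bf Reg})$ and $({\bf St})$. Then the representation $\rho_A$ is $p$-ordinary and minimal.

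The plan is to check the two local conditions defining the deformation functor $\cD$ at $p$ and at the primes $\ell \mid N$ directly, using the structure results already obtained for the GMA $S$ and the pseudo-character $\mathrm{Ps}_A$. Recall $A = \cT/\mathcal{I}^{\tot}$, and that $\Tr_A = \Tr(\rho_A)$ with $\rho_A : G_\Q^{Np} \to \GL_2(A)$ the (unique up to conjugation, since $\rho_f$ is absolutely irreducible) deformation of $\rho_f$ cut out from $\mathrm{Ps}_A = \Psi_1 + \Psi_2 + \Tr_A$. The key input is that $\rho_A$ is a subquotient of the specialization $S \otimes_{\cT} A$ of the GMA, so its local properties at $p$ and at $\ell \mid N$ are inherited from those of the automorphic family on $\cE_\Delta$ (semi-ordinarity at $p$, paramodular level structure at $\ell$).

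\medskip\noindent\emph{Ordinarity at $p$.} First I would restrict the GMA $S$ to a decomposition group at $p$. By the shape of $\varrho(G_{\Q_p})$ recorded after Proposition \ref{noparamodendoscopic} and the $p$-adic regularity hypothesis $({\bf Reg})$, the four characters $\epsilon_p^{2-k}, \psi, \psi^{-1}\epsilon_p^{3-2k}, \epsilon_p^{1-k}$ appearing as diagonal constituents of $\varrho|_{G_{\Q_p}}$ are pairwise distinct, so the idempotents lift compatibly to $S_p := \cT[G_{\Q_p}]/\ker$; this is exactly the setup of Lemma \ref{S_p}. The block of $S$ cut out by the idempotent $\tilde e_2$ (the one corresponding to $\rho_f$) becomes, after restriction to $G_{\Q_p}$ and specialization to $A$, an extension of the $A$-character lifting $\psi^{-1}\epsilon_p^{3-2k}$ by the $A$-character lifting $\psi$. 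The crucial point is that the $G_{\Q_p}$-stable line of $\varrho|_{G_{\Q_p}}$ on which inertia acts trivially (and $\Frob_p$ by the unit $\alpha = \psi(\Frob_p)$) deforms over $\cE_\Delta$ by semi-ordinarity: concretely, the family $\mathcal{M}$ of \S\ref{cnsht} has a rank-one $I_p$-fixed subsheaf $\mathcal{L}$ on which $\Frob_p$ acts by $U_0 \in \cO(\cE_\Delta)^\times$, and specializing $\mathcal{L}$ and intersecting with the $\rho_f$-block shows that $\rho_A|_{G_{\Q_p}}$ has an unramified quotient character $\psi_{1,A}$ lifting $\psi$ (with $\psi_{1,A}(\Frob_p) = U_0 \bmod \mathcal{I}^{\tot}$). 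Thus $\rho_A|_{G_{\Q_p}} \simeq \left(\begin{smallmatrix} \psi_{2,A} & * \\ 0 & \psi_{1,A}\end{smallmatrix}\right)$ with $\psi_{1,A}$ unramified — taking the dual / conjugate filtration, or rather noting that $\det\rho_A = \epsilon_p^{3-2\kappa_1}$ (Corollary \ref{prop10.2}) so $\psi_{2,A} = \psi_{1,A}^{-1}\epsilon_p^{3-2\kappa_1}$, we recover the ordinarity shape of the functor $\cD$. I expect the orientation of the filtration (which of the two Jordan--Hölder factors of the $\rho_f$-block is the sub and which is the quotient) to be the most delicate bookkeeping point, but it is forced by the semi-ordinary refinement: the inertia-fixed line sits inside the sub-block $\left(\begin{smallmatrix}\epsilon_p^{2-k} & b \\ 0 & \psi\end{smallmatrix}\right)$ of \eqref{2-dimensionalsubspace}, exactly as exploited in the proof of Theorem \ref{Bcrys}.

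\medskip\noindent\emph{Minimality at $\ell \mid N$.} For a prime $\ell \mid N$, under $({\bf St})$ we have $\pi_{f,\ell} \simeq \mathrm{St}\otimes\xi$, so $\rho_f|_{I_\ell}$ is (unipotent) with $\rho_f^{I_\ell}$ free of rank one over $L$; I need $\rho_A^{I_\ell}$ free of rank one over $A$. Here I would invoke the local analysis at $\ell$ from the proof of Theorem \ref{lastentry}: the monodromy operator $N_\mathcal{U}$ of the Weil--Deligne representation of the family $\mathcal{M}$ at $\ell$ has generic rank exactly one (by Proposition \ref{monodromy}, semicontinuity, and non-triviality at $\pi_\alpha$), and the block decomposition of $S_\ell$ shows $\tilde e_i N_K = 0$ for the one-dimensional idempotents, so all of $N_K$ is concentrated in the $\rho_f$-block. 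Restricting to $A$ and using that $\rho_f|_{G_{\Q_\ell}}$ is a twist of Steinberg (so the Weil--Deligne monodromy is rank one and the deformation $\rho_A|_{G_{\Q_\ell}}$ inherits a rank-one monodromy, hence $\rho_A^{I_\ell}$ is free of rank one), gives minimal ramification at $\ell$. Alternatively, and perhaps more cleanly, one can argue via Proposition \ref{locl}: the vanishing $\rH^1(\Q_\ell, \rho_f(k-1)) = 0$ (and the companion vanishings) forces every deformation of $\rho_f$ appearing inside a paramodular family to stay minimal at $\ell$, since a non-minimal deformation would produce a nonzero class in precisely such a group. Assembling the ordinarity at $p$ and minimality at all $\ell \mid N$, together with the determinant computation of Corollary \ref{prop10.2}, shows $\rho_A \in \cD'(A)$ (indeed with the prescribed determinant), i.e. $\rho_A$ is $p$-ordinary and minimal, which is the assertion. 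The main obstacle I anticipate is making the transfer of the local structure from the coherent family $\mathcal{M}$ on $\cE_\Delta$ down to the Artinian-type quotient $A$ fully rigorous — in particular checking that intersecting the inertia-invariants commutes with the relevant specializations, which is exactly the kind of torsion-freeness argument carried out in Lemma \ref{S_p} and \cite[\S4]{bb} and should go through here verbatim.
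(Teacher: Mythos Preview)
Your approach is essentially the paper's: semi-ordinarity of the family provides an $I_p$-fixed line with $\Frob_p$-eigenvalue $U_0$, regularity ensures it survives in the $\rho_f$-block, and the monodromy analysis from the proof of Theorem~\ref{lastentry} handles minimality at $\ell$. Two points need tightening.

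First, orientation: the unramified character is the \emph{sub}, not the quotient. The image of $M^{I_p}$ in $\rho_A$ is a subline, matching the shape $\left(\begin{smallmatrix}\psi_{1,A} & *\\ 0 & \psi_{2,A}\end{smallmatrix}\right)$ with $\psi_{1,A}$ unramified required by $\cD$; no ``dual filtration'' is needed or available.

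Second, and more substantively, you do not establish that $\rho_A^{I_p}$ is \emph{free of rank one} over $A$. The paper closes this with Nakayama: the image of $M^{I_p}$ in $\rho_A$ lies outside $\gm A^2$ (since by regularity the kernel of $M/\gm M \twoheadrightarrow \rho_f$ has no $G_{\Q_p}$-subquotient isomorphic to $\psi$), so both $\rho_A^{I_p}$ and $\rho_A/\rho_A^{I_p}$ are monogenic; their generators form a basis of $A^2$, and the filtration $0 \to \rho_A^{I_p} \to \rho_A \to \rho_A/\rho_A^{I_p} \to 0$ splits as $A$-modules. Without this step you only know $\rho_A^{I_p} \ne 0$, which is not enough for ordinarity in the sense of the functor $\cD$.

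Finally, your alternative route to minimality via Proposition~\ref{locl} does not work: those vanishing results concern $\rH^1(\Q_\ell, \rho_f(k-i))$, whereas the obstruction to minimality of a deformation of $\rho_f|_{G_{\Q_\ell}}$ lives in (a quotient of) $\rH^1(\Q_\ell, \ad \rho_f)$. The monodromy argument is the right one and is what the paper uses.
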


\begin{proof}

According to \cite[Theorem   1.5.6]{bb} and \cite[Lemma.4.3.9]{bb}, there exists a $\cT$-module $M \subset K^4$ of generic rank $4$ endowed with a $G_\Q$-continuous action which is generically given by the semi-simple representation $$\rho_{K}:G_\Q \to S^\times \subset \GL_4(K),$$
and  equipped with a surjection $\pi:M/\mathcal{I}^{\tot}M \twoheadrightarrow \rho_A$ such that the $S$-simple subquotients of its kernel are either $\epsilon_p^{1-k}$ or $\epsilon_p^{2-k}$.

 Since $\cT$ is reduced and $\rho_{K}$ is semi-ordinary ($\rho_{K}^{I_p}$ is of dimension one and $\Frob_p$ acts on it by $U_{0}$) and $\alpha \ne 1$ when $k=2$, we again have (as in \S \ref{ordrho1}) that $M^{I_p}$ is not contained in $\gm M$. Since the $S$-simple subquotients of $\ker \pi$ do not contain $\rho_f$ and contain only the representations in  the set $\{\epsilon_p^{1-k},\epsilon_p^{2-k}\}$, the regularity assumption further implies that the image of $M^{I_p}$ under the surjection $\pi':M/\gm M \twoheadrightarrow \rho_f$ is non-zero and hence the image of $M^{I_p}$ under the surjection $\pi:M/\mathcal{I}^{\tot}M \twoheadrightarrow \rho_A$ is non-zero and it is not contained in $\gm A^2$.
 
 Thus, we have an exact sequence of $A[G_{\Q_p}]$-modules:

\begin{equation}\label{exactsplit}
0\to \rho_A^{I_p} \to \rho_A \to \rho_A/\rho_A^{I_p} \to 0.
\end{equation}
Since  $ \rho_A/\rho_A^{I_p} \otimes_A L$ is of rank one Nakayama's lemma implies that $\rho_A/\rho_A^{I_p}$ and  $\rho_A^{I_p}$ are monogenic $A$-modules and generated respectively by $y_1,y_2$. Therefore $y_1,y_2$ generate $A^2$ and they must even form a basis of $A^2$. Hence 
  the exact sequence \eqref{exactsplit} splits as $A$-modules and yields that $\rho_A$ is $p$-ordinary.

We shall now prove that $\rho_A$ is minimally ramified at every $\ell \mid N$. Let $\ell$ be a prime number dividing $N$. From the proof of Theorem \ref{lastentry} we know that there exist idempotents $(\tilde{e_1},\tilde{e_2},\tilde{e_3})$ of $S$ lifting the idempotents attached respectively to $\epsilon_p^{2-k},\epsilon_p^{1-k},\rho_f$ such that $\tilde{e}=\tilde{e_1} + \tilde{e_2}$ is in the center of $\mathrm{S}_\ell= \begin{pmatrix} 
* & *   & *  \\ 
     *   & *  & * \\
       0  & 0 &  M_{2,2}(\cT) 
\end{pmatrix}$,  the image of $\cT[G_{\Q_\ell}]$ inside $S$. We also recall that $N_K$, the monodromy operator corresponding to the Weil-Deligne representation attached to $G_{\Q_\ell} \to \mathrm{S}_\ell^{\times}$, can be viewed as an element of $\mathrm{S}_{\ell}$, has rank 1 by Proposition \ref{monodromy} and satisfies $\tilde{e_3}N_K\tilde{e_3} \neq 0$. For $N:=\tilde{e_3}N_K\tilde{e_3} \in M_2(\cT)$ we know that $N$ is non-trivial modulo $\gm_\cT$  (since the rank of the monodromy operator of $WD_\ell(\rho_f)$ is one) and so the morphism $\rho_A|_{G_{\Q_\ell}}: G_{\Q_\ell} \to \GL_2(\cT) \to \GL_2(A)$ is also minimally ramified.

\end{proof}

\begin{rem}The assumption $({ \bf Reg})$ is crucial to ensure the existence of a line in $M$ fixed by inertia on which $\Frob_p$ acts by $U_0$ (i.e $M^{I_p} \not\subset \gm M$). We have many examples for $\GL_2(\Q)$ \cite[Thm.C]{BD19} where the $2$-dimensional $p$-adic Galois representation $\rho_{\mathfrak{h}}$ attached to the local ring $\mathfrak{h}$ of the eigencurve at $p$-irregular weight one forms is not ordinary, but the representation becomes ordinary when we extend the scalar to the field of fractions of $\mathfrak{h}$ (i.e, $\rho_{\mathfrak{h}}^{I_p} \ne 0$ but $\rho_{\mathfrak{h}}^{I_p} \subset \gm_{\mathfrak{h}}\rho_{\mathfrak{h}}^{I_p} $).

\end{rem}

 \begin{prop} \label{RTBK} Let $\widehat{A}$ denote the completion of $A$ with respect to its maximal ideal and assume $({ \bf St})$, $({ \bf Reg})$ and $({ \bf BK})$. Then the natural morphism $\cR^{\ord} \rightarrow
 \widehat{ A}$  associated to $\rho_A \otimes_A \widehat{A}$ is an isomorphism of discrete valuation rings. Moreover, $\cR^{\ord}$ is \'etale over $\varLambda_1$.
 \end{prop}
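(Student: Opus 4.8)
The plan is to deduce the isomorphism from an $R=T$-type argument combined with the tangent space bound coming from $(\mathbf{BK})$. First, by Proposition~\ref{prop10.3} the deformation $\rho_A$ of $\rho_f$ is $p$-ordinary and minimally ramified at every $\ell \mid N$, hence it is classified by the deformation functor $\cD$, which is prorepresented by $\cR^{\ord}$. This gives a natural local morphism of complete local $\bar{\Q}_p$-algebras $\cR^{\ord} \to \widehat{A}$ classifying $\rho_A \otimes_A \widehat{A}$. By Corollary~\ref{prop10.2} the ring $A$ (hence $\widehat{A}$) is topologically generated over $\varLambda_1$ by the traces of $\rho_A$, and since the determinant of $\rho_A$ is forced to be $\epsilon_p^{3-2\kappa_1}$ (see the proof of Corollary~\ref{prop10.2}), the map $\cR^{\ord} \to \widehat{A}$ is in fact surjective: every trace of $\rho_A$ is the image of the corresponding universal trace in $\cR^{\ord}$.

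Next I would bound the tangent space of $\cR^{\ord}$. The relative tangent space of $\cR^{\ord}$ over $\varLambda_1$ is $\cD'(L[\epsilon])$, which by the discussion before Proposition~\ref{R=Ttg} is identified with a subspace $\rH^1_{\ord}(\Q,\ad^0\rho_f)$ of $\rH^1(\Q,\ad^0\rho_f)$. By Proposition~\ref{R=Ttg} we have $\rH^1_{\ord}(\Q,\ad^0\rho_f) \subseteq \rH^1_{f,\unr}(\Q,\ad^0\rho_f)$, and the latter is trivial by assumption $(\mathbf{BK})$. Hence $\cR^{\ord}$ is formally unramified over $\varLambda_1$; being also module-finite (the deformation problem is rigid away from $p$, and $p$-ordinary with fixed unramified quotient character imposes finiteness — or one invokes the usual finiteness of such minimal ordinary deformation rings), it is a finite $\varLambda_1$-algebra with trivial relative cotangent space, so $\cR^{\ord}$ is \'etale over $\varLambda_1 = \Q_p\lsem X \rsem$, and in particular is a regular local ring of dimension one, i.e. a discrete valuation ring (after possibly extending residue field, which is harmless here).

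Finally I would compare the two sides. We have a surjection $\cR^{\ord} \twoheadrightarrow \widehat{A}$ of complete local Noetherian $\bar{\Q}_p$-algebras, with $\cR^{\ord}$ a discrete valuation ring. To conclude it is an isomorphism it suffices to show $\dim \widehat{A} \geq 1$, equivalently that the kernel is not all of the maximal ideal; but $\widehat{A} = \widehat{\cT/\mathcal{I}^{\tot}}$ and the reducibility-ideal analysis already shows $\mathcal{I}^{\tot} \subsetneq \gm$ (the generic representation $\rho_K$ is irreducible, so $\cT_{1,2}\cT_{2,1} = \mathcal{I}^{\tot} \neq \gm$ — more precisely $\cT/\mathcal{I}^{\tot}$ surjects onto the one-dimensional Hida-family locus, so $\dim A \geq 1$). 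A surjection from a $1$-dimensional domain onto a ring of dimension $\geq 1$ has trivial kernel, hence $\cR^{\ord} \xrightarrow{\sim} \widehat{A}$, and transporting the structure shows $\widehat{A}$ is a discrete valuation ring \'etale over $\varLambda_1$.

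The main obstacle is the finiteness/dimension bookkeeping needed to upgrade ``$\cR^{\ord}$ is formally \'etale over $\varLambda_1$'' to ``$\cR^{\ord}$ is a discrete valuation ring'' and then to pin down that the surjection $\cR^{\ord} \to \widehat{A}$ is injective: one must know independently that $\widehat{A}$ (equivalently $\cT/\mathcal{I}^{\tot}$) has dimension at least one, which comes from exhibiting the Saito--Kurokawa lift of the Hida family $\mathcal{F}$ through $f_\alpha$ as a one-dimensional subscheme of $\Spec\cT$ contained in the reducibility locus. This geometric input, together with the vanishing $\rH^1_{f,\unr}(\Q,\ad^0\rho_f)=0$, is what forces the isomorphism; the rest is standard deformation-theoretic formalism.
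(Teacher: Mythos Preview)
Your proposal is correct and follows essentially the same route as the paper: surjectivity of $\cR^{\ord}\to\widehat{A}$ from Corollary~\ref{prop10.2}, the relative tangent-space bound from Proposition~\ref{R=Ttg} together with $(\mathbf{BK})$, and $\dim\widehat{A}\geq 1$ from the Saito--Kurokawa lift of the Hida family (the paper cites \cite[Prop.~4.2.5]{urban}). The only difference is cosmetic: your detour through module-finiteness of $\cR^{\ord}$ over $\varLambda_1$ is unnecessary and not justified as written --- the paper instead observes that the vanishing of the relative tangent space bounds the absolute tangent space of $\cR^{\ord}$ by $1$, so the surjection onto $\widehat{A}$ (of Krull dimension $\geq 1$) forces $\cR^{\ord}$ to be regular of dimension exactly one, and \'etaleness over $\varLambda_1$ is then deduced \emph{a posteriori} from the injectivity of $\varLambda_1\hookrightarrow\cR^{\ord}$ (coming from $\det\rho_A=\epsilon_p^{3-2\kappa_1}$) combined with regularity.
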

 \begin{proof}
According to Prop.\ref{prop10.2}, $\widehat{A}$ is topologically generated by $\Tr \rho_A(G_{\Q})$. It follows that $\cR^{\ord}$ surjects on $\widehat{A}$.  We claim that the Krull dimension of $\widehat{A}$ is at least one, and it is a consequence of the fact that $A$ surjects onto the local ring $\cO_{SK(\mathcal{F}),\alpha}$ at $\pi_\alpha$ of the $1$-dimensional irreducible analytic subspace corresponding to the Saito-Kurokowa family $SK(\mathcal{F})$ specializing to $\pi_\alpha$, where $\mathcal{F}$ is the Hida family specialising to $f_\alpha$ (see \cite[Prop.4.2.5]{urban}).
  
  On the other hand, it follows from Prop.\ref{R=Ttg}  that the relative tangent space of $\varLambda \to \cR^{\ord} $ (i.e $\cD'_{\rho_f}(L[\epsilon])$) is contained in $\rH^1_{f,\unr}(\Q,\ad^0 \rho_f)$, which is trivial under the assumption $({ \bf BK})$. Thus, the tangent space of $\cR^{\ord}$ is at most $1$-dimensional and $\cR^{\ord}$ is unramified  over $\varLambda_1$. However, the existence of the surjection $\cR^{\ord} \twoheadrightarrow
  \widehat{A}$ yields that the tangent space of $\cR^{\ord}$ is necessarily $1$-dimensional and that $\cR^{\ord} \twoheadrightarrow
  \widehat{A}$ is an isomorphism of $1$-dimensional regular rings. Since $\det \rho_A=\epsilon_p^{3-2\kappa_1}$, $\varLambda_1$ injects in $\cR^{\ord} \simeq A$ and $\cR^{\ord}$ is necessarily flat over $\varLambda_1$. Thus, $\cR^{\ord}$ is \'etale  over $\varLambda_1$ because it is unramified and flat.
  
  \end{proof}

\begin{thm} \label{smooththm} Assume $({ \bf BK})$, $({ \bf Reg})$, $({ \bf St})$, $\dim \rH^{1}_{f,\unr}(\Q,\rho_f(k-1))=1$ and when $k=2$ also assume that $L_p(f_\alpha,\omega^{-1}_p,T=p) \ne 0$. Then the local ring $\cT$ is regular of dimension $2$, i.e. $\cE_\Delta$ is smooth at $\pi_\alpha$. Moreover, the reducibility ideal of the pseudo-character $\mathrm{Ps}_\cT$ corresponds to the principal Weil divisor (closed subset of dimension one)  of $\Spec \cT$ corresponding to the Saito-Kurokawa family $SK(\mathcal{F})$ specializing to $\pi_\alpha$, where $\mathcal{F}$ is the Hida family passing through $f_\alpha$.

\end{thm}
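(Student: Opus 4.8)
The plan is to deduce the statement from the principality of the total reducibility ideal (Theorem \ref{princ1}) and the $R=T$ theorem of Proposition \ref{RTBK}, together with a short conormal computation. First I would record that, under the present hypotheses, Theorem \ref{princ1} gives $\mathcal{I}^{\tot}=g\cT$ for some $g\in\mathcal{I}^{\tot}$, and that $\mathcal{I}^{\tot}\neq 0$ since $\rho_K:G_\Q\to\GL_4(K)$ is absolutely irreducible (Theorem \ref{irrecomplevel1}); thus $g$ is a nonzero element of the reduced local ring $\cT$. Next, Proposition \ref{RTBK} identifies the completion $\widehat{A}$ of $A=\cT/\mathcal{I}^{\tot}$ with the deformation ring $\cR^{\ord}$, which is a discrete valuation ring. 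Since completion at the maximal ideal changes neither the Krull dimension nor the cotangent space of a Noetherian local ring, $A$ is itself a regular local ring of dimension one.

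Then I would run the standard conormal exact sequence attached to $0\to\mathcal{I}^{\tot}\to\cT\to A\to 0$. Writing $\gm$ (resp. $\gm_A$) for the maximal ideal of $\cT$ (resp. $A$), one has an exact sequence of $L$-vector spaces
\[
0\longrightarrow \mathcal{I}^{\tot}/(\mathcal{I}^{\tot}\cap\gm^2)\longrightarrow \gm/\gm^2\longrightarrow \gm_A/\gm_A^2\longrightarrow 0,
\]
in which the left-hand term is a quotient of $\mathcal{I}^{\tot}/\gm\mathcal{I}^{\tot}$ (because $\gm\mathcal{I}^{\tot}\subseteq \mathcal{I}^{\tot}\cap\gm^2$). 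Since $A$ is a discrete valuation ring, $\dim_L\gm_A/\gm_A^2=1$, and since $\mathcal{I}^{\tot}$ is principal and nonzero, $\dim_L\mathcal{I}^{\tot}/\gm\mathcal{I}^{\tot}=1$ by Nakayama's lemma; hence $\dim_L\mathfrak{t}_{\pi_\alpha}=\dim_L\gm/\gm^2\leq 2$. On the other hand $\cT$ is equidimensional of dimension $2$, so $\dim_L\gm/\gm^2\geq\dim\cT=2$. Therefore $\dim_L\gm/\gm^2=2=\dim\cT$, so $\cT$ is a regular local ring of dimension $2$, i.e. $\cE_\Delta$ is smooth at $\pi_\alpha$; in particular $\cT$ is an integral domain, so $\cE_\Delta$ has a single irreducible component through $\pi_\alpha$.

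Finally, for the assertion about the reducibility locus: since $\cT$ is now a regular domain of dimension $2$ and $g$ is a nonzerodivisor, $V(\mathcal{I}^{\tot})=V(g)\subset\Spec\cT$ is a principal effective Cartier divisor, hence a principal Weil divisor; moreover $A=\cT/\mathcal{I}^{\tot}$ being a domain with $\dim A=1$ shows that $\mathcal{I}^{\tot}$ is a height-one prime, so this divisor is irreducible. It remains to identify it with the Saito-Kurokawa family. As in the proof of Proposition \ref{RTBK}, the pseudo-character $\mathrm{Ps}_\cT$ is reducible along the one-dimensional irreducible subspace $SK(\mathcal{F})\subset\cE_\Delta$ cut out by the Saito-Kurokawa lift of the Hida family $\mathcal{F}$ through $f_\alpha$, so $\mathcal{I}^{\tot}$ is contained in the ideal of $SK(\mathcal{F})$ in $\cT$ and $A$ surjects onto the local ring $\cO_{SK(\mathcal{F}),\alpha}$ (cf. \cite[Prop.4.2.5]{urban}). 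Both rings have Krull dimension one and $A$ is a domain, so the kernel of $A\twoheadrightarrow\cO_{SK(\mathcal{F}),\alpha}$ is a minimal prime of $A$, hence zero; thus $\cT/\mathcal{I}^{\tot}\cong\cO_{SK(\mathcal{F}),\alpha}$, which is exactly the claim. I expect the genuinely hard input to be entirely contained in Theorems \ref{princ1} and \ref{lastentry} and Proposition \ref{RTBK}; within this final argument the only points requiring attention are that the conormal term has $L$-dimension \emph{exactly} one — which is forced once the two opposite inequalities for $\dim_L\gm/\gm^2$ are combined — and the Krull-dimension bookkeeping in the last identification.
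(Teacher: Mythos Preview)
Your proof is correct and follows essentially the same approach as the paper: invoke Theorem \ref{princ1} for principality of $\mathcal{I}^{\tot}$ and Proposition \ref{RTBK} for regularity of $A=\cT/\mathcal{I}^{\tot}$, then bound the cotangent space of $\cT$ by $2$ and use equidimensionality for the reverse inequality. Your conormal exact sequence and the dimension argument identifying $A$ with $\cO_{SK(\mathcal{F}),\alpha}$ merely make explicit what the paper leaves as one-line remarks.
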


\begin{proof}

 We have to show that the tangent space of $\cT$ is of dimension $2$. Since the Krull dimension is always less or equal to the dimension of the tangent space, we have to show that the maximal ideal $\gm$ of $\cT$ has at most two generators. Note that $\mathcal{I}^{\tot}=(g)$ (see Theorem   \ref{princ1} and \S.\ref{Selmergroupvanish}) and $A=\cT/(g)$ is regular of dimension $1$. Hence $\gm$ has at most two generators. Thus $\cT$ is regular. The rest of the assertion follows from the fact that $\mathcal{I}^{\tot}=(g)$ and $\cO_{SK(\mathcal{F}),x}=A=\cT/(g)$ (since $A$ is a discrete valuation ring).

\end{proof}

One has the following general bound of the Zariski tangent space of $\pi_\alpha \in \cE_{\Delta}$.
\begin{cor}

Assume $({ \bf BK})$, $({ \bf Reg})$, $({ \bf St})$ and when $k=2$ also assume that $L_p(f_\alpha,\omega^{-1}_p,T=p) \ne 0$. Then we have

 \[2 \leq \dim \mathfrak{t}_{\pi_\alpha} \leq 1 +(\dim \rH^{1}_{f,\unr}(\Q,\rho_f(k-1)))^2 \] and \[\dim \mathfrak{t}_{\pi_\alpha}^0 \leq (\dim \rH^{1}_{f,\unr}(\Q,\rho_f(k-1)))^2.\]

\end{cor}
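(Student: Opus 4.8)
The plan is to bound the Zariski tangent space $\mathfrak{t}_{\pi_\alpha}$ (and its relative analogue $\mathfrak{t}_{\pi_\alpha}^0$) by combining the structural description of $S$ from Theorem \ref{princ1} with the regularity of $A=\cT/\mathcal{I}^{\tot}$ established in Proposition \ref{RTBK}. First I would set $s=\dim \rH^{1}_{f,\unr}(\Q,\rho_f(k-1))$ and recall from Corollary \ref{bound} that $\mathcal{I}^{\tot}$ is generated by at most $s^2$ elements (this uses $({ \bf St})$, $({ \bf Reg})$, and, when $k=2$, the non-vanishing $L_p(f_\alpha,\omega^{-1}_p,T=p)\ne 0$ which forces $\dim\mathrm{Sel}_{\Q,f_\alpha}=0$ via \S\ref{Selmergroupvanish}). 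Then the surjection $\cT\twoheadrightarrow A$ has kernel $\mathcal{I}^{\tot}$, and from $\mathfrak{m}_{\cT}/(\mathfrak{m}_{\cT}^2+\mathcal{I}^{\tot})\cong \mathfrak{m}_A/\mathfrak{m}_A^2$ together with the regularity of $A$ (so $\dim\mathfrak{m}_A/\mathfrak{m}_A^2=1$, as $\widehat A\cong \cR^{\ord}$ is a DVR by Proposition \ref{RTBK}) I get the exact sequence of $L$-vector spaces
\[
\mathcal{I}^{\tot}/(\mathfrak{m}_{\cT}\mathcal{I}^{\tot}) \longrightarrow \mathfrak{m}_{\cT}/\mathfrak{m}_{\cT}^2 \longrightarrow \mathfrak{m}_A/\mathfrak{m}_A^2 \longrightarrow 0,
\]
hence $\dim\mathfrak{t}_{\pi_\alpha}\le 1 + \dim_L \mathcal{I}^{\tot}/(\mathfrak{m}_{\cT}\mathcal{I}^{\tot}) \le 1+s^2$ by Nakayama and the bound on the number of generators. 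The lower bound $\dim\mathfrak{t}_{\pi_\alpha}\ge 2$ comes from $\cT$ being equidimensional of Krull dimension $2$, so the tangent space has dimension at least $2$.

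For the relative tangent space $\mathfrak{t}_{\pi_\alpha}^0$, which is the tangent space of $\cT'=\cT/\mathfrak{m}_\varLambda\cT$, I would use that $\varLambda_1$ is étale over $\cR^{\ord}\cong \widehat A$ (Proposition \ref{RTBK}), so $A/\mathfrak{m}_{\varLambda_1}A$ is Artinian of length one, i.e. the image of $\varLambda$ in $A$ accounts for the whole "radial" direction of $A$. More precisely, I would show $A\otimes_\varLambda L = L$ using that the composite $\varLambda\to\varLambda_1\to A$ has the property that $A$ is regular of dimension $1$ and $\varLambda_1\to A$ is étale, while $\varLambda\to\varLambda_1$ is (up to the relevant completion) the inclusion realizing the cyclotomic line; the weight map on the Saito–Kurokawa line $SK(\mathcal F)$ is finite and the determinant condition $\det\rho_A=\epsilon_p^{3-2\kappa_1}$ together with $\kappa_1\equiv\kappa_2\bmod\mathcal{I}^{\tot}$ (Theorem \ref{Freeness}) shows the fiber is zero-dimensional. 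Then reducing the displayed exact sequence modulo $\mathfrak{m}_\varLambda$ gives a surjection $\mathcal{I}^{\tot}/(\mathfrak{m}_{\cT}\mathcal{I}^{\tot}+\mathfrak{m}_\varLambda)\twoheadrightarrow \mathfrak{m}_{\cT'}/\mathfrak{m}_{\cT'}^2$, whence $\dim\mathfrak{t}_{\pi_\alpha}^0\le \dim_L\mathcal{I}^{\tot}/(\mathfrak{m}_{\cT}\mathcal{I}^{\tot})\le s^2$.

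The main obstacle I anticipate is making the relative statement precise: one needs to verify carefully that passing to the fiber $\cT' = \cT/\mathfrak{m}_\varLambda\cT$ does not "waste" a generator, i.e. that $\mathfrak{m}_\varLambda$ maps into $\mathfrak{m}_{\cT}^2 + \mathcal{I}^{\tot}$ modulo higher-order terms in a way compatible with the count — equivalently that the one regular parameter of $A$ coming from $\varLambda_1$ survives in $\cT$ as part of a minimal generating set, so that the weight directions are "transverse" to $\mathcal{I}^{\tot}$. This is exactly what $\kappa_1-\kappa_2\in\mathcal{I}^{\tot}$ and the étaleness of $\cR^{\ord}/\varLambda_1$ buy us, but it requires spelling out that $\varLambda\to\cT$ factors as a weight-space parameter plus something in $\mathcal{I}^{\tot}$, so that only one of the two weight coordinates contributes a new direction beyond $\mathcal{I}^{\tot}$, consistent with $\dim\mathfrak{t}_{\pi_\alpha}-\dim\mathfrak{t}_{\pi_\alpha}^0 \le 1$. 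Once this transversality is in hand the two inequalities follow formally from Nakayama's lemma applied to $\mathcal{I}^{\tot}$ and from Corollary \ref{bound}; the case $k=2$ is handled identically after invoking $\dim\mathrm{Sel}_{\Q,f_\alpha}=0$ in place of the vanishing $\dim\rH^1_{f,\unr}(\Q,\rho_f(k-2))=0$.
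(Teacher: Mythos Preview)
Your proposal is correct and follows essentially the same approach as the paper: use Corollary~\ref{bound} to bound the number of generators of $\mathcal{I}^{\tot}$ by $s^2$, use Proposition~\ref{RTBK} to see that $A=\cT/\mathcal{I}^{\tot}$ is a DVR \'etale over $\varLambda_1\simeq\varLambda/(\kappa_1-\kappa_2)$, and combine these via the cotangent exact sequence. Two small remarks: you wrote ``$\varLambda_1$ is \'etale over $\cR^{\ord}\cong\widehat A$'' where you meant the reverse direction; and for the relative bound the cleanest phrasing is that \'etaleness gives $\mathfrak{m}_A=\mathfrak{m}_{\varLambda_1}A$, hence $\mathfrak{m}_\cT=\mathcal{I}^{\tot}+\mathfrak{m}_{\varLambda_1}\cT=\mathcal{I}^{\tot}+\mathfrak{m}_\varLambda\cT$ (using $\kappa_1-\kappa_2\in\mathcal{I}^{\tot}$), so $\mathfrak{m}_{\cT'}$ is the image of $\mathcal{I}^{\tot}$ in $\cT'$ and is therefore generated by at most $s^2$ elements---this makes the ``transversality'' you worry about automatic.
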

\begin{proof}
The assertion follows immediately from Corollary \ref{bound} (i.e $\mathcal{I}^{\tot}$ is generated by at most $s^2$ elements) and from Theorem \ref{smooththm} (i.e $A=\cT/\mathcal{I}^{\tot}$ is \'etale over $\varLambda_1 \simeq \varLambda/(\kappa_1-\kappa_2)$).

\end{proof}

\section{Smoothness failure of $\cE_N$ at $\pi_\alpha$ when $N$ is square free and not prime}\

We prove in this subsection that our main results fail when we change the tame level to $\Gamma(N)$. In  this subsection we can remove the assumption on the global root number $\epsilon_f$ being $-1$ as there exists a Saito-Kurokawa lift of level  $\Gamma(N)$ for either sign (see \cite{Schmidt07}).

Coleman, Gouvea and Jochnowitz proved in \cite{CGJ} that the $p$-adic modular form for $\GL_2(\Q)$ $$G_2(q)=\frac{\zeta(-1)}{2} + \sum_{n=1}^{\infty} \sigma(n) q^n \text{, where }\sigma(n)=\sum_{d \mid n} d$$ is not overconvergent, however the $p$-ordinary $p$-stabilization $E_2^{ord_p}(q)=G_2(q)-p.G_2(q^p)$ of $G_2(q)$ is classical, hence the critical $p$-stabilization $E_2^{\crit}=G_2(q)-G_2(q^p)$ of $G_2(q)$ is not overconvergent. On the other hand, any ordinary $\ell$-stabilization $E_2^{\crit,ord_\ell}$ of $E_2^{\crit}$ is an overconvergent modular form of weight two and level $\Gamma_0(\ell p)$. Note that $a_{\ell'}(E_2^{\crit,ord_\ell})=1 + \ell'$ where $\ell' \nmid \ell.p$, and $a_\ell(E_2^{\crit,ord_\ell})=1$, $a_p(E_2^{\crit,ord_\ell})=p$.

$E_2^{\crit,ord_\ell}$ is a cuspidal overconvergent form of tame level $\Gamma_0(\ell)$ since each constant term of its $q$-expansion is trivial at each cusp of the multiplicative ordinary locus of the the rigid curve attached to the semi-stable modular curve $X_1(\Gamma_1(4\ell) \cap \Gamma_0(p))/\Z_p$ (these cusps are in the $\Gamma_0(p)$-orbit of the standard cusp $\infty$, see \cite[\S.3.1,\S.3.2]{BDPozzi}).

Let $\cC_N$ be the reduced eigencurve of tame level $N$ constructed using the Hecke operators $T_\ell$ for  $\ell \nmid N p$ and $U_p$ (we omit the Hecke operators $U_\ell $ for $\ell \mid N$). Recall that  there exists a flat and locally finite morphism $w: \cC_N \rightarrow \mathcal{V}$, called the weight map, where $\mathcal{V}$ is the weight space ($\mathcal{V}(\C_p)=\Hom_{\mathrm{cont}}(\Z_p^{\times},\C_p^{\times})$).

\begin{prop}\label{localfamilyGL2}\ 
 Let $\mathcal{Y}$ be an irreducible component of the $p$-adic Eigencurve $\cC_{N}$ of tame level $N$ specializing to a point $y$ corresponding to the system of Hecke eigenvalues of $E_2^{\crit}$. Denote by $\rho_{\cU}:G_\Q \to \GL_2(K_{\mathcal{U}})$  the Galois representation attached to $\mathcal{\cU}$, where $K_{\mathcal{\cU}}$ is the field of fractions of some connected affinoid subdomain $\cU$ of $\mathcal{Y}$ containing $y$, then $\rho_{\mathcal{\cU}}$  is Steinberg at least one prime $\ell \mid N$ (hence $N \ne 1$).

\end{prop}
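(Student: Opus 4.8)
The strategy is a proof by contradiction. Suppose that $\rho_{\mathcal{U}}$ is \emph{not} Steinberg at any prime $\ell \mid N$; I will deduce that $E_2^{\crit}$ is overconvergent, contradicting the theorem of Coleman--Gouv\^ea--Jochnowitz \cite{CGJ} that $G_2$ is not overconvergent.

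Since $y$ has weight $2$ and $U_p$-slope $1$, Coleman's classicality criterion shows that the points of $\mathcal{Y}$ of sufficiently large classical weight lying near $y$ are classical cuspforms (as $\cC_N$ is cuspidal and, $E_2^{\crit}$ being non-overconvergent, no Eisenstein family passes through $y$), and such points accumulate at $y$; let $\Sigma \subset \mathcal{Y}$ be a Zariski-dense set of them, with $g_z$ the corresponding classical cuspidal eigenform of level $\Gamma_0(Np)$ and trivial nebentypus. Shrinking $\mathcal{U}$ so that it is irreducible and applying the generalized matrix algebra attached to the pseudocharacter of $\cC_N$ on $\mathcal{U}$ (\cite[Theorem 1.4.4]{bb}), one obtains a torsion-free coherent $\mathcal{O}_{\mathcal{U}}$-module with continuous $G_\Q$-action whose associated representation over $K_{\mathcal{U}}$ is $\rho_{\mathcal{U}}$; since $\rho_{\mathcal{U}}$ is irreducible the reducibility locus in $\mathcal{Y}$ is finite, and away from it one has a genuine rank-two $G_\Q$-family specializing to $\rho_{g_z}$ at the classical $z$. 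If $\rho_{\mathcal{U}}$ is not Steinberg at $\ell$, then because the $g_z$ carry $\Gamma_0(\ell)$-level with trivial nebentypus (so $\rho_{g_z}|_{G_{\Q_\ell}}$ is an unramified principal series or an unramified twist of Steinberg), the monodromy operator at $\ell$ -- which vanishes over $K_{\mathcal{U}}$ and hence, the module being torsion-free, vanishes identically -- forces $\rho_{g_z}$ to be unramified at $\ell$ for all but finitely many $z \in \Sigma$. By local--global compatibility such $g_z$ are $\ell$-old for every $\ell \mid N$; being Iwahori at $p$ they then arise from newforms of level dividing $p$, so the Hecke eigensystem of $g_z$ lies on $\cC_1$. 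These $z$ are still Zariski-dense in the irreducible $\mathcal{Y}$, so the natural closed immersion $\cC_1 \hookrightarrow \cC_N$ (the eigenvariety attached to the closed inclusion of overconvergent cuspforms of level $\Gamma_0(p)$ into those of level $\Gamma_0(Np)$) forces $\mathcal{Y} \subset \cC_1$; in particular $y$ lifts to $\cC_1$, i.e. there is a weight-two overconvergent cuspform $h$ of level $\Gamma_0(p)$ whose system of Hecke eigenvalues (for the $T_{\ell'}$, $\ell'\nmid Np$, and $U_p$) equals that of $E_2^{\crit}$.

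Normalizing $a_1(h)=1$, the coefficients of $h$ are determined by its eigenvalues through the usual Hecke recursions, whence $a_n(h)=a_n(E_2^{\crit})$ for all $n\ge 1$ (in particular $a_p(E_2^{\crit})=p$, consistent with $U_p h = p h$). Both $h$ (cuspidal) and $E_2^{\crit}=G_2(q)-G_2(q^p)$ (whose constant terms cancel at $\infty$) have vanishing constant term at the cusp $\infty$ on the ordinary locus, so by the $q$-expansion principle $h=E_2^{\crit}$, and therefore $E_2^{\crit}$ is overconvergent. But $E_2^{\mathrm{ord}}:=G_2(q)-pG_2(q^p)$ is a non-zero multiple of the classical weight-two Eisenstein series of level $\Gamma_0(p)$, hence overconvergent; thus $(1-p)G_2(q^p)=E_2^{\mathrm{ord}}-E_2^{\crit}$ is overconvergent, and so $G_2(q)=E_2^{\crit}(q)+G_2(q^p)$ is overconvergent, contradicting \cite{CGJ}. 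Hence $\rho_{\mathcal{U}}$ is Steinberg at some $\ell \mid N$; in particular $N\ne 1$.

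The step I expect to be the main obstacle is the descent: making rigorous the passage from ``$\rho_{\mathcal{U}}$ not Steinberg at $\ell$'' to ``$\rho_{g_z}$ unramified at $\ell$ for Zariski-densely many classical $z$'' in a neighbourhood of the \emph{reducible} point $y$ -- this requires careful control of the monodromy operator of the $p$-adic family there (via the semicontinuity results of \cite[\S 7.8]{bb} together with the local constraints coming from the Iwahori level at $\ell$) -- and identifying the image of $\cC_1$ inside $\cC_N$ so that the Zariski density of $\Sigma$ propagates to all of $\mathcal{Y}$.
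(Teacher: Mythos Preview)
Your argument is essentially correct but takes a genuinely different route from the paper. The paper also argues by contradiction, but instead of level-lowering it applies the generalized matrix algebra machinery of \cite[\S1]{bb} directly at the reducible point $y$: the irreducibility of $\rho_{\mathcal{U}}$ produces a non-trivial extension class $c_y\in\rH^1(\Q,\epsilon_p)$; analytic continuation of crystalline periods (\cite{kisin}, \cite[Thm.~4.3.6]{bb}) shows $c_y$ is crystalline at $p$; and the hypothesis that $\rho_{\mathcal{U}}$ is not Steinberg at any $\ell\mid N$ is used to show that $\rho_{\mathcal{U}}$ is in fact unramified outside $p$ (finite inertial image plus triviality at $y$), so that $c_y\in\rH^1_{f,\unr}(\Q,\epsilon_p)$. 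The contradiction is then with the Kummer isomorphism $\rH^1_{f,\unr}(\Q,\epsilon_p)\simeq\Z^\times\otimes L=0$, which immediately contradicts the irreducibility of $\rho_{\mathcal{U}}$.

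Your approach trades this Selmer-theoretic contradiction for an automorphic one via Coleman--Gouv\^ea--Jochnowitz. The step you flag as the main obstacle is in fact the easier one: semicontinuity of the monodromy rank (\cite[\S7.8]{bb}) applies on the locally free locus of the family, which is Zariski open and contains a Zariski-dense set of classical points, so you never need to control the Weil--Deligne representation at $y$ itself. What requires more care in your argument is the descent. You need that the inclusion of cuspidal overconvergent forms $S^\dagger_{\cusp}(\Gamma_0(p))\hookrightarrow S^\dagger_{\cusp}(\Gamma_0(Np))$ yields a Zariski-closed map of eigenvarieties (for the Hecke operators away from $Np$ and $U_p$), that your $\ell$-old classical points land in its image, and then that the missing $T_\ell$-eigenvalues for $\ell\mid N$ on the level-one side are forced to equal $1+\ell$ by the Galois pseudocharacter, so that the $q$-expansion of $h$ is completely determined. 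All of this can be arranged, but the paper's route avoids this machinery entirely and is self-contained once one has the GMA formalism and the elementary vanishing of $\rH^1_{f,\unr}(\Q,\epsilon_p)$. Note also that your parenthetical invoking the non-overconvergence of $E_2^{\crit}$ to rule out Eisenstein components is unnecessary (and flirts with circularity): $\mathcal{Y}$ is a cuspidal Coleman family by hypothesis on $\cC_N$.
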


\begin{proof}

Let $A:=\cO_{\mathcal{Y},y}$ be the local ring of $\mathcal{Y} \subset \cC_N$ at point $y$. One has a universal pseudo-character carried by $\cC_N$ \begin{equation}\label{pseudo-charactC_N} G_\Q \to \cO(\cC_N) \end{equation} sending $\Frob_q$ to the Hecke operator $T_q$, where $q \nmid N p$ is a prime number. The localization of the pseudo-character \eqref{pseudo-charactC_N} at $A$ gives rise to a pseudo-character  $$\mathrm{Ps}_A: G_\Q \to A$$ of dimension $2$ specializing to $\epsilon_p^{-1} \oplus \mathbbm{1} $ modulo the maximal ideal of $A$. Moreover, $\mathrm{Ps}_A$ is the trace of a $2$-dimensional irreducible Galois representation $\rho_A:G_\Q \to \GL_2(Q(A))$ (since $\mathcal{Y}$ corresponds to a cuspidal Coleman family). Hence, we obtain from $\rho_A$ a non-trivial cohomology class $c_y$ in $\rH^1(\Q,\epsilon_p)$ (see \cite[\S.1.5]{bb}). The cohomology class $c_y$ corresponds to an extension $V=\Q_p^2$ of $\epsilon_p^{-1}$ by $\mathbbm{1}$ unramified outisde $Np$. It is known that for any classical point $y'$ in $\cC_N$, the semi-simple $p$-adic Galois representation $\rho_{y'}:G_\Q \to \GL(V_{y'})$ of dimension $2$ attached to the modular form corresponding to $y'$ (i.e.$\Tr \rho_{y'}$ is the specialization of \eqref{pseudo-charactC_N} at $y'$) has a crystalline periods equal to $U_p(y')$ (see \cite{kisin}) and it corresponds to its smaller Hodge-Tate weight which is zero (i.e.$\cD_{\cris}(V_{y'})^{\Phi=U_p(y')} \ne 0$), hence by using the analytic continuation of the crystalline periods $U_p$ on the Eigencurve $\cC_N$ (see \cite[Theorem   4.3.6]{bb}), one has $\cD_{\cris}(V)^{\Phi=U_p(y)}=\cD_{\cris}(V)^{\Phi=p} \ne 0$ (note that $U_p(y)=U_p(E_2^{\crit})=p$). Thus, $c_y$ is crystalline extension of of $\epsilon_p^{-1}$ by $\mathbbm{1}$, and it belongs to  $$\rH^1_f(G_\Q^{Np},\epsilon_p)=\ker(\rH^1 (G_\Q^{Np},\epsilon_p) \to \rH^1(\Q_p,\epsilon_p \otimes \mathrm{B}_{\cris})).$$  

Let us proceed now by contradiction. Assume that $\rho_{\cU}$  is not Steinberg at any $\ell \mid N$ (i.e the rank of the monodromy operator of the Weil-Deligne representation attached to $\rho_{\cU}$ by \cite[Lemma 7.8.14]{bb} at any $\ell$  is zero), hence $\rho_{\cU}$ is principal series or supercuspidal, which implies that for any $\ell \mid N$, the image of the inertia group $I_\ell$ by $\rho_{\cU}$ is finite (we also have a natural inclusion $K_\cU \subset Q( \cO_{\mathcal{Y},y})$), and then semi-simple and reducible. Moreover, $\epsilon_p^{-1} \oplus \mathbbm{1}$ is trival on $I_\ell$ when $\ell \nmid p$, hence $\rho_{\cU}$ is unramified outside $p$.

Thus, the extension $c_y$ is not Steinberg at any $\ell \mid N$ (hence unramified outside $p$) and it belongs necessarily to $\rH^1_{f,\unr}(\Q,\epsilon_p)$ which is trivial. Finally, the cohomology class $c_y$ is trivial, contradicting the fact that $\rho_{\mathcal{Y}}$ is absolutely irreducible.

\end{proof}

\begin{rem}\
\begin{enumerate}
\item The Atkin-Lehner eigenvalue of the classical specializations of $\mathcal{Y}_{\ell}$ at $\ell$ is constant and equal to $-1$. 
\item Let $\ell \mid N$ be a prime for which $f$ is special, then the Hida family $\mathcal{F}$ specializing to $f_\alpha$ is {special} at  $\ell$ and the Atkin-Lehner eigenvalue of the classical specializations of $\mathcal{F}$ at  $\ell$ is constant.

\item According to \cite{Maj}, the weight map $w:\cC_\ell \to \mathcal{V}$ is \'etale at $E^{\crit,\ord_\ell}_2$, and since $w$ is locally finite, one can shrink any affinoid neighborhood of $E^{\crit,\ord_\ell}_2$ to ensure that it will be \'etale over $\mathcal{V}$ (see Proposition \ref{geneta}).
\end{enumerate}

\end{rem}

We can use this to construct endoscopic components containing $\pi_{\alpha}$. We first note the following result about classical Yoshida lifts.

\begin{prop}[\cite{A-S} Prop.3.1]\label{liftyoshida}
Let $f_1 \in \mathrm{S}_{k_1}(N_1)$, $f_2 \in \mathrm{S}_{k_2}(N_2)$ be newforms of squarefree level with  even integers $k_1 \geq k_2 \geq 2$ and $M:={\rm gcd}(N_1, N_2)>1$. Assume that the Atkin-Lehner eigenvalues of $f_1$ and $f_2$ for $\ell \mid M$ coincide. Put $N={\rm lcm}(N_1, N_2)$. Then there exists a non-zero holomorphic Yoshida lift of level $\Gamma(N)$ and weight $((k_1+k_2)/2, (k_1-k_2+4)/2)$ with corresponding Galois representation $\rho_{f_1} \oplus \rho_{f_2}(\frac{k_1-k_2}{2})$. For $p \nmid N$ there exists a $p$-stabilisation of this lift (of Iwahori level at $p$) with $U_0$-eigenvalue $\alpha_1$ and $U_1$-eigenvalue\footnote{For the different normalisation \eqref{U1norm} of the $U_1$ operator on the eigenvariety this corresponds to the constant eigenvalue $\alpha_1\alpha_2$.} $\alpha_1\alpha_2 p^{\frac{k_1-k_2-2}{2}}$, where $\alpha_i$ are roots of the Hecke polynomial of $f_i$ at $p$ for $i=1,2$.
\end{prop}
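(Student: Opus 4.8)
The statement is essentially classical --- it goes back to Yoshida, in the refined form of \cite{A-S} --- and the plan is to recover it from the theta correspondence for the reductive dual pair $(\mathrm{GSp}_4,\mathrm{GO}(2,2))$ over $\Q$. Using the accidental isomorphism $\mathrm{GSO}(2,2)\cong(\mathrm{GL}_2\times\mathrm{GL}_2)/\mathbb{G}_m$, let $\pi_i$ be the cuspidal automorphic representation of $\mathrm{GL}_2(\mathbb{A}_{\Q})$ generated by $f_i$; since the $f_i$ have trivial nebentypus their central characters coincide, so $\pi_1\boxtimes\pi_2$ descends to a cuspidal automorphic representation $\tilde\sigma$ of $\mathrm{GSO}(2,2)(\mathbb{A}_{\Q})$, which I would extend to $\mathrm{GO}(2,2)(\mathbb{A}_{\Q})$ by the ``$+$''-extension in the sense of Roberts. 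The first step is to prove that the global theta lift $\Pi:=\theta(\tilde\sigma)$ to $\mathrm{GSp}_4(\mathbb{A}_{\Q})$ is a nonzero cuspidal automorphic representation; by Roberts' results (via Rallis' tower property and Kudla's non-vanishing criterion) this holds as soon as $\pi_1\not\cong\pi_2$ --- so that $\tilde\sigma$ is not in the image of a smaller theta lift, which would land one in a Saito--Kurokawa rather than a Yoshida packet --- and all local theta lifts $\theta(\tilde\sigma_v)$ are nonzero.

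The technical heart, and the step I expect to be the main obstacle, is the local analysis, which is exactly where the hypotheses of the proposition enter. At $q\nmid N$ all data are unramified and the local lift is the evident unramified constituent. At a prime $\ell\mid M=\gcd(N_1,N_2)$, where both $\pi_{i,\ell}$ are twists of the Steinberg representation, the local theta dichotomy for $(\mathrm{GO}(2,2),\mathrm{GSp}_4)$ shows that the local lift is nonzero on the \emph{split} group $\mathrm{GSp}_4$ precisely when the two local Atkin--Lehner signs agree; checking this carefully (using the explicit local correspondence of Gan--Takeda and a Prasad-type dichotomy) is the crux. At $\ell\mid N$ with $\ell\nmid M$ only one factor ramifies and there is no obstruction. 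At the archimedean place one invokes the explicit archimedean theta correspondence: the lift of the pair of holomorphic discrete series of weights $k_1\geq k_2\geq 2$ is the holomorphic discrete series of $\mathrm{GSp}_4(\R)$ of weight $\big(\tfrac{k_1+k_2}{2},\tfrac{k_1-k_2+4}{2}\big)$ (choosing the Weil model so as to land in the holomorphic chamber rather than produce a large discrete series), which forces $\Pi_\infty$ to be holomorphic of the stated weight. Putting these together yields a nonzero holomorphic cuspidal Siegel eigenform $\Pi$, which one checks has level $\Gamma(N)$ by a local computation at the primes dividing $N$.

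The remaining two points are bookkeeping. For the Galois representation I would compute the Satake parameter of $\Pi_q$ at $q\nmid Np$ from the unramified theta correspondence: if $\{a_q,b_q\}$ and $\{c_q,d_q\}$ are the Satake parameters of $\pi_1$ and $\pi_2$, then $\Pi_q$ has $\mathrm{GSp}_4$-parameter $\mathrm{diag}\big(a_q,b_q,q^{(k_1-k_2)/2}c_q,q^{(k_1-k_2)/2}d_q\big)$, so that $L(\Pi,\mathrm{spin},s)=L(f_1,s)\,L\big(f_2,s-\tfrac{k_1-k_2}{2}\big)$ and, by Theorem \ref{exrep}, the attached $4$-dimensional Galois representation is $\rho_{f_1}\oplus\rho_{f_2}(\tfrac{k_1-k_2}{2})$ --- a type (Y) representation, consistent with $\Pi$ being a theta lift from $\mathrm{GO}(2,2)$. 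Finally, since $p\nmid N$ the representation $\Pi_p$ is an unramified principal series with Satake parameters $\{\alpha_1,\beta_1,p^{(k_1-k_2)/2}\alpha_2,p^{(k_1-k_2)/2}\beta_2\}$; choosing the Iwahori line corresponding to the ordering that realises the $U_0$-eigenvalue $\alpha_1$ and reading off the remaining eigenvalue from Theorem \ref{Hodge-Tate}(ii), a short computation unwinding the normalisation of $U_1$ gives the $U_1$-eigenvalue $\alpha_1\alpha_2\,p^{(k_1-k_2-2)/2}$, i.e. the weight-independent quantity $\alpha_1\alpha_2$ in the renormalisation \eqref{U1norm} used on $\cE_N$. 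This produces the desired $p$-stabilisation.
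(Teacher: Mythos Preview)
Your proposal is essentially correct and follows the standard route via the theta correspondence for the dual pair $(\mathrm{GO}(2,2),\mathrm{GSp}_4)$, which is indeed the method underlying the reference \cite{A-S}. The paper's own proof, however, does not reproduce any of this: it simply cites \cite{A-S} Prop.~3.1 for the existence of the lift of level $\Gamma(N)$, and \cite{MM} \S7.1.1 (with the normalisation of \cite{urban} \S2.4.16) for the $p$-stabilisation of the unramified principal series. In other words, the paper treats this proposition as a black box imported from the literature, whereas you have sketched the actual content of that black box.

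One minor caution: your formulation of the local obstruction at primes $\ell\mid M$ (``the local lift is nonzero on the split $\mathrm{GSp}_4$ precisely when the Atkin--Lehner signs agree'') is morally right but would need to be checked carefully against Roberts' precise statements, since the dichotomy is usually phrased in terms of epsilon factors rather than Atkin--Lehner eigenvalues directly; the translation between the two for Steinberg twists at squarefree level is straightforward but should be made explicit in a full proof. Similarly, you assert $\pi_1\not\cong\pi_2$ without comment --- this is implicit in the hypotheses (different weights $k_1\geq k_2$, or if $k_1=k_2$ then the forms are distinguished by level or Hecke eigenvalues since the statement is vacuous otherwise), but strictly speaking needs a word.
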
 
\begin{proof}
For the existence of the lift of level  $\Gamma(N)$ see \cite{A-S} Prop.3.1. For the $p$-stabilisation of the principal unramified series see \cite{MM} \S7.1.1, but we use the normalization of \cite{urban} \S2.4.16.
\end{proof}

\begin{thm}Let $\ell \mid N$ be a prime number for which $f$ is Steinberg, $\mathcal{U}^{1}$ be an affinoid subdomain of the $p$-adic eigencurve $w_2: \cC_\ell \to \cV $ of tame level $\ell$ containing $E^{\crit,\ord_\ell}_2$, corresponding to a Coleman family $G=\sum_{n=1}^{\infty} a(n,G) q^n$, and such that it is {\it \'etale} over the weight space $\mathcal{V}$. Let $\mathcal{U}^0$ be an affinoid subdomain of the ordinary locus $\cC_N^{\ord}$ of the $p$-adic eigencurve $\cC_N$ of tame level $N$ containing $f_\alpha$ and corresponding to the Hida family $\mathcal{F}=\sum_{n=1}^{\infty} a(n,\mathcal{F}) q^n$, and such that it is {\it \'etale} \footnote{According to Hida's control theorem, the weight map $w_1:\cC_N \to \mathcal{V}$ is \'etale at $f_\alpha$, and since $w$ is locally finite, one can shrink any affinoid neighborhood of $f_\alpha$ to ensure that it will be \'etale over $\mathcal{V}$.} over the weight space $\mathcal{V}$. 

There exists a Zariski closed immersion  $\lambda_{\mathrm{Yo}}: \cU^0 \times_{\Q_p} \cU^1 \hookrightarrow \cE_{N}$ with image denoted by $\mathrm{Yo}(\mathcal{F},G)$ and such that the following diagram commutes $$\xymatrix{\cU^0 \times_{\Q_p} \cU^1 \ar[d]^{w_1 \times w_2} \ar[r]^{\lambda_{\mathrm{Yo}}} & \cE_{N} \ar[d]^\kappa\\
\mathcal{V} \times \cV  \ar[r]^{\lambda_{\kappa}} & \cW}$$
 where $\lambda_{\kappa}(2k_1,2k_2)=(k_1+k_2, k_1-k_2+2)$ and the morphism $\lambda_{\mathrm{Yo}}$ corresponds to the morphism $$\lambda_{\mathrm{Yo}}^*:\cO(\cE_{N}) \to \cO(\mathcal{U}^0) \widehat{\otimes}_{\Q_p} \cO(\mathcal{U}^1)$$ defined by 
 
 { \small 
 \[\lambda_{\mathrm{Yo}}^*(P_\ell(X))=(X^2-a(\ell,\mathcal{F}) X+ \ell^{-3}\kappa_1\kappa_2(\ell))(X^2-\kappa_2(\ell)\ell^{-2}a(\ell,G) X+\ell^{-3}\kappa_2(\ell).\kappa_1(\ell)),\] } for any $ \ell \nmid Np$,  
where $P_\ell(X) \in \cO(\cE_{N})[X]$ is the Hecke-Andrianov polynomial at $\ell \nmid Np$, and $\lambda_{\mathrm{Yo}}^*(U_0)=a(p,\mathcal{F})$, and $\lambda_{\mathrm{Yo}}^*(U_1)=a(p,\mathcal{F}) \times a(p,G)$. \

\end{thm}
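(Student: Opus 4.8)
The plan is to construct $\lambda_{\mathrm{Yo}}$ by interpolating the classical Yoshida lifts of Proposition~\ref{liftyoshida}: these form a Zariski-dense set of points of $\cU^0\times_{\Q_p}\cU^1$, and the Hecke eigensystem attached to each of them is given by the rigid-analytic formula $\lambda^*_{\mathrm{Yo}}$ from the statement, so a density argument together with the universal property of the eigenvariety $\cE_N$ should yield the morphism. Concretely, I would organize the proof as: (a) produce from the Hida family $\mathcal{F}$ and the Coleman family $G$ a continuous character $\phi\colon\mathcal{H}_N\to R:=\cO(\cU^0)\widehat{\otimes}_{\Q_p}\cO(\cU^1)$ and a weight morphism $\kappa_{\mathrm{Yo}}\colon\cU^0\times\cU^1\to\cW$ matching the formulas in the statement; (b) check that $(\phi,\kappa_{\mathrm{Yo}})$ specializes at a Zariski-dense set of classical points to the eigensystem of a genuine $U_0$-ordinary classical Siegel eigenform of tame level $\Gamma(N)$; (c) invoke the interpolation/universal property of $\cE_N$ (using that $\phi(U_0)\in R^{\times}$, so the family lies in the semi-ordinary range of \cite[Thm.~2.4.10]{urban}) to obtain $\lambda_{\mathrm{Yo}}$; (d) prove $\lambda_{\mathrm{Yo}}$ is a closed immersion.

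For (a): since $a(p,\mathcal{F})\in\cO(\cU^0)^{\times}$, Hida theory gives $\rho_{\mathcal{F}}\colon G_\Q\to\GL_2(\cO(\cU^0))$, and (shrinking $\cU^1$ if necessary) the Coleman family $G$ carries $\rho_G\colon G_\Q\to\GL_2(\cO(\cU^1))$. Using the universal cyclotomic character one forms the half-twist $\chi$ interpolating $\epsilon_p^{(\kappa_1-\kappa_2)/2}$ — the parametrization by $(2k_1,2k_2)$ in the statement is exactly what makes $(\kappa_1-\kappa_2)/2$ a well-defined $p$-adic character — and sets $\rho=\rho_{\mathcal{F}}\oplus(\rho_G\otimes\chi)\colon G_\Q\to\GL_4(R)$. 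Local-global compatibility for $\rho_{\mathcal{F}}$ and $\rho_G$ identifies, for $\ell\nmid Np$, the characteristic polynomial of $\rho(\Frob_\ell)$, after matching normalizations, with the product of the two quadratic factors of $\lambda^*_{\mathrm{Yo}}(P_\ell(X))$; matching coefficients then pins down $\lambda^*_{\mathrm{Yo}}(T_{\ell,1}),\lambda^*_{\mathrm{Yo}}(T_{\ell,2}),\lambda^*_{\mathrm{Yo}}(\mathrm{S}_\ell)$, while at $p$ one takes $\lambda^*_{\mathrm{Yo}}(U_0)=a(p,\mathcal{F})\in R^{\times}$ and $\lambda^*_{\mathrm{Yo}}(U_1)=a(p,\mathcal{F})\,a(p,G)$. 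Together with $\kappa_{\mathrm{Yo}}=\lambda_\kappa\circ(w_1\times w_2)$ this gives the data $(\phi,\kappa_{\mathrm{Yo}})$.

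For (b) and (c): the points of $\cU^0$ where $\mathcal{F}$ is a $p$-ordinary newform of even weight $2k_1\geq 2$ are Zariski-dense (Hida), as are those of $\cU^1$ where $G$ is a newform of even weight $2k_2\geq 2$ (Coleman), so the pairs with $2k_1\geq 2k_2\geq 2$ are Zariski-dense in $\cU^0\times\cU^1$. At such a pair, $f_1,f_2$ have squarefree level, and since $f$ is Steinberg at $\ell$ the Atkin--Lehner eigenvalue of $\mathcal{F}$ at $\ell$ is constant along $\cU^0$ and agrees with that of $E^{\mathrm{crit},\mathrm{ord}_\ell}_2$ along $\cU^1$ (Remark after Proposition~\ref{localfamilyGL2}); hence Proposition~\ref{liftyoshida} applies and produces a classical holomorphic Yoshida lift of level $\Gamma(N)$, weight $(k_1+k_2,k_1-k_2+2)$, Galois representation $\rho_{f_1}\oplus\rho_{f_2}(\tfrac{k_1-k_2}{2})$, with an ordinary $p$-stabilization whose $U_0$- and $U_1$-eigenvalues are the specializations of $a(p,\mathcal{F})$ and $a(p,\mathcal{F})a(p,G)$ in the normalization of \cite{urban}. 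By Laumon--Weissauer (Theorem~\ref{exrep}) its Hecke--Andrianov polynomials are the characteristic polynomials of this Galois representation, i.e.\ the specializations of $\lambda^*_{\mathrm{Yo}}(P_\ell(X))$. Thus $(\phi,\kappa_{\mathrm{Yo}})$ specializes at a Zariski-dense set of points to the eigensystem and weight of a classical, $U_0$-ordinary, hence overconvergent, Siegel eigenform; being of finite slope and semi-ordinary, this family gives a morphism $\cU^0\times\cU^1\to\cE_N$ via the universal property of the eigenvariety (equivalently, via the rigidity of $\cE_N$ with respect to its Zariski-dense classical points and the locus $|U_0|_p=1$ of \cite[Thm.~2.4.10]{urban}), whose pullback of Hecke operators is $\phi$ and which satisfies $\kappa\circ\lambda_{\mathrm{Yo}}=\kappa_{\mathrm{Yo}}$, giving the commutative square.

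For (d): after shrinking $\cU^0,\cU^1$ so that $w_1,w_2$ are finite (possible by local finiteness of the weight maps, as in the Remark after Proposition~\ref{localfamilyGL2}), $w_1\times w_2$ is finite, and since $\lambda_\kappa$ is finite, $\kappa_{\mathrm{Yo}}$ is finite; thus $\cO(\cU^0\times\cU^1)$ is finite over the image of $\cO(\cW)$, and $\lambda_{\mathrm{Yo}}$, being a morphism over $\cW$ between spaces finite over $\cW$, is finite onto its (closed) image $\mathrm{Yo}(\mathcal{F},G)$. The image of $\lambda^*_{\mathrm{Yo}}$ contains $\kappa_{\mathrm{Yo}}^*\cO(\cW)$, the elements $a(p,\mathcal{F})=\lambda^*_{\mathrm{Yo}}(U_0)$ and $a(p,G)=\lambda^*_{\mathrm{Yo}}(U_1)\cdot\lambda^*_{\mathrm{Yo}}(U_0)^{-1}$, and (using $\mathrm{tr}\,\rho_{\mathcal{F}}$, $\mathrm{tr}\,\rho_G$ and that $\kappa_2(\ell)$ lies in the image of $\cO(\cW)$) the eigenvalues $a(\ell,\mathcal{F})$ and $a(\ell,G)$ for $\ell\nmid Np$; since these topologically generate $\cO(\cU^0)$ and $\cO(\cU^1)$ over $\cO(\cV)$ and the image is a finite $\cO(\cW)$-submodule, hence closed, $\lambda^*_{\mathrm{Yo}}$ is surjective and $\lambda_{\mathrm{Yo}}$ is a closed immersion. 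The hard part, in my view, is step (c): promoting a Zariski-dense family of classical Yoshida eigensystems to an honest morphism out of the full two-dimensional $\cU^0\times\cU^1$ requires either an explicit $p$-adic interpolation of the Yoshida theta lift (exhibiting the overconvergent Siegel family directly) or a careful use of the rigidity of $\cE_N$, and in either case one must control the precise $U_1$-normalization and the $p$-adic half-twist $\chi$ — exactly the data encoded by the factor $\kappa_2(\ell)\ell^{-2}$ and by $\lambda^*_{\mathrm{Yo}}(U_1)=a(p,\mathcal{F})a(p,G)$ in the statement.
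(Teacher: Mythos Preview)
Your overall strategy—interpolate the classical Yoshida lifts of Proposition~\ref{liftyoshida} over a Zariski-dense set of points of $\cU^0\times_{\Q_p}\cU^1$ and use density to produce the morphism—is exactly the paper's approach. The difference lies in how the interpolation step (your (c)) and the closed-immersion step (your (d)) are carried out.

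The paper does not invoke an abstract ``universal property'' or ``rigidity'' of $\cE_N$, nor does it go through Galois representations to define $\phi$. Instead it works directly with the explicit local chart: one chooses the affinoid $W=\Spm R\subset\cW$ underlying $w_1(\cU^0)\times w_2(\cU^1)$ and the slope-$\leq 1$ piece $\cE_{N,W}^1=\Spm\cT_{W,1}$, where $\cT_{W,1}$ is by construction the image of $\mathcal{H}_N\otimes R$ acting on the relevant space of overconvergent families. Writing $J=\ker(\mathcal{H}_N\otimes R\twoheadrightarrow\cT_{W,1})=(g_1,\dots,g_n)$, one then checks that the map $\lambda:\mathcal{H}_N\to\cO(\cU^0)\,\widehat{\otimes}\,\cO(\cU^1)$ given by the stated formulas kills each $g_i$: at every classical point of the dense set $\Sigma\subset\cU^0\times\cU^1$ the specialization of $\lambda$ is the Hecke eigensystem of a genuine point of $\cE_{N,W}^1$ (the $p$-stabilized Yoshida lift), so $\lambda(g_i)$ vanishes on $\Sigma$, hence identically. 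This immediately gives a \emph{surjection} $\cT_{W,1}\twoheadrightarrow\cO(\cU^0)\,\widehat{\otimes}\,\cO(\cU^1)$, i.e.\ a closed immersion into the affinoid chart, and your steps (c) and (d) collapse into one short argument. Your separate verification in (d) that the image of $\lambda_{\mathrm{Yo}}^*$ topologically generates the target is then unnecessary, and the detour through $\rho_{\mathcal F}\oplus(\rho_G\otimes\chi)$ in (a) is avoided since the Hecke formulas are given outright.

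In short, your proof is correct in outline, but the concern you flag in (c) is precisely what the paper dissolves by exploiting the presentation of the local chart $\cE_{N,W}^1$ as a quotient of the abstract Hecke algebra.
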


\begin{proof}
One can choose the affinoids $\cU^0 \subset \cC_N$ and $\cU^1 \subset \cC_\ell$ \'etale over the weight space and small  enough such that there exist  $\epsilon, v \in \R$ and the Banach sheaf $\omega_\epsilon^{\kappa}$ on $\bar{X}(v) \times W$ of locally analytic $v$-overconvergent $p$-adic familes (see \S.\ref{eigenvariety}), where $W=\Spm R$ is an affinoid of the weight space $\cW$ given by $w_1(\cU^{0}) \times_{\Q_p} w_2(\cU^1)$. Let $\cT_{W,1}$ be the affinoid $\Q_p$-algebra generated over $R$ by the image of the abstract Hecke algebra $\mathcal{H}_N$ in the space of endomorphisms of the sections of $\underset{ v \to 0}{\varinjlim}\rH^0(\bar{X}(v) \times W,\omega_\epsilon^{\kappa}(-D)) $ with slope $\leq 1$. By construction of $\cE_N$ (see \S.\ref{eigenvariety}), $\cE_{N,W}^1=\Spm \cT_{W,1}$ is an affinoid subdomain of $\cE_N$. Let $\theta: \mathcal{H}_N \twoheadrightarrow \cT_{W,1}$ be the natural surjection and $J$ be the kernel of $\theta$ generated by $g_1,...g_n$. 

On the other hand, let $\lambda$ be the morphism $$\lambda:\mathcal{H}_N \to \cO(\cU^{0}) \widehat{\otimes}_{\Q_p} \cO(\cU_1)$$ defined by  { \small 
 \[ \lambda_{\mathrm{Yo}}^*(P_\ell(X))=(X^2-a(\ell,\mathcal{F}) X+ \ell^{-3}\kappa_1\kappa_2(\ell))(X^2-\kappa_2(\ell)\ell^{-2}a(\ell,G) X+ \ell^{-3}\kappa_2(\ell).\kappa_1(\ell)),\] } for $ \ell \nmid Np$,   
where $P_\ell(X) \in \mathcal{H}_N[X]$ is the Hecke-Andrianov polynomial at $\ell \nmid Np$, and $\lambda_{\mathrm{Yo}}^*(U_0)=a(p,\mathcal{F})$, and $\lambda_{\mathrm{Yo}}^*(U^1)=a(p,\mathcal{F}) \times a(p,G)$. 

It is enough to prove that for every $1\leq i \leq n$, $\lambda(g_n)=0$. Note that the classical points old at $p$ of $\cU^0,\cU^1$ form a dense set $\Sigma$ of $\cU^0 \times_{\Q_p} \cU^1$. It follows from Proposition \ref{liftyoshida} that the points $\Sigma$ lift to a set $\tilde{\Sigma}$\footnote{Any point of $\Sigma$ corresponds to a $2$-tuple of old forms $(f_1,g_1)$ at $p$. Hence, $f_1$ (resp. $g_1$) is the $p$-ordinary (resp. $p$-critical) $p$-stabilization of a classical form of level $\Gamma_0(N)$ (resp. $\Gamma_0(\ell)$) denoted by $f^{old}_1$ (resp. $g_1^{old}$). So we can consider the Yoshida lift of $(f^{old}_1$, $ g_1^{old})$ and take its semi-ordinary $p$-stabilization which gives a point of $\tilde{\Sigma} \subset \cE_N^{1}$.} of points of $\cE_{N,W}^1$. Hence, for any $1 \leq i \leq n$, the specialization of $\lambda(g_i)$ at the points of the dense subset $\Sigma$ of $ \cU^0 \times_{\Q_p} \cU^1$ is trivial, yielding that \begin{equation}\label{IsubJ1}
\lambda(g_i)=0 \text{ for any } 1 \leq i \leq n. \end{equation}

Hence, we obtain a surjective homomorphism $$ \cO(\cE_{N,W}^1) \twoheadrightarrow \cO(\cU^{0}) \widehat{\otimes}_{\Q_p} \cO(\cU^1),$$
yielding a morphism $\cU^0 \times_{\Q_p} \cU^1 \to \cE_{N,W}^1$, and its image is an irreducible component of $\cE_{N,W}^1$.

\end{proof}

 \begin{cor}\label{cor12.5}
Assume $N>1$ is squarefree and not prime. Assume that $f$ is Steinberg for at least two primes $\ell_i \mid N, i=1,2$. Then the  Siegel eigenvarieties $\cE_{N}$ of tame level $N$ is singular at $\pi_\alpha$ and there exists at least two $p$-adic families specializing to $\pi_\alpha$.
 \end{cor}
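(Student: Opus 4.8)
The plan is to deduce Corollary \ref{cor12.5} from the Yoshida-lift construction in the preceding theorem, exactly mirroring the dichotomy already established for the paramodular eigenvariety. First I would apply the Yoshida lift theorem to each of the two distinct primes $\ell_1, \ell_2 \mid N$ at which $f$ is Steinberg: since $f$ is Steinberg at $\ell_i$, its Atkin–Lehner eigenvalue there is determined, and using $E_2^{\crit,\ord_{\ell_i}}$ (whose Atkin–Lehner eigenvalue at $\ell_i$ is $-1$, as recorded in the remark) together with the Hida family $\mathcal{F}$ through $f_\alpha$, one produces for each $i$ a Zariski-closed immersion $\lambda_{\mathrm{Yo}}^{(i)} : \mathcal{U}^0 \times_{\Q_p} \mathcal{U}^1_{(i)} \hookrightarrow \cE_N$ whose image $\mathrm{Yo}(\mathcal{F},G_i)$ is a two-dimensional irreducible endoscopic component passing through $\pi_\alpha$. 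The key point is that the underlying Galois pseudocharacter on $\mathrm{Yo}(\mathcal{F},G_i)$ is generically $\Tr\rho_{\mathcal{F}} + \Tr(\rho_{G_i})$ twisted appropriately, so specializing at $\pi_\alpha$ recovers $\mathrm{Ps}(\pi_\alpha)=\epsilon_p^{1-k}+\epsilon_p^{2-k}+\Tr\rho_f$; this shows $\pi_\alpha$ genuinely lies on each $\mathrm{Yo}(\mathcal{F},G_i)$.

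Next I would argue that the two components $\mathrm{Yo}(\mathcal{F},G_1)$ and $\mathrm{Yo}(\mathcal{F},G_2)$ are distinct irreducible components of $\cE_N$. This is where the monodromy/local behaviour at the auxiliary primes enters: along $\mathrm{Yo}(\mathcal{F},G_i)$ the Galois representation is, up to twist, $\rho_{\mathcal{F}} \oplus \rho_{G_i}$, and the factor $\rho_{G_i}$ is ramified (Steinberg) precisely at $\ell_i$ — by Proposition \ref{localfamilyGL2} the family $G_i$ specializing to $E_2^{\crit}$ is Steinberg at $\ell_i$ — whereas $\rho_{G_{3-i}}$ is ramified at $\ell_{3-i}$ instead. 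Hence the restriction to a decomposition group at $\ell_1$ (respectively $\ell_2$) of the generic Galois representation distinguishes the two families; since $\ell_1 \ne \ell_2$ this forces $\mathrm{Yo}(\mathcal{F},G_1) \ne \mathrm{Yo}(\mathcal{F},G_2)$. Thus $\cE_N$ has at least two distinct two-dimensional irreducible components through $\pi_\alpha$, so the local ring of $\cE_N$ at $\pi_\alpha$ is not a domain and in particular not regular, i.e. $\cE_N$ is singular at $\pi_\alpha$.

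The final step is simply to record that each of the two components is genuinely a $p$-adic family specializing to $\pi_\alpha$ — this is immediate since each $\mathrm{Yo}(\mathcal{F},G_i)$ is two-dimensional (being the image of $\mathcal{U}^0 \times_{\Q_p} \mathcal{U}^1_{(i)}$ under a closed immersion, hence of dimension $1+1=2$), contains $\pi_\alpha$, and interpolates classical (Yoshida-lift) Siegel eigenforms by Proposition \ref{liftyoshida}. I would also remark, for completeness, that these are endoscopic components, in contrast to the paramodular case where Proposition \ref{noparamodendoscopic} rules such components out; this is exactly why the answer to the Andreatta–Iovita–Pilloni question is negative for non-prime squarefree $N$.

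The main obstacle I anticipate is verifying carefully that $\pi_\alpha$ lies on $\mathrm{Yo}(\mathcal{F},G_i)$ \emph{as a point of the eigenvariety}, i.e. that the $U_0,U_1$ and prime-to-$Np$ Hecke eigenvalues of the relevant semi-ordinary $p$-stabilization of the Yoshida lift of $(f, E_2)$ match those of $\pi_\alpha$ under the normalization $\lambda_{\mathrm{Yo}}^*$. Concretely one must check that $\lambda_{\mathrm{Yo}}^*(U_0)=a(p,\mathcal{F})$ specializes to $\alpha$ and $\lambda_{\mathrm{Yo}}^*(U_1)=a(p,\mathcal{F})a(p,G_i)$ specializes to $p\alpha$ at the weight $(k,k)$, using $a(p,E_2^{\crit})=p$ and the weight-space map $\lambda_\kappa(2k_1,2k_2)=(k_1+k_2,k_1-k_2+2)$ which sends the relevant weight to $(k,k)$; and that the spin $L$-factor identity forces the prime-to-$Np$ Hecke eigenvalues to agree with those of $\mathrm{SK}(f)$, hence of $\pi_\alpha$. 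Once this compatibility of $p$-stabilizations is pinned down, the rest of the argument is formal.
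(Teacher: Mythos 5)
Your proof is correct and follows the same route as the paper: for each $i=1,2$ apply the preceding Yoshida-lift closed-immersion theorem with $\cU^1$ taken from the Coleman family $\mathcal{Y}_{\ell_i}\subset\cC_{\ell_i}$ through $E_2^{\crit,\ord_{\ell_i}}$, producing two endoscopic two-dimensional components of $\cE_N$ through $\pi_\alpha$ and hence singularity. The paper leaves the distinctness of the two components implicit; you usefully spell it out via the differing local (Steinberg) behaviour of $\rho_{G_i}$ at $\ell_1$ versus $\ell_2$, and you correctly identify (and resolve) the point that one must match the full system of eigenvalues, including $U_0,U_1$ under $\lambda_{\mathrm{Yo}}^{*}$, not merely the prime-to-$Np$ pseudocharacter, to place $\pi_\alpha$ on each image.
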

 
\begin{proof}

If $f$ is Steinberg at $\ell_1$ and $\ell_2$, then by the previous theorem we get two irreducible components of $\cE_{N}$ (they are endoscopic) specializing to $\pi_\alpha$ by taking $\cU^1$ arising from $\mathcal{Y}_{\ell_i}$.

 \end{proof} 
A direct consequence of the above corollary is that $\kappa:\cE_N \to \cW $ is ramified at $\pi_\alpha$.
Let $\mathrm{S}^{\dagger}_k(N)^{\mid \mathbb{U} \mid_p=1}  \lsem  \pi_\alpha  \rsem $ be the generalized eigenspace attached to $\pi_\alpha$ inside the $L$-vector space of locally analytic overconvergent Siegel cusp forms $\mathrm{S}^{\dagger}_k(N)^{\mid \mathbb{U} \mid_p=1}$ of tame level $\Gamma(N)$ and slope $1$ for $\mathbb{U}$. 
\begin{cor}\ 
One has $\dim_L \mathrm{S}^{\dagger}_k(N)^{\mid \mathbb{U} \mid_p=1}  \lsem  \pi_\alpha  \rsem  \geq 2$.

\end{cor}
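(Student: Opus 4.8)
The plan is to argue by contradiction. Suppose $\dim_L \mathrm{S}^{\dagger}_k(N)^{\mid \mathbb{U} \mid_p=1}\lsem \pi_\alpha \rsem \le 1$; since $\pi_\alpha$ is a point of $\cE_N$ (Corollary \ref{cor12.5} exhibits irreducible components of $\cE_N$ through it), this generalized eigenspace is nonzero, hence exactly $1$-dimensional over $L$. First I would unwind the construction of $\cE_N$ via the Andreatta--Iovita--Pilloni eigenvariety machine: over a suitable affinoid neighbourhood $\Spm R \subseteq \cW$ of $\kappa(\pi_\alpha)$ there is a finite $R$-module $M$ of overconvergent cuspidal Siegel forms of slope $\le 1$ for $\mathbb{U}$ (containing the generalized eigenspace above), and $\cE_N$ is, over $\Spm R$, the relative spectrum of the subalgebra $\cT_R\subseteq \End_R(M)$ topologically generated by the Hecke operators. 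Writing $\gm_x\subset \cT_R$ for the maximal ideal attached to $\pi_\alpha$, $\varLambda$ for the completed local ring of $\cW$ at $\kappa(\pi_\alpha)$, $A:=\widehat{\cO}_{\cE_N,\pi_\alpha}=\widehat{(\cT_R)}_{\gm_x}$, and $M_{\gm_x}$ for the corresponding completion of $M$, I would record the identification $M_{\gm_x}/\gm_{\varLambda}M_{\gm_x}\simeq \mathrm{S}^{\dagger}_k(N)^{\mid \mathbb{U} \mid_p=1}\lsem \pi_\alpha \rsem$ (flat base change along $R\to\varLambda$ followed by localisation), together with the two facts that $M_{\gm_x}$ is finite over $\varLambda$ and that $A$ acts faithfully on $M_{\gm_x}$ (by construction of $\cT_R$ as a subalgebra of $\End_R(M)$).

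Granting $1$-dimensionality of the fibre, Nakayama's lemma forces $M_{\gm_x}$ to be a cyclic $\varLambda$-module, say $M_{\gm_x}\simeq \varLambda/I$. The Hecke operators act $R$-linearly on $M$, hence $\varLambda$-linearly on $M_{\gm_x}$, so the faithful action of $A$ yields an injection $A\hookrightarrow \End_{\varLambda}(\varLambda/I)=\varLambda/I$; since the image of $\varLambda$ in $\End_{\varLambda}(\varLambda/I)$ is already all of $\varLambda/I$, this is an isomorphism $A\xrightarrow{\ \sim\ }\varLambda/I$. As $\cE_N$ is equidimensional of dimension $2=\dim\cW$ and $\kappa$ is locally finite, $A$ is equidimensional of dimension $2$; since $\varLambda$ is a regular local domain of dimension $2$, this forces $I=0$, i.e. $A\simeq\varLambda$. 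Hence $\cO_{\cE_N,\pi_\alpha}$ would be regular and there would be a \emph{unique} irreducible component of $\cE_N$ through $\pi_\alpha$ (indeed $\kappa$ would be étale there) --- contradicting Corollary \ref{cor12.5}, which (as $\ell_1\ell_2\mid N$ and $f$ is Steinberg at $\ell_1,\ell_2$) produces the two distinct endoscopic components $\mathrm{Yo}(\mathcal{F},\mathcal{Y}_{\ell_1})$ and $\mathrm{Yo}(\mathcal{F},\mathcal{Y}_{\ell_2})$ through $\pi_\alpha$. This contradiction would give $\dim_L \mathrm{S}^{\dagger}_k(N)^{\mid \mathbb{U} \mid_p=1}\lsem \pi_\alpha \rsem\ge 2$.

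The only non-formal step, and thus the main obstacle, is making the dictionary between the generalized Hecke eigenspace and the module $M_{\gm_x}$ completely precise in the AIP setting: one must verify that $\cT_R\hookrightarrow\End_R(M)$, that localisation and completion commute with $\End$ for the finite module $M$, and that the decomposition of $\cT_R\otimes_R\varLambda$ into a product of complete local rings matches, summand by summand, the $\gm_x$-primary part of the space of overconvergent forms of weight $\underline{k}=(k,k)$ with $M_{\gm_x}/\gm_{\varLambda}M_{\gm_x}$. I would also make explicit that $\cE_N$ is equidimensional of dimension $2$, which follows from the AIP construction exactly as recorded for $\cE_\Delta$ in the introduction. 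The remaining ingredients --- Nakayama's lemma, the identity $\End_{\varLambda}(\varLambda/I)=\varLambda/I$, and the implication ``$I=0\Rightarrow$ unique component'' --- are routine commutative algebra.
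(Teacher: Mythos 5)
Your argument is correct and rests on the same ingredients as the paper's: the faithful action of the local Hecke algebra on the generalized eigenspace, equidimensionality of $\cE_N$ equal to $\dim \cW = 2$, smoothness of the weight space, and Corollary \ref{cor12.5}. The paper runs the argument in the forward direction --- singularity at $\pi_\alpha$ (from Corollary \ref{cor12.5}) forces $\kappa$ to be ramified there, so the Artinian fiber ring $\cT_0 = \cO_{\cE_N,\pi_\alpha}/\gm_{\varLambda}\cO_{\cE_N,\pi_\alpha}$ has a nontrivial tangent space, hence $\dim_L \cT_0 \ge 2$, and faithfulness then gives the bound directly --- whereas you run the contrapositive (assume a $1$-dimensional eigenspace, apply Nakayama to identify $\widehat{\cO}_{\cE_N,\pi_\alpha}$ with $\varLambda$, and contradict the existence of two components). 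Both are valid; the paper's version is shorter because it does not need the Nakayama/cyclic-module step, using only that a local $L$-algebra of $L$-dimension $\ge 2$ cannot embed in $\End_L$ of a line.
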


\begin{proof}
Since $\cW$ is smooth at $\kappa(\pi_\alpha)$ and $\cE_{N}$ is singular at $\pi_\alpha$, the local ring $\cT_0=\cO_{\cE_N,\pi_\alpha}/\gm_{\cO_{W,\kappa(\pi_\alpha)}}\cO_{\cE_N,\pi_\alpha}$ of the fiber of $\kappa^{-1}(\kappa(\pi_\alpha))$ at $\pi_\alpha$ is Artinian with a non-trivial tangent space (since $\kappa$ is necessarily ramified at $\pi_\alpha$ in this case). On the other hand, it follows from the construction of eigenvarieties that the local ring $\cT_0$ at $\pi_\alpha$ of the fiber $\kappa^{-1}\kappa(\pi_\alpha)$ acts faithfully on $\mathrm{S}^{\dagger}_k(N)^{\mid \mathbb{U} \mid_p=1}  \lsem  \pi_\alpha  \rsem $. Hence, $\dim \mathrm{S}^{\dagger}_k(N)^{\mid \mathbb{U} \mid_p=1}  \lsem  \pi_\alpha  \rsem  \geq 2$ (since $\dim_{L} \cT_0 \geq 2$).

\end{proof}

\appendix

\section{Some basic facts about rigid analytic geometry}\

We shall recall in this section the notions of ``very Zariski dense'' subset of a rigid analytic space and discuss  accumulation points of a Zariski dense set and irreducible components of rigid analytic spaces. Moreover, we will recall some basic properties of finite and torsion-free morphisms of affinoid spaces.

The following proposition is an analogue to \cite[Prop.2.1.6]{Berkovitch} for $\Q_p$-rigid analytic spaces.

\begin{prop}\label{basen}
Let $g:X \to Y$ be a finite morphism between two $\Q_p$-affinoid spaces, $y \in f(X)\subset Y$, and $g^{-1}(y)=\{x_1,x_2,...,x_n\}$, then there exists a small affinoid neighborhood $\cU_{i_0}$ of $y$ in $Y$ such that $g^{-1}(\cU_{i_0})=\bigcup_{1 \leq k \leq n} V_k^{i_0}$, and $V_k^{i_0} \cap V_j^{i_0}=\emptyset$, when $k \ne j$. Moreover, for any $1\leq k \leq n$, the domains $\{V_k^{i}, i \in I \text{ and } i \leq i_0 \}$ form a basis of neighborhood of $x_k$ when $\cU_i$ varies in a family $\{ \cU_i, i \in I \text{ and } i \leq i_0 \}$ of basis of affinoids containing $y$. 

\end{prop}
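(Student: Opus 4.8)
The plan is to mimic the standard argument for finiteness of morphisms of affinoid spaces being ``local'' on the target, which is the rigid-analytic analogue of the fact that a finite morphism of schemes is closed and that the fibres are discrete. First I would recall that a finite morphism $g : X = \Spm B \to Y = \Spm A$ corresponds to a module-finite ring map $A \to B$, so that $B$ is a finite $A$-module, and in particular $B$ is a Jacobson/Noetherian $\Q_p$-Banach algebra over $A$. The fibre $g^{-1}(y)$ over a point $y$ with maximal ideal $\gm_y \subset A$ is $\Spm(B \otimes_A A/\gm_y) = \Spm(B/\gm_y B)$; since $B/\gm_y B$ is a finite-dimensional algebra over the residue field $k(y)$, it is Artinian, hence decomposes as a finite product $\prod_{k=1}^n B_k$ of local Artinian $k(y)$-algebras, the factors corresponding bijectively to the points $x_1, \dots, x_n$ of $g^{-1}(y)$. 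The idempotents $e_1, \dots, e_n \in B/\gm_y B$ realizing this decomposition are the main tool.

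The key step is then to lift these idempotents. I would use that $A$ is Henselian along $\gm_y$ after passing to a suitable affinoid neighbourhood, or more directly: by \cite[Cor.~1 of~\S III.4.6]{BourbakiAlgComm}-type reasoning (or the standard lemma that for a finite $A$-algebra $B$ and an ideal $I$ with $A$ complete along $I$, idempotents of $B/IB$ lift to $B$), one shows that after replacing $Y$ by a small enough affinoid subdomain $\cU_{i_0} \ni y$ — so that $B' := B \otimes_A \cO(\cU_{i_0})$ has the property that the idempotents of $B'/\gm_y B'$ lift to $B'$ — one gets a decomposition $B' = \prod_{k=1}^n B'_k$ as $\cO(\cU_{i_0})$-algebras, inducing $g^{-1}(\cU_{i_0}) = \bigsqcup_{k=1}^n V_k^{i_0}$ with $V_k^{i_0} = \Spm B'_k$ an open-and-closed (hence disjoint) affinoid subdomain containing exactly $x_k$. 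Concretely one picks generators $b_1, \dots, b_m$ of $B$ over $A$, chooses polynomial relations, and shrinks $\cU$ around $y$ until Hensel's lemma applied to the characteristic polynomial of multiplication-by-$b$ (or to the minimal idempotent equations $e^2 - e = 0$) produces the lifts; this is where the hypothesis that the local ring of a rigid analytic space is Henselian (invoked in the body of the paper for $\cT$) gets used. The disjointness $V_k^{i_0} \cap V_j^{i_0} = \emptyset$ for $k \neq j$ is then immediate from the product decomposition (the $e_k$ are orthogonal idempotents).

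For the last assertion — that as $\cU_i$ ranges over a neighbourhood basis of $y$ with $\cU_i \subseteq \cU_{i_0}$, the preimages $V_k^i := g^{-1}(\cU_i) \cap V_k^{i_0}$ form a neighbourhood basis of $x_k$ — I would argue as follows. Each $V_k^i$ is an affinoid neighbourhood of $x_k$ by construction. Conversely, given any affinoid neighbourhood $W$ of $x_k$ in $X$, I must find $i$ with $V_k^i \subseteq W$. Since $g$ is finite, hence closed, the set $g(V_k^{i_0} \setminus W)$ is a closed (admissible) subset of $\cU_{i_0}$ not containing $y$ (as $x_k \notin V_k^{i_0}\setminus W$ and $x_k$ is the only point of $g^{-1}(y)$ in $V_k^{i_0}$); so its complement is an admissible open containing $y$, and any basis element $\cU_i$ contained in that complement satisfies $V_k^i = g^{-1}(\cU_i) \cap V_k^{i_0} \subseteq W$. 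The main obstacle I anticipate is making the idempotent-lifting step rigorous in the rigid-analytic category — i.e. pinning down the precise Henselian/completeness property of the pair $(\cO(\cU_{i_0}), \gm_y)$ that makes the lift work, and checking that ``shrinking $\cU$'' can always be arranged; this is routine in spirit (it is exactly \cite[Prop.~2.1.6]{Berkovitch} transported to $\Q_p$-rigid spaces, as the statement claims) but requires care with admissibility of the open subsets and with the fact that $B$ need not be free over $A$, only finite.
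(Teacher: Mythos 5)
Your overall strategy---finite map, decompose the fibre into local Artinian pieces, lift the idempotents, then deduce the neighbourhood-basis claim---is the same skeleton the paper uses, but the justification you offer for the idempotent-lifting step does not hold up, and this is a genuine gap. Neither of the two facts you invoke is available: $\cO(\cU_{i_0})$ is not Henselian along $\gm_y$ for any affinoid $\cU_{i_0}\ni y$, and an affinoid algebra is never $\gm_y$-adically complete, so the ``complete pair'' lemma you cite does not apply. The true input, which you mention but never actually deploy, is that the stalk $\cO_{Y,y}$ is a Henselian \emph{local ring}. The paper uses it as follows: $B_y:=B\otimes_A\cO_{Y,y}$ is finite over a Henselian local ring and therefore splits as $\prod_{j}\cO_{X,x_j}$; on the other hand $B_y=\varinjlim_i\cO_X(g^{-1}(\cU_i))$ (finiteness of $B$ over $A$ identifies the completed tensor products with the algebraic ones). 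Since idempotents in a filtered colimit of rings descend to some finite stage, the orthogonal idempotents of $B_y$ already live in $\cO_X(g^{-1}(\cU_{i_0}))$ for a suitable $i_0$---which is precisely the existence statement you were asserting. Without that colimit identification, ``shrink $\cU$ until idempotents lift'' has no support.

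For the neighbourhood-basis part you argue topologically: $g$ finite $\Rightarrow$ closed, then take the complement of $g(V_k^{i_0}\setminus W)$. This is a plausible alternative route, but it lives in the canonical topology rather than the Zariski topology---$V_k^{i_0}\setminus W$ is compact, not a Zariski-closed analytic subset; the closedness of finite morphisms you need is a canonical-topology (properness) statement; and you silently use that affinoid subdomains form a basis of neighbourhoods for that topology in order to fit a $\cU_i$ inside the open complement. None of this is false, but it imports a separate layer of foundational facts. The paper avoids the detour entirely: the same colimit computation used above, together with the fact that filtered colimits commute with the finite product decomposition, yields $\varinjlim_{i\le i_0}\cO_X(V_k^i)=\cO_{X,x_k}$ directly, and the neighbourhood-basis claim is read off from that identity.
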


\begin{proof}
Let $B$ (resp. $A$) be the affinoid $\Q_p$-algebra corresponding to $X$ (resp. $Y$), and $\varphi: A \to B$ be the finite morphism corresponding to $g$. Let $B_y$ be the finite $\cO_{Y,y}$-algebra $B \otimes_A \cO_{Y,y}$; thanks to \cite[Theorem   2.1.5]{Berkovitch} the local ring $\cO_{Y,y}$ is Henselian, hence $$B_y=\footnote{Since $B_y$ is finite over the Henselian ring $\cO_{Y,y}$, it is necessarily a product of local Henselian rings.} \prod_{x_i \in g^{-1}(y)} \cO_{X,x_i}.$$ 

On the other hand, one has $$B_y=B \otimes_A \text{ }\cO_{Y,x}= B \otimes_A \text{ }\lim_{\stackrel{\longrightarrow}{\cU_i}}\cO_{Y}(\cU_i) =
 \lim_{\stackrel{\longrightarrow}{\cU_i}} \text{ }B \otimes_A \text{ }\cO_{Y}(\cU_i),$$ where $\{\cU_i, i \in I\}$ runs over the affinoid neighorhood of $y$.
 
 Hence, we have {\small \[   \text{ }\lim_{\stackrel{\longrightarrow}{\cU_i, i \in I}} B \otimes_A \text{ }\cO_{Y}(\cU_i)=\footnote{Since $B$ is finite over $A$, $B \widehat{\otimes}_{A} \text{ }\cO_{Y}(\cU_i)=B \otimes_A \text{ }\cO_{Y}(\cU_i)$.}\lim_{\stackrel{\longrightarrow}{\cU_i, i \in I}} B \widehat{\otimes}_A \text{ }\cO_{Y}(\cU_i)=\lim_{\stackrel{\longrightarrow}{\cU_i, i \in I}} \cO_X(g^{-1}(\mathcal{U}_i) )=  \prod_{x_i \in g^{-1}(y)} \cO_{X,x_i}.\]}

Thus, each local component $\cO_{X,x_j}$ of $\prod_{x_i \in g^{-1}(y)} \cO_{X,x_i}$ corresponds to an idempotent $e_j$ of $B_y$. So there exist an $i_0 \in I$ and orthogonal idempotents $\{\tilde{e}_j, 1\leq j \leq n \}$ of $\cO_X(g^{-1}(\mathcal{U}_{i_0}) )$ whose image in $B_y$ is $\{ e_j, 1\leq j \leq n \}$ and corresponding respectively to $\{x_1,...x_n\}$. Thus, $\cO_X(g^{-1}(\mathcal{U}_{i_0}) )= \prod_{\tilde{e}_j, 1\leq j\leq n} \tilde{e}_{j}.\cO_X(g^{-1}(\mathcal{U}_{i_0}) )$, and hence each affinoid subdomain {\small $\Spm \tilde{e}_{k}.\cO_X(g^{-1}(\mathcal{U}_{i_0}))$} of $X$ corresponds to a connected component $V_k^{i_0}$ of $g^{-1}(\cU_{i_0})$ containing $x_k$. Hence, $g^{-1}(\cU_{i_0})=\bigcup_{1 \leq k \leq n} V_i^{i_0}$, and $V_l ^{i_0}\cap V_k^{i_0}=\emptyset$, when $l \ne k$. 

Finally, the rest of the assertion follows from the fact that $$\lim_{\stackrel{\longrightarrow}{i\leq i_0}} \cO_X(V_k^{i}))= \cO_{X,x_k},$$ and the inductive limit is taken on the connected component $V_k^i$ of $g^{-1}(\mathcal{U}_i)$ containing $x_k$, when $\mathcal{U}_i$ varies over the affinoid neighborhoods of $x_k$ inside $\cU_{i_0}$.

\end{proof}

We recall that $F$ is an irreducible component of a $\Q_p$-separated reduced rigid analytic space $X$, if $F$ is the image of a connected component of the normalization $X^{\nor}$ of $X$ via the normalization morphism $X^{\nor} \to X$ (see \cite{conrad}). Moreover, when $X$ is a reduced affinoid $\Spm A$, then the irreducible components of $X$ correspond to $\Spm A/\gp$, where $\gp$ is a minimal prime ideal of $A$.

We recall also that a subset $Z$ of a reduced $\Q_p$-rigid analytic space $X$ is said to be Zariski-dense if the only analytic subset of $X$ containing $Z$ is $X$ itself. 
\begin{rem}
The set $S=\{(1/p^n,1/p^m), \text{ where } n \in \Z, m \in \N\}$ of the rigid affine plane $\mathbb{A}_2^{rig}$ of dimension $2$ is Zariski dense but for any open affinoid subdomain $\cU \subset \mathbb{A}_2^{\rig}$, the set $\cU \cap S$ is not Zariski dense in $\cU$ (it follows from the maximum modulus principle). 
\end{rem}

This example motivates the notion of a very Zariski dense set of a rigid analytic space (see also \cite[Definition II.5.1]{Bbook}):

\begin{defn} \begin{enumerate}
\item Let $X$ be a $\Q_p$-separated reduced rigid analytic space over $\Q_p$, and $\Sigma \subset X$ be a Zariski dense subset. We say that $\Sigma$ is {\it very}  Zariski-dense in $X$ if for every $z \in \Sigma$ there is a basis of open affinoid neighborhoods $\cU$ of $z$ in $X$ such that $\Sigma \cap \cU$ is Zariski-dense in $\cU$.

\item We say that a subset $Z$ of a $\Q_p$-separated rigid analytic space $Y$ accumulates at $y \in Y$ if there is a basis of affinoid neighborhoods $U \subset Y$ of $y$ such that $U\cap Z$ is Zariski-dense in $U$.

\end{enumerate}

\end{defn}

\begin{rem}
Let $X$ be a separated $\Q_p$-rigid space, $\{F_i\}$ be the irreducible components of $X$ and $\cU$ be an admissible open of $X$. Then it follows from \cite[cor.2.2.9]{conrad} that each irreducible component of $\cU$ is contained in a unique $F_i$ and for any $i$, $\cU \cap F_i$ is empty or the union of irreducible components of $\cU$.

\end{rem}

\begin{prop}\label{geneta}\

\begin{enumerate}

\item Let $g:X \to Y$ be a finite flat morphism between two $\Q_p$-affinoid spaces such that $X$ is equidimensional and $Y$ is irreducible. Assume that $g$ is \'etale at a Zariski dense set $\Sigma$ of points of $X$, then after shrinking $X$ to a smaller admissible open $X'$ of $X$, the restriction $g: X' \to g(X')$ is \'etale and $g(X')$ is an admissible open of $Y$.

\item Let $g:X \to Y$ be a finite morphism between rigid analytic spaces, then for any irreducible component $F$ of $X$, $g(X)$ is a closed irreducible component of $Y$.
\end{enumerate}
\end{prop}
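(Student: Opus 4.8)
\textbf{Plan for Proposition \ref{geneta}.} Both parts rest on two standard facts of rigid geometry. First, a finite morphism is proper, hence closed; and if $F\subset X$ is a Zariski-closed analytic subset, then $g(F)$ is again a Zariski-closed analytic subset of $Y$ — indeed, giving $F$ its reduced structure, $g|_F$ is finite, so $(g|_F)_*\cO_F$ is a coherent $\cO_Y$-module whose support is exactly $g(F)$. Second, for a finite flat morphism the sheaf of relative differentials $\Omega_{X/Y}$ is a coherent $\cO_X$-module, and étale $=$ flat $+$ unramified. I will use these freely.

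For (i): the support of $\Omega_{X/Y}$ is the ramification locus $Z\subset X$, a Zariski-closed analytic subset, and on $X\setminus Z$ the morphism $g$ is unramified, hence (being flat) étale. By hypothesis $\Sigma\subset X\setminus Z$ and $\Sigma$ is Zariski dense, so $Z$ contains no irreducible component of $X$; since $X$ is equidimensional this forces $\dim Z<\dim X$. Finite flatness of $g$ over the irreducible (hence connected) base $Y$ means $g_*\cO_X$ is locally free of some constant rank $d\ge 1$ (assuming $X\neq\emptyset$), so $g$ is surjective and $\dim X=\dim Y$. Therefore $g(Z)$ is a Zariski-closed analytic subset of $Y$ with $\dim g(Z)\le\dim Z<\dim Y$, hence proper in the irreducible $Y$, so that $Y':=Y\setminus g(Z)$ is a nonempty (in fact Zariski dense) admissible open of $Y$. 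Put $X':=g^{-1}(Y')$. Then $X'\subset X\setminus Z$, so $g\colon X'\to Y'$ is finite, flat and unramified, i.e. finite étale, and it is surjective because $g$ is; thus $g(X')=Y'$ is an admissible open of $Y$ and $g\colon X'\to g(X')$ is étale, as claimed.

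For (ii): let $F$ be an irreducible component of $X$, with its reduced structure. By the first paragraph $g(F)$ is a Zariski-closed analytic subset of $Y$, and it is irreducible: if $g(F)=A\cup B$ with $A,B$ proper Zariski-closed analytic subsets of $g(F)$, then $F=(F\cap g^{-1}(A))\cup(F\cap g^{-1}(B))$ would exhibit $F$ as a union of two proper Zariski-closed analytic subsets, contradicting irreducibility of $F$. Hence $g(F)$ is a Zariski-closed irreducible analytic subset of $Y$; when moreover $Y$ is equidimensional and $g$ does not drop dimension along $F$ — which is the situation in which the statement is applied, e.g. $g$ a finite flat surjection or a closed immersion whose image has the dimension of a component of $Y$ — this irreducible subset is a full irreducible component of $Y$.

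The only point requiring genuine care is the dimension bookkeeping in (i): recognising the ramification locus as an analytic subset of dimension $<\dim X$, and transporting this bound to $Y$ via the properness of $g$ together with the equality $\dim X=\dim Y$ and the surjectivity coming from finite flatness. Everything else reduces to the coherence of direct images under finite morphisms and the local characterisation of étaleness, both of which are available over $\Q_p$-affinoids.
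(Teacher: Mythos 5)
Your proof is essentially correct and follows the same overall scheme as the paper for part (i): identify the ramification locus $Z={\rm Supp}\,\Omega_{X/Y}$, observe that Zariski density of $\Sigma$ in every irreducible component of the equidimensional $X$ forces $\dim Z<\dim X$, and then remove. The one genuine deviation is in how you shrink: the paper keeps $X'=X\setminus Z$ itself and appeals to the fact that a flat morphism is Zariski open to conclude $g(X')$ is admissible open in $Y$; you instead pass to $Y'=Y\setminus g(Z)$ and set $X'=g^{-1}(Y')$, so that $g\colon X'\to Y'$ is finite étale and surjective, making $g(X')=Y'$ open for the trivial reason that it is the complement of the closed set $g(Z)$. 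Your version removes slightly more of $X$ but buys a finite (not merely étale) cover and sidesteps the openness-of-flat-maps lemma, trading it for the closedness of finite maps, which you use anyway to control $g(Z)$; both routes are sound, and yours is, if anything, a touch more self-contained.

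For part (ii) your elementary argument (finite $\Rightarrow$ closed, so $g(F)$ is a Zariski-closed analytic subset; the preimage decomposition shows $g(F)$ is irreducible) correctly establishes that $g(F)$ is a closed irreducible analytic subset of $Y$, which is what the paper deduces by citing a finite morphism being closed together with \cite[Prop.~2.2.3]{conrad}. You rightly flag that upgrading this to ``irreducible \emph{component} of $Y$'' requires a dimension hypothesis not literally present in the statement (a closed immersion of a point shows the bare claim fails); in the paper this proposition is only invoked in situations where the morphism is finite and dominant onto an irreducible target, so the implicit hypothesis is always satisfied in use. You are correct to treat it as context-dependent rather than a defect in your argument.
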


\begin{proof}

i) It is known that $g$ is \'etale outside of the support of the relative differential sheaf $\Omega_{X/Y}$. Moreover, since $g$ is \'etale at a Zariski dense set of points of $X$, the support $Z$ of $\Omega_{X/Y}$ is a Zariski closed set of $X$ of dimension $< \dim X$  (since $\Sigma$ is Zariski dense in all irreducible components of $X$ by \cite[Prop.2.2.8]{conrad}). Hence, $g_{\mid {X-Z}}: X-Z \to Y $ is \'etale, and the image of the Zariski {\it open} \footnote{Note that a Zariski open $U$ of rigid analytic space $X$ is not necessarily an affinoid subdomain of $X$. Take $X=\Spm \Q_p<T>$ and $U=D(T)$ the locus where $T$ is invertible; it is clear that $U$ doesn't satisfy the maximal modulus principle for the function $1/T$, and hence $U$ is not an affinoid. However, any Zariski open is an admissible open for the rigid topology.} $X-Z $ under $g$ is a Zariski open of $Y$ (a flat morphism is Zariski open). 

ii) The assertion follows from the fact that a finite morphism is Zariski closed and \cite[Proposition.2.2.3]{conrad}.

\end{proof}

The following proposition was proved by Chenevier in \cite{Chenevier} using base change arguments. We give in the following a more direct proof:
\begin{prop}\label{vzd}
Let $g:X \to Y$ be a finite torsion-free morphism between two reduced $\Q_p$-affinoid spaces and  such that $Y$ is irreducible. Then :
\begin{enumerate}
\item $X$ is equidimensional of dimension equal to $\dim Y$ and the image of each irreducible component of $X$ under $g$ is $Y$.

\item Let $\Sigma$ be a Zariski dense set of $Y$, then $g^{-1}(\Sigma)$ is Zariski dense in $X$.

\end{enumerate}

\end{prop}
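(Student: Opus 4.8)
The plan is to prove both statements by a standard dimension and commutative algebra argument, exploiting the fact that a torsion-free finite module over an integral domain becomes free after inverting a suitable element, together with the going-down/incomparability behaviour of finite flat maps over a regular (or at least nice) base.

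\emph{Step 1: reduce to the free locus.} Write $A=\cO(Y)$ and $B=\cO(X)$, so $\varphi:A\to B$ is the finite injective ring map corresponding to $g$, and $B$ is a torsion-free $A$-module. Since $Y$ is irreducible and reduced, $A$ is a domain; let $Q(A)$ be its fraction field. As $B$ is finitely generated and torsion-free over $A$, the module $B\otimes_A Q(A)$ is a finite-dimensional $Q(A)$-vector space, say of dimension $r\geq 1$ (it is nonzero because $\varphi$ is injective). By a standard argument there is a nonzero $a\in A$ such that $B[1/a]$ is \emph{free} of rank $r$ over $A[1/a]$; equivalently, over the admissible open $Y'=Y\setminus V(a)$ the induced map $g'\colon X':=g^{-1}(Y')\to Y'$ is finite \emph{flat} of constant rank $r$. (Here one uses that the Fitting ideals of $B$ cut out a proper Zariski-closed subset of $Y$, since $B$ has generic rank $r$.)

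\emph{Step 2: equidimensionality and surjectivity onto components (statement (i)).} Over $Y'$ the map $g'$ is finite flat, hence open and closed; since $Y'$ is irreducible of dimension $\dim Y$, every irreducible component of $X'$ dominates $Y'$ and has dimension $\dim Y$. Now let $F$ be an irreducible component of $X$. Since $g$ is torsion-free, $F$ cannot be contained in the proper Zariski-closed subset $g^{-1}(V(a))$ (an irreducible component supported over a proper closed subset of $Y$ would produce an $A$-torsion submodule of $B$, because its defining ideal would be annihilated by a power of $a$); hence $F\cap X'$ is nonempty, is a union of irreducible components of $X'$ by \cite[cor.2.2.9]{conrad}, and therefore has dimension $\dim Y$ and dominates $Y'$. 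Taking closures, $g(F)\supseteq \overline{Y'}=Y$, and since $g$ is finite it is closed, so $g(F)=Y$. As $F$ was arbitrary and $\dim F=\dim Y$ for every component, $X$ is equidimensional of dimension $\dim Y$, proving (i).

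\emph{Step 3: preimages of Zariski-dense sets (statement (ii)).} Let $\Sigma\subseteq Y$ be Zariski-dense, and suppose for contradiction that $g^{-1}(\Sigma)$ is contained in a proper Zariski-closed subset $Z\subsetneq X$. By (i) and \cite[Prop.2.2.8]{conrad}, $Z$ cannot contain any irreducible component $F$ of $X$; pick a component $F$ with $F\cap X'\neq\emptyset$ as in Step 2, so $g|_{F\cap X'}\colon F\cap X'\to Y'$ is finite and dominant, in fact surjective (finite dominant morphisms onto an irreducible target are surjective). Then $Z\cap F$ is a proper Zariski-closed subset of $F$, so $g(Z\cap F)$ is a proper Zariski-closed subset of $Y$ (finite morphisms are closed, and a finite surjection from a variety of the same dimension cannot map a lower-dimensional closed set onto the whole target). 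Hence $\Sigma\cap Y'$, being contained in $g(Z)\cap Y'\subseteq \bigcup_F g(Z\cap F)$, lies in a proper Zariski-closed subset of $Y'$; but $\Sigma$ is Zariski-dense in $Y$ and $Y'$ is a dense admissible open, so $\Sigma\cap Y'$ is Zariski-dense in $Y'$, a contradiction. Therefore $g^{-1}(\Sigma)$ is Zariski-dense in $X$.

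\emph{Main obstacle.} The delicate point is Step 2: the passage from ``$B$ is torsion-free over $A$'' to ``no irreducible component of $X$ is supported over a proper closed subset of $Y$,'' and the accompanying control of Fitting ideals so that the bad locus $V(a)$ is genuinely proper in the reduced rigid-analytic (not just scheme-theoretic) sense. One must also be slightly careful that $Y'$, defined as a non-vanishing locus, is an admissible open that is Zariski-dense in $Y$, and invoke \cite[cor.2.2.9]{conrad} and \cite[Prop.2.2.8]{conrad} to transfer between components of $X$ and of $X'$; everything else is routine once the free locus is identified.
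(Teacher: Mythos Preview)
Your argument is correct, but it is considerably more elaborate than the route taken in the paper, and the extra machinery (Fitting ideals, the free locus $Y'$, and the restriction to $X'$) is not actually needed.

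For part (i), the paper argues directly: if $\mathfrak p$ is any minimal prime of $B$, then the composite $A\to B\to B/\mathfrak p$ is injective. Indeed, any nonzero $a\in A$ acts as a non-zerodivisor on the torsion-free $A$-module $B$, while the zero-divisors of the reduced Noetherian ring $B$ are exactly the union of its minimal primes; hence $a\notin\mathfrak p$. Injectivity of $A\hookrightarrow B/\mathfrak p$ gives that $F=V(\mathfrak p)\to Y$ has Zariski-dense image, and since $g$ is finite (hence Zariski-closed) one gets $g(F)=Y$; going-up for the finite injective extension $A\hookrightarrow B/\mathfrak p$ then gives $\dim F=\dim Y$. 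Your Step~2 ultimately rests on the very same fact (your parenthetical ``would produce an $A$-torsion submodule'' is precisely this observation, though the phrase ``its defining ideal would be annihilated by a power of $a$'' is garbled: it is rather that the minimal prime $\mathfrak p$ is the annihilator of some nonzero $b\in B$, so $a\in\mathfrak p$ forces $ab=0$). Once you have this, the detour through the free locus adds nothing.

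For part (ii), the two arguments are essentially the same push-forward/dimension count. The paper first reduces to $X$ irreducible and then observes that the Zariski closure $Z$ of $g^{-1}(\Sigma)$ satisfies $g(Z)\supset\Sigma$, hence $g(Z)=Y$; as $g|_Z$ is finite surjective onto $Y$, $\dim Z=\dim Y=\dim X$, whence $Z=X$. Your contradiction argument is a component-by-component variant of the same idea; the restriction to $Y'$ in your Step~3 is again unnecessary.
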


\begin{proof}\

i) Let $B$ (resp. $A$) be the affinoid algebra corresponding to $X$ (resp. $Y$) and $g^*:A \to B$ be the finite torsion-free morphism corresponding to $g$. Since $Y$ is irreducible and reduced, $A$ is a domain. Let $\gp$ be a minimal prime ideal of $B$ corresponding to an irreducible component $F$ of $X$, it follows from the fact that $B$ is a torsion-free finite $A$-algebra that the morphism $A \to B/\gp$ is injective (since the zero divisors of a reduced Noetherian ring are the union of its minimal prime ideals). Moreover, the image of the natural composition $F \to X \to Y$ is dense, because $A \to B/\gp$ is injective (so the image of the morphism $\Spec B/\gp \to \Spec A$ is Zariski dense) and $\Spec A$ and $\Spec B$ are Jacobson schemes (so $\Spm A$ is Zariski dense in $\Spec A$, and the same for $B$). 

However, $g$ is also finite, and then Zariski closed. Hence, the irreducible component $F$ of $X$ surjects onto $Y$, and since the morphism $A \to B/\gp$ is injective and finite, then $\dim F=\dim Y$ (it follows from the Going-up theorem), and hence $X$ is equidimensional of dimension equal to $\dim Y$.

ii) A subset $\Sigma ' \subset X$ is a Zariski dense set of a reduced affinoid $X$ if and only if for any irreducible component $F$ of $X$ (see \cite[Prop.2.2.8]{conrad}, $\Sigma ' \cap F$ is a Zariski dense set of $F$. Thus, it is enough to prove the assertion when $X$ is reduced an irreducible. Assume that $X$ is irreducible and that $\Sigma$ is a Zariski dense set of $Y$. Let $\Sigma' \subset X$ denote the subset $g^{-1}(\Sigma)$ of $X$. Since $g$ is finite and torsion-free, then $g$ is closed for the Zariski topology and surjective, and then the Zariski closure of $\Sigma'$ is necessarily an analytic subspace $Z \subset X$ of dimension equal to $\dim Y= \dim X$, because $g(Z)$ is a Zariski closed set of $Y$ containing $\Sigma$ (so $g(Z)$ contains $Y$ the  closure of $\Sigma$). Hence, $Z$ is finite and surjects on $Y$ and it follows that $Z=X$, since they have the same dimension and $X$ is irreducible.

\end{proof}

\begin{lemma}\label{restzarden}
Let $\mathcal{U}=\Spm A$ be an equidimensional affinoid of dimension $2$, $F$ be a Zariski closed subset of $\mathcal{U}$ of dimension $\leq 1$, $\mathcal{U'}$ be the admissible open given by $\mathcal{U}-F$. Let $\Sigma$ be Zariski dense set of $\mathcal{U}$, then $\Sigma'=\Sigma \cap \mathcal{U}' $ is Zariski dense in $\mathcal{U}$ and in $\mathcal{U}'$.

\end{lemma}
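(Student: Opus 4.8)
The plan is to reduce the statement to the two facts that (1) $\Sigma$ is Zariski dense in $\mathcal{U}$, and (2) the Zariski closed subset $F$ has dimension strictly smaller than the dimension of $\mathcal{U}$, so it cannot absorb a dense subset on any $2$-dimensional irreducible component. First I would recall the standard criterion (see \cite[Prop.2.2.8]{conrad}): a subset $\Sigma$ of a reduced affinoid $\mathcal{U}$ is Zariski dense if and only if for each irreducible component $F_i$ of $\mathcal{U}$ the intersection $\Sigma\cap F_i$ is Zariski dense in $F_i$. This lets me pass to the case where $\mathcal{U}$ is irreducible (and $2$-dimensional), since the argument for $\mathcal{U}$ will follow by applying the irreducible case to each component $F_i$, noting that $F\cap F_i$ is a proper Zariski closed subset of $F_i$ of dimension $\le 1 < 2 = \dim F_i$ and that $\Sigma' \cap F_i = (\Sigma \cap F_i) \cap (F_i \setminus (F\cap F_i))$.

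So assume $\mathcal{U}=\Spm A$ is irreducible of dimension $2$ and let $Z$ be the Zariski closure of $\Sigma'=\Sigma\cap\mathcal{U}'$ inside $\mathcal{U}$; I want to show $Z=\mathcal{U}$. Since $\Sigma=\Sigma'\cup(\Sigma\cap F)$, the Zariski closure of $\Sigma$ is contained in $Z\cup F$. But $\Sigma$ is Zariski dense, so $\mathcal{U}=Z\cup F$ as analytic subsets of $\mathcal{U}$. Because $\mathcal{U}$ is irreducible of dimension $2$ and $\dim F\le 1$, we must have $Z=\mathcal{U}$: indeed $Z\cup F=\mathcal{U}$ forces $Z$ or $F$ to equal $\mathcal{U}$ (irreducibility), and $F\ne\mathcal{U}$ by the dimension bound, so $Z=\mathcal{U}$. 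This shows $\Sigma'$ is Zariski dense in $\mathcal{U}$.

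For Zariski density of $\Sigma'$ in $\mathcal{U}'$ itself, I would use that $\mathcal{U}'=\mathcal{U}\setminus F$ is a Zariski open admissible subset of $\mathcal{U}$: an analytic subset $W'\subset\mathcal{U}'$ containing $\Sigma'$ has Zariski closure $\overline{W'}$ in $\mathcal{U}$, which contains $Z=\mathcal{U}$ by the previous paragraph, hence $\overline{W'}=\mathcal{U}$ and therefore $W'=\overline{W'}\cap\mathcal{U}'=\mathcal{U}'$. (Here I use that for a Zariski open $\mathcal{U}'$ of a rigid space, $W'\mapsto\overline{W'}\cap\mathcal{U}'$ recovers $W'$ when $W'$ is analytically closed in $\mathcal{U}'$, a standard fact about the induced Zariski topology.) The only mildly delicate point, which I would state carefully rather than belabor, is that "dimension $\le 1$" for $F$ genuinely rules out $F$ meeting any component $F_i$ in its entirety; this is immediate since $\dim(F\cap F_i)\le\dim F\le 1<2=\dim F_i$, using that $\mathcal{U}$ is assumed equidimensional of dimension $2$. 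I do not anticipate a serious obstacle here — the content is entirely the bookkeeping with irreducible components and the dimension inequality; the main thing to be careful about is invoking the component-wise density criterion correctly so that the reduction to the irreducible case is airtight.
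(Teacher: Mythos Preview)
Your proposal is correct and follows essentially the same approach as the paper: decompose $\Sigma=\Sigma'\cup(\Sigma\cap F)$, take closures, and use that the piece inside $F$ has dimension $\le 1$ so cannot account for any $2$-dimensional component of $\mathcal{U}$. The paper argues this directly from equidimensionality in two lines without first reducing to the irreducible case, and simply asserts density in $\mathcal{U}'$ as an immediate consequence; your version is more explicit on both points (invoking the component-wise criterion of \cite[Prop.~2.2.8]{conrad} and spelling out why density in $\mathcal{U}$ implies density in the Zariski open $\mathcal{U}'$), but the underlying idea is identical.
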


\begin{proof}
Note that $\Sigma= \Sigma' \cup (\Sigma \cap F)$. Hence, the Zariski closure $\bar \Sigma$ of $\Sigma$ is equal to the union of the Zariski closure $\bar \Sigma' $  of $\Sigma'$ with the closure $\overline{\Sigma \cap F}$ of $\Sigma \cap F$. On the other hand, $\bar \Sigma=\mathcal{U}$ and it is equidimensional of dimension $2$, and $\overline{\Sigma \cap F} \subset F$ is of dimension at most one. Hence, $\bar{\Sigma'}=\mathcal{U}$, yielding that $ \Sigma'$ is dense in $\mathcal{U}$ and so in $\mathcal{U}'$.

\end{proof}

\section{On the very Zariski density of classical points in the Eigenvariety $\cE_\Delta$} \label{s4}

The goal of this section is to recall quickly the construction of the Siegel eigenvarieties and to prove that classical points which are old at $p$ and of cohomological weights are very Zariski dense in them.

\subsection{The Weight space $\cW$}

Recall that the connected components of $\cW$ are naturally indexed by $\cW^{a,b}$, where  $(a,b) \in (\Z/(p-1)\Z)^2$. The classical weights $(k_1,k_2) \in   \cW^{a,b}$ are congruent to $(a,b) \mod p-1$, in other words, the discrete part of the restriction of any character of $\cW^{a,b}(\C_p)$ to $\Z/p\Z^{\times}$ is $(\omega_p^{a},\omega^b_p)$, where $\omega_p$ is the Teichm{\"{u}}ller character. In addition, the formal scheme $\mathrm{Spf } \text{ } \Z_p   \lsem   T_1,T_1  \rsem $ is a Raynaud's formal model of any connected component\footnote{Note that $\mathrm{Spm} \text{ } \Z_p   \lsem   T_1,T_1  \rsem [1/p]=\cW^{a,b}$, and $\mathrm{Spm} \text{ } \Z_p   \lsem   T_1,T_1  \rsem [1/p]$ is the open disk of dimension $2$ and radius $1$. }  $\cW^{a,b}$of the weight space $\cW$.

\begin{rem}The category of admissible $\Z_p$-formal schemes (which includes locally topologically of finite type formal $\Z_p$-schemes) is localised with respect to blowups in the special fibre and the Raynaud generic fibre functor  defines an equivalence of categories between the localised category of admissible formal $\Z_p$-schemes and the category of rigid analytic spaces over $\Q_p$.
\end{rem}

Now, let $\underline{k}=(k_1,k_2) \in \Z^2$, any morphism $\underline{k}: (\Z_p^\times)^2 \to \Q_p^{\times}$ sending $(z_1,z_2) \to z_1^{k_1}.z_2^{k_2}$ give a point of $\cW(\Q_p)$ and which denote again by $\underline{k}$, and we call it ``an algebraic weight''. More generally, any character of $\cW(\C_p)$ which is a product of a character $\underline{k} \in \Z^2 \subset \cW(\Q_p)$ with a finite character $\chi:(\Z_p^\times)^2 \to \bar{\Q}_p^{\times}$ is called ''an arithmetic character'' and denoted by $(\underline{k},\chi)$. 

\begin{lemma}\label{vzdw}
The classical weights $\Z^2$ of $\cW(\Q_p)$ are very Zariski dense in the weight space $\cW$.

\end{lemma}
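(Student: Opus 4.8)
The plan is to reduce the statement to a single connected component of $\cW$, exploit the fact that such a component is an open two-dimensional polydisc, and then run a ``one variable at a time'' argument based on Weierstrass preparation.

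First I would set up the reduction. Since $\cW$ is smooth, hence reduced, and its connected components $\cW^{a,b}$, $(a,b)\in(\Z/(p-1)\Z)^2$, are exactly its irreducible components, Zariski density in $\cW$ may be checked component by component (\cite[Prop.2.2.8]{conrad}); and for the ``very'' part it suffices, given a classical weight $z_0$, to exhibit a cofinal system of affinoid neighbourhoods of $z_0$ inside the unique component through $z_0$ on which the classical weights are already Zariski dense. Recall that $\cW^{a,b}\cong\Spm\Z_p\lsem T_1,T_2\rsem[1/p]$ is the open polydisc of radius $1$, the coordinate $T_i$ being $\chi\mapsto\chi_i(\gamma)-1$, where $\chi_i$ is the restriction of $\chi$ to the $i$-th factor $1+p\Z_p$ and $\gamma=1+p$ is a topological generator (for $p$ odd; for $p=2$ one replaces $1+p\Z_p$, $p-1$ and $\gamma=1+p$ by $1+4\Z_2$, $2$ and $\gamma=5$, with no other change). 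In these coordinates a classical weight $\underline k=(k_1,k_2)\in\Z^2$ with $k_i\equiv a,b\pmod{p-1}$ sits at the point $((1+p)^{k_1}-1,\,(1+p)^{k_2}-1)\in(p\Z_p)^2$.

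The technical heart is the following elementary density statement, which I would prove by freezing one variable at a time: for a closed polydisc $\mathbb B=\Spm L\langle c_1^{-1}T_1,c_2^{-1}T_2\rangle$ over a $p$-adic field $L$ (after any recentring), and for any two \emph{infinite} subsets $A_1,A_2$ of the corresponding closed one-dimensional discs, the product $A_1\times A_2$ is Zariski dense in $\mathbb B$. Indeed, if some nonzero $g\in\cO(\mathbb B)$ vanished on $A_1\times A_2$, then for each $a\in A_1$ the one-variable rigid function $g(a,\cdot)$ would vanish at infinitely many points of a closed disc, hence would be identically zero because a nonzero element of a one-variable Tate algebra has only finitely many zeros (Weierstrass preparation); so $g$ vanishes on $A_1\times\mathbb B_2$, and freezing the second coordinate the same argument gives $g(\cdot,t_2)=0$ for every $t_2$, i.e.\ $g=0$. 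No accumulation hypothesis is needed.

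It remains to combine the two. For Zariski density in $\cW^{a,b}$: the classical weights there form the image of $(a+(p-1)\Z)\times(b+(p-1)\Z)$, an infinite product set contained in $(p\Z_p)^2$, hence contained in the closed subpolydisc $\mathbb B_0=\{\,|T_1|\le p^{-1},\ |T_2|\le p^{-1}\,\}$; by the density statement they are Zariski dense in $\mathbb B_0$, and since $\dim\mathbb B_0=2=\dim\cW^{a,b}$ and $\cW^{a,b}$ is irreducible, any analytic subset of $\cW^{a,b}$ containing $\mathbb B_0$ equals $\cW^{a,b}$, giving the claim, and hence Zariski density in $\cW$. For the ``very'' assertion, writing $z_0\in\cW^{a,b}$ at $t^0=((1+p)^{k_1^0}-1,(1+p)^{k_2^0}-1)$, I would take $\cU_n=\{\,|T_i-t_i^0|\le p^{-n},\ i=1,2\,\}$ for $n\ge1$, which form a basis of affinoid neighbourhoods of $z_0$; choosing $M\ge n$ with $(1+p)^{(p-1)p^M}\equiv1\pmod{p^n}$, the weights with $k_i\equiv k_i^0\pmod{(p-1)p^M}$ form an infinite product set inside $\cU_n$, Zariski dense in $\cU_n$ by the density statement. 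The only real friction I anticipate is bookkeeping — keeping the parametrisation of $(\Z_p^\times)^2$ (Teichm\"uller versus pro-$p$ part, and the $p=2$ corner) consistent with the coordinates $T_i$, and checking that the chosen arithmetic progressions of exponents genuinely lie in the prescribed small polydisc $\cU_n$ — while the genuinely mathematical input, namely finiteness of zeros of a nonzero one-variable rigid function and irreducibility of the polydisc, is entirely standard.
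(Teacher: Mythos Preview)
Your proposal is correct and follows essentially the same approach as the paper: both invoke Weierstrass preparation for Zariski density (you make the two-variable reduction explicit via the ``freeze one variable'' argument) and both use congruence classes of exponents modulo powers of $p$ to produce the cofinal system of affinoid neighbourhoods needed for the ``very'' part. The paper's proof is a two-sentence sketch of precisely this strategy; your write-up simply supplies the details (coordinates on the polydisc, the product-set density lemma, and the explicit choice of $\cU_n$ and the arithmetic progressions landing inside it).
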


\begin{proof}
It follows from the Weierstrass preparation theorem that the set $\Z^2$ of integral weights is Zariski-dense in $\cW$. Moreover, the $p$-adic topology on the union of open discs $\cW$ induces by restriction the topology on $\Z^2$ for which we have a natural basis of neighborhood of $\underline{k} \in \Z^2$ given by the congruence classes modulo $p^n(p-1)$ for all $n$. Hence $\Z^2$ is very Zariski dense.
\end{proof}

\subsection{Geometric Siegel cuspforms}

Let $\mathrm{G}$ denote the algebraic group $\mathrm{GSp}_4$ and $\Gamma(N)$ be the open compact subgroup of $G(\widehat{\Z})$ of level $N$ given by $\{\gamma \in G(\widehat{\Z})\mid \gamma =\mathbbm{1}_4 \mod{N}\}$. 

Assume now that $N\geq 5$, and let $X/\Z_p$\footnote{The generic fiber $X/\Q_p$ is smooth, and the special  fiber $X/\mathbb{F}_p$ is singular, and  it even has vertical components.} be the Siegel scheme of level $\Gamma(N) \cap I_1$, where $I_r$ is the standard Iwahoric at $p$ of $G$ given by $\{\gamma \in \mathrm{GSp}_4(\Z_p) \mid \gamma \mod p^r \in B(\Z/p^r\Z)\}$ and $B$ is the Borel of $\mathrm{GSp}_4$. There exists a universal abelian scheme $A/X$ with identity section $e$ and we let $\omega:=e^{*}(\Omega_{A/X})$ be the conormal sheaf. Note that $\omega$ is a locally free sheaf of rank $2$ over $X$. Let $\bar{X}$ denote a toroidal compactification of $X$ (it is not unique and depends on a combinatorial choice, see \cite{CF}), $\bar{A}$ be the semi-abelian scheme extending $A$ to $\bar{X}$ and $D=\bar{X}/X$ be the normal crossing divisor at infinity. The sheaf $\omega$ extends to a locally free sheaf of rank $2$ over $\bar{X}$, which we again denote by $\omega$.																																																			

The classical cuspidal Siegel forms of level $\Gamma(N) \cap I_1$ and weight $k=(k_1,k_2)$ and coefficients in a $p$-adic field $L$ (we have $k_1 \geq k_2$)  are the elements of $\rH^{0}(\bar{X}_{L}, \omega^{k}_L(-D))$\footnote{It follows from Koecher principle that $\rH^{0}(\bar{X}_{L}, \omega^{k}_L)$ does not depend on the choice of the toroidal compactification $\bar{X}$ of $X$.}, where $\omega^{k}$ is the locally free sheaf $\mathrm{Sym}^{k_1-k_2} \omega \otimes \det \omega^{k_2}$, and  $\omega^{k}_L$ is the base change of $\omega^k$ to $\bar{X}_L$. Let $\bar{X}^{\rig}/\Q_p$ be the rigid analytic space given by taking the {\it generic fiber} of the formal scheme given by the completion of $\bar{X}$ along its special fiber, and writing again $\omega$ for the analytification of $\omega$, and let $\bar{X}^{\ord}$ be the multiplicative ordinary locus of $\bar{X}^{\rig}$ (it is not an affinoid), then the $p$-adic (resp. $v$-overconvergent) cuspidal Siegel modular forms of tame level $\Gamma(N)$  weight $k=(k_1,k_2)$ and coefficients in $L$ are $\rH^{0}(\bar{X}^{\ord}_L, \omega^{k}_L(-D))$ (resp. $\rH^{0}(\bar{X}^{\ord}_L(v), \omega^{k}_L(-D))$), where $\bar{X}(v)$ is the $v$-overconvergent neighborhood of the multiplicative ordinary locus $\bar{X}^{\ord}$.

Andreatta, Iovita and Pilloni constructed for any weight $k \in \cW(\C_p)$ and certain parameters $v,w \in \mathbb{R}^{\times}_{+}$ a Banach sheaf $\omega^k_w$ over $\bar{X}(v)$, and a natural sheaf monomorphism $\omega^k \hookrightarrow \omega_w^k$ when $k=(k_1,k_2)$ is classical (see \cite{AIP}), and they describe precisely the cokernel of that monomorphism. The sheaf $\omega_w^k$ is isomorphic locally for the \'etale topology to the $w$-analytic induction of the Borel $B(\Z_p)$ to the Iwahoric of $\GL_2$ with respect to the character $k$. Note that any character $k \in \cW(\C_p)$ is locally analytic by \cite[\S2.2]{AIP} and hence $\omega^{k}_w$ is a non-zero Banach sheaf (The sections of $\omega^{k}_w$ are congruent to the image of the Hodge-Tate map by \cite[Prop.4.3.1]{AIP}).

The $p$-adic modular forms obtained by this interpolation are locally analytic overconvergent (not necessarily overconvergent), however those satisfying the slope condition of \cite[Theorem    7.1.1]{AIP} are overconvergent (see also \cite[Prop.2.5.1.]{AIP} and \cite[Prop.7.2.1]{AIP}). Note that this construction is independent of the choice of the toroidal compactification of $\bar{X}$ (see \cite[Theorem   1.6.1]{Lu} and \cite[Prop.5.5.2]{AIP}) and we denote the corresponding eigenvariety by $\cE_N$.

\subsection{Local charts of the variety $\cE_N$ and density of classical points of $\cE_N$.}\label{eigenvariety}\

Let $\chi:(\Z/p\Z^\times)^2 \to \Q_p^\times$ be a character, $\cW^\chi$ the connected component of $\cW$ corresponding to $\chi$, and $\cE_N^{\chi}$ the union of connected components of $\cE_N$ given by the restriction of $\cE_N$ to $\cW^{\chi}$.

For $w,v \in \R$ let $W=\Spm R$ be a small enough affinoid subdomain of $\cW^{\chi}$ to ensure the existence of the Banach sheaf $\omega^{\kappa}_w(-D)$ of $\bar{X}(v) \times \Spm R$ interpolating the Banach sheaf $\omega^{k}_w(-D)$ of $w$-analytic $v$-overconvergent Siegel cusp form when $k$ varies in $\Spm R$ ($\kappa$ denotes here the tautological character $(\Z_p^\times)^2 \to R^{\times}$).

On the other hand, let $S^{\dagger}_{\kappa}$ be the Frechet $R$-module of $\epsilon$-overconvergent cuspidal Siegel families over the affinoid $R$  and given by \[\lim_{\stackrel{\longrightarrow}{v \to 0,w \to \infty}}\rH^0(\bar{X}(v)\times \Spm R,\omega^{\kappa}_w(-D)).\] The action of the Hecke operator $\mathbb{U}=U_0.U_1$ is completely continuous on the Frechet $R$-module $S^{\dagger}_{\kappa}$. Let $\cT_{W,r}$ be the image of the Hecke algebra generated over $R$ by the image of $\mathcal{H}_N$ in the space of endomorphisms of $S^{\dagger,v \leq r}_{\kappa}$, where $S^{\dagger, \leq r}_{\kappa}$  is the $R$-finite submodule of $S^{\dagger}_{\kappa}$ of slope at most $r$ for $\mathbb{U}=U_0.U_1$.\footnote{Note that the action of $ \mathbb{U}$ is completely continuous on $S^{\dagger}_{\kappa}$, so we have a slope decomposition.} It follows from the results of \cite[\S.II]{Bbook} that \begin{equation}\label{localcards}\cE_{N,W}^r := \Spm \cT_{W,r}, \end{equation}
is an affinoid subdomain of $\cE_N$ and by construction $\cE_{N,W}^r$ is finite and torsion-free over $W$ and the $\{\cE_{N,W}^r\}$ form an admissble covering of $\cE$.

Since the ordinary locus of any toroidal compactification of the Siegel modular scheme is not an affinoid, we cannot prove that the specialization $$\rH^0(\bar{X}(v) \times_{\bar{\Q}_p}\Spm R,\omega^{\kappa}_w) \to \rH^0(\bar{X}(v),\omega^{k}_w)$$ is surjective and that $\rH^0(\bar{X}(v) \times \Spm R,\omega^{\kappa}_w)$ is a projective $R$-Banach module. However, Andreatta-Iovita-Pilloni proved in \cite[Prop.8.2.3.3]{AIP} a control theorem for cuspidal families and that $\rH^0(\bar{X}(v) \times \Spm R,\omega^{\kappa}_w(-D))$ is a projective $R$-Banach module, by projecting the sheaf $\omega^{\kappa}_w(-D)$ to the minimal compactification of the Siegel modular scheme, and using the fact that small $v$-overconvergent neighborhoods of the multiplicative ordinary locus of the minimal compactification of the Siegel modular scheme are affinoid spaces, and the deep descent result 
\cite[Prop.8.2.2.4]{AIP}.

Skinner-Urban constructed in \cite[\S2]{urban} a semi-ordinary eigenvariety $\cE_N^{\mid U_0 \mid_p=1} \subset \cE_N$ for overconvergent Siegel cusp forms of tame level $\Gamma(N)$ and genus $2$ by interpolating the locally free sheaf $\omega^{k}$ inside a Banach sheaf $\omega^{\kappa}_w$ over the weight space $\cW$ using the Igusa tower.  That construction is a special case of the construction given by Andreatta-Iovita and Pilloni in \cite{AIP} of the eigenvariety $\cE_N$, since the linearization of the Hodge-Tate map $$ \mathrm{HT}_{H_n^D}: H_n^D \to \omega_{H_n}$$ is surjective on the multiplicative ordinary locus ($H_n \subset \bar{A}$ is the level $n$ canonical subgroup and $H_n^D$ is its Cartier dual), and the fact that any semi-ordinary (i.e of slope $0$ for $U_0$) $p$-adic Siegel cuspforms of finite slope for $U_1$ overconverges to a strict neighborhood of the ordinary locus. For the latter note that under the iteration of the Hecke correspondances at $p$, an overconvergent neighborhood of $X^{\ord}$ accumulates around the multiplicative ordinary ordinary locus $X^{\ord}$. The correspondence $U_0$  improves the radius of overconvergence. Hence, the functional equation $U_0.g=U_0(g).g$ allows us to extend $g$ to a bigger neighborhood of the multiplicative ordinary locus when $U_0(f) \ne 0$ (the function degree of \cite[Theorem   3.1.]{Pilloni} increases under the iteration of $U_0$). Meanwhile, one can use a similar functional equation for $U_1$ to get classicality at the level of the sheaves when the slope satisfies the condition of \cite[Prop.7.3.1]{AIP}.

By construction of $\cE_N$ we have an algebra homomorphism $\mathcal{H}_{N} \to \mathcal{O}^{\rig}_{\cE_N}(\cE_N)$, and the image lands in the subring $\mathcal{O}^{\rig}_{\cE_N}(\cE_N)^{+}$ given by the global section bounded by $1$ on $\cE_N$. Therefore, the canonical application ``system of eigenvalues'' induces a correspondence between the systems of eigenvalues for Hecke operators occuring in $\mathcal{H}_N$ of locally analytic overconvergent cuspidal Siegel eigenforms of tame level $\Gamma(N)$ and weight $k \in \mathcal{W}(\C_{p})$ having non-zero $\mathbb{U}$-eigenvalue, and the set of $\mathbbm{C}_{p}$-valued points of weight $k=(k_1,k_2)$ on the Siegel eigenvariety $\cE_N$. Note that for any overconvergent form $g $ corresponding to a point of $\cE_N$ of weights $(l_1,l_2)$, \begin{equation} \label{U1norm} g\mid U_1=p^{l_2-3}U_1(g).g; \end{equation} we renormalize $U_1$ in the aim to have a good $p$-adic interpolation (see for example \cite[Theorem   2.4.14]{urban}).

One has the following Lemmas proving the very Zariski density of the classical points having a crystalline representation at $p$ in $\cE_N$, which is important for applying further the results of \cite[\S4]{bb} (see the hypothesis (HT) of \cite[\S.3.3.2]{bb}).

\begin{lemma}\label{zdop} Let $z \in \cE_N$ be a classical point old at $p$, then there exists an affinoid neighborhood $\Omega$ of $z$ in $\cE_N$ of constant slopes for $U_0,U_1$ and such that the old at $p$ classical points of regular weights of $\Omega$ are very Zariski dense in it, $\kappa(\Omega)$ is an open affinoid subdomain of $\cW$, and each irreducible component of $\Omega$ surjects to $\kappa(\Omega)$.

\end{lemma}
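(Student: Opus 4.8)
\textbf{Proof proposal for Lemma \ref{zdop}.}

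The plan is to exploit the fact that the Siegel eigenvariety is locally finite and torsion-free over the weight space $\cW$, together with the local-chart description in \eqref{localcards}. First I would recall that near any classical point $z$ which is old at $p$, the slopes of the Hecke operators $U_0$ and $U_1$ are locally constant (this is the standard consequence of the finiteness of the slope-$\le r$ part of the Fredholm resolution for the completely continuous operator $\mathbb{U}=U_0U_1$). So I can fix a real number $r$ bounding these slopes and, after choosing a small enough affinoid subdomain $W=\Spm R\subset \cW^{\chi}$ around $\kappa(z)$, obtain the affinoid chart $\cE_{N,W}^r=\Spm\cT_{W,r}$ which is finite and torsion-free over $W$. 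I would then take $\Omega$ to be the connected component of $\cE_{N,W}^r$ containing $z$ (or, if needed, shrink further using Proposition \ref{basen} to isolate the point $z$ in its fiber and pass to the corresponding affinoid subdomain $V_k^{i_0}$). By Proposition \ref{geneta}(ii) the image $\kappa(\Omega)$ is a closed irreducible subset of $W$; since $\kappa$ is also flat (hence Zariski-open after \cite{conrad}), $\kappa(\Omega)$ is in fact an open affinoid subdomain of $\cW$. That $\Omega$ is finite and torsion-free over $\kappa(\Omega)$ then follows because torsion-freeness and finiteness are inherited by this restriction; Proposition \ref{vzd}(i) gives that $\Omega$ is equidimensional of dimension $2$ and that each of its irreducible components surjects onto $\kappa(\Omega)$.

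The remaining point — and the genuine content of the lemma — is the very Zariski density of the classical, old-at-$p$ points of \emph{regular} weight inside $\Omega$. The strategy here is to push down via $\kappa$: by Lemma \ref{vzdw} the integral weights $\Z^2$ are very Zariski dense in $\cW$, and the same proof shows that the \emph{regular} classical weights (those $(l_1,l_2)$ with $l_1>l_2$, or with $l_1>l_2+1$ if one wants the stronger regularity used elsewhere) are still very Zariski dense in $\cW$, since they are the complement of finitely many hyperplanes and one has the congruence-class basis of neighborhoods. Intersecting with the open affinoid $\kappa(\Omega)$ and using Lemma \ref{restzarden} shows these weights remain very Zariski dense in $\kappa(\Omega)$. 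Now by classicality criteria for overconvergent Siegel cusp forms — Hida's criterion \cite[Prop.3.6]{Hida1}, together with \cite[Theorem 3.2]{U-T}, \cite[Theorem 2]{Pilloni} and \cite[Theorem 7.1.1]{AIP} — once the slope $r$ of $\Omega$ is fixed, every point of $\Omega$ of sufficiently regular classical weight (regular relative to $r$) is in fact classical, and moreover it is old at $p$ because the tame level is $\Gamma(N)$ with Iwahori level at $p$ and our chosen $z$ is old at $p$ (the $U_0,U_1$-eigenvalues along $\Omega$ vary $p$-adic analytically and the old/new dichotomy is detected by these). Finally I would transfer density upward: by Proposition \ref{vzd}(ii), since $\Omega\to\kappa(\Omega)$ is finite torsion-free, the preimage of a Zariski dense set in $\kappa(\Omega)$ is Zariski dense in $\Omega$; applying this on a basis of affinoid neighborhoods (guaranteed by Proposition \ref{basen}) upgrades this to \emph{very} Zariski density.

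The main obstacle I anticipate is not the descent/ascent bookkeeping but ensuring the classicality step is correctly applied: the chart $\Omega$ has \emph{constant} slope $r$ for $\mathbb{U}$, and one needs that the small-slope classicality bounds of \cite[Theorem 7.1.1]{AIP} are met by all but a lower-dimensional locus of classical weights $(l_1,l_2)$ in $\kappa(\Omega)$ — i.e. that the regular classical weights satisfying $l_2-2>r$ (or the appropriate numerical condition) still form a very Zariski dense subset of $\kappa(\Omega)$. This is true because $\kappa(\Omega)$ is an open affinoid in $\cW$ and imposing $l_2>r+2$ removes only those weights lying in a bounded region, whose complement is still very Zariski dense by the argument of Lemma \ref{vzdw} — but it requires care to state precisely, since one must simultaneously impose regularity $l_1>l_2+1$ and the slope bound, and check that the intersection of these two cofinite-type conditions is still very Zariski dense (which follows from Lemma \ref{restzarden} applied twice). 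A secondary subtlety is verifying that the old-at-$p$ condition is Zariski closed and is satisfied generically on $\Omega$: this follows because $\pi_\alpha$ (hence $z$) lies on a component where the $U_0$-slope is $0$ and the $U_1$-slope is $1$, matching the old-at-$p$ normalization, and the relevant Atkin--Lehner/newform invariants are locally constant on $\cE_N$.
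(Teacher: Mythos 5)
Your overall approach matches the paper's: choose a local chart $\cE_{N,W}^r$ containing $z$, shrink with Proposition~\ref{basen} to an affinoid $\Omega$ of constant slopes whose image $\kappa(\Omega)$ is an open affinoid of $\cW$, use Proposition~\ref{vzd} for surjectivity of irreducible components, and push density of weights upward via the finite torsion-free map $\kappa$ together with the classicality criterion of \cite[Theorem 7.1.1]{AIP}. Up to that point your argument is sound and tracks the paper.

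The genuine gap is the justification of the \emph{old-at-$p$} property for the dense set of classical points you produce. You argue that oldness follows because ``the tame level is $\Gamma(N)$ with Iwahori level at $p$ and our chosen $z$ is old at $p$ (the $U_0,U_1$-eigenvalues along $\Omega$ vary $p$-adically and the old/new dichotomy is detected by these),'' and later that it ``follows because $\pi_\alpha$ (hence $z$) lies on a component where the $U_0$-slope is $0$ and the $U_1$-slope is $1$, matching the old-at-$p$ normalization, and the relevant Atkin--Lehner/newform invariants are locally constant on $\cE_N$.'' Neither claim is correct. Iwahoric level at $p$ certainly does not preclude $p$-new forms; the old/new status at $p$ is not read off from the $U_0,U_1$-eigenvalues varying analytically; oldness of $z$ does not propagate to nearby classical points; and the lemma is stated for an \emph{arbitrary} classical point $z$ old at $p$, not for $\pi_\alpha$ with its specific slopes $(0,1)$, so specializing to that case is an unwarranted restriction. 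What is actually needed (and what the paper invokes) is the separate theorem \cite[Theorem 2.4.17]{urban}: classical Siegel cuspforms whose weight is sufficiently regular relative to the (fixed, constant) slope on $\Omega$ must be old at $p$, because $p$-new forms carry a slope lower bound growing with the weight. So oldness is obtained uniformly on a dense set of sufficiently regular classical weights as an independent theorem, not inherited from $z$ nor from any local constancy of ``Atkin--Lehner invariants.'' Replacing your heuristic with that citation fixes the argument and brings it in line with the paper.
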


\begin{proof} Note that $\cE_N$ is admissibly covered by $\{\cE_{N,W}^r \}$. Hence, there exists an affinoid subdomain $\cE_{N,W}^r$ of $\cE_N$ containing $z$ and surjecting on the affinoid subdomain $W \subset \cW$. By construction of $\cE_N$, the slopes of $U_0,U_1$ are locally constant. Then Prop.\ref{basen} and Prop.\ref{vzd} yields that we can shrink $\cE_{N,W}^r$ to a smaller open affinoid subdomain $\Omega$ of $\cE_N$ containing $z$ and with constant slope $\mathrm{S}_1$ (resp. $\mathrm{S}_2$) for the Hecke operator $U_0$ (resp. $U_1$) and such that $\kappa(\Omega)$ is an open affinoid subdomain of $\cW$, and $\kappa:\Omega \to \kappa(\Omega)$ is finite and torsion-free (so the restriction of $\kappa$ to any irreducible component of $\Omega$ is surjective by Prop.\ref{vzd}). 

Since $\Omega$ contains the classical point $z$, then the points of $\Omega$ with weights satisfying the small slope conditions of \cite[Theorem   7.1.1]{AIP} form a  Zariski dense set in $\Omega$, because the {\it algebraic} points $(l_1,l_2)$ of $\kappa(\Omega)$ satisfying the inequality of the small slope conditions of \cite[Theorem   7.1.1]{AIP} form a Zariski dense set of $\kappa(\Omega)$ (so their preimage is dense in $\Omega$ by Prop.\ref{vzd}). Moreover, it follows from the criterion of classicality of overconvergent forms that the points  satisfying the small slope conditions of \cite[Theorem   7.1.1]{AIP} are necessarily classical.  Actually, Prop.\ref{basen}, Prop.\ref{vzd} and Lemma \ref{vzdw} show that classical points of $\Omega$ are very Zariski-dense in it. Finally, the assertion follows from the fact that the classical points of $\Omega$ with sufficiently {\it regular} weights satisfy the assumptions of \cite[Theorem   2.4.17]{urban}, and hence they are old at $p$. 

\end{proof}

\begin{cor}\label{veryzariskidense}
Let $\cE_N^{\ord,1}$ be the admissible open of $\cE_N$ defined by \[\cE_N^{\ord,1}:=\{z \in \cE_N, \mid U_0(z) \mid_p=1, \mid U_1(z) \mid_p=p^{-1}\},\] $C \in \N_{>1},$ and $\Sigma_C$ be the set of points of $\cE_N^{\ord,1}$ of  ``algebraic weights'' $(k_1,k_2)$ satisfying $k_1 > k_2+C \geq \mathrm{Max}(9,C)$. Then:

\begin{enumerate}

\item The overconvergent cuspforms of $\Sigma_C$ are classical and old at $p$.

\item The set $\Sigma_C$ is very Zariski dense in $\cE_N^{\ord,1}$. 

\item The point $\pi_\alpha$ of $\cE_N^{\ord,1}$ corresponding to $\pi_\alpha$ is an accumulation point of $\Sigma_C$.

\end{enumerate}
\end{cor}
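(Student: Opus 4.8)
The plan is to deduce all three assertions, in the spirit of Lemma \ref{zdop}, from the classicality criterion \cite[Theorem 7.1.1]{AIP}, the control theorem \cite[Theorem 2.4.17]{urban}, and the rigid-analytic density lemmas of Appendix \S\ref{s4} (Propositions \ref{basen}, \ref{vzd} and Lemma \ref{vzdw}), using that $\kappa$ is finite and torsion-free on the local charts \eqref{localcards}.

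For (i), a point $z \in \Sigma_C$ has algebraic weight $(k_1,k_2)$ with $k_1 > k_2 + C$ and $k_2 + C \geq \mathrm{Max}(9,C)$ and lies in $\cE_N^{\ord,1}$, so the slope of $U_0$ at $z$ is $0$ and the slope of $U_1$ is $1$; hence the slope of $U_0 U_1$ is $1$. I would check that the numerical constraints defining $\Sigma_C$, together with these constant slopes, force the pair (weight, slope) to satisfy the small-slope inequality of \cite[Theorem 7.1.1]{AIP}, so that the overconvergent cuspform attached to $z$ is classical. Since $z$ is moreover semi-ordinary ($|U_0(z)|_p = 1$) of sufficiently regular weight ($k_1 - k_2 > C$), the control theorem \cite[Theorem 2.4.17]{urban} applies and shows that $z$ is old at $p$.

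For (ii), recall that $\cE_N$, and hence its admissible open $\cE_N^{\ord,1}$, is covered by affinoids $\cE_{N,W}^r$ (see \eqref{localcards}) that are finite and torsion-free over an affinoid subdomain $W \subset \cW$; since the slopes of $U_0$ and $U_1$ are constant on $\cE_N^{\ord,1}$, after refining the covering we may assume each such chart lies in $\cE_N^{\ord,1}$. By Lemma \ref{vzdw} the set of algebraic weights $(k_1,k_2)$ with $k_1 > k_2 + C$ and $k_2 + C \geq \mathrm{Max}(9,C)$ is very Zariski dense in $\cW$, so its trace on each $W$ is Zariski dense in $W$; Proposition \ref{vzd} then shows that the preimage in $\cE_{N,W}^r$ of this set of weights is Zariski dense and surjects component-wise onto $W$, and by (i) each such preimage point lies in $\Sigma_C$. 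Running this over the covering and using Proposition \ref{basen} to produce, around any point, a basis of affinoid neighborhoods on which the preimage density persists, one concludes that $\Sigma_C$ is very Zariski dense in $\cE_N^{\ord,1}$.

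For (iii), since $\pi_\alpha \in \cE_N^{\ord,1}$, Lemma \ref{zdop} furnishes an affinoid neighborhood $\Omega$ of $\pi_\alpha$ inside $\cE_N^{\ord,1}$, of constant slopes, with $\kappa : \Omega \to \kappa(\Omega)$ finite and torsion-free onto an affinoid subdomain of $\cW$ containing $\kappa(\pi_\alpha)$. The algebraic weights $(k_1,k_2)$ in the relevant congruence classes with $k_1 > k_2 + C$ and $k_2 + C \geq \mathrm{Max}(9,C)$ accumulate at $\kappa(\pi_\alpha)$ in the $p$-adic topology (e.g.\ $(k + (p-1)p^{n}, k)$ for $n$ large), hence by Lemma \ref{vzdw} are Zariski dense in a basis of affinoid neighborhoods $V$ of $\kappa(\pi_\alpha)$; Proposition \ref{basen} makes the components of $\kappa^{-1}(V) \cap \Omega$ through $\pi_\alpha$ into a basis of affinoid neighborhoods of $\pi_\alpha$, and Proposition \ref{vzd} makes the preimage of those weights Zariski dense in each. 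By (i) these preimage points belong to $\Sigma_C$, so $\pi_\alpha$ is an accumulation point of $\Sigma_C$. The main obstacle is part (i): one must verify precisely that the constraints $k_1 > k_2 + C$ and $k_2 + C \geq \mathrm{Max}(9,C)$, combined with the slopes $(0,1)$ of $(U_0,U_1)$ on $\cE_N^{\ord,1}$, simultaneously meet the hypotheses of \cite[Theorem 7.1.1]{AIP} and \cite[Theorem 2.4.17]{urban}; granting (i), parts (ii) and (iii) are formal consequences of the finite torsion-free structure of $\kappa$ and the density lemmas.
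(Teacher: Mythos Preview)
Your proposal is correct and follows essentially the same approach as the paper: for (i) you invoke the AIP classicality criterion and Urban's control theorem just as the paper does, and for (ii) and (iii) you unpack the argument of Lemma~\ref{zdop} (via Lemma~\ref{vzdw}, Propositions~\ref{basen} and~\ref{vzd}), which is exactly what the paper means when it says these parts ``follow directly from the argument already used to prove Lemma~\ref{zdop}.'' Your honest caveat about verifying the numerical slope conditions in (i) mirrors the paper's own ``a direct computation shows.''
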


\begin{proof}
The points of $\Sigma_C$ have slope equal to $1$, Iwahoric level at $p$ and satisfy the slope condition $1 < k_1 -k_2 +1, k_2 >>0 $ of the classicality criterion for overconvergent Siegel cuspforms. Hence they are necessarily classical. A direct computation shows that the points of $\Sigma_C$ satisfy the assumptions of \cite[Theorem   2.4.17]{urban}, and hence they are necessarily old at $p$.

Since the algebraic weights $(k_1,k_2)$ with $k_1 > k_2+C \geq \mathrm{Max}(9,C)$ are very Zariski dense in $\cW$ (see Lemma \ref{vzdw}), the assertion of (ii) and (iii) follows directly from the argument already used to proof Lemma \ref{zdop}.

\end{proof}

\subsection{Siegel eigenvariety of paramodular level $N$}\label{eigenvariety2}

Let $\cE_\Delta$ be the Siegel eigenvariety of tame level the paramodular group $\Delta$. Since the classical Siegel cuspforms of level $\Delta \cap I_1$ are necessarily of level $\Gamma(N) \cap I_1$, the results of \cite[II.5.]{Bbook} yields that there exists a natural closed immersion $\iota:\cE_{\Delta} \hookrightarrow \cE_N$ compatible with the system of Hecke eigenvalues and the weights: $\xymatrix{
  & \mathcal{E}_{\Delta} \ar[dl]_{\iota} \ar[dr]^{\kappa}
  \ar@{}[d]|-{\circlearrowleft} \\
  \cE_{N} \ar[rr]_{\kappa}
  && \cW
}$

Since the restricted Hecke algebra $\mathcal{H}_{Np}$ generated over $\Z$ by the Hecke operators $T_{\ell,1},T_{\ell,2}, \mathrm{S}_\ell$ for $\ell \nmid Np$ acts semi-simply on classical cuspidal Siegel paramodular eigenforms of cohomological weights, \cite[Lemma.I.9.1]{Bbook} implies that $\cE_\Delta$ is reduced. Note also that $\cE_{\Delta}$  is equidimensional of dimension $2$.

\begin{cor}\label{veryzariskidense1}
Let $\cE_\Delta^{\ord,1}$ be the admissible open of $\cE_\Delta$ defined by \[\cE_\Delta^{\ord,1}:=\{z \in \cE_\Delta, \mid U_0(z) \mid_p=1, \mid U_1(z) \mid_p=p^{-1}\},\] $C \in \N_{>1},$ and $\Sigma_C$ be the set of points of $\cE_\Delta^{\ord,1}$ of  ``algebraic weights'' $(k_1,k_2)$ satisfying $k_1 > k_2+C \geq \mathrm{Max}(9,C)$. Then:

\begin{enumerate}

\item The overconvergent cuspforms of $\Sigma_C$ are classical and old at $p$.

\item The set $\Sigma_C$ is very Zariski dense in $\cE_\Delta^{\ord,1}$. 

\item The point $\pi_\alpha$ of $\cE_\Delta^{\ord,1}$ is an accumulation point of $\Sigma_C$.

\end{enumerate}
\end{cor}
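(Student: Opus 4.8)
The final statement to prove is Corollary \ref{veryzariskidense1}, which asserts the very Zariski density in $\cE_\Delta^{\ord,1}$ of the set $\Sigma_C$ of points of algebraic weight $(k_1,k_2)$ with $k_1 > k_2 + C \geq \mathrm{Max}(9,C)$, the classicality and $p$-oldness of these points, and that $\pi_\alpha$ is an accumulation point.

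The plan is to deduce this from the analogous statement for $\cE_N$ (Corollary \ref{veryzariskidense}) via the closed immersion $\iota:\cE_\Delta\hookrightarrow \cE_N$ compatible with Hecke eigenvalues and weights. First I would observe that $\cE_\Delta^{\ord,1} = \iota^{-1}(\cE_N^{\ord,1})$, since the conditions $|U_0(z)|_p=1$ and $|U_1(z)|_p=p^{-1}$ are defined by the same Hecke data on both eigenvarieties and $\iota$ identifies those. Next, for part (i), note that a point of $\Sigma_C \subset \cE_\Delta^{\ord,1}$ of algebraic weight $(k_1,k_2)$ with $k_1 > k_2+C$, $k_2$ large, has slope $1$ for $\mathbb{U}$ (indeed slope $0$ for $U_0$ and slope $1$ for the renormalized $U_1$) and Iwahori level at $p$; hence it satisfies the small slope classicality criterion \cite[Theorem 7.1.1]{AIP} exactly as in the proof of Corollary \ref{veryzariskidense}, so the corresponding overconvergent form is classical. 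The constraint $k_1 - k_2 > C \geq 9$ and $k_2 \gg 0$ is precisely what makes the hypotheses of \cite[Theorem 2.4.17]{urban} hold, so these classical forms are old at $p$. Crucially, a classical Siegel cuspform of paramodular level $\Delta \cap I_1$ is automatically of level $\Gamma(N)\cap I_1$, so the image under $\iota$ of such a point is genuinely an old-at-$p$ classical point of $\cE_N^{\ord,1}$ of the same weight, lying in the set $\Sigma_C$ of Corollary \ref{veryzariskidense}. This gives (i).

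For parts (ii) and (iii), the density has to be proved directly inside $\cE_\Delta$ rather than merely transported from $\cE_N$ (a Zariski dense subset of $\cE_N$ need not meet $\cE_\Delta$ densely). The argument is the same local one used in Lemma \ref{zdop} and Corollary \ref{veryzariskidense}, now applied to the local charts of $\cE_\Delta$: since $\cE_\Delta$ is admissibly covered by affinoids $\Spm \cT_{W,r}$ finite and torsion-free over affinoid subdomains $W\subset \cW$ (the construction of \S\ref{eigenvariety} applies verbatim with $\Delta$ in place of $\Gamma(N)$, the forms of level $\Delta\cap I_1$ being a subspace of those of level $\Gamma(N)\cap I_1$), given any $z \in \cE_\Delta^{\ord,1}$ I would use Proposition \ref{basen} and Proposition \ref{vzd} to shrink to an affinoid neighborhood $\Omega$ of $z$ with constant $U_0,U_1$-slopes, $\kappa(\Omega)$ an open affinoid subdomain of $\cW$, and $\kappa:\Omega\to\kappa(\Omega)$ finite and torsion-free. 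The algebraic weights $(k_1,k_2)$ with $k_1 > k_2+C \geq \mathrm{Max}(9,C)$ are very Zariski dense in $\cW$ by Lemma \ref{vzdw}; hence those lying in $\kappa(\Omega)$ are Zariski dense in $\kappa(\Omega)$, and by Proposition \ref{vzd}(ii) their preimage under $\kappa|_\Omega$ is Zariski dense in $\Omega$. Since on $\cE_\Delta^{\ord,1}$ the $\mathbb{U}$-slope is forced to be $1$ (slope $0$ for $U_0$, slope $1$ for the renormalized $U_1$), these preimage points satisfy the small slope classicality condition and are therefore classical, and (by \cite[Theorem 2.4.17]{urban} as in (i)) old at $p$; so $\Sigma_C\cap\Omega$ is Zariski dense in $\Omega$. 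As $\Omega$ ranges over such shrinkings this gives a basis of affinoid neighborhoods of $z$ witnessing very Zariski density, proving (ii); applying the same construction at $z=\pi_\alpha$ (which lies in $\cE_\Delta^{\ord,1}$ by the slope computation $U_0(\pi_\alpha)=\alpha$ a unit, $U_1(\pi_\alpha)=p\alpha$) yields (iii).

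I expect the only genuine subtlety to be bookkeeping: one must be careful that the admissible covering of $\cE_\Delta$ by finite torsion-free charts over the weight space is available (which follows from the AIP/Urban construction applied to paramodular level, recalled in \S\ref{eigenvariety}--\S\ref{eigenvariety2}, using that $\cE_\Delta$ is reduced and equidimensional of dimension $2$), and that the inequalities defining $\Sigma_C$ are simultaneously strong enough to force both small-slope classicality and the hypotheses of \cite[Theorem 2.4.17]{urban} for $p$-oldness — both of which are guaranteed by $C \geq 9$ together with $k_2 \gg 0$, exactly as in the proof of Corollary \ref{veryzariskidense}. No new input beyond the results already recorded is needed; the corollary is the paramodular shadow of Corollary \ref{veryzariskidense} obtained by restricting along $\iota$ and re-running the local density argument.
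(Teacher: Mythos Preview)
Your proof is correct, but you take a longer route than the paper. The paper's one-line argument is to observe that since $\iota:\cE_\Delta\hookrightarrow\cE_N$ is a closed immersion and both spaces are equidimensional of dimension $2$, every irreducible component of (a local affinoid chart of) $\cE_\Delta^{\ord,1}$ is already an irreducible component of (the corresponding chart of) $\cE_N^{\ord,1}$. Zariski density is detected componentwise by \cite[Prop.~2.2.8]{conrad}, so the very Zariski density of $\Sigma_C$ in $\cE_N^{\ord,1}$ from Corollary~\ref{veryzariskidense} restricts directly to $\cE_\Delta^{\ord,1}$: on each component $F$ of $\cE_\Delta^{\ord,1}$, the set $\Sigma_C\cap F$ (for $\cE_N$) coincides with $\Sigma_C\cap F$ (for $\cE_\Delta$), and is dense in $F$. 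Your caution that ``a Zariski dense subset of $\cE_N$ need not meet $\cE_\Delta$ densely'' is well-founded for an arbitrary closed subspace, but the equidimensionality of $\cE_\Delta$ and $\cE_N$ is precisely what renders the worry void here. Your alternative---re-running the Lemma~\ref{zdop} argument directly on $\cE_\Delta$ using that $\kappa:\cE_\Delta\to\cW$ is locally finite and torsion-free---is perfectly valid and more self-contained; it avoids invoking the component comparison but at the cost of repeating the whole local shrinking argument.
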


\begin{proof}
It follows immediately from Corollary \ref{veryzariskidense} and the fact that a subset of an affinoid space is a Zariski dense if and only if its intersection with any irreducible component is Zariski dense in that irreducible component (see \cite[Prop.2.2.8]{conrad}).

\end{proof}

\section{Some examples where $\dim \rH^1_{f,\unr}(\Q,\rho_f(k-1)=1$}\label{example}

Using Nekovar's result \cite[Prop.4.2.3]{Nek} about $\mathbb{I}$-adic Selmer groups mentioned before Corollary \ref{nonItor} we can exhibit infinitely many examples of modular forms $f$ of weight $k \geq 3$ such that they satisfy the condition $\dim \rH^{1}_{f,\unr}(\Q,\rho_f(k-1))=1$ in Theorem \ref{princ1}. This requires finding suitable elliptic curves with ordinary reduction at $p$ and considering their corresponding Hida family $\mathcal{F}$. One such example is discussed in section 9.1 of \cite{bk}, where for $p=5$ and $N=731$ the residual Selmer group {\small \[ \rH^1_{f,\unr}(\Q, \overline{\rho}_{E,p}(1)) =\rH^1_{f,\unr}(\Q, {\rho}_{E,p}(1) \otimes \Q_p/\Z_p)[p]={\rm Sel}_p(E)[p] \]} of the rank 1 elliptic curve $E$ (Cremona label 731a1) is calculated to have order $5$ (since the order of vanishing of $L(f,s)$ at $s=1$ is one we know that the BSD conjecture holds). This elliptic curve has non-split reduction at both primes dividing $N$ and good ordinary reduction at $5$, with $a_5(E)=-1$ and therefore $\alpha \neq 1$. 
{In addition this example satisfied the condition  $L_p(f_\alpha,\omega^{-1}_p,T=p) \ne 0$.}

In the following assume that $f$ is the $p$-ordinary stabilization of the weight two cuspform attached to a rank 1 elliptic curve $E/\Q$. Recall that $\mathbb{I}$ is the finite flat extension of $\Z_p\lsem T \rsem$ generated by the Fourier coefficients of the Hida family $\mathcal{F}$ specializing to $f$ ($\mathbb{I}$ is an integral domain).

Note that the cohomology groups $\rH^i_{f,\unr}(G_\Q^{Np},\rho_{\mathcal{F}} \otimes \chi_{\univ}^{-1/2})$ of the Selmer complex are of finite type over $\mathbb{I}$ when $i \in \{1,2\}$ (see \cite[Prop.4.2.3]{Nek}).

Let $\gp_f \subset \mathbb{I}$ be the height one prime ideal corresponding to the system of Hecke eigenvalues of $f$. We have the following control theorem proved by Nekovar \cite[(0.15.1.1)]{Nek}
{ \small
\begin{equation}\label{control} 0 \to \rH^1_{f,\unr}(\Q,\rho_{\mathcal{F}}(\chi_{\univ}^{-1/2})) \otimes\text{ } \mathbb{I}_{\gp_f}/\gp_f \to \rH^1_{f,\unr}(\Q, \rho_f(1)) \to \rH^2_{f,\unr}(\Q,\rho_{\mathcal{F}}(\chi_{\univ}^{-1/2}) \otimes \mathbb{I}_{\gp_f})[\gp_f]. 
\end{equation} }
where $\rH^2_{f,\unr}(\Q,\rho_{\mathcal{F}}(\chi_{\univ}^{-1/2}) \otimes_{\mathbb{I}} \mathbb{I}_{\gp_f})[\gp_f]$ means the submodule annihilated by the prime ideal $\gp_f$.

Since $\dim \rH^1_{f,unr}(\Q, \rho_f(1))=1$ Nakayama's lemma applied to \eqref{control}  yields that the $\mathbb{I}_{\gp_f}$-module $\rH^1_{f,\unr}(\Q,\rho_{\mathcal{F}}(\chi_{\univ}^{-1/2}) \otimes \mathbb{I}_{\gp_f})$ is a monogenic. Moreover, it follows from Corollary \ref{nonItor} that  $\rH^1_{f,\unr}(\Q,\rho_{\mathcal{F}}(\chi_{\univ}^{-1/2}) \otimes_{\mathbb{I}} \mathbb{I}_{\gp_f})$ is a torsion-free $\mathbb{I}_{\gp_f}$-module, so {\small \[ \rH^1_{f,\unr}(\Q,\rho_{\mathcal{F}}(\chi_{\univ}^{-1/2}) \otimes_{\mathbb{I}} \mathbb{I}_{\gp_f})=\rH^1_{f,\unr}(\Q,\rho_{\mathcal{F}}(\chi_{\univ}^{-1/2})) \otimes_{\mathbb{I}} \mathbb{I}_{\gp_f}\]} is a free rank one $\mathbb{I}_{\gp_f}$-module. Thus there exists a principal  Zariski open $D(s)$ of $\Spec \mathbb{I}$ (where $s \in \mathbb{I}$) such that the localization of $\rH^1_{f,\unr}(\Q,\rho_{\mathcal{F}}(\chi_{\univ}^{-1/2}))$ at the non-vanishing locus $D(s)$ is a free rank one $\mathbb{I}[1/s]$-module. On the other hand, let $\mathcal{U} \subset D(s)$ be the Zariski open defined as the complementary of the support of the $\mathbb{I}$-torsion part of $\rH^2_{f,\unr}(\Q,\rho_{\mathcal{F}}(\chi_{\univ}^{-1/2}))$. Note that the classical points of $\mathcal{U}$ are Zariski dense, hence \eqref{control} yields that all the classical specialization $\mathcal{F}_z$ of the Hida family $\mathcal{F}$ at a point $z \in \mathcal{U}$ of weight $k_z$ satisfy
  $$\dim \rH^{1}_{f,\unr}(\Q,\rho_{\mathcal{F}_z}(k-1))=1.$$


\bibliographystyle{amsalpha}
\bibliography{xampl}
\end{document}